\documentclass[amsfonts,12pt]{amsart}

\usepackage{amssymb}
\usepackage{etoolbox}
\usepackage{graphicx}
\usepackage{stmaryrd}
\usepackage{color}
\usepackage{pifont}
\usepackage{enumerate}
\setlength\marginparwidth{1.9truecm}
\setlength\marginparsep{0.1truecm}

\textwidth6.6truein \oddsidemargin-.5truecm \evensidemargin-.5truecm

\DeclareMathOperator*{\esssup}{ess\,sup}
\DeclareMathOperator*{\essinf}{ess\,inf}
\newcommand{\ee}{\varepsilon}

\newcommand{\N}{{\mathbb N}}
\newcommand{\Q}{{\mathbb Q}}
\newcommand{\R}{{\mathbb R}}

\newcommand{\cH}{{\mathcal H}}

\newcommand{\cK}{\mathcal K}
\newcommand{\cL}{\mathcal L}
\newcommand{\cM}{\mathcal M}

\newcommand{\cS}{\mathcal S}

\newtheorem{thm}{Theorem}[section]

\newtheorem{lem}[thm]{Lemma}
\newtheorem{cor}[thm]{Corollary}

\newtheorem{prop}[thm]{Proposition}
\newtheorem{ex}[thm]{Example}

\newcommand{\supp}{{\mathrm{supp}}\,}

\newcommand{\inte}{{\mathrm{int}}\,}

\newcommand{\cl}{{\mathrm{cl}}\,}

\newcommand{\pa}{{\partial}}
\newcommand{\ga}{{\gamma}}

\newcommand{\cE}{{\mathcal E}}
\newcommand{\cV}{{\mathcal V}}

\newcommand{\di}{\diamondsuit}

\newcommand{\uomc}{\overline{\cM}^*}
\newcommand{\lomc}{\overline{\cM}_*}

\newcommand{\uomcC}{\overline{\cM}^*_C}
\newcommand{\lomcC}{\overline{\cM}_{*C}}
\newcommand{\omcC}{\overline{\cM}_C}

\newcommand{\lphi}{{L^\Phi(\R^n)}}
\newcommand{\lphiom}{{L^\Phi(\Omega)}}
\newcommand{\lphir}{{L^{\Phi_r}(\R^n)}}

\numberwithin{equation}{section}


\begin{document}
\hfill\today
\bigskip

\title{The P\'olya-Szeg\H{o} inequality for smoothing rearrangements}
\author[Gabriele Bianchi, Richard J. Gardner, Paolo Gronchi, and Markus Kiderlen]
{Gabriele Bianchi, Richard J. Gardner, Paolo Gronchi, and Markus Kiderlen}
\address{Dipartimento di Matematica e Informatica ``U. Dini", Universit\`a di Firenze, Viale Morgagni 67/A, Firenze, Italy I-50134} \email{gabriele.bianchi@unifi.it}
\address{Department of Mathematics, Western Washington University,
Bellingham, WA 98225-9063,USA} \email{Richard.Gardner@wwu.edu}
\address{Dipartimento di Matematica e Informatica ``U. Dini", Universit\`a di Firenze, Piazza Ghiberti 27, Firenze, Italy I-50122} \email{paolo.gronchi@unifi.it}
\address{Department of Mathematical Sciences, University of Aarhus,
Ny Munkegade, DK--8000 Aarhus C, Denmark} \email{kiderlen@math.au.dk}
\thanks{First and third author supported in part by the Gruppo
Nazionale per l'Analisi Matematica, la Probabilit\`a e le loro
Applicazioni (GNAMPA) of the Istituto Nazionale di Alta Matematica (INdAM).  Fourth author supported by the Centre for Stochastic Geometry and Advanced Bioimaging, funded by a grant from the Villum Foundation.}
\subjclass[2020]{Primary: 28A20, 52A20; secondary: 46E30, 52A38} \keywords{P\'olya-Szeg\H{o} inequality, convex body, Schwarz symmetrization, rearrangement, polarization, smoothing, modulus of continuity, Orlicz norm, anisotropic}

\maketitle

\begin{abstract}
A basic version of the P\'olya-Szeg\H{o} inequality states that if $\Phi$ is a Young function, the $\Phi$-Dirichlet energy---the integral of $\Phi(\|\nabla f\|)$---of a suitable function $f\in \cV(\R^n)$, the class of nonnegative measurable functions on $\R^n$ that vanish at infinity, does not increase under symmetric decreasing rearrangement. This fact, along with variants that apply to polarizations and to Steiner and certain other rearrangements, has numerous applications.  Very general versions of the inequality are proved that hold for all smoothing rearrangements, those that do not increase the modulus of continuity of functions.  The results cover all the main classes of functions previously considered: Lipschitz functions $f\in \cV(\R^n)$, functions $f\in W^{1,p}(\R^n)\cap\cV(\R^n)$ (when $1\le p<\infty$ and $\Phi(t)=t^p$), and functions $f\in W^{1,1}_{loc}(\R^n)\cap\cV(\R^n)$.  In addition, anisotropic versions of these results, in which the role of the unit ball is played by a convex body containing the origin in its interior, are established.  Taken together, the results bring together all the basic versions of the P\'olya-Szeg\H{o} inequality previously available under a common and very general framework.
\end{abstract}

\section{Introduction}

A familiar version of the P\'olya-Szeg\H{o} inequality states that if $1\le p\le \infty$ and  $f\in W^{1,p}(\R^n)\cap \cV(\R^n)$, then $f^{\#}\in W^{1,p}(\R^n)$ and
\begin{equation}\label{intro1}
\int_{\R^n}\|\nabla f^{\#}(x)\|^p\, dx\leq \int_{\R^n}\|\nabla f(x)\|^p\; dx.
\end{equation}
See, e.g., \cite[Theorem~3.20 and~p.~113]{Baer}; when $p=\infty$, the integrals of $p$th powers are replaced by the essential suprema over $\R^n$. Here $f^{\#}$ denotes the {\em symmetric decreasing rearrangement} of $f$ (another common notation is $f^{\star}$) and $\cV(\R^n)$ is the class of nonnegative measurable functions on $\R^n$ that vanish at infinity, a natural class for which this rearrangement is defined. Most of our definitions and terminology can be found in Sections~\ref{subsec:notations} and~\ref{Properties}.  However, since we have no need for a precise definition of $f^{\#}$ (see, e.g., \cite[Definition~1.29]{Baer}, \cite[p.~9]{Kaw}, \cite[p.~80]{LL}), we lean on a vivid description of Sperner \cite[Abstract]{Spe}:  Imagine the subgraph of $f$ as a lump of clay on a potter's wheel, which on turning is molded into a perfectly symmetrical shape, maintaining the height of each particle of clay.  The molded shape then represents the subgraph of $f^{\#}$.  The map that takes $f$ to $f^{\#}$ is the primary example of a rearrangement on $\cV(\R^n)$.  In general, if $X$ is a class of measurable functions on $\R^n$ containing the characteristic functions of sets in $\cL^n$, the $\cH^n$-measurable sets of finite measure, a {\em rearrangement} $T$ on $X$ is an essentially monotonic (i.e., monotonic up to sets of $\cH^n$-measure zero) and equimeasurable (preserving the $\cH^n$-measure of superlevel sets) map $T:X\to X$.

Inequality \eqref{intro1} has its roots in studies of symmetrization of sets and rearrangements of functions that go back to Jakob Steiner's work on the isoperimetric inequality around 1836.  (The isoperimetric inequality can actually be deduced from the case $p=1$ of \eqref{intro1}.)  With extra assumptions on $f$, it was first proved for $n=p=2$ by G.~Faber and E.~Krahn independently, and then for all $n$ and $p=2$ by Krahn, all in the 1920s. In fact, \eqref{intro1} was a key ingredient in the solution by Faber and Krahn of Lord Rayleigh's 1884 conjecture that the disk has the lowest fundamental frequency of vibration of all membranes of a given area.  References are given by Daners \cite{Dan} in his detailed commentary focusing on Krahn's solution, and by Mondino and Semola \cite{MS}, who provide a lucid account of this early history.  (The latter also describe extensions of \eqref{intro1} to non-Euclidean settings, but the present paper is set entirely in $\R^n$.)  The many sources that outline the recent history of \eqref{intro1} often contradict each other.  P\'olya and Szeg\H{o}'s classic text \cite{PS} on isoperimetric inequalities in mathematical physics is always cited, but \eqref{intro1} is not explicitly stated there, and the setting is different.  They work with smooth surfaces $A_0$ and $A_1$ in $\R^3$ with $A_0$ in the interior of $A_1$, and functions equal to 0 on $A_0$, $1$ on $A_1$, and between 0 and 1 in the region bounded by $A_0$ and $A_1$. Their arguments in \cite[pp.~154--156]{PS}, with additional work, lead to \eqref{intro4} below when $f$ is smooth with compact support, $n=3$, and $T$ is the $(2,3)$-Steiner rearrangement (explained below) with respect to the $xy$-plane.  This and the approximation of Schwarz rearrangement by a sequence of $(2,3)$-Steiner rearrangements sketched in \cite[pp.~157]{PS} yield \eqref{intro4}, and hence \eqref{intro1} for $1<p<\infty$, with the same restrictions on $f$ and $n$. We believe Baernstein \cite[Section~3.8]{Baer} is correct in giving credit to Sperner \cite{Spe} for proving \eqref{intro1} when $f$ is Lipschitz (though he cites the wrong paper) and Hild\'{e}n \cite{Hil} for the result as stated above when $p<\infty$.

Diverse variants and applications of the P\'olya-Szeg\H{o} inequality (often called the P\'olya-Szeg\H{o} principle) have generated a very substantial literature, surveyed by Talenti \cite[Sections~1.3 and~1.5]{Tal2}, \cite[Section~5]{Tal}, who in \cite[p.~126]{Tal} provides over fifty references. The main themes are: P\'olya-Szeg\H{o} inequalities on spheres, hyperbolic, or other spaces, and for other functionals of the gradient; weighted versions involving other measures; versions invariant under affine transformations; anisotropic inequalities; the examination of equality cases; connections with capacitary inequalities; and applications to mathematical physics, PDEs, and function spaces.

Like \eqref{intro1}, this paper has also arisen from earlier work on symmetrization and rearrangement, including our previous investigations \cite{BGG, BGG2, BGGK}.  As in those articles, the attention is less on particular symmetrizations or rearrangements than on general properties that allow results for those special cases to be extended and unified.    For the classes $X$ of measurable functions considered, such as $\cV(\R^n)$, each rearrangement $T$ is essentially determined by an associated map $\di_T:\cL^n\to \cL^n$ defined by $\di_TA=\{x:T1_A(x)=1\}$, where $1_A$ is the characteristic function of $A$, satisfying
$$\{x: Tf(x)\ge t\}=\di_T\{x: f(x)\ge t\},$$
essentially, for $t>\essinf f$; see Proposition~\ref{lemapril30} below. Another such map may then be defined by $\di^*_TA=(\di_TA)^*$, where $E^*$ denotes the set of density points of $E$.  We focus here on {\em smoothing} rearrangements, those for which
\begin{equation}\label{intro2}
(\di^*_TA)+dB^n\subset \di^*_T(A+dB^n),
\end{equation}
essentially, for each $d>0$ and bounded measurable set $A$. Several equivalent variations of this definition, which stems from that of Sarvas \cite[p.~11]{Sar72}, are given in Lemma~\ref{lemapr3}.  The use of density points on the left of \eqref{intro2} is crucial and a feature of our methods, which differ from those in related studies of rearrangements by Brock and Solynin \cite{BS} and Van Schaftingen and Willem \cite{VSW}.  See \cite[Appendix]{BGGK} and the remarks around \eqref{eqjan72} and at the beginning of Section~\ref{Smoothing rearrangements} below for commentary about the various approaches.

It turns out that for the main classes $X$ of interest, smoothing rearrangements are precisely the rearrangements $T:X\to X$ that {\em reduce the modulus of continuity} of functions in $X$, that is,  such that $\omega_d(Tf)\le \omega_d(f)$ for $d>0$ and $f\in X$, where
$$
\omega_d(f)=\esssup_{\|x-y\|\le d}|f(x)-f(y)|.
$$
This result, a consequence of Corollary~\ref{lemdec301}, relies on several others, such as Theorems~\ref{lemdec201} and~\ref{lemdec194}, which collectively generalize (even in the special case when $K=B^n$) the theorem of Brock and Solynin \cite[Theorem~3.3]{BS}.  Corollary~\ref{lemdec301} also shows that the equivalence of smoothing and reduction of the modulus of continuity is true when only the continuous functions, or indeed the contractions, in $X$, are considered.  Via this equivalence and results in the literature, it can be seen that the class of smoothing rearrangements on $\cV(\R^n)$ includes the symmetric decreasing rearrangement (see, e.g., \cite[Theorem~2.12]{Baer}); more generally, the {\em Schwarz} or {\em $(k,n)$-Steiner} rearrangement with respect to a $k$-dimensional subspace in $\R^n$ \cite[Theorem~6.10]{Baer}, \cite[Corollary~6.1]{BS} (here the axis of the potter's wheel is $(n-k+1)$-dimensional in $\R^{n+1}$, $1\le k\le n$, with $k=n$ corresponding to the symmetric decreasing rearrangement); polarization with respect to a hyperplane, defined by \eqref{pol} below \cite[Proposition~1.37]{Baer}, \cite[Lemma~5.1]{BS}; and others besides, such as the SC 1-symmetrizations \cite[Lemma~4.4 and~Definition~4.4]{Sol1} and their generalizations \cite[Section~9]{Sol1}, which we shall call {\em Solynin rearrangements}.

The P\'olya-Szeg\H{o} inequality \eqref{intro1} holds for each of the just-mentioned rearrangements.  (For the symmetric decreasing rearrangement, references were provided above, while proofs for the Schwarz rearrangement, polarization (when \eqref{intro1} becomes an equality), and Solynin rearrangements can be found in \cite[Theorem~6.19]{Baer}, \cite[Lemma~5.3]{BS}, and \cite[Theorem~10.2]{Sol1}, respectively.)  One of the main purposes of this paper is to prove \eqref{intro1} for all smoothing rearrangements on $\cV(\R^n)$; see Corollary~\ref{polyaszego_w12}.  The initial goal in this direction, achieved in Theorem~\ref{polyaszego_lip}, is to show that if
$T:\cV(\R^n)\to \cV(\R^n)$ is a smoothing rearrangement, $\Phi:[0,\infty)\to[0,\infty]$ is left-continuous and convex with $\Phi(0)=0$ (i.e., what we call a Young function), and $f\in \cV(\R^n)$ is Lipschitz, then
\begin{equation}\label{intro4}
\int_{\R^n}\Phi\left(\|\nabla Tf(x)\|\right)\, dx\leq \int_{\R^n}\Phi\left(\|\nabla f(x)\|\right)\, dx,
\end{equation}
where the integrals may be infinite. (For real-valued $\Phi$, this result for the special rearrangements discussed above can be found in \cite[Theorem~3.11]{Baer}, \cite[Theorem~6.16]{Baer}, \cite[Proposition~3.12]{Baer}, with polarization again giving an equality even when $\Phi:[0,\infty)\to[0,\infty)$ is an arbitrary Borel function, and \cite[Theorem~10.4]{Sol1}, respectively.)  The core of the proof of \eqref{intro4} is an inequality (see \eqref{introcore} below) between the $(n+1)$-dimensional measure of the Minkowski sum of the part of the subgraph $K_{Tf}$ of $Tf$ above a fixed height and a suitable convex body $C\subset \R^{n+1}$, on the one hand, and the corresponding quantity for the subgraph $K_f$ of $f$, on the other.  (The inequality follows from a containment relation in Lemma~\ref{lemdec303} between horizontal sections of these two sets, that comes from \eqref{intro2} and other properties of $T$.) This yields an inequality between the (upper) outer Minkowski contents of the two sets (Lemma~\ref{minkcont_diminishes}). Some results from geometric measure theory, in particular a formula of Lussardi and Villa \cite{LV}, allow us to express this inequality in terms of integrals over the graphs of $Tf$ and of $f$ of the support function of $C$ of the outer unit normal (Lemma~\ref{minkcontequalhausdorff}). The last main step is to prove that $C$ can be chosen so that it represents $\Phi$ (Lemma~\ref{lemoct20}), i.e., so that the mentioned inequalities transform into \eqref{intro4}. We also use the McShane-Whitney extension theorem for Lipschitz functions (Lemma~\ref{cor1may21}).

In Theorem~\ref{polyaszego_w12 new}, we present a far-reaching version of \eqref{intro4} in $W^{1,1}_{loc}(\R^n)$.  Specifically, we show that if $T:\cV(\R^n)\to \cV(\R^n)$ is a smoothing rearrangement, $\Phi$ is a Young function, $f\in W^{1,1}_{loc}(\R^n)\cap\cV(\R^n)$, and $\int_{\R^n}\Phi(\|\nabla f(x)\|)\, dx<\infty$, then $Tf\in W^{1,1}_{loc}(\R^n)$ and \eqref{intro4} holds.  This generalizes the results for Schwarz and Solynin rearrangements in \cite[Theorem~8.3]{BS} and \cite[Theorem~10.4]{Sol1}.  The passage from Theorem~\ref{polyaszego_lip} to Theorem~\ref{polyaszego_w12 new} requires overcoming some technical difficulties, made all the more challenging because we do not assume that $\Phi$ is an N-function.  In particular, we approximate $\Phi$ by a real-valued Young function $\Phi_r$ such that the Orlicz space $L^{\Phi_r}(\R^n)$ is equivalent to $L^1(\R^n)+L^\infty(\R^n)$, the largest Orlicz space, and use both the necessary and the sufficient condition of the so-called de La Vall\'ee-Poussin criterion. The necessary background on Orlicz spaces is provided at the beginning of Section~6.  Since $W^{1,p}(\R^n)\subset W^{1,1}_{loc}(\R^n)$ for $1\le p<\infty$, Theorem~\ref{polyaszego_w12 new} immediately yields Corollary~\ref{polyaszego_w12}, the classical version \eqref{intro1} of the P\'olya-Szeg\H{o} inequality, but now for every smoothing rearrangement.

Finally, anisotropic versions of Theorems~\ref{polyaszego_lip} and~\ref{polyaszego_w12 new}, in which the role of the unit ball $B^n$ is replaced by a convex body $K\subset \R^n$ containing the origin in its interior, are proved in Theorems~\ref{anisotropic polyaszego_lip} and~\ref{AnisotropicW11}.  Here the rearrangement $T:\cV(\R^n)\to \cV(\R^n)$ is assumed to be $K$-smoothing (i.e., \eqref{intro2} holds with $B^n$ replaced by $K$), and then, with $\Phi$ and $f$ as in Theorem~\ref{polyaszego_lip} or Theorem~\ref{polyaszego_w12 new}, respectively, the conclusion is that
\begin{equation}\label{intro5}
\int_{\R^n}\Phi\left(h_{-K}(\nabla Tf(x))\right)\, dx\leq \int_{\R^n}\Phi\left(h_{-K}(\nabla f(x))\right)\, dx,
\end{equation}
where $h_K$ is the support function of $K$.  When $K=B^n$, \eqref{intro5} becomes \eqref{intro4}.  This type of P\'olya-Szeg\H{o} inequality was introduced by Alvino, Ferone, Trombetti, and Lions \cite[Theorem~3.1]{AFTL} when $K$ is $o$-symmetric and $\Phi(t)=t^p$, $p\ge 1$, but for all $f\in W^{1,p}_0(\R^n)$.  In their anisotropic framework, Schwarz symmetrization is replaced by one they call convex symmetrization; in the potter's wheel description above, each horizontal slice of clay would be molded into a dilate of the convex body $K$.  See Example~\ref{exaug61}(ii) below, where we call the process when it is extended to functions a {\em $K$-Schwarz rearrangement} and note that it is $K$-smoothing.  These concepts, which align with that of Wulff shape in crystallography (see, e.g., \cite[Section~7.5]{Sch14}), were generalized by Van Schaftingen in a process he calls partial anisotropic symmetrization in \cite{VS2006}, where he proves a corresponding generalization of \cite[Theorem~3.1]{AFTL}.  In the rearrangements resulting from partial anisotropic symmetrizations, which we may consistently also call $K$-Schwarz rearrangements, $K$ is a $k$-dimensional convex body in $\R^k$ and the axis of the potter's wheel is $(n-k+1)$-dimensional in $\R^{n+1}$, $1\le k\le n$.  We also introduce in \eqref{eqmay255} below the $K$-modulus of continuity of a function, and show in Theorem~\ref{lemdec201} that when $X=\cV(\R^n)$, for example, a rearrangement $T: X\to X$ that reduces the $K$-modulus of continuity of functions in $X$ is $K$-smoothing.  The converse is true when $K$ is $o$-symmetric (see Theorem~\ref{lemdec194}), but Example~\ref{exmay205} shows that the $K$-Schwarz rearrangement does not generally reduce the $K$-modulus of continuity of functions in $X$ when $K$ is not $o$-symmetric.

Different anisotropic extensions of the P\'olya-Szeg\H{o} inequality were found by Klimov \cite{Kli99} and Van Schaftingen \cite{VS2006}.  Our methods can be used to prove Klimov's inequality for the $K$-Schwarz rearrangement of symmetrizable functions.

Known proofs of \eqref{intro1} and its variants seem to follow one of two approaches.  The first, adopted in the present paper, proceeds via isoperimetric inequalities applied to (super-) level sets, while the second uses approximation by special rearrangements, principally polarizations.  The second approach does not provide information about the cases of equality and moreover does not help with the anisotropic case, but otherwise can be extremely efficient.  The standard {\em polarization} process, sometimes called {\em two-point symmetrization}, with respect to an {\em oriented} $(n-1)$-dimensional (linear) subspace $H$, takes a function $f:\R^n\to \R$ and replaces it by
\begin{equation}\label{pol}
P_Hf(x)=\begin{cases}
\max\{f(x),f(x^{\dagger})\},& {\text{if $x\in H^+$,}}\\
\min\{f(x),f(x^{\dagger})\},& {\text{if $x\in H^-$}},
\end{cases}
\end{equation}
where $^{\dagger}$ denotes the reflection in $H$ and where $H^+$, $H^-$, are the two closed half-spaces bounded by $H$ and determined by its orientation. For background and references, see \cite[Introduction]{BGGK}, where it is explained in exactly what sense all Schwarz rearrangements, including the symmetric decreasing rearrangement, can be approximated by polarizations, a result due to Brock and Solynin \cite{BS} and refined by Van Schaftingen \cite{VS1, VS2}. Solynin \cite[Lemmas~7.4 and~9.2]{Sol1} proved that his rearrangements can also be approximated by polarizations.  This raises a natural question:  Can all smoothing rearrangements be so approximated by those in a subclass for which \eqref{intro1} is easy to prove?  We have begun to study this question but do not yet have an answer.  Another obvious question is whether the smoothing or $K$-smoothing assumptions are necessary for our P\'olya-Szeg\H{o} inequalities, though Example~\ref{exaug721} shows that this is not the case when $p=1$.  (Example~\ref{exaug721} also shows that the smoothing assumption cannot generally be omitted in Theorems~\ref{polyaszego_lip} and~\ref{polyaszego_w12 new} and Corollary~\ref{polyaszego_w12}.)  Also left for a future investigation are the cases of equality.  Even for the symmetric decreasing rearrangement, this is challenging; see \cite{BF} and the references given there, which go back to the initial study of Brothers and Ziemer \cite{BZ}.

When $p=2$, the P\'olya-Szeg\H{o} inequality \eqref{intro1} can be derived from the Riesz-Sobolev inequality (also called the Riesz rearrangement inequality) for the symmetric decreasing rearrangement \cite[Theorem~8.4]{Baer}, \cite[Theorem~3.7]{LL}; for a proof of this fact, see \cite[Lemma~7.17]{LL}.  This inequality holds for all Schwarz rearrangements but not for polarizations (see \cite[Corollary~4.3]{VS2006}), and therefore is not true for all smoothing rearrangements.  On the other hand, it is pointed out in \cite[p.~1763]{BS} that a very special case of the Riesz-Sobolev inequality, the Hardy-Littlewood inequality \cite[p.~54]{Baer}, \cite[Theorem~3.4]{LL}, is a simple consequence of Proposition~\ref{contraction} below, and therefore holds for all rearrangements.

\section{Preliminaries}\label{subsec:notations}

As usual, $S^{n-1}$ denotes the unit sphere and $o$ the origin in Euclidean $n$-space $\R^n$.  Unless stated otherwise, we assume throughout that $n\ge 2$.   The standard orthonormal basis for $\R^n$ is $\{e_1,\dots,e_n\}$ and the Euclidean norm is denoted by $\|\cdot\|$.  The term {\em ball} in $\R^n$ will always mean a {\em closed} $n$-dimensional ball unless otherwise stated.  The unit ball in $\R^n$ will be denoted by $B^n$ and $B(x,r)$ is the ball with center $x$ and radius $r$.  We write $D^n$ for the open unit ball in $\R^n$. If $x,y\in \R^n$ we write $x\cdot y$ for the inner product and $[x,y]$ for the line segment with endpoints $x$ and $y$. If $x\in \R^n\setminus\{o\}$, then $x^{\perp}$ is the $(n-1)$-dimensional subspace orthogonal to $x$ and $\langle x\rangle$ is the 1-dimensional subspace spanned by $x$. Throughout the paper, the term {\em subspace} means a linear subspace.

If $A$ is a set,  we denote by $\cl A$, $\inte A$, and $\dim A$ the {\it closure}, {\it interior}, and {\it dimension} (that is, the dimension of the affine hull) of $A$, respectively.  If $H$ is a subspace of $\R^n$, then $A|H$ is the (orthogonal) projection of $A$ on $H$ and $x|H$ is the projection of a vector $x\in \R^n$ on $H$.

If $A$ and $B$ are sets in $\R^n$ and $t\in \R$, then we denote by $tA=\{tx:x\in A\}$ the {\em dilate} of $A$ by the factor $t$, and by
$$A+B=\{x+y: x\in A, y\in B\}$$
the {\em Minkowski sum} of $A$ and $B$.  We write $-A=(-1)A$ for the reflection of $A$ in the origin and call $A$ {\em origin symmetric} or {\it $o$-symmetric} if $-A=A$.

We write ${\mathcal H}^k$ for $k$-dimensional Hausdorff measure in $\R^n$, where $k\in\{1,\dots, n\}$.  When dealing with relationships between sets in $\R^n$ or functions on $\R^n$, the term {\em essentially} means up to a set of $\cH^n$-measure zero.

The Grassmannian of $k$-dimensional subspaces in $\R^n$ is denoted by ${\mathcal{G}}(n,k)$.

We denote by ${\mathcal C}^n$, ${\mathcal G}^n$, ${\mathcal B}^n$, ${\mathcal M}^n$, and ${\mathcal L}^n$ the class of nonempty compact sets, open sets, bounded Borel sets, ${\mathcal H}^n$-measurable sets, and ${\mathcal H}^n$-measurable sets of finite ${\mathcal H}^n$-measure, respectively, in $\R^n$.

Let ${\mathcal K}^n$ be the class of nonempty compact convex subsets of $\R^n$ and let ${\mathcal K}^n_n$ be the class of {\em convex bodies}, i.e., members of ${\mathcal K}^n$ with interior points.  We write ${\mathcal K}^n_{(o)}$ for subclass of ${\mathcal K}^n_n$ whose members contain the origin in their interiors. If $K\in {\mathcal K}^n$, then
\begin{equation}\label{support}
h_K(x)=\sup\{x\cdot y: y\in K\}
\end{equation}
for $x\in\R^n$, defines the {\it support function} $h_K$ of $K$.  The texts by Gruber \cite{Gru07} and Schneider \cite{Sch14} contain a wealth of useful information about convex sets and related concepts such as the {\em intrinsic volumes} $V_j$, $j\in\{1,\dots, n\}$ (see also \cite[Appendix~A]{Gar06}).  In particular, if $K\in {\mathcal K}^n$ and  $\dim K=n$ then $2V_{n-1}(K)$ is the {\em surface area} of $K$. If $\dim K=k$, then $V_k(K)={\mathcal H}^k(K)$ is the {\em volume} of $K$.  By $\kappa_n$ we denote the volume ${\mathcal H}^n(B^n)$ of the unit ball in $\R^n$.

If $K\in{\mathcal K}^n_{(o)}$, the {\em polar body} $K^\circ$ of $K$ is defined by
\begin{equation}\label{polarbody}
K^\circ=\{x\in\R^n: x\cdot y\leq1 \ \text{for}\  y\in K \}.
\end{equation}
Then $(K^\circ)^\circ=K$ and (see \cite[(1.52), p.~57]{Sch14})
\begin{equation} \label{polarrad}
\rho_K(x) h_{K^\circ}(x)=h_K (x) \rho_{K^\circ}(x)=1\ \ \ \mathrm{for~} x\in \R^n\setminus\{o\},
\end{equation}
where
\begin{equation} \label{radialfunction}
\rho_K(x)=\max\{\lambda\ge 0: \lambda x\in K\}
\end{equation}
for $x\in \R^n\setminus\{o\}$, is the {\em radial function} of $K$.  We shall also find use for the {\em gauge function} of $K$, defined by
\begin{equation} \label{gaugefunction}
\|x\|_K=\inf\{\lambda \ge 0: x\in \lambda K\}=h_{K^{\circ}}(x)
\end{equation}
for $x\in \R^n$.  The previous equality follows from \eqref{polarrad} and \eqref{radialfunction}, or see \cite[Lemma~1.7.13]{Sch14}.  Despite the notation, $\|\cdot\|_K$ is a norm if and only if $K$ is $o$-symmetric; in general it is sublinear but does not satisfy $\|-x\|_K=\|x\|_K$ for all $x\in \R^n$.  When $K=B^n$, $\|\cdot\|_K$ is the Euclidean norm.

It will be convenient to call a function $f:\R^n\to \R$ a {\em $K$-contraction} if
\begin{equation}\label{Kcontraction}
|f(x)-f(y)|\le \|x-y\|_K
\end{equation}
for all $x,y\in \R^n$.  Note that when $K=B^n$, a $K$-contraction is a contraction in the usual sense of the term.  Note also that $f$ is a $K$-contraction if and only if it is a $-K$-contraction, since \eqref{Kcontraction} is equivalent to $|f(y)-f(x)|\le \|x-y\|_K$ for all $x,y\in \R^n$, and hence to $|f(x)-f(y)|\le \|y-x\|_K=\|x-y\|_{-K}$ for all $x,y\in \R^n$.  Clearly, every $K$-contraction is continuous.

From \eqref{support} and \eqref{polarrad}, it is easy to see that
\begin{equation}\label{CS_polar}
x\cdot y\leq h_{K}(x)\, h_{K^\circ}(y)
\end{equation}
for $x,y\in \R^n$ (see \cite[(1.40), p.~54]{Sch14}) and that equality holds when $x,y\neq o$ if and only if $x$ is an outer normal to $K$ at $\rho_K(y)y=y/h_{K^\circ}(y)\in \partial K$.

Given $A\in\cM^n$, let $\uomc(A)$ and $\lomc(A)$ denote, respectively, its {\em upper} and {\em lower outer Minkowski content}, i.e.,
\begin{equation}\label{eqapr31}
\uomc(A)=\limsup_{\ee\to0+} \frac{\cH^n( A+\ee B^n)-\cH^n(A)}{\ee}\quad\text{and}\quad
\lomc(A)=\liminf_{\ee\to0+} \frac{\cH^n( A+\ee B^n)-\cH^n(A)}{\ee}.
\end{equation}
See \cite[p.~69]{BurZ80} and \cite{ACV}, whose notation and terminology differs from ours, and note that the limits in \eqref{eqapr31} are unchanged if $B^n$ is replaced by $D^n$.  We shall also need the following generalization of these concepts.  If $C\in {\mathcal K}^n_{(o)}$ and $A\in\cM^n$, let $\uomcC(A)$ and $\lomcC(A)$ denote, respectively, the {\em upper} and {\em lower anisotropic outer Minkowski content} of $A$ with respect to $C$, obtained by replacing $B^n$ in \eqref{eqapr31} by $C$.  When the two limits coincide we denote them by $\omcC(A)$, and again, the limits are unchanged if $C$ is replaced by $\inte C$.

Let $A\in\cM^n$.  We shall write $S(A)$ for the {\em perimeter} of $A$. For the definition of this widely-used term, see, for example, \cite[p.~170]{EG}, \cite[p.~107]{KP}, \cite[p.~122]{Mag}, or \cite[p.~34]{Pfe}.  When $K$ is a convex body, its perimeter is equal to its surface area, defined in that case as
$$
S(K)=\lim_{\ee\to0+} \frac{\cH^n( K+\ee B^n)-\cH^n(K)}{\ee},
$$
its outer Minkowski content.  It is for this reason that we prefer not to use the more common $P(A)$ for the perimeter of $A$.

Let ${\mathcal{M}}(\R^n)$ (or ${\mathcal{M}}_+(\R^n)$) denote the set of real-valued (or nonnegative, respectively) measurable functions on $\R^n$ and let ${\mathcal{S}}(\R^n)$ denote the set of functions $f$ in ${\mathcal{M}}(\R^n)$ such that ${\mathcal{H}}^{n}(\{x:f(x)>t\})<\infty$ for $t>\essinf f$.  By ${\mathcal{V}}(\R^n)$, we denote the set of functions $f$ in ${\mathcal{M}}_+(\R^n)$ such that ${\mathcal{H}}^{n}(\{x:f(x)>t\})<\infty$ for $t>0$.  The four classes of functions satisfy
${\mathcal{V}}(\R^n)\subset {\mathcal{S}}(\R^n)\subset {\mathcal{M}}(\R^n)$ and ${\mathcal{V}}(\R^n)\subset {\mathcal{M}}_+(\R^n)\subset {\mathcal{M}}(\R^n)$.  Members of ${\mathcal{S}}(\R^n)$ have been called {\em symmetrizable} (see, e.g., \cite{BS}) and those of ${\mathcal{V}}(\R^n)$ are often said to {\em vanish at infinity}.  Note that the constant functions are symmetrizable but do not vanish at infinity unless they are identically zero.

We shall define a {\em Young function} as a left-continuous and convex function $\Phi:[0,\infty)\to[0,\infty]$ such that $\Phi(0)=0$, and say that such a function is {\em nontrivial} if $\Phi\not\equiv 0$ and $\Phi\not\equiv\infty$ on $(0,\infty)$.  Note that a real-valued Young function is both continuous and increasing (which will always mean non-decreasing in this paper). In \cite[Definition~2.1.1]{ES}, the term {\em Orlicz function} is used for a nontrivial Young function.  Both terms have other definitions in the literature.

Our notation for Sobolev spaces such as $W^{1,p}(\R^n)$ is standard. Definitions can be found in many texts, such as \cite{LL}.

If $f\in {\mathcal{M}}(\R^n)$, we denote its {\em graph} by $G_f$ and define its {\em subgraph} $K_f\subset \R^{n+1}$ by
\begin{equation}\label{eqdec304}
K_f=\{(x,t)\in \R^n\times \R: f(x)\ge t\}.
\end{equation}

If $T:X\to X$, where $X$ is one of the function classes given above, we shall usually write $Tf$ instead of $T(f)$.  If $T_0,T_1:X\to X$ are maps, we say that $T_0$ is {\em essentially equal} to $T_1$ if for $f\in X$, $T_0f(x)=T_1f(x)$ for ${\mathcal{H}}^{n}$-almost all $x\in \R^n$, where the exceptional set may depend on $f$.

If $f$ is a locally integrable function on $\R^n$, define
\begin{equation}\label{eqdec141}
  f^*(x)=\lim_{r \to 0^+}\frac{1}{\cH^n(B(x,r))}\int_{B(x,r)}f(y)\, dy
\end{equation}
when the limit exists and $f^*(x)=0$ otherwise.  The limit exists and equals $f(x)$ ${\mathcal{H}}^{n}$-almost everywhere in $\R^n$, by the Lebesgue differentiation theorem (see, e.g., \cite[Proposition~3.5.4]{KP}).  Evans and Gariepy \cite[p.~46]{EG} call $f^*$ the {\em precise representative} of $f$. If $A$ is a measurable set,
\begin{equation}\label{eqjuly151}
\Theta(A,x)=1_A^*(x)=\lim_{r\to0^+}\frac{\cH^{n}(A\cap B(x,r))}{\cH^n(B(x,r))},
\end{equation}
is the {\em density} of $A$ at $x$, provided the limit exists.

If $A\in \cM^n$, define
$$
A^*=\{x\in \R^n: \Theta(A,x)=1_A^*(x)=1\}.
$$
Elements of $A^*$ are called {\em Lebesgue density points}, or simply density points, of $A$.  Note that $A^*=A$, essentially, by the Lebesgue density theorem (see, e.g., \cite[Theorem~1.5.2]{Pfe}).  Since it follows immediately from the definition of perimeter as a supremum of integrals of divergences (see, for example, \cite[p.~34]{Pfe}) that if two measurable sets are essentially equal, their perimeters are equal, we have
\begin{equation}\label{eqapr2}
S(A^*)=S(A)
\end{equation}
for $A\in \cM^n$.

\begin{lem}\label{lemjan61}
Let $A, B\in \cM^n$.

\noindent{\rm{(i)}} If $A\subset B$, essentially, then $A^* \subset B^*$.

\noindent{\rm{(ii)}} If $A=B$, essentially, then $A^*= B^*$.

\noindent{\rm{(iii)}} $(A^*)^*=A^*$.

\noindent{\rm{(iv)}} If $K\subset \R^n$ is a convex body, then $A^*+K=A^*+\inte K$ is open.
\end{lem}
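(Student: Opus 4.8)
The plan is to get (i) straight from the definition of a density point, to derive (ii) and (iii) from (i) together with the Lebesgue density theorem, and to reserve the actual work for (iv). For (i), assume $\cH^n(A\setminus B)=0$ and let $x\in A^*$, so that $\cH^n(A\cap B(x,r))/\cH^n(B(x,r))\to 1$ as $r\to0^+$. Since $A\cap B(x,r)$ and $A\cap B\cap B(x,r)$ differ only by a subset of the null set $A\setminus B$, we have $\cH^n(A\cap B(x,r))\le\cH^n(B\cap B(x,r))\le\cH^n(B(x,r))$; dividing through by $\cH^n(B(x,r))$ and squeezing shows that the density of $B$ at $x$ exists and equals $1$, i.e., $x\in B^*$. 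Part (ii) is then (i) applied with the roles of $A$ and $B$ interchanged. For (iii), observe that $A^*$ is $\cH^n$-measurable, being essentially equal to $A$, and hence lies in $\cM^n$; since also $A^*=A$ essentially by the Lebesgue density theorem (as recorded just before the lemma), applying (ii) with $B=A^*$ gives $(A^*)^*=A^*$.

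For (iv), the inclusion $A^*+\inte K\subseteq A^*+K$ is trivial, and $A^*+\inte K=\bigcup_{a\in A^*}(a+\inte K)$ is open because $\inte K$ is open and nonempty (a convex body has interior points). So everything reduces to proving $A^*+K\subseteq A^*+\inte K$, which is vacuous if $A^*=\emptyset$. Fix $a\in A^*$ and $k\in K$, choose $v_0\in\inte K$ and $\rho>0$ with $B(v_0,\rho)\subseteq\inte K$, and for $t\in(0,1]$ set $q_t=a+t(k-v_0)$. If $a'\in B(q_t,t\rho)$, then writing $v:=v_0-(a'-q_t)/t$ we have $\|v-v_0\|<\rho$, so $v\in\inte K$, and $a'=a+t(k-v)$, whence $a+k-a'=(1-t)k+tv$; since $v\in\inte K$, $k\in K$ and $t\in(0,1]$, this point lies in $\inte K$. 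Thus $a+k\in a'+\inte K$, and it suffices to find some $t\in(0,1]$ with $A^*\cap B(q_t,t\rho)\ne\emptyset$. Because $B(q_t,t\rho)\subseteq B(a,t(\|k-v_0\|+\rho))$ and $a$ is a density point of $A$, we have $\cH^n(B(q_t,t\rho)\setminus A)\le\cH^n(B(a,t(\|k-v_0\|+\rho))\setminus A)=o(t^n)$ as $t\to0^+$, whereas $\cH^n(B(q_t,t\rho))=\kappa_n\rho^nt^n$. Hence $\cH^n(A\cap B(q_t,t\rho))>0$ for all small $t>0$, and since $A=A^*$ essentially, also $\cH^n(A^*\cap B(q_t,t\rho))>0$, so that ball meets $A^*$. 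This proves (iv).

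The only substantive step is (iv), and the point to watch there is that $k$ may lie on $\partial K$, so $a+k$ need not already belong to $A^*+\inte K$ via the trivial choice $a'=a$; the remedy is to slide the first summand from $a$ to a nearby density point $a'$. The geometric content of the argument is that the balls $B(q_t,t\rho)$ both cluster at $a$ (so they meet the full-density set $A$, hence $A^*$) and are positioned so that the residual vector $a+k-a'$ is pushed into $\inte K$; supplying these two properties simultaneously is the crux, and the density-point hypothesis on $a$ — rather than mere nonemptiness of $A$ — is exactly what makes the intersection with the shrinking balls nonempty.
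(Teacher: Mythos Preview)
Your proof is correct and follows essentially the same strategy as the paper's: parts (i)--(iii) are handled identically, and for (iv) both arguments show $A^*+K\subset A^*+\inte K$ by using the density of $A$ at $a$ to find a nearby $a'\in A^*$ for which convexity forces $a+k-a'\in\inte K$. The only cosmetic difference is that the paper parameterizes the admissible displacements via an open cone at $a+k$ pointing into $a+\inte K$, whereas you parameterize them via shrinking balls $B(q_t,t\rho)$ along the segment from $a$ toward $a+(k-v_0)$; both constructions encode the same geometric fact and the same use of the density-one condition.
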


\begin{proof}
(i) If $A\subset B$, essentially, then $1_A \leq 1_B$, essentially.  Therefore, if $x\in A^*$, then
\begin{eqnarray*}
1=1_A^*(x)&=&\lim_{r \to 0^+}\frac{1}{\cH^n(B(x,r))}\int_{B(x,r)}1_A(y)\, dy
\le  \liminf_{r \to 0^+}\frac{1}{\cH^n(B(x,r))}\int_{B(x,r)}1_B(y)\, dy\\
&\le & \limsup_{r \to 0^+}\frac{1}{\cH^n(B(x,r))}\int_{B(x,r)}1_B(y)\, dy\le 1
\end{eqnarray*}
and hence $1_B^*(x)=1$.  Therefore $A^* \subset B^*$.

Parts (ii) and (iii) follow easily.

(iv)  Let $x\in  A^*+K$ and choose $y\in A^*$ such that $x\in y+K$. Choose $r>0$ and an open cone $C$ with vertex at $o$ such that $x+(C\cap rD^n)\subset y+\inte K$.  Note that if $w\in C\cap rD^n$, then $x\in y-w+\inte K$.  Since ${\cH}^n(-C\cap rD^n)>0$, $y\in A^*$, and $A=A^*$, essentially, there is a $w_0\in C\cap rD^n$ such that $y-w_0\in A^*$.  Hence $x\in y-w_0+\inte K\subset A^*+\inte K$ and it follows from the definition of $A^*$ that $A^*+\inte K$ is open since $\inte K$ is open.
\end{proof}

A function  $f\in\cM(\R^n)$ is \emph{approximately continuous} at $x\in\R^n$ if for each $\ee>0$,
\begin{equation}\label{eqjuly282}
\lim_{r\to 0+}\frac{\cH^n\left(B(x,r)\cap\{y:|f(x)-f(y)|<\ee\}\right)}{\cH^n(B(x,r))}=1.
\end{equation}
We shall use the fact that each $f\in\cM(\R^n)$ is approximately continuous at $\cH^n$-almost all $x\in \R^n$; see, for example, \cite[Theorem~3, Section~1.7.2]{EG}.

Note that a measurable characteristic function $1_A$ is approximately continuous at $x$ if and only if the limit in \eqref{eqdec141} with $f=1_A$ exists and equals $1_A(x)$, that is, if and only if either $x\in A$ and $1_A^*(x)=1$ or $x\notin A$, the limit in \eqref{eqdec141} exists, and $1_A^*(x)=0$.  This means that the set of points of approximate continuity can change even when a function is only changed on a set of measure zero.
Moreover, if $1_A$ is approximately continuous at $x$, then $1_A(x)=1_{A^*}(x)$ and $1_{A^*}$ is also approximately continuous at $x$.  Hence, the set of approximate continuity points of $1_{A^*}$ is the largest set of approximate continuity points of any $1_B$ for which $A$ and $B$ essentially coincide. In particular, if $A=B^n$, essentially, then $1_A$ is not approximately continuous at any unit vector. This precludes the possibility of finding a representative of each $f\in\cM(\R^n)$ that  is approximately continuous everywhere and agrees with $f$ for $\cH^n$-almost all $x\in \R^n$.

\section{Properties of maps}\label{Properties}

Let $i\in \{1,\dots,n-1\}$ and let $H\in {\mathcal{G}}(n,i)$ be fixed. We consider a map $\di:{\mathcal{E}}\subset{\mathcal{L}}^n\to {\mathcal{L}}^n$ and define
\begin{equation}\label{eqaug31}
\di^*A=(\di A)^*
\end{equation}
for each $A\in {\mathcal{E}}$.  We assume (here and throughout the paper) that the properties listed below hold for all $A,B\in {\mathcal{E}}$ and that the class ${\mathcal{E}}$ is appropriate for the property concerned.

\smallskip

1. ({\em Monotonic} or {\em strictly monotonic}) \quad $A\subset B\Rightarrow \di A\subset \di B$, essentially (or $\Rightarrow \di A\subset \di B$, essentially, and $A\neq B \Rightarrow \di A\neq \di B$, essentially, respectively).

2. ({\em Measure preserving}) \quad ${\mathcal H}^n(\di A)={\mathcal H}^n(A)$.

3. ({\em Maps balls to balls})\quad If $K=B(x,r)$, then $\di K=B(x',r')$, essentially.

4. ({\em Continuous from the inside}) \quad If $(A_m)$ is an increasing sequence of sets in ${\mathcal{E}}$ such that $\cup_{m\in\N}A_m\in {\mathcal{E}}$, then
$\di(\cup_{m\in\N}A_m)=\cup_{m\in\N}\, \di A_m$, essentially.

5. ({\em Continuous from the outside}) \quad If $(A_m)$ is a decreasing sequence of sets in ${\mathcal{E}}$ such that $\cap_{m\in\N}\,A_m\in {\mathcal{E}}$, then $\di(\cap_{m\in\N}A_m)=\cap_{m\in\N}\, \di A_m$, essentially.

6. ({\em Smoothing and $K$-smoothing}) \quad If $K\in {\mathcal K}^n_{(o)}$, we say that $\di$ is {\em $K$-smoothing} if whenever $d>0$,
\begin{equation}\label{eq:smothDef}
(\di^*A)+d K\subset \di^*(A+d K)=\di(A+d K),
\end{equation}
essentially, for each bounded $A\in \cE$ with $A+dK\in {\mathcal{E}}$, where $\di^*A$ is defined by \eqref{eqaug31}.  Then $\di$ is called {\em smoothing} if it is $K$-smoothing with $K=B^n$.

\smallskip

Information concerning relations between the first three properties listed above and others besides may be found in \cite[Sections~3 and~6]{BGGK}. The terms ``continuous from the inside," ``continuous from the outside," and ``smoothing" are employed by Sarvas \cite[p.~11]{Sar72}, although his definitions differ slightly from those above.

In the definition of $K$-smoothing, one can equivalently require a pointwise inclusion in \eqref{eq:smothDef}.  To see this, note that by Lemma~\ref{lemjan61}(iv) with $A$ and $K$ replaced by $\di A$ and $dK$, respectively, $(\di^*A)+dK=(\di^*A)+d\,\inte K$ is open.  Then the essential inclusion in \eqref{eq:smothDef} and parts (i) and (iii) of Lemma~\ref{lemjan61} give
$$(\di^*A)+d K=\left((\di^*A)+d K\right)^*\subset \left(\di^*(A+d K)\right)^*=\di^*(A+d K).$$	

\begin{lem}\label{lemdec302}
If $\di:{\mathcal{E}}\subset{\mathcal{L}}^n\to {\mathcal{L}}^n$ is monotonic and measure preserving, then $\di$ is continuous from the inside and from the outside.
\end{lem}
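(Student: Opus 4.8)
The plan is to prove continuity from the inside directly; the argument for continuity from the outside is essentially the same with inclusions and unions replaced by the opposite. So let $(A_m)$ be an increasing sequence in $\cE$ with $A:=\cup_{m\in\N}A_m\in\cE$. Since $A_m\subset A$ for every $m$ and $\di$ is monotonic, $\di A_m\subset\di A$, essentially, for every $m$, and hence $\cup_{m}\di A_m\subset\di A$, essentially. It remains to prove the reverse essential inclusion, i.e.\ that $\cH^n\!\left(\di A\setminus\cup_m\di A_m\right)=0$.

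For this, I would use the measure-preserving property twice. First, because the $A_m$ increase to $A$, continuity of $\cH^n$ from below gives $\cH^n(A_m)\to\cH^n(A)$ (noting that if $\cH^n(A)<\infty$, as it is since $A\in\cL^n$, there is no issue of infinite values). Next, since $\di A_m\subset\di A$, essentially, and $\di$ is monotonic, the sequence $(\di A_m)$ is essentially increasing, so after replacing each $\di A_m$ by $\bigcup_{k\le m}\di A_k$ (which changes nothing essentially and makes the sequence genuinely increasing) we get
\[
\cH^n\!\left(\bigcup_{m\in\N}\di A_m\right)=\lim_{m\to\infty}\cH^n(\di A_m)=\lim_{m\to\infty}\cH^n(A_m)=\cH^n(A)=\cH^n(\di A),
\]
using the measure-preserving property on each $\di A_m$ and on $\di A$. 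Since $\cup_m\di A_m\subset\di A$, essentially, and the two sets have the same finite $\cH^n$-measure, they are essentially equal, which is the desired conclusion $\di(\cup_m A_m)=\cup_m\di A_m$, essentially.

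The argument for continuity from the outside is the mirror image: if $(A_m)$ decreases to $A=\cap_m A_m\in\cE$, then monotonicity gives $\di A\subset\di A_m$, essentially, hence $\di A\subset\cap_m\di A_m$, essentially; and applying the measure-preserving property together with continuity of $\cH^n$ from above (valid since $\cH^n(A_1)<\infty$) yields $\cH^n(\cap_m\di A_m)=\lim_m\cH^n(\di A_m)=\lim_m\cH^n(A_m)=\cH^n(A)=\cH^n(\di A)$, so again equality of finite measures together with the essential inclusion forces essential equality.

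I do not anticipate a genuine obstacle here; the only point requiring mild care is the bookkeeping that lets one pass from ``$\di A_m\subset\di A$, essentially, for each $m$'' (a countable family of essential inclusions, with possibly different null exceptional sets) to a single essential inclusion $\cup_m\di A_m\subset\di A$ — this is fine because a countable union of $\cH^n$-null sets is $\cH^n$-null — and the analogous replacement of $(\di A_m)$ by a genuinely monotone sequence so that continuity of $\cH^n$ along the sequence applies. The finiteness of $\cH^n$ on the relevant sets, guaranteed by membership in $\cL^n$, is what makes ``equal finite measure plus one-sided inclusion'' upgrade to essential equality.
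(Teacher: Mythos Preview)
Your proof is correct and follows essentially the same approach as the paper's: monotonicity gives the one-sided essential inclusion, and then measure preservation combined with continuity of $\cH^n$ along monotone sequences forces essential equality. The extra bookkeeping you spell out (countable unions of null sets, making $(\di A_m)$ genuinely monotone, finiteness from membership in $\cL^n$) is handled more tersely in the paper but is exactly the content of its argument.
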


\begin{proof}
Let $(A_m)$ be an increasing sequence of sets in ${\mathcal{E}}$ such that $\cup_{m\in\N}A_m\in {\mathcal{E}}$.  Since $\di$ is monotonic, we have $\di A_m\subset \di(\cup_{m\in\N}A_m)$ for $m\in \N$, essentially, and hence $\cup_{m\in\N}\,\di A_m\subset \di(\cup_{m\in\N}A_m)$, essentially.  The continuity of measures of increasing sequences and the fact that $\di$ preserves measure yield $$\cH^n\left(\di(\cup_{m\in\N}A_m)\right)=\cH^n\left(\cup_{m\in\N}A_m\right)=
\lim_{m\to\infty}\cH^n(A_m)=\lim_{m\to\infty}\cH^n(\di A_m)=
\cH^n\left(\cup_{m\in\N}\, \di A_m\right).$$
It follows that $\di(\cup_{m\in\N}A_m)=\cup_{m\in\N}\, \di A_m$, essentially, and hence that $\di$ is continuous from the inside.

The proof of the continuity from the outside is similar.
\end{proof}

Let $X\subset {\mathcal{M}}(\R^n)$, where we assume henceforth that $X$ contains the characteristic functions of sets in ${\cL}^n$.  Let $T:X\to X$ and if $A\in \cL^n$, let
$$
\di_T A=\{x: T1_A(x)=1\}
$$
and let
\begin{equation}\label{eqdec201}
\di_T^*A=(\di_TA)^*.
\end{equation}
By Proposition~\ref{fromSetToFct}(i) below, the induced map $\di_T:\cL^n\to \cL^n$ is well defined when $X={\mathcal{M}}(\R^n)$, ${\mathcal{M}}_+(\R^n)$, ${\mathcal{S}}(\R^n)$, or $\cV(\R^n)$. Of course, $\di_T^*:\cL^n\to \cL^n$ is well defined whenever $\di_T:\cL^n\to \cL^n$ is.

If $X\subset {\mathcal{M}}(\R^n)$, we consider the following properties of a map $T:X\to X$, where the first four properties are assumed to hold for all $f,g\in X$:

\smallskip

1. ({\em Equimeasurable})\quad
$$
{\mathcal{H}}^{n}(\{x: Tf(x)>t\})={\mathcal{H}}^{n}(\{x: f(x)>t\})
$$
for $t\in \R$.

2. ({\em Monotonic}) \quad $f\le g$, essentially, implies $Tf\le Tg$, essentially.

3. ({\em $L_p$-contracting}) \quad $\|Tf-Tg\|_p\le \|f-g\|_p$ when $f-g\in L^p(\R^n)$.

4. ({\em Modulus of continuity reducing and $K$-modulus of continuity reducing})  If $d>0$ and $K\in {\mathcal K}^n_{(o)}$, we define the {\em $K$-modulus of continuity} of $f\in X$ by
\begin{equation}\label{eqmay255}
\omega_{K,\,d}(f)=\esssup_{\|x-y\|_{K}\le d}|f(x)-f(y)|=\esssup_{x-y\in dK}|f(x)-f(y)|.
\end{equation}
The equivalence of these two expressions follows easily from the left-hand equality in \eqref{gaugefunction}.  Then $T$ {\em reduces the $K$-modulus of continuity} if
$\omega_{K,d}(Tf)\le \omega_{K,d}(f)$ for all $d>0$ and $f\in X$.  When $K=B^n$, we refer simply to the {\em modulus of continuity} of $f\in X$ and drop the suffix $K$, i.e.,
$$
\omega_d(f)=\esssup_{\|x-y\|\le d}|f(x)-f(y)|,
$$
and say that $T$ {\em reduces the modulus of continuity} if $\omega_{d}(Tf)\le \omega_{d}(f)$ for all $d>0$ and $f\in X$.

5. ({\em Continuous on the inside (or outside)}) \quad  The induced map $\di_T$ is well defined on ${\cL}^n$ and continuous from the inside (or outside, respectively) when ${\mathcal{E}}={\cL}^n$.

6. ({\em Smoothing and $K$-smoothing}) \quad If $K\in {\mathcal K}^n_{(o)}$, we say that $T$ is {\em $K$-smoothing} if the induced map $\di_T$ is well defined on ${\cL}^n$ and $K$-smoothing when ${\mathcal{E}}={\cL}^n$, i.e.,
\begin{equation}\label{eqjuly312}
(\di_T^*A)+d K\subset \di_T^*(A+d K)=\di_T(A+d K),
\end{equation}
essentially, for each $d>0$ and bounded $A\in \cM^n$.  Then $T$ is called {\em smoothing} if it is $K$-smoothing with $K=B^n$.

\smallskip

The map $T$ is called a {\em rearrangement} if it is equimeasurable and monotonic.

In their somewhat different setting, versions of Properties~5 and~6 (for $K=B^n$) were also considered by Brock and Solynin \cite[p.~1764]{BS}.  In particular, their definition of a smoothing rearrangement $T:{\mathcal{S}}(\R^n)\to {\mathcal{S}}(\R^n)$ corresponds to requiring $(\di_TA)+dD^n\subset \di_T(A+dD^n)$, for each $d>0$ and $A\in \cL^n$. However, $A+dD^n\not\in \cL^n$, in general, when $A\in \cL^n$.  Moreover, their definition is sensitive to changing $T$ on a set of ${\cH}^n$-measure zero.  For example, if $T_0f=f$ is the identity map and $T_1f=\max\{f,1_{\Q^n}\}$, then $T_0=T_1$, essentially, while $\di_{T_0} A=A$ and $\di_{T_1}A=A\cup \Q^n$ implies that $T_0$ is smoothing but $T_1$ is not under their definition.

Our definitions of smoothing and $K$-smoothing are examined further in Lemma~\ref{lemapr3} below.  See also the remarks at the beginning of Section~\ref{Smoothing rearrangements}.

For the convenience of the reader, we now state five results proved in \cite{BGGK} as Lemmas~4.1, 4.5, 4.7, Theorem~4.8 and the remarks that follow it, and Theorem~4.9, respectively.

\begin{prop}\label{may8lem}
\noindent{\rm{(i)}}  If $T:{\mathcal{S}}(\R^n)\to {\mathcal{S}}(\R^n)$ is equimeasurable, then $\essinf Tf=\essinf f$ for $f\in {\mathcal{S}}(\R^n)$.

\noindent{\rm{(ii)}} If $T:{\mathcal{M}}(\R^n)\to {\mathcal{M}}(\R^n)$ is a rearrangement, then $\essinf Tf\ge \essinf f$ for $f\in {\mathcal{M}}(\R^n)$.   Hence, $T:{\mathcal{S}}(\R^n)\to {\mathcal{S}}(\R^n)$.

\noindent{\rm{(iii)}} In either case, $T:\cV(\R^n)\to \cV(\R^n)$ and $T$ is essentially the identity on constant functions.
\end{prop}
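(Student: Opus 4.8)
The plan is to treat the three parts in turn, using two elementary observations. First, equimeasurability alone forces $\esssup Tf=\esssup f$ for every $f$ in the relevant class, since $\cH^n(\{Tf>t\})$ and $\cH^n(\{f>t\})$ vanish for exactly the same values of $t$. Second, for every $g\in\cS(\R^n)$,
\[
\essinf g=\inf\{\,s\in\R:\cH^n(\{x:g(x)>s\})<\infty\,\},
\]
with the convention $\inf\emptyset=+\infty$: if $s<\essinf g$ then $g\ge\essinf g>s$ a.e., so $\{g>s\}$ is conull and hence of infinite measure, while if $s>\essinf g$ then $\cH^n(\{g>s\})<\infty$ by the very definition of $\cS(\R^n)$. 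Part (i) is then immediate: under its hypotheses both $f$ and $Tf$ belong to $\cS(\R^n)$ and, by equimeasurability, their superlevel sets have equal $\cH^n$-measure at every level, so the two sides of the displayed formula coincide for $g=f$ and for $g=Tf$, giving $\essinf Tf=\essinf f$.

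For part (ii), put $m=\essinf f$; we may assume $m>-\infty$, there being nothing to prove otherwise. Let $\mathbf c$ denote the constant function with value $m$. Since $f\ge\mathbf c$ a.e., monotonicity of $T$ gives $Tf\ge T\mathbf c$ a.e., hence $\essinf Tf\ge\essinf T\mathbf c$, so the claim reduces to $\essinf T\mathbf c\ge m$; as equimeasurability already gives $T\mathbf c\le\mathbf c$ a.e., this is the same as saying that a rearrangement of $\cM(\R^n)$ is essentially the identity on constant functions. I expect this to be the main obstacle: it is the one point where monotonicity is genuinely indispensable, since equimeasurability by itself would permit $T\mathbf c$ to equal $m$ on a half-space and $m-1$ on the complementary half-space. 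I would attack it by bracketing $\mathbf c$, below by functions of $\cS(\R^n)$ (such as $x\mapsto m-(\|x\|-R)^+$) and above by functions such as $M 1_E+m 1_{E^c}$ with $E\in\cL^n$ and $M>m$, transporting these comparisons through the induced set map $\di_T$ (well defined on $\cL^n$ by Proposition~\ref{fromSetToFct} below and, being monotonic and measure preserving, continuous from the inside and the outside by Lemma~\ref{lemdec302}), and then passing to the limit; this is where the structural analysis of rearrangements developed in the cited work is needed. Granting $T\mathbf c=\mathbf c$ a.e., we obtain $\essinf Tf\ge m=\essinf f$. The stated consequence is then immediate: if $f\in\cS(\R^n)$ and $t>\essinf Tf$, then $t>\essinf f$, so $\cH^n(\{Tf>t\})=\cH^n(\{f>t\})<\infty$; that is, $Tf\in\cS(\R^n)$.

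For part (iii), observe that no nonzero constant function lies in $\cV(\R^n)$ and that every $f\in\cV(\R^n)$ has $f\ge0$ a.e. and $\essinf f=0$, since otherwise a superlevel set $\{f>\delta\}$ with $\delta>0$ would be conull, hence of infinite measure, contradicting membership in $\cV(\R^n)$. Consequently, in either case, $\essinf Tf\ge\essinf f=0$, so $Tf\ge0$ a.e., and together with $\cH^n(\{Tf>t\})=\cH^n(\{f>t\})<\infty$ for $t>0$ this shows $Tf\in\cV(\R^n)$. Finally, for any real $c$, the constant function $g\equiv c$ satisfies $Tg\in\cS(\R^n)$ (by hypothesis in the first case, and by the consequence just proved in the second), so $\essinf Tg=\essinf g=c$ by the argument of part (i), while $\esssup Tg=\esssup g=c$ by equimeasurability; hence $Tg=c$ a.e. Restricted to $\cV(\R^n)$, whose only constant member is the zero function, $T$ is therefore essentially the identity on constant functions.
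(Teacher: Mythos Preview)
The paper does not prove Proposition~\ref{may8lem}; it is quoted from \cite{BGGK} (their Lemma~4.1), so there is no in-paper argument to compare against. Your treatment of (i) is correct and efficient: the identity $\essinf g=\inf\{s:\cH^n(\{g>s\})<\infty\}$ holds for every $g\in\cS(\R^n)$, and since both $f$ and $Tf$ lie in $\cS(\R^n)$ by hypothesis, equimeasurability gives the result immediately. Part (iii) is also fine once (i) and (ii) are available.

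The genuine gap is in (ii). You correctly reduce $\essinf Tf\ge\essinf f$ to the statement that a rearrangement $T:\cM(\R^n)\to\cM(\R^n)$ is essentially the identity on constants, and you rightly flag this as the crux. But you do not prove it: you write ``Granting $T\mathbf c=\mathbf c$ a.e.'' and proceed. The bracketing sketch does not close the gap as written. Bounding from below by $g_R(x)=m-(\|x\|-R)^+$ yields only $Tg_R\le T\mathbf c$, and since $\essinf g_R=-\infty$, neither (i) nor equimeasurability says anything useful about $Tg_R$ from below; a bounded barrier such as $h_R=m-1_{(RB^n)^c}\in\cS(\R^n)$ is more promising, but one still must show that the sets $\{Th_R=m\}$ exhaust $\R^n$ up to a null set as $R\to\infty$, and that argument is missing. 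There is also a circularity concern: you invoke Proposition~\ref{fromSetToFct} and Lemma~\ref{lemdec302}, but Propositions~\ref{may8lem}--\ref{propoct14} are all imported from \cite{BGGK}, so the true logical order is that of that paper; note for instance that Lemma~\ref{lemdec311}(i) in the present paper, which extends \eqref{eqoct72} to $X=\cM(\R^n)$, itself appeals to Proposition~\ref{may8lem}(iii). A self-contained proof must either avoid such forward references or verify in \cite{BGGK} that the tools used precede their Lemma~4.1.
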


\begin{prop}\label{fromSetToFct}
Let $X={\mathcal{M}}(\R^n)$, ${\mathcal{M}}_+(\R^n)$, ${\mathcal{S}}(\R^n)$, or $\cV(\R^n)$, and let $T:X\to X$ be equimeasurable.

\noindent{\rm{(i)}} The induced map $\di_T:\cL^n\to \cL^n$ given by
\begin{equation}\label{eqIndic}
\di_T A=\{x: T1_A(x)=1\}
\end{equation}
for $A\in\cL^n$ is well defined and measure preserving.

\noindent{\rm{(ii)}} If $X={\mathcal{M}}_+(\R^n)$, ${\mathcal{S}}(\R^n)$, or $\cV(\R^n)$, then $T$ essentially maps characteristic functions of sets in $\cL^n$ to characteristic functions of sets in $\cL^n$, in the sense that for each $A\in \cL^n$,
\begin{equation}\label{eqoct72}
T1_A=1_{\di_TA},
\end{equation}
essentially.
\end{prop}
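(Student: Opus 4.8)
The plan is to reduce both parts to the fact that the superlevel sets of a characteristic function are completely explicit. Fix $A\in\cL^n$; since $X$ contains $1_A$, the function $g:=T1_A$ lies in $X$. Because $\{x:1_A(x)>t\}$ equals $\R^n$ for $t<0$, equals $A$ for $0\le t<1$, and is empty for $t\ge 1$, equimeasurability of $T$ forces $\cH^n(\{g>t\})$ to equal $\infty$, $\cH^n(A)$, and $0$ on these same ranges of $t$. I would extract three consequences: $\cH^n(\{g>1\})=0$, so $g\le 1$ essentially; $\cH^n(\{g>0\})=\cH^n(A)$; and, writing $\{g\ge 1\}=\bigcap_{m\ge 1}\{g>1-1/m\}$ as a decreasing intersection of sets each of finite measure $\cH^n(A)$, continuity of measure gives $\cH^n(\{g\ge 1\})=\cH^n(A)$ as well.

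For part (i): the set $\di_TA=\{x:g(x)=1\}$ is $\cH^n$-measurable since $g$ is a measurable function, and $\{g=1\}=\{g\ge 1\}\setminus\{g>1\}$ together with $\cH^n(\{g>1\})=0$ gives $\cH^n(\di_TA)=\cH^n(\{g\ge 1\})=\cH^n(A)<\infty$. Hence $\di_TA\in\cL^n$, so $\di_T:\cL^n\to\cL^n$ is well defined — there is no ambiguity of representative to worry about, as $T$ acts on honest functions and the only thing requiring proof is membership in $\cL^n$ — and the same computation shows $\di_T$ is measure preserving.

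For part (ii), where $X$ is ${\mathcal M}_+(\R^n)$, $\cS(\R^n)$, or $\cV(\R^n)$: I would first show $g\ge 0$ essentially. For ${\mathcal M}_+(\R^n)$ and $\cV(\R^n)\subset{\mathcal M}_+(\R^n)$ this is part of the definition of the class; for $\cS(\R^n)$ it follows from Proposition~\ref{may8lem}(i), since $\cH^n(\R^n\setminus A)=\infty$ (as $\cH^n(A)<\infty$) makes $\essinf 1_A=0$, whence $\essinf g=0$. Now $0\le g\le 1$ essentially, and, decomposing $\{g>0\}=\{0<g<1\}\cup\{g\ge 1\}$ disjointly with both $\{g>0\}$ and $\{g\ge 1\}$ of measure $\cH^n(A)$, we get $\cH^n(\{0<g<1\})=0$; thus $g$ is $\{0,1\}$-valued essentially, so $g=1_{\{g=1\}}$ essentially, and since $\{g=1\}=\di_TA$ this is exactly \eqref{eqoct72}.

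I do not anticipate a genuine obstacle: the whole argument is bookkeeping with superlevel sets plus continuity of measure, the only external ingredient being Proposition~\ref{may8lem}(i) to rule out negative values of $T1_A$ in $\cS(\R^n)$. The one point deserving emphasis is conceptual: part (ii) must exclude $X={\mathcal M}(\R^n)$, because a function equimeasurable with $1_A$ in ${\mathcal M}(\R^n)$ can be strictly negative on a set of infinite measure while still equalling $1$ on a set of measure $\cH^n(A)$, hence need not be a characteristic function — which is also why part (i), needing no sign information, holds for all four classes.
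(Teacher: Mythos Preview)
Your proof is correct. Note, however, that the paper does not actually prove this proposition: it is one of five results quoted verbatim from \cite{BGGK} (see the sentence preceding Proposition~\ref{may8lem}), so there is no in-paper proof to compare against. Your argument---reading off the superlevel-set structure of $1_A$, transferring it to $g=T1_A$ via equimeasurability, using continuity of measure for $\{g\ge 1\}=\bigcap_m\{g>1-1/m\}$, and then eliminating the range $(0,1)$ in part~(ii) by a measure count---is the natural and essentially canonical route, and your use of Proposition~\ref{may8lem}(i) to secure $g\ge 0$ in the $\cS(\R^n)$ case is exactly the right external input. The closing remark explaining why $\cM(\R^n)$ is excluded from~(ii) is a nice touch.
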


\begin{prop}\label{lemoct6}
Let $X={\mathcal{S}}(\R^n)$ or $\cV(\R^n)$ and let $T:X\to X$ be a rearrangement.  For $X={\mathcal{S}}(\R^n)$, $A\in \cL^n$, and $\alpha,\beta\in \R$ with $\alpha\ge 0$, we have
\begin{equation}\label{eqoct61}
T(\alpha 1_A+\beta)=\alpha\, T1_A+\beta,
\end{equation}
essentially.  When $X=\cV(\R^n)$, \eqref{eqoct61} holds, essentially, if $\beta=0$.
\end{prop}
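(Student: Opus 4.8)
The plan is to split the verification into two parts: first pinning down the values taken by $T(\al 1_A+\beta)$ using only equimeasurability, and then identifying the relevant superlevel set with $\di_T A$ using monotonicity. Write $g=\al 1_A+\beta$. A few cases are immediate: if $\al=0$ then $g\equiv\beta$ is constant, so $Tg=\beta$ essentially by Proposition~\ref{may8lem}(iii), which matches $\al\, T1_A+\beta=\beta$; and if $\cH^n(A)=0$ or $\cH^n(\R^n\setminus A)=0$, then $g$ is essentially constant and, using Proposition~\ref{fromSetToFct}, both sides of the claimed identity reduce to that constant. So one may assume $\al>0$ and $0<\cH^n(A)<\infty$ (hence $\cH^n(\R^n\setminus A)=\infty$), and note that $g\in X$ in either case; put $b=\al+\beta$, so that $g$ takes the value $b$ on $A$ and $\beta$ on its complement.

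\textbf{Step 1 (values of $Tg$).} Equimeasurability gives $\cH^n(\{Tg>t\})=\cH^n(\{g>t\})$, which equals $\cH^n(A)$ for $\beta\le t<b$ and $0$ for $t\ge b$, while $\essinf Tg=\essinf g=\beta$ --- by Proposition~\ref{may8lem}(i) when $X=\cS(\R^n)$, and because every function in $\cV(\R^n)$ has essential infimum $0$ when $X=\cV(\R^n)$ (where $\beta=0$). From the case $t=b$ one gets $Tg\le b$ essentially, and from the infimum $Tg\ge\beta$ essentially. Set $E=\{Tg>\beta\}$; then $\cH^n(E)=\cH^n(A)$, and since $\{Tg\ge b\}=\bigcap_{t<b}\{Tg>t\}$ is a decreasing intersection of sets of finite measure $\cH^n(A)$, also $\cH^n(\{Tg\ge b\})=\cH^n(A)$. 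As $\{Tg\ge b\}\subset E$ and both have the same finite measure, $\{Tg\ge b\}=E$ essentially. Hence $Tg=b$ essentially on $E$ and $Tg=\beta$ essentially off $E$, that is, $Tg=\al 1_E+\beta$ essentially; the finiteness $\cH^n(A)<\infty$ is used repeatedly to upgrade inclusions to essential equalities.

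\textbf{Step 2 (the set $E$ is essentially $\di_T A$).} This is the crux. Record the a priori dependence by writing $E=E(\al,\beta)$. Comparing $g$ with $g'=\al' 1_A+\beta$ for $\al'>\al$: since $g'\ge g$ pointwise, monotonicity gives $Tg'\ge Tg$ essentially, hence $E(\al,\beta)=\{Tg\ge b\}\subset\{Tg'\ge b\}$ essentially; but $b$ lies strictly between the two values $\beta$ and $\al'+\beta$ of $g'$, so the same kind of computation as in Step 1 gives $\cH^n(\{Tg'\ge b\})=\cH^n(A)$. Since $E(\al',\beta)=\{Tg'\ge\al'+\beta\}\subset\{Tg'\ge b\}$ and $E(\al,\beta)\subset\{Tg'\ge b\}$, and all these sets have measure $\cH^n(A)<\infty$, we conclude $E(\al,\beta)=E(\al',\beta)$ essentially; thus $E(\al,\beta)$ is essentially independent of $\al>0$. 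When $X=\cS(\R^n)$, the same monotone-comparison argument applied to $\hat g=\al 1_A+\hat\beta$ with $0<\hat\beta-\beta<\al$ (again $b$ lies strictly between the two values of $\hat g$) shows $E(\al,\beta)=E(\al,\hat\beta)$ essentially, and a finite chain of such steps removes the restriction $\hat\beta-\beta<\al$, so $E(\al,\beta)$ is essentially independent of $\beta$ as well. Evaluating at $\al=1$, $\beta=0$, where $g=1_A$, gives $E(1,0)=\{T1_A\ge1\}$, which equals $\di_T A$ essentially by Proposition~\ref{fromSetToFct}(ii) (as $T1_A=1_{\di_T A}$ essentially). Therefore $E(\al,\beta)=\di_T A$ essentially for every admissible $\al,\beta$.

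Combining the two steps, $Tg=\al 1_{\di_T A}+\beta$ essentially, and by Proposition~\ref{fromSetToFct}(ii) once more $\al\, T1_A+\beta=\al 1_{\di_T A}+\beta$ essentially; hence $T(\al 1_A+\beta)=\al\, T1_A+\beta$ essentially. I expect Step 2 to be the main obstacle: monotonicity alone does not directly locate the exceptional superlevel set, and the device of proving that $E$ does not depend on $(\al,\beta)$ --- so as to reduce to the tautological case $g=1_A$ --- is the key point; Step 1, by contrast, is routine distribution-function bookkeeping, modulo care with essential (in)equalities and the role of $\cH^n(A)<\infty$.
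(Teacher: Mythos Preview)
The paper does not give its own proof of this proposition: it is one of five results quoted verbatim from \cite{BGGK} (there as Lemma~4.7), so there is no in-paper argument to compare against directly. Your proof is correct. Step~1 is the expected distribution-function bookkeeping, and your Step~2 device---showing that the superlevel set $E(\alpha,\beta)$ is essentially independent of $(\alpha,\beta)$ via monotone comparison and then specializing to $(1,0)$---is a clean way to identify $E$ with $\di_T A$ without appealing to Proposition~\ref{lemapril30}(ii), which is stated (and in \cite{BGGK} proved) afterwards. The only structural hint the present paper gives about the argument in \cite{BGGK} is in Lemma~\ref{lemdec311}(ii), which refers to ``the second paragraph of the proof of Proposition~\ref{lemoct6}'' for the case $\alpha>0$, $\beta=0$; this is consistent with an argument along the lines of your Step~2. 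One cosmetic remark: the case $\cH^n(\R^n\setminus A)=0$ you list among the trivial cases is vacuous, since $A\in\cL^n$ forces $\cH^n(\R^n\setminus A)=\infty$.
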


\begin{prop}\label{lemapril30}
Let $X={\mathcal{M}}(\R^n)$, ${\mathcal{M}}_+(\R^n)$, ${\mathcal{S}}(\R^n)$, or $\cV(\R^n)$ and let $T:X\to X$ be a rearrangement.

\noindent{\rm{(i)}}  The map  $\di_T:\cL^n\to \cL^n$ defined by \eqref{eqIndic} is monotonic.

\noindent{\rm{(ii)}} If $X={\mathcal{S}}(\R^n)$ or $\cV(\R^n)$ and $f\in X$, then
\begin{equation}\label{eqnov11}
\{x: Tf(x)\ge t\}=\di_T\{x: f(x)\ge t\}\quad{\text{and}}\quad\{x: Tf(x)> t\}=\di_T\{x: f(x)> t\},
\end{equation}
essentially, for $t>\essinf f$.  Moreover, $T$ is essentially determined by $\di_T$, since
\begin{equation}\label{eqoct62}
Tf(x)=\max\left\{\sup\{t\in \Q,\, t>\essinf f: x\in \di_T \{z: f(z)\ge t\}\},\essinf f\right\},
\end{equation}
essentially.
\end{prop}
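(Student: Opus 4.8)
For part~(i) I would argue straight from the definitions. Proposition~\ref{fromSetToFct}(i) already tells us that the induced map $\di_T:\cL^n\to\cL^n$ is well defined and measure preserving, so only monotonicity needs checking. If $A\subset B$ in $\cL^n$, then $1_A\le 1_B$ pointwise, hence $T1_A\le T1_B$, essentially, since $T$ is monotonic; and because $T$ is equimeasurable and $1_B\le 1$, the set $\{x:T1_B(x)>1\}$ is $\cH^n$-null, so $\di_TB=\{x:T1_B(x)=1\}$ coincides, essentially, with $\{x:T1_B(x)\ge 1\}$. Combining these gives $\di_TA=\{x:T1_A(x)=1\}\subset\{x:T1_B(x)\ge 1\}=\di_TB$, essentially.

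For part~(ii) the plan is to establish the first identity in \eqref{eqnov11} first, deduce the second from it, and then read off \eqref{eqoct62}. I would begin with the case in which $c:=\essinf f$ is finite. Fix $t>c$ and set $A=\{x:f(x)\ge t\}\in\cL^n$. The inequality $c+(t-c)1_A\le f$ holds essentially --- it reads $t\le f$ on $A$ and $c\le f$ on $\{f<t\}$ --- so monotonicity of $T$, Proposition~\ref{lemoct6} (with $\alpha=t-c\ge0$ and $\beta=c$; here $c=0$ when $X=\cV(\R^n)$), and \eqref{eqoct72} give $Tf\ge T\bigl(c+(t-c)1_A\bigr)=c+(t-c)1_{\di_TA}$, essentially. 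Hence $Tf\ge t$, essentially, on $\di_T\{f\ge t\}$, that is, $\di_T\{f\ge t\}\subset\{x:Tf(x)\ge t\}$, essentially. For the reverse inclusion I would compare measures: equimeasurability of $T$ gives $\cH^n(\{Tf\ge t\})=\lim_{s\to t^-}\cH^n(\{f>s\})=\cH^n(\{f\ge t\})$, while $\cH^n(\di_T\{f\ge t\})=\cH^n(\{f\ge t\})$ since $\di_T$ is measure preserving; because $t>c$ all three quantities are finite, so the inclusion already shown forces equality, essentially.

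To drop the hypothesis $c>-\infty$ --- needed only when $X=\cS(\R^n)$, since $f\ge0$ forces $c\ge0$ on $\cV(\R^n)$ --- I would truncate from below: for $k\in\N$, $f_k:=\max\{f,-k\}$ lies in $\cS(\R^n)$ with $\essinf f_k=-k$, and $\{f_k\ge s\}=\{f\ge s\}$ once $k>-s$. The finite-$\essinf$ case applied to $f_k$ yields $\{Tf_k\ge s\}=\di_T\{f\ge s\}$, essentially; as $f\le f_k$ gives $\{Tf\ge s\}\subset\{Tf_k\ge s\}$, essentially, and $\cH^n(\{Tf\ge s\})=\cH^n(\{f\ge s\})=\cH^n(\di_T\{f\ge s\})<\infty$, equality again follows. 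This establishes the first identity in \eqref{eqnov11} for all $t>\essinf f$. For the second, I note that $\di_T$ is monotonic by part~(i) and measure preserving, hence continuous from the inside by Lemma~\ref{lemdec302}; since for $t>\essinf f$ the sets $\{f\ge t+1/m\}$ increase to $\{f>t\}$ within $\cL^n$, the first identity and continuity from the inside give $\{Tf>t\}=\bigcup_m\{Tf\ge t+1/m\}=\bigcup_m\di_T\{f\ge t+1/m\}=\di_T\{f>t\}$, essentially. Finally, for \eqref{eqoct62} I would choose, for each rational $t>\essinf f$, an $\cH^n$-null set $N_t$ off which the conditions $x\in\di_T\{f\ge t\}$ and $Tf(x)\ge t$ are equivalent, put $N=\{x:Tf(x)<\essinf f\}\cup\bigcup_t N_t$ (null, the first piece by Proposition~\ref{may8lem}), and check for $x\notin N$ that the right-hand side of \eqref{eqoct62} collapses to $Tf(x)$ by density of $\Q$, separating the cases $Tf(x)>\essinf f$ and $Tf(x)=\essinf f$.

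The only genuinely delicate point is the case $\essinf f=-\infty$, where the sandwich in part~(ii) degenerates because Proposition~\ref{lemoct6} requires a finite additive constant; the truncation step is what rescues it. Everything else is careful bookkeeping of the ``essentially'' qualifiers, straightforward given Lemma~\ref{lemjan61} and the stability of $\cH^n$-null sets under countable unions.
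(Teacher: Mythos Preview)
Your argument is correct. Part~(i) is the natural deduction from monotonicity and equimeasurability of $T$, and in part~(ii) the sandwich $c+(t-c)1_A\le f$ combined with Proposition~\ref{lemoct6} cleanly gives one inclusion, after which the measure comparison forces equality; the truncation $f_k=\max\{f,-k\}$ to handle $\essinf f=-\infty$ is exactly the right fix, and your derivation of the strict-inequality version via Lemma~\ref{lemdec302} and of \eqref{eqoct62} via a countable union of null sets is sound.

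Note, however, that this paper does not actually prove Proposition~\ref{lemapril30}: it is quoted (together with Propositions~\ref{may8lem}, \ref{fromSetToFct}, \ref{lemoct6}, and~\ref{propoct14}) from the authors' earlier work \cite{BGGK}, where it appears as Theorem~4.8. So there is no in-paper proof to compare against. Your argument relies on Proposition~\ref{lemoct6} (Lemma~4.7 in \cite{BGGK}), which precedes Theorem~4.8 there, so there is no circularity, and your route is essentially the one a reader would reconstruct from the ingredients the present paper makes available.
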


\begin{prop}\label{propoct14}
Let $T:{\mathcal{S}}(\R^n)\to {\mathcal{S}}(\R^n)$ be a rearrangement and let $f\in {\mathcal{S}}(\R^n)$.  If $\varphi:\R\to\R$ is right-continuous and increasing (i.e., non-decreasing), then $\varphi\circ f\in {\mathcal{S}}(\R^n)$ and
\begin{align}\label{eqoct149}
\varphi(Tf)=T(\varphi\circ f),
\end{align}
essentially.
\end{prop}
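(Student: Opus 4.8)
The plan is to prove the two assertions in turn, reducing everything to strict superlevel sets. Put $m=\essinf f$. The key elementary fact is that
$$\essinf(\varphi\circ f)=\varphi(m).$$
Here ``$\ge$'' holds since $f\ge m$ essentially and $\varphi$ is increasing; for ``$\le$'', the set $\{f<m+\ee\}$ has positive $\cH^n$-measure for each $\ee>0$ and $\varphi\circ f\le\varphi(m+\ee)$ on it, so $\essinf(\varphi\circ f)\le\varphi(m+\ee)$, and letting $\ee\to0+$ the right-continuity of $\varphi$ gives $\essinf(\varphi\circ f)\le\varphi(m)$. (Being increasing, $\varphi$ is Borel, so $\varphi\circ f$ is measurable.) For the membership $\varphi\circ f\in\cS(\R^n)$, fix $t>\varphi(m)$. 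Since $\varphi$ is increasing, $S_t:=\{s\in\R:\varphi(s)>t\}$ is empty, all of $\R$, or an interval unbounded above; it is not all of $\R$, for otherwise $\varphi(m)>t$. If $S_t\neq\emptyset$, let $v=\inf S_t$; since $\lim_{s\to m^+}\varphi(s)=\varphi(m)<t$ and $\varphi(s)\le\varphi(m)$ for $s\le m$, we have $\varphi(s)<t$ for all $s<m+\delta$ with some $\delta>0$, whence $v\ge m+\delta>m$. Then $\{x:\varphi(f(x))>t\}$ is empty or equals $\{f>v\}$ or $\{f\ge v\}$ with $v>m$, each of finite $\cH^n$-measure since $f\in\cS(\R^n)$. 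Hence $\varphi\circ f\in\cS(\R^n)$.

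For the identity $\varphi(Tf)=T(\varphi\circ f)$, I would first reduce it to a statement about superlevel sets. By Proposition~\ref{may8lem}(i), applied to $f$ and to $\varphi\circ f\in\cS(\R^n)$, we have $\essinf Tf=m$ and $\essinf T(\varphi\circ f)=\varphi(m)$, so both $\varphi\circ Tf$ and $T(\varphi\circ f)$ are $\ge\varphi(m)$ essentially. Any measurable $h$ with $h\ge c$ essentially satisfies $h(x)=\max\{\sup\{t\in\Q,\ t>c:h(x)>t\},\,c\}$ for almost every $x$, and hence is determined, up to an $\cH^n$-null set, by the sets $\{h>t\}$ for rational $t>c$. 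So it suffices to show that, for every rational $t>\varphi(m)$,
$$\{x:\varphi(Tf(x))>t\}=\{x:T(\varphi\circ f)(x)>t\},$$
essentially.

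Fix such a $t$ and consider $S_t=\{s\in\R:\varphi(s)>t\}$, which by the above is either empty or an interval unbounded above with $\inf S_t=v>m$. If $S_t=\emptyset$, then $\varphi\le t$ on $\R$, so $\{\varphi\circ f>t\}$ and $\{\varphi(Tf)>t\}$ are empty, while $\{T(\varphi\circ f)>t\}$ is essentially empty by equimeasurability; the required equality holds. Otherwise, writing $S_t$ as $(v,\infty)$ or $[v,\infty)$ according as $v\notin S_t$ or $v\in S_t$, we get $\{\varphi\circ f>t\}=\{f>v\}$ or $\{f\ge v\}$, and correspondingly $\{\varphi(Tf)>t\}=\{Tf>v\}$ or $\{Tf\ge v\}$. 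Applying \eqref{eqnov11} of Proposition~\ref{lemapril30}(ii) --- with parameter $v$, which is legitimate because $v>m=\essinf f$, and with parameter $t$, legitimate because $t>\varphi(m)=\essinf(\varphi\circ f)$ --- now yields
$$\{\varphi(Tf)>t\}=\di_T\{f>v\}=\di_T\{\varphi\circ f>t\}=\{T(\varphi\circ f)>t\}$$
essentially, with $\{f>v\}$ replaced by $\{f\ge v\}$ in the closed case (recall $\di_T$ is monotonic by Proposition~\ref{lemapril30}(i), so it respects essential equality of sets). This proves the claim, and hence the identity.

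I expect the only genuinely delicate point to be the interplay between $\essinf(\varphi\circ f)=\varphi(\essinf f)$ and the necessity of working with strict superlevel sets at levels strictly above this common value: at the threshold level $t=\varphi(\essinf f)$ the appeal to \eqref{eqnov11} is unavailable, since that identity requires a strict inequality, and in fact $\varphi(Tf)$ and $T(\varphi\circ f)$ can differ there; moreover, without right-continuity of $\varphi$ even the membership $\varphi\circ f\in\cS(\R^n)$, and the identity, may fail. The remaining ingredients --- the shape of the superlevel sets of an increasing function, reconstruction of a function from its superlevel sets, and the fact that $\di_T$ respects essential equality --- are routine.
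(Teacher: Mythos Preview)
The paper does not supply its own proof of this proposition; it is stated (together with four other results) as being proved in the authors' earlier paper \cite{BGGK}, where it appears as Theorem~4.9. So there is no in-paper proof to compare against.

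Your argument is correct and is essentially the natural one. The two ingredients are exactly those one expects: the computation $\essinf(\varphi\circ f)=\varphi(\essinf f)$ from right-continuity, and the reduction of the identity $\varphi(Tf)=T(\varphi\circ f)$ to a comparison of strict superlevel sets at levels strictly above this common essential infimum, where Proposition~\ref{lemapril30}(ii) applies. Your case analysis of $S_t=\{s:\varphi(s)>t\}$ is clean, and the crucial observation that $\inf S_t>m$ (so that \eqref{eqnov11} may legitimately be invoked at level $v$) is handled correctly via right-continuity. The reconstruction of a measurable function from its rational strict superlevel sets above the essential infimum is standard, and your appeal to a countable union of null sets is the right way to pass from ``essential equality of each superlevel set'' to ``essential equality of the functions''. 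The remark that $\di_T$ respects essential equality (needed implicitly when you identify $\di_T\{f>v\}$ with $\di_T\{\varphi\circ f>t\}$) is in fact automatic here, since these two sets are literally equal, not merely essentially equal; but in any case Lemma~\ref{lemjan63}(iv) covers it.
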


The following result was first proved, without the assumption that $j$ is nonnegative, by Crowe, Zweibel, and Rosenbloom \cite{CZR} for Schwarz rearrangement.  Versions of it have been stated for general rearrangements in \cite[Theorem~3.1]{BS}, \cite[Proposition~3.3.9]{VSPhD}, and \cite[Corollary~1]{VSW}; however, these works take a different approach to rearrangements, so we provide a proof and brief commentary in the Appendix.

\begin{prop}\label{contraction}
Let $j:\R\to [0,\infty)$ be convex with $j(0)=0$. If $T:\cV(\R^n)\to \cV(\R^n)$ is a rearrangement, then
\begin{equation}\label{eq_contraction}
\int_{\R^n}j(Tf(x)-Tg(x))\, dx\leq \int_{\R^n}j(f(x)-g(x))\, dx
\end{equation}
for $f,g\in\cV(\R^n)$ such that either integral exists.  In particular, $T$ has the $L^p$-contracting property.
\end{prop}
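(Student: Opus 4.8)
The plan is to reduce the inequality for general $f,g\in\cV(\R^n)$ to the case of simple functions, for which it can be verified directly, and then pass to the limit using monotone convergence together with continuity properties of $T$. The key preliminary observation is that, by Proposition~\ref{lemapril30}, $Tf$ is essentially determined by the superlevel sets $\di_T\{x:f(x)\ge t\}$ via \eqref{eqoct62}, so $T$ interacts well with the ``layer-cake'' decomposition of a nonnegative function. I would first establish \eqref{eq_contraction} when $f=\sum_{i=1}^m a_i 1_{A_i}$ and $g=\sum_{i=1}^m b_i 1_{A_i}$ are simple functions built on a common partition $\{A_1,\dots,A_m\}$ of a set of finite measure (arranging, by refining the partition, that on each $A_i$ both $f$ and $g$ are constant). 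In that case one shows that $T$ acts on $(f,g)$ by ``permuting the layers'' in a way that is measure-preserving on each level, reducing \eqref{eq_contraction} to the elementary rearrangement fact that for convex $j$ with $j(0)=0$, redistributing the values of $f-g$ over sets of the same measure cannot increase $\int j(f-g)$. Concretely, since $j$ is convex with $j(0)=0$, for any $u,v$ and $0\le\lambda\le 1$ one has $j(\lambda u+(1-\lambda)v)\le \lambda j(u)+(1-\lambda)j(v)$, and more relevantly the ``contraction toward a common rearrangement'' estimate: if $T$ preserves the joint distribution of the relevant level sets, then the distribution of $Tf-Tg$ is a ``mixture/averaging'' of the distribution of $f-g$, and Jensen's inequality applied to the convex $j$ closes the argument.

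Second, I would pass from the finite-measure simple-function case to general $f,g\in\cV(\R^n)$. Given $f,g\in\cV(\R^n)$ with $\int j(f-g)<\infty$ (otherwise there is nothing to prove), approximate $f$ and $g$ from below by increasing sequences $f_k\uparrow f$, $g_k\uparrow g$ of simple functions constant on a common finite partition with values in $2^{-k}\Z$; one can do this so that $f_k-g_k$ is suitably controlled. By monotonicity and equimeasurability of $T$, and using Proposition~\ref{lemoct6} and Proposition~\ref{lemapril30}, $Tf_k\uparrow Tf$ and $Tg_k\uparrow Tg$ essentially. The inequality for each $k$ reads $\int j(Tf_k-Tg_k)\le \int j(f_k-g_k)$. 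The right-hand side converges to $\int j(f-g)$ by dominated convergence (dominating $j(f_k-g_k)$ by a suitable integrable function, or by monotone-type control using convexity of $j$); the left-hand side is handled by Fatou's lemma since $j\ge 0$ and $Tf_k-Tg_k\to Tf-Tg$ essentially. Combining these gives \eqref{eq_contraction}. The final assertion, that $T$ is $L^p$-contracting, then follows by taking $j(t)=|t|^p$ for $1\le p<\infty$, which is convex with $j(0)=0$; for $p=\infty$ one argues separately, noting that $\|f-g\|_\infty\le c$ means $g-c\le f\le g+c$ essentially, so monotonicity of $T$ and Proposition~\ref{lemoct6} give $Tg-c\le Tf\le Tg+c$, i.e., $\|Tf-Tg\|_\infty\le c$.

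The main obstacle I anticipate is the simple-function case: making precise the sense in which $T$ ``permutes the layers jointly'' for $f$ and $g$. The subtlety is that $T$ is only known to be equimeasurable and monotonic (a rearrangement), and $\di_T$ is measure-preserving and monotonic, but $T$ need not be linear or commute with arbitrary operations on pairs. The right framework is to decompose via the superlevel sets $E_t=\{f\ge t\}$ and $F_t=\{g\ge t\}$, note that $\di_T E_t$ and $\di_T F_t$ are nested in $t$ and measure-preserving, and then express $\int j(Tf-Tg)$ as a double integral over heights using the coarea/layer-cake identity $j(a-b)=\int\!\!\int \mathbf 1[\text{appropriate region}]\,ds\,dt$ valid for convex $j$ with $j(0)=0$ (so that $j$ is an integral of its increasing derivative). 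This converts the problem into comparing, for each pair of heights, the measures $\cH^n(\di_T E_s\setminus \di_T F_t)$ and $\cH^n(\di_T F_t\setminus \di_T E_s)$ with their untransformed counterparts; monotonicity and measure-preservation of $\di_T$ give $\cH^n(\di_T E_s\setminus\di_T F_t)\ge \cH^n(\di_T E_s)-\cH^n(\di_T F_t)=\cH^n(E_s)-\cH^n(F_t)$ and similarly, and a short argument using that $\di_T$ is monotone in its argument yields exactly the needed inequality between the transformed and original layer-measures. Alternatively, since this proposition is quoted as already proved in \cite{CZR} and \cite{BS} (with a promised proof in the Appendix using yet another route), one may simply invoke that argument; but the layer-cake approach above is self-contained and uses only the properties of $\di_T$ established in Propositions~\ref{fromSetToFct} and~\ref{lemapril30}.
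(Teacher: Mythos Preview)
Your layer-cake intuition in the final paragraph is on the right track and is essentially what the paper does, but the proposal as written has a gap at the key step. The inequality you record,
\[
\cH^n(\di_T E_s\setminus \di_T F_t)\ge \cH^n(\di_T E_s)-\cH^n(\di_T F_t)=\cH^n(E_s)-\cH^n(F_t),
\]
is trivially true (it is just $\cH^n(A\setminus B)\ge \cH^n(A)-\cH^n(B)$) and does not compare the transformed quantity with the original $\cH^n(E_s\setminus F_t)$. What is actually needed is the \emph{intersection} inequality
\[
\cH^n\big(\{f\ge s\}\cap\{g\ge t\}\big)\le \cH^n\big(\{Tf\ge s\}\cap\{Tg\ge t\}\big),
\]
and this is exactly where the monotonicity of $\di_T$ enters: from $A\cap B\subset A$ and $A\cap B\subset B$ one gets $\di_T(A\cap B)\subset \di_T A\cap \di_T B$ essentially, hence $\cH^n(A\cap B)=\cH^n(\di_T(A\cap B))\le \cH^n(\di_T A\cap \di_T B)$. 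This is the heart of the argument, and your proposal does not isolate it.

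The paper packages this cleanly by setting $F(s,t)=-j(s-t)$, observing that the convexity of $j$ makes $F$ supermodular (the rectangle increments $G(R)$ in \eqref{eqoct140} are nonnegative), so $G$ extends to a nonnegative measure $\nu$ on $[0,\infty)^2$ and one has the exact representation
\[
F(b,d)=F(b,0)+F(0,d)+\int_0^\infty\!\!\int_0^\infty H(b-s)H(d-t)\,d\nu(s,t).
\]
Integrating over $x$ with $b=f(x)$, $d=g(x)$, the first two terms are invariant under $T$ by equimeasurability (via Proposition~\ref{propoct14}), and the double integral increases under $T$ by the intersection inequality above and Fubini. Your vaguer ``$j(a-b)=\iint\mathbf 1[\ldots]\,ds\,dt$'' would need this supermodularity/measure construction to be made precise, and the boundary terms $F(b,0),F(0,d)$ must be treated separately, which is where the hypotheses $j\ge 0$, $j(0)=0$ are used.

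Finally, the simple-function route in your first two paragraphs is not a proof as stated: the assertion that ``the distribution of $Tf-Tg$ is a mixture/averaging of the distribution of $f-g$'' is precisely the nontrivial content of the proposition and is not a consequence of $T$ being a rearrangement without the intersection inequality above. I would drop that route and write up the layer-cake argument directly, with the corrected key step.
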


\begin{lem}\label{lemaug11}
Let $X={\mathcal{M}}(\R^n)$, ${\mathcal{M}}_+(\R^n)$, ${\mathcal{S}}(\R^n)$, or $\cV(\R^n)$.  If $T:X\to X$ is a rearrangement, then the maps $\di_T, \di_T^*:{\cL}^n\to {\cL}^n$ are well defined, measure preserving, and monotonic (pointwise monotonic, in the case of $\di_T^*$, i.e., $A\subset B$ $\Rightarrow$ $\di_T^*A\subset \di_T^*B$). Moreover, $T$ and $\di_T^*$ are continuous from the inside and from the outside.
\end{lem}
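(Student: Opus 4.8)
The plan is to deduce everything from results already in place, since this lemma is purely a matter of transferring known properties of $\di_T$ to $\di_T^*$ and reorganizing. First I would observe that a rearrangement $T$ is in particular equimeasurable, so Proposition~\ref{fromSetToFct}(i) gives that $\di_T:\cL^n\to\cL^n$ is well defined and measure preserving, while Proposition~\ref{lemapril30}(i) gives that $\di_T$ is monotonic. Because $\di_TA\in\cL^n$ for each $A\in\cL^n$, the formula $\di_T^*A=(\di_TA)^*$ defines a map $\cL^n\to\cL^n$: the image has finite measure since $(\di_TA)^*=\di_TA$ essentially, by the Lebesgue density theorem, so $\di_T^*$ is well defined; the same essential equality shows $\cH^n(\di_T^*A)=\cH^n(\di_TA)=\cH^n(A)$, so $\di_T^*$ is measure preserving.

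Next I would verify the pointwise monotonicity of $\di_T^*$: if $A\subset B$, then $\di_TA\subset\di_TB$ essentially by the monotonicity of $\di_T$, and Lemma~\ref{lemjan61}(i) upgrades this to the genuine inclusion $(\di_TA)^*\subset(\di_TB)^*$, i.e.\ $\di_T^*A\subset\di_T^*B$. For the continuity assertions, the point is that both $\di_T$ and $\di_T^*$ are now known to be monotonic and measure preserving, so Lemma~\ref{lemdec302} applies to each and shows that each is continuous from the inside and from the outside. Finally, since by definition the statement ``$T$ is continuous from the inside (resp.\ outside)'' means precisely that $\di_T$ is well defined on $\cL^n$ and continuous from the inside (resp.\ outside), the continuity of $T$ from the inside and from the outside is immediate.

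This is a straightforward assembly, so I do not anticipate a genuine obstacle; the only place meriting a moment's care is that the monotonicity claimed for $\di_T^*$ is the pointwise one (honest set inclusion, not inclusion modulo a null set), which is why Lemma~\ref{lemjan61}(i) is invoked rather than just the essential monotonicity of $\di_T$, and it is also this pointwise version---together with measure preservation---that lets Lemma~\ref{lemdec302} deliver the continuity of $\di_T^*$ directly. One could alternatively derive the continuity of $\di_T^*$ from that of $\di_T$ using Lemma~\ref{lemjan61}(ii), the fact that $E^*=E$ essentially, and the observation that a countable union of essential equalities is again an essential equality, but routing through Lemma~\ref{lemdec302} keeps the argument shortest.
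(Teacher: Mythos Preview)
Your proposal is correct and essentially mirrors the paper's proof: both invoke Propositions~\ref{fromSetToFct}(i) and~\ref{lemapril30}(i) for $\di_T$, use the Lebesgue density theorem and Lemma~\ref{lemjan61}(i) to transfer these properties to $\di_T^*$, and appeal to Lemma~\ref{lemdec302} for continuity. The only cosmetic difference is that you apply Lemma~\ref{lemdec302} directly to $\di_T^*$ (having verified its hypotheses), whereas the paper derives the continuity of $\di_T^*$ from that of $\di_T$ via the Lebesgue density theorem---you yourself note this alternative, and both routes are equally valid and short.
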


\begin{proof}
The induced map $\di_T:{\cL}^n\to {\cL}^n$ is well defined, measure preserving, and monotonic by Propositions~\ref{fromSetToFct}(i) and~\ref{lemapril30}(i).  It follows that $\di_T^*:{\cL}^n\to {\cL}^n$ is well defined and measure preserving by the Lebesgue density theorem, and pointwise monotonic by Lemma~\ref{lemjan61}(i).

Lemma~\ref{lemdec302} with ${\mathcal{E}}={\cL}^n$ and $\di$ replaced by $\di_T$ shows that $T$ is continuous from the inside and from the outside.  The fact that $\di_T^*$ is also continuous from the inside and from the outside is then an easy consequence of the Lebesgue density theorem.
\end{proof}

It is convenient to state the following lemma for the induced maps $\di_T$ of a rearrangement $T$, but it holds more generally for any monotonic map $\di:{\mathcal{E}}\subset{\mathcal{L}}^n\to {\mathcal{L}}^n$ and $A, B\in {\mathcal{E}}$.

\begin{lem}\label{lemjan63}
Let $X={\mathcal{M}}(\R^n)$, ${\mathcal{M}}_+(\R^n)$, ${\mathcal{S}}(\R^n)$, or $\cV(\R^n)$, let $T:X\to X$ be a rearrangement, and let $A, B\in {\cL}^n$.

\noindent{\rm{(i)}} If $A\subset B$, essentially, then $\di^*_TA\subset \di^*_TB$.

\noindent{\rm{(ii)}} If $A=B$, essentially, then $\di^*_TA=\di^*_TB$.

\noindent{\rm{(iii)}} $\di^*_TA=\di^*_TA^*$.

\noindent{\rm{(iv)}} $\di_T^*A=\di_TA=\di_T A^*$, essentially.

\noindent{\rm{(v)}} For $f\in X$ and $s\ge t> \essinf f$, we have $\di_T^*\{z:f(z)\geq s\} \subset \di_T^*\{z:f(z)\geq t\}$.
\end{lem}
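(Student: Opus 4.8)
The plan is to deduce all five assertions from three inputs only: Lemma~\ref{lemjan61}, the Lebesgue density theorem, which gives $E^*=E$ essentially for every $E\in\cM^n$, and the basic properties of a rearrangement recorded in Section~\ref{Properties} --- namely that $T$ is monotonic (both inequalities of Property~2) and that the induced map $\di_T:\cL^n\to\cL^n$ is well defined, measure preserving, and monotonic (Propositions~\ref{fromSetToFct}(i) and~\ref{lemapril30}(i)), together with the pointwise monotonicity of $\di_T^*$ established in Lemma~\ref{lemaug11}. The single preliminary fact I would isolate first is that $\di_T$ respects essential equality of sets: if $A=B$ essentially then $1_A=1_B$ essentially, so applying the monotonicity of $T$ in both directions gives $T1_A\le T1_B$ and $T1_B\le T1_A$ essentially, hence $T1_A=T1_B$ essentially, and therefore $\di_T A=\{x:T1_A(x)=1\}=\{x:T1_B(x)=1\}=\di_T B$ essentially.

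With this in hand the five parts are short. For (ii): $A=B$ essentially gives $\di_T A=\di_T B$ essentially by the preliminary fact, and then $\di_T^*A=(\di_T A)^*=(\di_T B)^*=\di_T^*B$ by Lemma~\ref{lemjan61}(ii). For (i): given $A\subset B$ essentially, put $C=A\cap B\in\cL^n$; then $C\subset B$, so $\di_T^*C\subset\di_T^*B$ by the pointwise monotonicity of $\di_T^*$ (Lemma~\ref{lemaug11}), while $C=A$ essentially, so $\di_T^*C=\di_T^*A$ by (ii), and combining these gives $\di_T^*A\subset\di_T^*B$. Part (iii) is (ii) applied to the essentially equal sets $A$ and $A^*$, using that $A^*\in\cL^n$ since $A^*=A$ essentially. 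For (iv), the equality $\di_T^*A=(\di_T A)^*=\di_T A$ essentially is the Lebesgue density theorem applied to $\di_T A\in\cL^n$, and $\di_T A=\di_T A^*$ essentially is the preliminary fact applied to $A$ and $A^*$. Finally, (v) is the instance of (i) with $A=\{z:f(z)\ge s\}$ and $B=\{z:f(z)\ge t\}$: here $A\subset B$ holds literally because $s\ge t$, and, for $X=\cS(\R^n)$ or $\cV(\R^n)$, both sets lie in $\cL^n$ because $s,t>\essinf f$.

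I do not expect a genuine obstacle here; the entire content is the careful separation of pointwise from essential inclusions and equalities. The one spot deserving attention is the preliminary fact that $\di_T$ preserves essential equality --- it is here, and essentially only here, that the monotonicity of $T$ itself (rather than merely that of $\di_T$) is used --- and, for (v), the bookkeeping that ensures the level sets $\{z:f(z)\ge s\}$ and $\{z:f(z)\ge t\}$ really do belong to $\cL^n$, which rests on $s,t>\essinf f$ together with the defining finiteness property of the class $X$.
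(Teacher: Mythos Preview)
Your proof is correct and uses the same ingredients as the paper's --- monotonicity of $T$ (hence of $\di_T$), Lemma~\ref{lemjan61}, and the Lebesgue density theorem. The only difference is cosmetic: the paper proves (i) first, directly from essential monotonicity of $\di_T$ and Lemma~\ref{lemjan61}(i), and then reads off (ii) and (iii), whereas you establish (ii) first via your preliminary fact and then recover (i) through the $C=A\cap B$ trick; the paper's route to (i) is a touch shorter, but both orderings are fine.
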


\begin{proof}
(i) Since $\di_T$ is monotonic by Lemma~\ref{lemaug11}, we have $\di_TA\subset \di_TB$, essentially. The conclusion follows from \eqref{eqdec201} and Lemma~\ref{lemjan61}(i).

Parts (ii) and (iii) follow easily, the latter using the fact that $A=A^*$, essentially.  The latter equality and the monotonicity of $\di_T$ yield the second equality in (iv), while the first is a consequence of \eqref{eqdec201} and $\di_T A=(\di_T A)^*$, essentially.

Part (v) follows from (i) and the fact that $\{z:f(z)\geq s\} \subset \{z:f(z)\geq t\}$ for $s\ge t> \essinf f$.
\end{proof}

Let $X={\mathcal{S}}(\R^n)$ or $\cV(\R^n)$ and let $T:X\to X$ be a rearrangement. Using \eqref{eqoct62}, the Lebesgue density theorem and the fact that the supremum is over a countable set of values, and Lemma~\ref{lemjan63}(v), we obtain
\begin{eqnarray}\label{eqjan72}
Tf(x)&=&\max\left\{\sup\{t\in \Q,\, t>\essinf f: x\in \di_T \{z: f(z)\ge t\}\},\essinf f\right\}\nonumber \\
&=&\max\left\{\sup\{t\in \Q,\, t>\essinf f: x\in \di_T^* \{z: f(z)\ge t\}\},\essinf f\right\}\nonumber \\
&=& \max\left\{\sup\{t\in \R,\, t>\essinf f: x\in \di_T^* \{z: f(z)\ge t\}\},\essinf f\right\},
\end{eqnarray}
essentially.  This shows that by substituting $\di^*_T$ for $\di_T$ in \eqref{eqoct62}, we may take the supremum over $\R$ and thus bring the formula into line with those in \cite{BS} and \cite{VSW}; see the discussion in \cite[Appendix]{BGGK}.

\section{Smoothing rearrangements and reduction of the modulus of continuity}\label{Smoothing rearrangements}

Before embarking on the main goal of this section, we prove the inequalities \eqref{eqapr33} below for smoothing rearrangements.  These will not be needed for the sequel, but seem interesting and follow fairly easily from what we know so far.

\begin{lem}\label{lemjuly312}
Suppose that ${\cK}_n^n\subset {\mathcal{E}}\subset{\mathcal{L}}^n$ and that $\di:{\mathcal{E}}\to {\mathcal{L}}^n$ is measure preserving and smoothing.  Then $S(\di K)\le S(K)$ for $K\in {\cK}_n^n$ and hence $\di$ maps balls to balls.
\end{lem}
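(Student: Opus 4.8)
The plan is to derive the inequality purely from the defining properties of $\di$ by comparing outer Minkowski contents. The mechanism: the smoothing property places a Euclidean $\ee$-neighbourhood of $\di^* K$ inside $\di$ of the $\ee$-neighbourhood of $K$, and since $\di$ preserves measure this bounds the volume growth of $\di^* K$ by that of $K$; letting $\ee\to0$ converts this into $\uomc(\di^* K)\le S(K)$, after which $S(\di K)=S(\di^* K)\le\uomc(\di^* K)$ finishes the estimate.

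First I would fix $K\in\cK^n_n$ and $\ee>0$. Since $K$, $\ee B^n$, and hence $K+\ee B^n$ all belong to $\cK^n_n\subset\cE$ and $K$ is bounded, the smoothing property \eqref{eq:smothDef}, applied to $K$ with parameter $\ee$, gives $(\di^* K)+\ee B^n\subset\di^*(K+\ee B^n)=\di(K+\ee B^n)$, essentially. Taking $\cH^n$ and using that $\di$ is measure preserving together with $\cH^n(E^*)=\cH^n(E)$, I obtain
\[
\cH^n\big((\di^* K)+\ee B^n\big)\le\cH^n\big(\di(K+\ee B^n)\big)=\cH^n(K+\ee B^n).
\]
Because $\cH^n(\di^* K)=\cH^n(\di K)=\cH^n(K)$, subtracting this common value and dividing by $\ee$ yields
\[
\frac{\cH^n\big((\di^* K)+\ee B^n\big)-\cH^n(\di^* K)}{\ee}\le\frac{\cH^n(K+\ee B^n)-\cH^n(K)}{\ee}.
\]
As $\ee\to0+$, the right-hand side converges to $S(K)$, since for a convex body the surface area equals the outer Minkowski content (Section~\ref{subsec:notations}), while the $\limsup$ of the left-hand side is $\uomc(\di^* K)$ by \eqref{eqapr31}; hence $\uomc(\di^* K)\le S(K)$. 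Now I would invoke the fact that $S(A)\le\lomc(A)\le\uomc(A)$ for every $A\in\cM^n$, apply it to $A=\di^* K$, and combine with \eqref{eqapr2}, which gives $S(\di K)=S((\di K)^*)=S(\di^* K)$. This produces $S(\di K)=S(\di^* K)\le\uomc(\di^* K)\le S(K)$, as required.

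For the last assertion I would take $K=B(x,r)$ with $r>0$. Then $\cH^n(\di B(x,r))=\kappa_n r^n$ and, by what has just been shown, $S(\di B(x,r))\le S(B(x,r))=n\kappa_n r^{n-1}$. The isoperimetric inequality for sets of finite perimeter forces $S(\di B(x,r))\ge n\kappa_n^{1/n}\cH^n(\di B(x,r))^{(n-1)/n}=n\kappa_n r^{n-1}$, so equality holds throughout; by the rigidity in the isoperimetric inequality (see, e.g., \cite{Mag}), $\di B(x,r)$ coincides up to a set of $\cH^n$-measure zero with a ball $B(x',r')$, and the measure-preserving property then forces $r'=r$. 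Thus $\di B(x,r)=B(x',r)$, essentially, so $\di$ maps balls to balls.

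The one step I expect to require genuine (if standard) input is the inequality $S(A)\le\uomc(A)$ used above; it can be proved directly by approximating $1_A$ in $L^1$ by the Lipschitz functions that equal $1$ on $A$, vanish outside $A+\ee B^n$, and have gradient of norm at most $1/\ee$, and then invoking the lower semicontinuity of the perimeter (cf.\ \cite{ACV}). Everything else is bookkeeping with the defining properties of $\di$ and the Lebesgue density theorem.
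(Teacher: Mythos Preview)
Your proof is correct and follows essentially the same route as the paper: bound the outer Minkowski content of $\di^* K$ by that of $K$ via the smoothing and measure-preserving properties, invoke $S(A)\le\lomc(A)$ (the paper cites \cite[Theorem~14.2.1]{BurZ80} for this, you sketch the standard lower-semicontinuity argument), and then apply the isoperimetric inequality with its equality case. The only cosmetic difference is that you pass through $\uomc$ where the paper uses $\lomc$, but since the difference-quotient inequality holds for every $\ee>0$, either works.
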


\begin{proof}
Let $K\in {\cK}_n^n$. For $\ee>0$, the assumed properties of $\di$ imply that
$$\cH^n((\di^* K)+\ee B^n)\le \cH^n(\di(K+\ee B^n))= \cH^n(K+\ee B^n)$$
and $\cH^n(\di^* K)=\cH^n(K)$. It follows, using \eqref{eqapr2} with $A=\di K$ and the relation between the (lower) outer Minkowski content (defined by \eqref{eqapr31}) and perimeter \cite[Theorem~14.2.1]{BurZ80}, that
\begin{eqnarray*}
S(\di K)=S(\di^* K)\le \lomc(\di^* K) &=& \liminf_{\ee\to 0+}\frac{\cH^n((\di^* K)+\ee B^n)-\cH^n(\di^* K)}{\ee}\\
&\le& \lim_{\ee\to 0+}\frac{\cH^n(K+\ee B^n)-\cH^n(K)}{\ee}=S(K).
\end{eqnarray*}
Consequently, $\di K$ has finite perimeter.  The fact that $\di$ maps balls to balls is now a direct consequence of the isoperimetric inequality for sets in $\cL^n$ of finite perimeter and its equality condition (see \cite[p.~165]{Mag}).
\end{proof}

Recall that $\uomc(A)$ and $\lomc(A)$ are the upper and lower outer Minkowski content of $A$, defined by \eqref{eqapr31}.

\begin{thm}\label{lemdec28}
Let $X={\mathcal{M}}(\R^n)$, ${\mathcal{M}}_+(\R^n)$, ${\mathcal{S}}(\R^n)$, or $\cV(\R^n)$ and suppose that $T:X\to X$ is a rearrangement.  If $T$ is smoothing, then
\begin{equation}\label{eqapr33}
\uomc(\di^*_T A)\le \uomc(A^*)\quad{\text{and}}\quad\lomc(\di^*_T A)\le \lomc(A^*)
\end{equation}
for bounded $A\in\cM^n$.  Moreover, $S(\di_T K)\le S(K)$ for $K\in {\cK}_n^n$ and hence $\di_T$ maps balls to balls.
\end{thm}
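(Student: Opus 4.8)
The plan is to run the argument of Lemma~\ref{lemjuly312} with $\di=\di_T$; the one new ingredient is that, in order to produce $\uomc(A^*)$ and $\lomc(A^*)$ on the right-hand side of \eqref{eqapr33}, the smoothing inequality \eqref{eqjuly312} must be applied to $A^*$ rather than to $A$ itself. First I would note that if $A\in\cM^n$ is bounded, then $A^*$ is bounded (its points lie in $\cl A$) and $A^*\in\cM^n$ (since $A^*=A$ essentially). Fix $\ee>0$ and apply the smoothing property \eqref{eqjuly312} of $T$ with $A$ replaced by $A^*$ to obtain
$$(\di_T^*A^*)+\ee B^n\subset\di_T(A^*+\ee B^n),$$
essentially. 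By Lemma~\ref{lemjan63}(iii), $\di_T^*A^*=\di_T^*A$, so the left-hand side equals $(\di_T^*A)+\ee B^n$, which is open by Lemma~\ref{lemjan61}(iv). Since an essential inclusion of measurable sets is a measure inequality and $\di_T$ is measure preserving (Lemma~\ref{lemaug11}),
$$\cH^n\big((\di_T^*A)+\ee B^n\big)\le\cH^n\big(\di_T(A^*+\ee B^n)\big)=\cH^n(A^*+\ee B^n).$$
Because $\cH^n(\di_T^*A)=\cH^n(A)=\cH^n(A^*)$ (Lemma~\ref{lemaug11} and the Lebesgue density theorem), subtracting this from the previous display and dividing by $\ee$ yields
$$\frac{\cH^n((\di_T^*A)+\ee B^n)-\cH^n(\di_T^*A)}{\ee}\le\frac{\cH^n(A^*+\ee B^n)-\cH^n(A^*)}{\ee}$$
for every $\ee>0$. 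Taking $\limsup_{\ee\to0+}$ of both sides gives the first inequality in \eqref{eqapr33}, and taking $\liminf_{\ee\to0+}$ gives the second.

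For the ``moreover'' statement I would reproduce the computation in the proof of Lemma~\ref{lemjuly312} verbatim, now with $\di=\di_T$. Fix $K\in\cK_n^n$. Since $K$ is itself a bounded member of $\cM^n$, applying \eqref{eqjuly312} to $K$ and using that $\di_T$ is measure preserving gives $\cH^n((\di_T^*K)+\ee B^n)\le\cH^n(\di_T(K+\ee B^n))=\cH^n(K+\ee B^n)$ together with $\cH^n(\di_T^*K)=\cH^n(K)$; the point is that here the right-hand bound is $\cH^n(K+\ee B^n)$, which is precisely what yields $S(K)$. Combining this with \eqref{eqapr2} and the inequality between perimeter and lower outer Minkowski content (\cite[Theorem~14.2.1]{BurZ80}) gives, just as in Lemma~\ref{lemjuly312}, the chain
$$S(\di_TK)=S(\di_T^*K)\le\lomc(\di_T^*K)\le\lim_{\ee\to0+}\frac{\cH^n(K+\ee B^n)-\cH^n(K)}{\ee}=S(K),$$
the last equality holding because the outer Minkowski content of a convex body equals its surface area, i.e.\ its perimeter. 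In particular $\di_TK$ has finite perimeter. Finally, since $\di_T$ is measure preserving, $\cH^n(\di_TB(x,r))=\cH^n(B(x,r))$, so $\di_TB(x,r)$ is a set of finite perimeter having the volume of a ball of radius $r$ and perimeter at most that of such a ball; the isoperimetric inequality for sets of finite perimeter together with its equality condition (\cite[p.~165]{Mag}) then force $\di_TB(x,r)$ to be, essentially, a ball of radius $r$, so $\di_T$ maps balls to balls.

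I expect the only genuine obstacle to be the very first step: recognizing that \eqref{eqjuly312} should be applied to $A^*$, and that Lemma~\ref{lemjan63}(iii) permits one to replace $\di_T^*A^*$ by $\di_T^*A$. Applying \eqref{eqjuly312} directly to $A$ would leave $\cH^n(A+\ee B^n)$ on the right, which bears no useful relation to $\uomc(A^*)$ or $\lomc(A^*)$ when $A$ and $A^*$ differ by a null set with large closure (e.g.\ a dense countable set). Everything else is routine bookkeeping with the already-established properties of $\di_T$ and $\di_T^*$, in close parallel to Lemma~\ref{lemjuly312}.
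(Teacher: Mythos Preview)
Your proof is correct and follows essentially the same route as the paper's: apply the smoothing inclusion \eqref{eqjuly312} with $A$ replaced by $A^*$, use $\di_T^*A^*=\di_T^*A$ from Lemma~\ref{lemjan63}(iii), pass to measures via the measure-preserving property of $\di_T$, and then invoke Lemma~\ref{lemjuly312} verbatim for the perimeter and balls-to-balls statements. Your remark about why one must apply the smoothing property to $A^*$ rather than $A$ is exactly the point.
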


\begin{proof}
Let $A\in\cM^n$ be bounded and let $\ee>0$.  Then the fact that $\di^*_T A=\di^*_T A^*$ by Lemma~\ref{lemjan63}(iii), \eqref{eqjuly312} with $A$ replaced by $A^*$, Lemma~\ref{lemjan63}(iv) with $A$ replaced by $A^*+\ee B^n$, and the equimeasurability of $T$ imply that
\begin{equation}\label{eqdec291}
\cH^n((\di_T^*A)+\ee B^n)\le \cH^n(\di_T^*(A^*+\ee B^n))=\cH^n(\di_T(A^*+\ee B^n))=\cH^n(A^*+\ee B^n)
\end{equation}
and
\begin{equation}\label{eqdec292}
\cH^n(\di^*_TA)=\cH^n(\di_TA)=\cH^n(A)=\cH^n(A^*).
\end{equation}
We obtain the inequalities \eqref{eqapr33} directly from \eqref{eqapr31},  \eqref{eqdec291}, and \eqref{eqdec292}.

The second statement in the lemma follows directly from Lemma~\ref{lemjuly312} with $\di$ replaced by $\di_T$, which is valid by Lemma~\ref{lemaug11}.
\end{proof}

\begin{ex}\label{exaug61}
{\rm
\noindent{(i)} Let $H=e_n^\perp$ and define $\di:{\mathcal{K}}^n_n\rightarrow \cK^n_n$ by $\di K=M_HK$, the Minkowski symmetral of $K$ (see \cite[Section~3]{BGG}).  Then $\di$ is smoothing but not measure preserving and does not satisfy $S(\di K)\le S(K)$ for each $K\in \cK^n_n$. This shows that the measure-preserving assumption in Lemma~\ref{lemjuly312} cannot be dropped.

\noindent{(ii)} Let $K\in {\mathcal K}^n_{(o)}$ and define $\di:\cL^n\rightarrow \cK^n_n$ by $\di A=r_A K$, where $r_A=({\cH^n}(A)/{\cH^n}(K))^{1/n}$.  This map, which corresponds to the convex symmetrization in \cite{AFTL}, is monotonic and measure preserving and, using \eqref{eq:smothDef} and the Brunn-Minkowski inequality \cite{Gar02}, it is easy to see that it is $K$-smoothing.  Clearly, $\di$ maps balls to balls if and only if $K$ is a ball, and we claim that it is smoothing if and only if $K$ is an $o$-symmetric ball.  To see this, note that by definition, $\di$ is smoothing if
$${\cH^n}(A)^{1/n}K+d {\cH^n}(K)^{1/n}B^n\subset {\cH^n}(A+dB^n)^{1/n}K$$
for all $A\in \cL^n$.  If $A=K$, this implies that
$${\cH^n}(K)^{1/n}(K+dB^n)\subset{\cH^n}(K+dB^n)^{1/n}K.$$
Since the sets on both sides of this inclusion have the same volume, the inclusion must be an equality and hence
$${\cH^n}(K)^{1/n}K+{\cH^n}(K)^{1/n}dB^n={\cH^n}(K)^{1/n}K+\left({\cH^n}(K+dB^n)^{1/n}
-{\cH^n}(K)^{1/n}\right)K.$$
The cancelation law \cite[p.~139]{Sch14} for Minkowski addition yields
$${\cH^n}(K)^{1/n}dB^n=\left({\cH^n}(K+dB^n)^{1/n}
-{\cH^n}(K)^{1/n}\right)K,$$
which holds if and only if $K=rB^n$ for some $r\ge 0$. Thus, the smoothing assumption cannot be omitted in Lemma~\ref{lemjuly312}.

Now suppose that $X={\mathcal{S}}(\R^n)$ or $\cV(\R^n)$ and $T:X\to X$ is the rearrangement given, for all $x\in \R^n$, by \eqref{eqoct62} or \eqref{eqjan72} with $\di_T=\di$.  Then the superlevel sets of $Tf$ are dilates of $K$, so $T$ is the {\em $K$-Schwarz rearrangement} mentioned in the Introduction, and again, unless $K$ is an $o$-symmetric ball, $T$ is $K$-smoothing but not smoothing. Therefore the smoothing assumption in Theorem~\ref{lemdec28} cannot be replaced by $K$-smoothing for any non-spherical convex body $K$.

See Example~\ref{exmay205} for more information about the rearrangement $T$.
\qed }
\end{ex}

In the rest of this section, we study the relationship between the $K$-smoothing and reduction of the $K$-modulus of continuity properties of a rearrangement $T:X\to X$, where $X={\mathcal{M}}(\R^n)$, ${\mathcal{M}}_+(\R^n)$, ${\mathcal{S}}(\R^n)$, or $\cV(\R^n)$.  When $K=B^n$, some information of this type was obtained by Brock and Solynin in \cite[Theorem~3.3]{BS}, which states that a rearrangement (in their sense of the term) that is continuous from the inside is smoothing if and only if it reduces the modulus of continuity of continuous functions in ${\mathcal{S}}(\R^n)$.  A comparison of their approach to rearrangements and ours can be found in \cite[Appendix]{BGGK}.

When $K=B^n$, it is possible to use \cite[Theorem~3.3]{BS} to obtain the same result for our rearrangements, i.e., the equivalence (ii)$\Leftrightarrow$(iii) of Corollary~\ref{lemdec301} below.  To see this, note firstly that the continuity from the inside assumption is not necessary in our context, by Lemma~\ref{lemaug11}.  If $T:{\mathcal{S}}(\R^n)\to {\mathcal{S}}(\R^n)$ is a rearrangement, then by Lemma~\ref{lemaug11}, the set transformation $\di_T^*:{\cL}^n\to {\cL}^n$ is a rearrangement in the sense of \cite[p.~1762]{BS}, since it is pointwise monotonic.  If $\overline{T}:{\mathcal{S}}(\R^n)\to {\mathcal{S}}(\R^n)$ denotes the rearrangement map induced by $\di_T^*$ via \cite[(3.1), p.~1762]{BS}, then \eqref{eqjan72} yields $\overline{T}=T$, so \cite[Theorem~3.3]{BS} is valid for $T$.

However, even when $K=B^n$, Theorems~\ref{lemdec201} and~\ref{lemdec194} below are more general than \cite[Theorem~3.3]{BS}, since they apply to much wider classes of functions.

In (ii) of the following lemma, we assume that $A\in \cM^n$ is bounded to ensure that $A+d\,\inte K\in \cL^n$ when $d>0$.  This seems unavoidable since $A+d\,\inte K\not\in\cL^n$ when $A$ is unbounded, in which case $\di_T^*(A+d\,\inte K)$ is not defined.

\begin{lem}\label{lemapr3}
Let $X={\mathcal{M}}(\R^n)$, ${\mathcal{M}}_+(\R^n)$, ${\mathcal{S}}(\R^n)$, or $\cV(\R^n)$, let $T:X\to X$ be a rearrangement, and let $K\in {\mathcal K}^n_{(o)}$.    The following statements are equivalent.

\noindent{\rm{(i)}}  $T$ is $K$-smoothing.

\noindent{\rm{(ii)}} For each $d>0$ and bounded $A\in \cM^n$, we have
\begin{equation}\label{eqdec221}
(\di_T^*A)+d\,\inte K\subset \di_T^*(A+d\,\inte K).
\end{equation}

\noindent{\rm{(iii)}} For each $d>0$ and bounded $A\in \cM^n$, \eqref{eqdec221} holds essentially.

\noindent{\rm{(iv)}} For each $d>0$ and $A\in \cL^n$, we have
\begin{equation}\label{eqdec192}
(\di_T^*A)+d\,\inte K\subset \cup\{\di_T^*E: E\in \cL^n,~ E\subset A+d\,\inte K\},
\end{equation}
essentially.
\end{lem}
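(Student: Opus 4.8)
The plan is to establish the cyclic chain of implications (i) $\Rightarrow$ (ii) $\Rightarrow$ (iii) $\Rightarrow$ (iv) $\Rightarrow$ (i). Throughout I would use freely that $\di_T^*:\cL^n\to\cL^n$ is well defined, measure preserving, pointwise monotonic, and continuous from the inside (Lemma~\ref{lemaug11}), that $(\di_T^*A)+dK=(\di_T^*A)+d\,\inte K$ is open for $A\in\cM^n$ (Lemma~\ref{lemjan61}(iv)), and that $\di_T^*A=\di_TA$, essentially (Lemma~\ref{lemjan63}(iv)). I would also use two elementary consequences of $o\in\inte K$: first, $d'K\subset d\,\inte K$ whenever $0<d'<d$; second, every $y\in d\,\inte K$ lies in $d'K$ for some $d'<d$. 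The implication (ii) $\Rightarrow$ (iii) is immediate, since a pointwise inclusion is in particular an essential one.

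For (i) $\Rightarrow$ (ii): by the remark following the definition of $K$-smoothing, (i) is equivalent to the pointwise inclusion $(\di_T^*A)+dK\subset\di_T^*(A+dK)$ holding for every $d>0$ and bounded $A\in\cM^n$. Given such $A$ and $d$, and $x\in(\di_T^*A)+d\,\inte K$, write $x=a+y$ with $a\in\di_T^*A$ and $y\in d\,\inte K$, and pick $d'<d$ with $y\in d'K$; then $x\in(\di_T^*A)+d'K\subset\di_T^*(A+d'K)$. Since $d'K\subset d\,\inte K$ gives $A+d'K\subset A+d\,\inte K$, monotonicity of $\di_T^*$ (Lemma~\ref{lemjan63}(i)) yields $x\in\di_T^*(A+d\,\inte K)$, proving (ii). (If one prefers to avoid citing the remark, the same conclusion follows from the essential form of (i) together with the fact that an open set essentially contained in a set $B\in\cM^n$ is contained in $B^*=(\di_T B)^*$.)

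For (iii) $\Rightarrow$ (iv): fix $d>0$ and $A\in\cL^n$, and set $A_m=A\cap B(o,m)$, an increasing sequence of bounded sets in $\cM^n$ with union $A$. Applying (iii) to $A_m$ gives $(\di_T^*A_m)+d\,\inte K\subset\di_T^*(A_m+d\,\inte K)$, essentially; since $A_m+d\,\inte K$ is bounded, open, hence in $\cL^n$, and contained in $A+d\,\inte K$, the set $\di_T^*(A_m+d\,\inte K)$ is among those whose union forms the right side of \eqref{eqdec192}. Taking the union over $m$, the set $\bigcup_m\left((\di_T^*A_m)+d\,\inte K\right)$ is essentially contained in that right side. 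It remains to see this union contains $(\di_T^*A)+d\,\inte K$. Since $(\di_T^*A_m)$ is increasing with $\cH^n(\di_T^*A_m)=\cH^n(A_m)\to\cH^n(A)=\cH^n(\di_T^*A)$, we get $\bigcup_m\di_T^*A_m=\di_T^*A$, essentially. If $x\in(\di_T^*A)+d\,\inte K$, write $x=a+y$ as above; the open set $x-d\,\inte K$ contains $a\in(\di_TA)^*$, so $\di_TA$, hence $\bigcup_m\di_T^*A_m$, has positive measure there and meets some $\di_T^*A_m$, say at $a'$, whence $x\in a'+d\,\inte K\subset(\di_T^*A_m)+d\,\inte K$. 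This establishes (iv).

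For (iv) $\Rightarrow$ (i): let $A\in\cM^n$ be bounded and $d>0$; then $A\in\cL^n$ and $A+d\,\inte K\in\cL^n$. Every $E\in\cL^n$ with $E\subset A+d\,\inte K$ satisfies $\di_T^*E\subset\di_T^*(A+d\,\inte K)$ by monotonicity, so the right side of \eqref{eqdec192} is contained in $\di_T^*(A+d\,\inte K)$, and (iv) gives $(\di_T^*A)+d\,\inte K\subset\di_T^*(A+d\,\inte K)$, essentially. Combining with $(\di_T^*A)+dK=(\di_T^*A)+d\,\inte K$ (Lemma~\ref{lemjan61}(iv)), with $\di_T^*(A+d\,\inte K)\subset\di_T^*(A+dK)$ (monotonicity), and with $\di_T^*(A+dK)=\di_T(A+dK)$, essentially (Lemma~\ref{lemjan63}(iv)), we obtain $(\di_T^*A)+dK\subset\di_T^*(A+dK)=\di_T(A+dK)$, essentially, which is \eqref{eqjuly312}; hence $T$ is $K$-smoothing. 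The main obstacle is the step (iii) $\Rightarrow$ (iv): (iii) controls only bounded sets, so one must pass to an arbitrary $A\in\cL^n$ of possibly infinite diameter, and the limiting argument must be carried out at the level of the sets $(\di_T^*A_m)+d\,\inte K$ — whose naive union need not equal $(\di_T^*A)+d\,\inte K$ — which is precisely why the density-point description of $\di_T^*A$ together with the measure-theoretic continuity of $\di_T^*$ from the inside is needed. By contrast, (i) $\Leftrightarrow$ (ii) is just the interchange of $K$ with $\inte K$ using $o\in\inte K$, and (iv) $\Rightarrow$ (i) is a formal consequence of monotonicity of $\di_T^*$.
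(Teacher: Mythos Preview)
Your proof is correct and follows essentially the same strategy as the paper's, but with a worthwhile simplification in how you close the cycle. The paper establishes (i)$\Leftrightarrow$(ii)$\Leftrightarrow$(iii) and (iii)$\Leftrightarrow$(iv) separately; in particular its step (ii)$\Rightarrow$(i) invokes continuity of $\di_T^*$ from the \emph{outside}, writing $(\di_T^*A)+dK\subset\bigcap_m\di_T^*(A+(d+1/m)\,\inte K)=\di_T^*(A+dK)$. You bypass this entirely: your (iv)$\Rightarrow$(i) obtains the $dK$ inclusion directly from the $d\,\inte K$ inclusion via the identity $(\di_T^*A)+dK=(\di_T^*A)+d\,\inte K$ of Lemma~\ref{lemjan61}(iv), together with monotonicity. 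This is a genuine economy, since it means your entire argument uses only continuity from the inside (implicitly, through the measure argument $\cH^n(\di_T^*A_m)\to\cH^n(\di_T^*A)$). Similarly, your (i)$\Rightarrow$(ii) is a direct pointwise argument using $d'K\subset d\,\inte K$ for $d'<d$, whereas the paper reaches (ii) via (iii) using a countable union and then the openness of $(\di_T^*A)+d\,\inte K$; the two routes are of comparable length. Your (iii)$\Rightarrow$(iv) is the same exhaustion argument as the paper's (with $B(o,m)$ in place of $m\,\inte K$), but you make explicit the density-point step showing $(\di_T^*A)+d\,\inte K\subset\bigcup_m\bigl((\di_T^*A_m)+d\,\inte K\bigr)$, which the paper leaves embedded in its chain of essential equalities.
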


\begin{proof}
(i)$\Rightarrow$ (iii)  Let $d>0$, let $A\in {\cM}^n$ be bounded, and choose $N\in \N$ so that $1/N<d$.  Using the $K$-smoothing property of $T$ and the continuity of $\di_T^*$ from the inside provided by Lemma~\ref{lemaug11}, we obtain
\begin{eqnarray*}
(\di_T^*A)+d\,\inte K&= & \cup_{m=N}^{\infty}\,\left((\di_T^*A)+(d-1/m)K\right)
\subset \cup_{m=N}^{\infty}\,\di_T^*(A+(d-1/m)K)\\
&=& \di_T^*\left(\cup_{m=N}^{\infty}(A+(d-1/m) K)\right)
=\di_T^*(A+d\,\inte K),
\end{eqnarray*}
essentially. This proves \eqref{eqdec221}.

(iii)$\Rightarrow$ (ii)  Let $d>0$ and let $A\in {\cM}^n$ be bounded.  From the fact that $(\di_T^* A)+d\,\inte K$ is open, the essential inclusion \eqref{eqdec221}, and parts (i) and (iii) of Lemma~\ref{lemjan61}, we obtain
$$(\di_T^* A)+d\,\inte K=\left((\di_T^* A)+d\,\inte K)\right)^*\subset \di_T^*(A+d\,\inte K),$$
proving (ii).

(ii)$\Rightarrow$(i) Let $d>0$, let $A\in {\cM}^n$ be bounded. We use \eqref{eqdec221} and the continuity of $\di_T^*$ from the outside from Lemma~\ref{lemaug11} to get
\begin{eqnarray*}
(\di_T^*A)+dK&\subset & \cap_{m\in \N}\,\left((\di_T^*A)+(d+1/m)\,\inte K\right)
\subset \cap_{m\in \N}\,\di_T^*(A+(d+1/m)\,\inte K)\\
&= &\di_T^*\left(\cap_{m\in \N}(A+(d+1/m)\,\inte K)\right)
= \di_T^*(A+d K),
\end{eqnarray*}
essentially, so $T$ is $K$-smoothing.

(iii)$\Rightarrow$(iv) Let $d>0$, let $A\in {\cL}^n$, and define $A_m=A\cap m\,\inte K$ for $m\in \N$.  Since $\di_T^*$ is continuous from the inside by Lemma~\ref{lemaug11}, we obtain
\begin{eqnarray*}
(\di_T^*A)+d\,\inte K&=&\left(\di_T^*(\cup_{m\in\N} A_m)\right)+d\,\inte K\\
&=&\left(\cup_{m\in\N}\,\di_T^*A_m\right)+d\,\inte K
=\cup_{m\in\N}(\di_T^*A_m+d\,\inte K)\\
&\subset&\cup_{m\in\N}\,\di_T^*(A_m+d\,\inte K)\subset \cup\{\di_T^*E: E\in \cL^n,~ E\subset A+d\,\inte K\},
\end{eqnarray*}
essentially, and \eqref{eqdec192} follows.

(iv)$\Rightarrow$(iii)  Let $d>0$ and let $A\in {\cM}^n$ be bounded.  Applying \eqref{eqdec192}, we immediately obtain \eqref{eqdec221}.
\end{proof}

Recall the definition \eqref{eqjuly282} of approximate continuity.

\begin{lem}\label{modulus_appr_contin}
Let $d>0$, let $f\in\cM(\R^n)$, and let $C$ be the set of points of approximate continuity of $f$.  If $K\in {\mathcal K}^n_{(o)}$, then
$$
\omega_{K,d}(f)=\esssup_{\|x-y\|_K\le d}|f(x)-f(y)|=\sup_{\|x-y\|_K\le d;~ x,y\in C}|f(x)-f(y)|.
$$
\end{lem}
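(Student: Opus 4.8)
The plan is to show that the $\esssup$ defining $\omega_{K,d}(f)$, which is taken over all pairs $(x,y)$ with $\|x-y\|_K\le d$ ignoring sets of $\cH^n$-measure zero, coincides with the genuine supremum of $|f(x)-f(y)|$ over pairs of approximate-continuity points at $K$-gauge distance at most $d$. One inequality is essentially immediate: since the set $\cM^n\setminus C$ has $\cH^n$-measure zero, the set of pairs $(x,y)\in\R^n\times\R^n$ with $x\notin C$ or $y\notin C$ has $\cH^{2n}$-measure zero in $\R^{2n}$, and the constraint region $\{(x,y):\|x-y\|_K\le d\}$ has positive $\cH^{2n}$-measure; hence restricting to $x,y\in C$ does not change the essential supremum, giving $\sup_{\|x-y\|_K\le d;~x,y\in C}|f(x)-f(y)|\le \omega_{K,d}(f)$.

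For the reverse inequality, set $M=\sup_{\|x-y\|_K\le d;~x,y\in C}|f(x)-f(y)|$ and suppose for contradiction that $\omega_{K,d}(f)>M$ (the case $M=\infty$ being trivial). Then there is a set $E\subset\{(x,y):\|x-y\|_K\le d\}$ of positive $\cH^{2n}$-measure on which $|f(x)-f(y)|>M+\eta$ for some $\eta>0$; by Fubini, there exist points $x_0$ with $\cH^n(\{y:\|x_0-y\|_K\le d,\ |f(x_0)-f(y)|>M+\eta\})>0$, and we may further arrange that $x_0\in C$ (since $\cH^n(\R^n\setminus C)=0$). Fix such an $x_0$ and let $F=\{y:\|x_0-y\|_K\le d,\ |f(x_0)-f(y)|>M+\eta\}$, a set of positive measure. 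The key step is then an approximation argument: I want to produce $x_1\in C$ close to $x_0$ and $y_1\in C\cap F$ with $\|x_1-y_1\|_K\le d$, which would force $|f(x_1)-f(y_1)|\ge|f(x_0)-f(y_1)|-|f(x_0)-f(x_1)|>M+\eta/2>M$, contradicting the definition of $M$.

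The main obstacle, and the technical heart of the proof, is handling the \emph{closed} constraint $\|x_0-y\|_K\le d$ together with possible irregularity of the level structure of $f$ near the gauge sphere: a positive-measure subset of $F$ might conceivably concentrate on the boundary $\|x_0-y\|_K=d$, where perturbing $x_0$ could push points out of the allowed region. To deal with this I would argue that since $\cH^n(F)>0$, for some $0<d'<d$ the set $F'=F\cap\{y:\|x_0-y\|_K\le d'\}$ still has positive measure (by continuity of measure, as $d'\uparrow d$); then for $y$ in the interior part we have slack. Using that $x_0\in C$, for small $r$ the ball $B(x_0,r)$ meets $C$ in a set of full density, and for $r$ small enough $\|x-y\|_K\le\|x_0-y\|_K+\|x-x_0\|_K\le d'+Lr\le d$ for all $x\in B(x_0,r)$ and $y\in F'$, where $L$ bounds $\|\cdot\|_K$ on the unit ball. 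Simultaneously, since $F'$ has positive measure there is a density point $y_1$ of $F'$, hence $y_1\in C$ can be chosen in $F'$ (density points of positive-measure sets that lie in $C$ exist since $\cH^n(\R^n\setminus C)=0$), and we can choose $x_1\in B(x_0,r)\cap C$ with $|f(x_0)-f(x_1)|<\eta/2$ using approximate continuity of $f$ at $x_0$ (the set where $|f(x_0)-f(\cdot)|<\eta/2$ has full density at $x_0$, so it meets $B(x_0,r)\cap C$). Then $x_1,y_1\in C$, $\|x_1-y_1\|_K\le d$, and $|f(x_1)-f(y_1)|\ge|f(x_0)-f(y_1)|-|f(x_0)-f(x_1)|>(M+\eta)-\eta/2=M+\eta/2$, contradicting the definition of $M$ and completing the argument.
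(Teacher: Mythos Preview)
You have the two inequalities reversed. The immediate inequality is $\omega_{K,d}(f)\le\sup_{\|x-y\|_K\le d;\,x,y\in C}|f(x)-f(y)|$: since $(\R^n\setminus C)\times\R^n\,\cup\,\R^n\times(\R^n\setminus C)$ is $\cH^{2n}$-null, the essential supremum over all pairs equals the essential supremum over pairs in $C\times C$, and any essential supremum is at most the pointwise supremum over the same co-null set. Your first paragraph's reasoning establishes exactly this chain, but you then record the inequality the wrong way round.

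Consequently, your second and third paragraphs---which assume $\omega_{K,d}(f)>M$ and manufacture $x_1,y_1\in C$ with $|f(x_1)-f(y_1)|>M$---are an elaborate proof of the trivial direction (once $\omega_{K,d}(f)>M$, the set of pairs $(x,y)$ with $\|x-y\|_K\le d$ and $|f(x)-f(y)|>M$ has positive $\cH^{2n}$-measure, and intersecting with the co-null set $C\times C$ immediately yields such $x_1,y_1$). The genuine content of the lemma is the \emph{other} direction: given specific $\bar x,\bar y\in C$ with $\|\bar x-\bar y\|_K\le d$ and $|f(\bar x)-f(\bar y)|$ large, one must produce a set of pairs of \emph{positive $\cH^{2n}$-measure} on which $|f(x)-f(y)|$ is nearly as large and still $\|x-y\|_K\le d$. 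This is where approximate continuity at $\bar x$ and $\bar y$ does real work, and where the boundary case $\|\bar x-\bar y\|_K=d$ is delicate: one cannot simply retreat to some $d'<d$, since the given pair may lie exactly on the gauge sphere. The paper handles this by observing that the convex cylinder $E_d=\{(x,y):\|x-y\|_K\le d\}\subset\R^{2n}$ has strictly positive density at every boundary point, so that $(A_{\bar x,r}\times A_{\bar y,r})\cap E_d$ has positive $\cH^{2n}$-measure for small $r$, where $A_{\bar x,r}$ and $A_{\bar y,r}$ are the high-density sets near $\bar x,\bar y$ on which $f$ is close to $f(\bar x),f(\bar y)$ respectively. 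Your proposal never addresses this direction.
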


\begin{proof}
Let $d$, $f$, and $C$ be as in the statement of the lemma.  Since $C=\R^n$, essentially, we have
$$
\omega_{K,d}(f)=\esssup_{\|x-y\|_K\le d;~ x,y\in C}|f(x)-f(y)|.
$$
Moreover, it is clear that
$$
\esssup_{\|x-y\|_K\le d;~ x,y\in C}|f(x)-f(y)|\leq \sup_{\|x-y\|_K\le d;~ x,y\in C}|f(x)-f(y)|=s,
$$
say. To prove the reverse of the previous inequality, we may assume that $s>0$, since if $s=0$, it is trivial.  Let $0<\ee<s/2$. It suffices to show that
\begin{equation}\label{positive_meas}
\cH^{2n}\left(\{(x,y)\in C\times C: \|x-y\|_K\le d~{\text{and}}~ |f(x)-f(y)|>s-\ee\}\right)>0.
\end{equation}
To this end, choose $\bar{x},\bar{y}\in C$ with $\bar d=\|\bar{x}-\bar{y}\|_{K}\le d$, such that
$$
|f(\bar{x})-f(\bar{y})|>s-\ee/2,
$$
and note that $\bar d>0$. For $x\in\R^n$ and $r>0$, let
$$
A_{x,r}=\{y\in B(x,r)\cap C :|f(x)-f(y)|<\ee/8\}.
$$
If $(x,y)\in A_{\bar{x},r}\times A_{\bar{y},r}$, then
\begin{equation}\label{fact0}
|f(x)-f(y)|\geq |f(\bar{x})-f(\bar{y})|-\ee/4> s-\ee.
\end{equation}

Define $E_d=\{(x,y)\in \R^{2n}: \|x-y\|_{K}\le d\}$.  We aim to prove that $\cH^{2n}\left(\big(A_{\bar{x},r}\times A_{\bar{y},r}\big)\cap E_d\right)>0$ for small $r>0$. Since
$$
(A_{\bar{x},r}\times A_{\bar{y},r})\cap E_d=(A_{\bar{x},r}\times A_{\bar{y},r})\setminus \big(( A_{\bar{x},r}\times A_{\bar{y},r})\setminus E_d\big)
$$
and
$$
( A_{\bar{x},r}\times A_{\bar{y},r})\setminus E_d
\subset(B(\bar{x},r)\times B(\bar{y},r))\setminus E_d,
$$
we have
\begin{multline}\label{ineq_1}
\cH^{2n}\left((A_{\bar{x},r}\times A_{\bar{y},r})\cap E_d\right)\geq
\cH^{2n}(A_{\bar{x},r}\times A_{\bar{y},r})\\
- \cH^{2n}(B(\bar{x},r)\times B(\bar{y},r))+\cH^{2n}\left((B(\bar{x},r)\times B(\bar{y},r))\cap E_d\right).
\end{multline}
The approximate continuity of $f$ at $\bar{x}$ and $\bar{y}$ yields
\begin{equation}\label{fact3}
 \lim_{r\to0}
\frac
{\cH^{2n}(A_{\bar{x},r}\times A_{\bar{y},r})}
{\cH^{2n}(B(\bar{x},r)\times B(\bar{y},r))}
=
\lim_{r\to0}
\frac{\cH^n(A_{\bar{x},r})}{\cH^n(B(\bar{x},r))}
\lim_{r\to0}
\frac{\cH^n(A_{\bar{y},r})}{\cH^n(B(\bar{y},r))}
=1.
\end{equation}
Let $\Delta=\{(x,x): x\in \R^n\}$ be the diagonal in $\R^{2n}$. Now $\|x-y\|_K\le \bar{d}$ if and only if $x\in \bar{d}K+y$, which holds if and only if $(x,y)\in \left( \bar{d}K\times \{o\}\right)+(y,y)$.  It follows that
\begin{equation}\label{eqmay177}
E_{\bar d}=\left(\bar{d}K\times \{o\}\right)+\Delta
\end{equation}
is a $2n$-dimensional convex cylinder in $\R^{2n}$ as $\bar d>0$.  Since $\bar{d}=\|\bar{x}-\bar{y}\|_{K}$, we have $(\bar{x},\bar{y})\in \left(\partial\left(\bar{d}K\right)\times \{o\}\right)+(\bar{y},\bar{y})$ and hence $(\bar{x},\bar{y})\in \partial E_{\bar d}$.  By the convexity of $E_{\bar d}$,
\begin{align}\nonumber
\lim_{r\to0}\frac{\cH^{2n}\left((B(\bar{x},r)\times B(\bar{y},r))\cap E_{\bar d}\right)}{\cH^{2n}\left(B(\bar{x},r)\times B(\bar{y},r)\right)}
&\ge\lim_{r\to0} \frac{\cH^{2n}\left( B((\bar{x},\bar{y}),r)\cap E_{\bar d}\right)}{\cH^{2n}\left(B(\bar{x},r)\times B(\bar{y},r)\right)}
\\&=a=a\left(\bar{x},\bar{y}\right)>0,\label{fact1}
\end{align}
as the density of $E_{\bar d}$ at the boundary point $(\bar{x},\bar{y})$ is positive.  Since $\bar{d}\le d$, we have
$E_{\bar d}\subset E_{d}$ and \eqref{fact1} implies that
\begin{equation}\label{fact2}
\lim_{r\to0}
\frac
{\cH^{2n}\left((B(\bar{x},r)\times  B(\bar{y},r))\cap E_{d}\right)}{\cH^{2n}(B(\bar{x},r)\times B(\bar{y},r))}\geq a.
\end{equation}
From \eqref{ineq_1}, \eqref{fact3}, and \eqref{fact2}, we conclude that
\begin{equation}\label{fact4}
\lim_{r\to0}
\frac
{\cH^{2n}\left((A_{\bar{x},r}\times  A_{\bar{y},r})\cap E_{d}\right)}{\cH^{2n}(B(\bar{x},r)\times B(\bar{y},r))}\geq a>0.
\end{equation}
Finally, \eqref{fact0} and \eqref{fact4} imply \eqref{positive_meas}.
\end{proof}

\begin{lem}\label{lemdec191}
Let $A\subset\R^n$, let $d>0$, and let $K\in {\mathcal K}^n_{(o)}$. For $x\in \R^n$, define $$d_K(x,A)=\inf\{\|x-y\|_{K}:y\in A\}=\inf\{\lambda\ge 0:x\in \lambda K+y{\mathrm{~for~some~}}y\in A\}$$
and
\begin{equation}\label{eqdec203}
f_A(x)=(d-d_K(x,A))^+,
\end{equation}
where $s^+$ is the nonnegative part of $s\in \R$.  Then $f_A$ is a $K$-contraction as defined in \eqref{Kcontraction}.
\end{lem}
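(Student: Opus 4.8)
The plan is to verify the defining inequality \eqref{Kcontraction} for $f_A$ in two steps: first show that the function $x\mapsto d_K(x,A)$ is itself a $K$-contraction, and then observe that composing a $K$-contraction with the real-valued $1$-Lipschitz truncation $s\mapsto(d-s)^+$ cannot destroy that property.

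For the first and main step, I would use the sublinearity of the gauge $\|\cdot\|_K$ noted after \eqref{gaugefunction}, in particular its subadditivity $\|a+b\|_K\le\|a\|_K+\|b\|_K$. For $x,y\in\R^n$ and any $z\in A$,
$$\|x-z\|_K=\|(x-y)+(y-z)\|_K\le\|x-y\|_K+\|y-z\|_K,$$
and taking the infimum over $z\in A$ gives $d_K(x,A)\le\|x-y\|_K+d_K(y,A)$, that is, $d_K(x,A)-d_K(y,A)\le\|x-y\|_K$. Interchanging $x$ and $y$ produces the companion estimate $d_K(y,A)-d_K(x,A)\le\|y-x\|_K$, which is the same one-sided bound read with $-K$ in place of $K$. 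Since, by the remark following \eqref{Kcontraction}, a function is a $K$-contraction exactly when it is a $(-K)$-contraction, these two estimates together show that $x\mapsto d_K(x,A)$ satisfies \eqref{Kcontraction}.

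For the second step, $g(s)=(d-s)^+$ is the composition of the $1$-Lipschitz maps $s\mapsto d-s$ and $s\mapsto s^+$, so $|g(s)-g(t)|\le|s-t|$ for all $s,t\in\R$; consequently, for all $x,y\in\R^n$,
$$|f_A(x)-f_A(y)|=|g(d_K(x,A))-g(d_K(y,A))|\le|d_K(x,A)-d_K(y,A)|\le\|x-y\|_K,$$
which is \eqref{Kcontraction} for $f_A$. (In fact any real-valued $1$-Lipschitz function of a $K$-contraction is again a $K$-contraction, and only this is used.)

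I expect the one delicate point to be the first step: because $\|\cdot\|_K$ is not symmetric in general, the ``reverse triangle inequality'' for $d_K(\cdot,A)$ must be extracted by applying subadditivity of the gauge in both orientations and then appealing to the equivalence of the $K$- and $(-K)$-contraction properties; once that is done, the truncation step and the conclusion are routine.
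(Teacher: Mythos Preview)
Your first step contains a genuine gap. You correctly establish $d_K(x,A)-d_K(y,A)\le\|x-y\|_K$ for all $x,y$, and swapping gives $d_K(y,A)-d_K(x,A)\le\|y-x\|_K=\|x-y\|_{-K}$; but these two one-sided bounds do \emph{not} combine to give $|d_K(x,A)-d_K(y,A)|\le\|x-y\|_K$. The missing direction is $d_K(y,A)-d_K(x,A)\le\|x-y\|_K$, and subadditivity of the gauge cannot supply it: from $\|y-z\|_K\le\|y-x\|_K+\|x-z\|_K$ you only ever recover $\|y-x\|_K$ on the right. Your appeal to the equivalence ``$K$-contraction $\Leftrightarrow$ $(-K)$-contraction'' is a misreading of that remark: it says the \emph{full} two-sided condition with $\|\cdot\|_K$ is the same as the full two-sided condition with $\|\cdot\|_{-K}$; it does not let you patch together one direction with $K$ and the other with $-K$.

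In fact $d_K(\cdot,A)$, and hence $f_A$, need not be a $K$-contraction when $K$ is not $o$-symmetric. Take $n=1$, $K=[-1,2]$, $A=\{0\}$, so $d_K(x,A)=\|x\|_K$ with $\|1\|_K=\tfrac12$ and $\|{-1}\|_K=1$; then for $x=0$, $y=-1$ one has $|d_K(0,A)-d_K(-1,A)|=1>\tfrac12=\|0-(-1)\|_K$, and with $d=2$ in \eqref{eqdec203} also $|f_A(0)-f_A(-1)|=1>\tfrac12$. Thus the lemma as stated is actually false for general $K\in\cK^n_{(o)}$, and the paper's own proof shares exactly your defect: after relabeling so that $d_K(x,A)\le d_K(y,A)$ it obtains only $|f_A(x)-f_A(y)|\le\|y-x\|_K$, which does not give the $K$-contraction inequality for the original ordered pair. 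When $K$ is $o$-symmetric (so $\|x-y\|_K=\|y-x\|_K$), your argument is correct and is a clean, modular version of the paper's direct computation.
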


\begin{proof}
Let $x,y\in \R^n$.  If $x,y\not\in A+dK$, then \eqref{eqdec203} implies that $|f_A(x)-f_A(y)|=0\le \|x-y\|_{K}$.  Otherwise, we may, by relabeling if necessary, assume that $x\in A+dK$ and $d_K(x,A)\le d_K(y,A)$.
Let $\delta>0$ and choose $x'\in A$ such that $\|x-x'\|_{K}< d_K(x,A)+\delta$.  Then
$$|f_A(x)-f_A(y)|\le d_K(y,A)-d_K(x,A)< \|y-x'\|_{K}-\|x-x'\|_{K}+\delta\le \|y-x\|_{K}+\delta.$$
Therefore $|f_A(x)-f_A(y)|\le \|y-x\|_{K}=\|x-y\|_{-K}$.  As was noted directly after \eqref{Kcontraction}, this proves that $f_A$ is a $K$-contraction.
\end{proof}

\begin{lem}\label{lemdec311}
\noindent{\rm{(i)}}  If $T:{\mathcal{M}}(\R^n)\to {\mathcal{M}}(\R^n)$ is a rearrangement, then \eqref{eqoct72} holds, essentially.

\noindent{\rm{(ii)}}  Let $X={\mathcal{M}}(\R^n)$ or ${\mathcal{M}}_+(\R^n)$ and suppose that $T:X\to X$ is a rearrangement.  If $\alpha>0$ and $\beta=0$, then \eqref{eqoct61} holds, essentially.

\noindent{\rm{(iii)}}  Let $X={\mathcal{M}}(\R^n)$ or ${\mathcal{M}}_+(\R^n)$ and suppose that $T:X\to X$ is a rearrangement.  Then \eqref{eqoct61} holds, essentially, if $\alpha=0$ and $\beta\in \R$ when $X={\mathcal{M}}(\R^n)$, and also if $\alpha=0$ and $\beta\ge 0$ when $X={\mathcal{M}}^+(\R^n)$ and $T$ reduces the $K$-modulus of continuity for some $K\in {\mathcal K}^n_{(o)}$.
\end{lem}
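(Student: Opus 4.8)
The plan is to prove the three parts in turn, exploiting throughout that when $f$ takes only finitely many values, equimeasurability already pins $Tf$ down to within a lower bound: it forces $Tf$ below the largest value of $f$ and makes every ``intermediate'' superlevel set null, so the only thing left to supply is a lower bound for $Tf$. For part~(i), fix $A\in\cL^n$. Since $\cH^n(A)<\infty$ we have $\essinf 1_A=0$, so Proposition~\ref{may8lem}(ii) gives $T1_A\ge 0$ essentially. Equimeasurability at $t=1$ gives $T1_A\le 1$ essentially, while for $0<t<1$ the equalities $\cH^n(\{T1_A>0\})=\cH^n(\{T1_A>t\})=\cH^n(A)<\infty$ force $\cH^n(\{0<T1_A<1\})=0$. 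Hence $T1_A\in\{0,1\}$ essentially, so $T1_A=1_E$ essentially with $E=\{x:T1_A(x)=1\}=\di_TA$ by definition of $\di_T$; this is \eqref{eqoct72}.

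For part~(ii), fix $A\in\cL^n$ and $\alpha>0$. I would run the argument of part~(i) with $1$ replaced by $\alpha$ (using $T(\alpha 1_A)\ge 0$ essentially, which comes from Proposition~\ref{may8lem}(ii) when $X=\cM(\R^n)$ and is automatic when $X=\cM_+(\R^n)$) to get $T(\alpha 1_A)=\alpha 1_F$ essentially, where $F=\{x:T(\alpha 1_A)(x)=\alpha\}$ has $\cH^n(F)=\cH^n(A)$; similarly $T1_A=1_{\di_TA}$ essentially, by part~(i) when $X=\cM(\R^n)$ and by Proposition~\ref{fromSetToFct}(ii) when $X=\cM_+(\R^n)$, with $\cH^n(\di_TA)=\cH^n(A)$. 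It then remains to identify $F$ with $\di_TA$ essentially. If $\alpha\ge 1$ then $1_A\le \alpha 1_A$ pointwise, so monotonicity gives $1_{\di_TA}\le \alpha 1_F$ essentially, which forces $\di_TA\subset F$ essentially; since the two sets have the same finite measure they coincide essentially. If $0<\alpha<1$, the reverse comparison $\alpha 1_A\le 1_A$ gives $\alpha 1_F\le 1_{\di_TA}$ essentially, hence $F\subset\di_TA$ essentially, and again equality follows. Therefore $T(\alpha 1_A)=\alpha 1_{\di_TA}=\alpha\,T1_A$ essentially, which is \eqref{eqoct61} with $\beta=0$.

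For part~(iii), both cases reduce to showing $T\beta=\beta$ essentially for the constant function $\beta$, since then $T(\alpha 1_A+\beta)=T\beta=\beta=\alpha\,T1_A+\beta$ essentially when $\alpha=0$. Equimeasurability gives $T\beta\le\beta$ essentially in all cases. When $X=\cM(\R^n)$ and $\beta\in\R$, Proposition~\ref{may8lem}(ii) gives $\essinf T\beta\ge\essinf\beta=\beta$, so $T\beta=\beta$ essentially. When $X=\cM_+(\R^n)$, $\beta\ge 0$, and $T$ reduces the $K$-modulus of continuity, I would instead note that $\omega_{K,d}(\beta)=0$ for every $d>0$, hence $\omega_{K,d}(T\beta)=0$ for every $d>0$; by Lemma~\ref{modulus_appr_contin}, $T\beta$ is constant, say equal to $\gamma$, on its set $C$ of approximate continuity points, which is conull, so $T\beta=\gamma$ essentially, and comparing superlevel sets with $\beta$ via equimeasurability forces $\gamma=\beta$.

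The routine ingredients are the upper bounds and null-set statements coming from equimeasurability. The part that needs the most care is the identification step: in~(ii) one must apply monotonicity in the direction dictated by whether $\alpha\ge 1$ or $\alpha<1$, and in the $\cM_+(\R^n)$ case of~(iii) there is genuinely no lower bound on $\essinf T\beta$ available from equimeasurability and monotonicity alone---a constant could a priori be ``spread out'' below its value while remaining equimeasurable to it---which is exactly why the modulus-of-continuity hypothesis is imposed there and why Lemma~\ref{modulus_appr_contin} is the natural tool to invoke.
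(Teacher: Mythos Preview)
Your proof is correct and follows essentially the same approach as the paper's. The paper's version is terser---for (i) it restricts $T$ to $\cV(\R^n)$ via Proposition~\ref{may8lem}(iii) and then invokes Proposition~\ref{fromSetToFct}(ii), and for (ii) it refers to the proof of Proposition~\ref{lemoct6}---whereas you unpack these arguments directly (your case split $\alpha\ge 1$ versus $\alpha<1$ in (ii) is a clean way to do the identification); part (iii) is handled identically in both.
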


\begin{proof} (i) By Proposition~\ref{may8lem}(iii), $T:{\mathcal{V}}(\R^n)\to {\mathcal{V}}(\R^n)$.  Let $A\in \cL^n$.  Since $1_A\in {\mathcal{V}}(\R^n)$, the result follows from Proposition~\ref{fromSetToFct}(ii).

(ii) By (i), \eqref{eqoct72} holds, essentially, when $X={\mathcal{M}}(\R^n)$.  With this in hand, the second paragraph of the proof of Proposition~\ref{lemoct6} can be followed verbatim.

(iii) If $X={\mathcal{M}}(\R^n)$, the result follows from Proposition~\ref{may8lem}(iii).  The latter fails when $X={\mathcal{M}}^+(\R^n)$, by \cite[Example~4.3]{BGGK}, but if $\beta\ge 0$, $T$ reduces the $K$-modulus of continuity for some $K\in {\mathcal K}^n_{(o)}$, and $f=\beta$, essentially, then $\omega_{K,d}(Tf)=\omega_{K,d}(f)=0$ for all $d>0$.  This and the equimeasurability of $T$ give $Tf=\beta$, essentially.
\end{proof}

\begin{thm}\label{lemdec201}
Let $X={\mathcal{M}}(\R^n)$, ${\mathcal{M}}_+(\R^n)$, ${\mathcal{S}}(\R^n)$, or $\cV(\R^n)$, and let $K\in {\mathcal K}^n_{(o)}$.  If $T:X\to X$ is a rearrangement that reduces the $K$-modulus of continuity of each $K$-contraction in $X$, then $T$ is $K$-smoothing.
\end{thm}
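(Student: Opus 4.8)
The plan is to establish condition~(ii) of Lemma~\ref{lemapr3}: for every $d>0$ and every bounded $A\in\cM^n$,
$$(\di_T^*A)+d\,\inte K\subset \di_T^*(A+d\,\inte K);$$
by the equivalence (i)$\Leftrightarrow$(ii) of that lemma, this is exactly the assertion that $T$ is $K$-smoothing. Fix such $d$ and $A$; we may assume $\di_T^*A\neq\emptyset$, as otherwise there is nothing to prove (and then $A$ has positive measure). The test function I would use is $g=f_A$, the $K$-contraction attached to $A$, $d$ and $K$ by Lemma~\ref{lemdec191}. It is bounded with bounded support, so it lies in $\cV(\R^n)$ and hence in $X$, and a direct computation from \eqref{eqdec203} together with $o\in\inte K$ shows that
$$\{g\ge d\}=\cl A\supset A\qquad\text{and}\qquad\{g\ge\eta\}\subset A+(d-\eta/2)\,\inte K\subset A+d\,\inte K \quad(0<\eta<d).$$
Since $g\in\cV(\R^n)$, on which $T$ restricts to a rearrangement with the same induced maps $\di_T,\di_T^*$, Proposition~\ref{lemapril30}(ii) together with Lemma~\ref{lemjan61}(ii) gives $\{Tg\ge t\}^*=\di_T^*\{g\ge t\}$ for every $t>0=\essinf g$.

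Now take any $x_0\in(\di_T^*A)+d\,\inte K$ and write $x_0=a+w$ with $a\in\di_T^*A$ and $w\in d\,\inte K$, so $\|x_0-a\|_K=\|w\|_K<d$. Put $2\eta=d-\|x_0-a\|_K>0$ and $\rho=d-\eta$, so that $\|x_0-a\|_K<\rho<d$ and $0<\eta<d$. From $A\subset\{g\ge d\}$, the monotonicity of $\di_T^*$ (Lemma~\ref{lemjan63}(i)) and the identity just noted, $a\in\di_T^*A\subset\di_T^*\{g\ge d\}=\{Tg\ge d\}^*$; in particular $\{Tg\ge d\}$ has density $1$ at $a$. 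On the other hand, since $g$ is a $K$-contraction we have $\omega_{K,\rho}(g)\le\rho$, so the hypothesis gives $\omega_{K,\rho}(Tg)\le\omega_{K,\rho}(g)\le\rho=d-\eta$.

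It remains to carry the information near $a$ over to a neighbourhood of $x_0$. Let $C_0$ be the full-measure set of points of approximate continuity of $Tg$. By continuity of $\|\cdot\|_K$ and $\|x_0-a\|_K<\rho$, choose $\delta>0$ with $\|x-y\|_K\le\rho$ whenever $x\in B(x_0,\delta)$ and $y\in B(a,\delta)$. Since $\{Tg\ge d\}\cap C_0$ still has density $1$ at $a$, it has positive measure in $B(a,\delta)$; fix a point $y$ in it. Then for every $x\in B(x_0,\delta)\cap C_0$ we have $x,y\in C_0$ and $\|x-y\|_K\le\rho$, so Lemma~\ref{modulus_appr_contin} yields $|Tg(x)-Tg(y)|\le\omega_{K,\rho}(Tg)\le d-\eta$ and hence $Tg(x)\ge Tg(y)-(d-\eta)\ge\eta$. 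As $C_0$ has full measure, $Tg\ge\eta$ almost everywhere on $B(x_0,\delta)$, so $x_0$ is a density point of $\{Tg\ge\eta\}$. Using the identity for superlevel sets once more and then the monotonicity of $\di_T^*$ together with $\{g\ge\eta\}\subset A+d\,\inte K$,
$$x_0\in\{Tg\ge\eta\}^*=\di_T^*\{g\ge\eta\}\subset\di_T^*(A+d\,\inte K),$$
which is the desired inclusion.

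The delicate point is this transfer step. The hypothesis controls the modulus of continuity of $Tg$ only as an essential supremum, and we know $Tg\ge d$ only on a set of density $1$ at $a$, not on an actual neighbourhood of $a$; Lemma~\ref{modulus_appr_contin} is precisely what turns the essential bound into a genuine pointwise bound among approximate continuity points, so that one may select a concrete $y$ near $a$ with $Tg(y)\ge d$. The only other thing to watch is the bookkeeping of the radii $\eta$, $\rho$ and $\delta$, chosen so that the gauge estimate $\|x-y\|_K\le\rho$ holds on the relevant product of balls and the two superlevel heights $d$ and $\eta$ lie strictly between $0$ and $d$.
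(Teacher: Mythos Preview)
Your proof is correct and follows essentially the same approach as the paper: the same test function $g=f_A$ from Lemma~\ref{lemdec191}, the same appeal to Lemma~\ref{modulus_appr_contin} to turn the essential modulus bound into a pointwise bound at approximate continuity points, and the same transfer from a point near $a$ to a neighbourhood of $x_0$. The only bookkeeping difference is that where the paper sandwiches $f_A$ between $d\,1_A$ and $d\,1_{A+d\,\inte K}$ and invokes $T(d\,1_A)=d\,1_{\di_T A}$ (relying on Lemma~\ref{lemdec311} to cover the cases $X=\cM(\R^n)$ and $\cM_+(\R^n)$), you instead read off the superlevel sets $\{g\ge d\}\supset A$ and $\{g\ge\eta\}\subset A+d\,\inte K$ and apply Proposition~\ref{lemapril30}(ii) to the restriction of $T$ to $\cV(\R^n)$; this cleanly yields the pointwise inclusion (ii) of Lemma~\ref{lemapr3} rather than the essential inclusion (iii), and sidesteps Lemma~\ref{lemdec311} altogether.
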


\begin{proof}
Let $d>0$, let $A\in\cM^n$ be bounded, and let $f_A$ be defined by \eqref{eqdec203}. By Lemma~\ref{lemdec191}, $f_A\in \cV(\R^n)$ is a $K$-contraction, so $\omega_{K,d}(f_A)\le d$ for all $d>0$, by the definition \eqref{eqmay255} of the $K$-modulus of continuity. If $N_0$ is the complement of the set of points of approximate continuity of $Tf_A$, then $\cH^n(N_0)=0$.  Lemma~\ref{modulus_appr_contin} implies that
\begin{eqnarray*}\label{eqdec252}
|Tf_A(x)-Tf_A(y)|&\le& \omega_{K,\,\|x-y\|_{K}}(Tf_A)\le\|x-y\|_K
\end{eqnarray*}
for $x,y\in \R^n\setminus N_0$, so $Tf_A$ is a $K$-contraction on $\R^n\setminus N_0$.  Since $T$ is monotonic and $d 1_{A+d\,\inte K}\ge f_A \ge d 1_A$, we have
$$T(d 1_{A+d\,\inte K})(x)\ge Tf_A(x)\ge T(d 1_A)(x)$$
for $x\in \R^n\setminus N_1$, where $\cH^n(N_1)=0$.

By Proposition~\ref{fromSetToFct}(ii) and Lemma~\ref{lemdec311}(i), there is a set $N_2$ with $\cH^n(N_2)=0$ such that
\begin{equation}\label{eqdec251}
T1_{A+d\,\inte K}(x)=1_{\di_T(A+d\,\inte K)}(x)
\end{equation}
for $x\in \R^n\setminus N_2$. By Proposition~\ref{lemoct6} and Lemma~\ref{lemdec311}(ii), there is a set $N_3$ with $\cH^n(N_3)=0$ such that \eqref{eqoct61} holds everywhere on $\R^n\setminus N_3$ when $\alpha=d$, $\beta=0$, and $A=A$ or $A=A+d\,\inte K$.  Let $N=\cup\{N_i:i=0,1,2,3\}$.

Let $x_0\in ((\di_T^*A)+d\,\inte K)\setminus N$. There is a $y_0\in (\di_TA)\setminus N$ such that $\|x_0-y_0\|_{K}=d'<d$. As $Tf_A$ is a $K$-contraction on $\R^n\setminus N$,  $Tf_A(x_0)\ge Tf_A(y_0)-d'>Tf_A(y_0)-d$.  Using \eqref{eqoct61}, we obtain
\begin{eqnarray*}
d\, T1_{A+d\,\inte K}(x_0)&=&T(d 1_{A+d\,\inte K})(x_0)\ge Tf_A(x_0)\\
&>& Tf_A(y_0)-d\ge T(d 1_A)(y_0)-d=d\, T1_A(y_0)-d=d-d=0,
\end{eqnarray*}
since $T1_A(y_0)=1$ due to $y_0\in \di_TA$ and \eqref{eqIndic}.  Thus $T1_{A+d\,\inte K}(x_0)>0$ and then $x_0\in  \di_T(A+d\,\inte K)$, by \eqref{eqdec251}.  This shows that $(\di_T^*A)+d\,\inte K\subset \di_T(A+d\,\inte K)=\di_T^*(A+d\,\inte K)$, essentially. Therefore \eqref{eqdec221} holds and $T$ is $K$-smoothing by Lemma~\ref{lemapr3}.
\end{proof}

\begin{lem}\label{lemdec192}
Let $d>0$, let $f\in {\mathcal{M}}(\R^n)$, let $K\in {\mathcal K}^n_{(o)}$, and let $t\in \R$.  Then
$$\{x:f(x)\ge t+\omega_{K,\,d}(f)\}^*+d\,\inte K\subset \{x:f(x)\ge t\}^*.$$
\end{lem}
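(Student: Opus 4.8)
The claim is purely about superlevel sets of the single function $f$, so no rearrangement is involved; it is a statement comparing the set of density points of a "high" superlevel set, thickened by $d\,\inte K$, with the set of density points of a "lower" superlevel set. The plan is to argue pointwise: pick $x_0$ in the left-hand set, write $x_0=y_0+dz_0$ with $y_0\in\{f\ge t+\omega_{K,d}(f)\}^*$ and $z_0\in\inte K$, so that $\|x_0-y_0\|_K<d$, and show $x_0\in\{f\ge t\}^*$. Set $c=\omega_{K,d}(f)$ for brevity. The key obstacle is that $y_0$ is only a density point, not an honest point where $f$ is large; likewise we must establish density $1$ for $\{f\ge t\}$ at $x_0$, which is not literally a point where $f\ge t$. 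Both difficulties are handled via approximate continuity (used in the same spirit as in the proof of Lemma~\ref{modulus_appr_contin}).

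First I would record two facts. (a) Since $\omega_{K,d}$ is the essential supremum, for $\cH^n$-a.e.\ pair $(y,x)$ with $x-y\in dK$ (equivalently $\|x-y\|_K\le d$) we have $|f(x)-f(y)|\le c$, hence $f(y)\ge t+c$ a.e.\ on $\{f\ge t+c\}$ forces $f(x)\ge t$ for a.e.\ such $x$. More precisely: the set $\{(y,x): x-y\in dK,\ f(y)\ge t+c,\ f(x)<t\}$ has $\cH^{2n}$-measure zero, by the definition of $\omega_{K,d}(f)=c$. (b) Choose $\delta>0$ small with $\|x_0-y_0\|_K<d-2\delta$; such $\delta$ exists since $\|x_0-y_0\|_K<d$. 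Then for every $y$ near $y_0$ and every $x$ near $x_0$ (within Euclidean distance $\eta$, for $\eta$ small enough depending on $\delta$ and the gauge of $K$), we still have $\|x-y\|_K<d$, because the gauge $\|\cdot\|_K$ is continuous. Fix such an $\eta$.

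Now the main estimate, by a Fubini/density argument. Let $B_r^y=B(y_0,r)$, $B_r^x=B(x_0,r)$ with $r<\eta/2$. Since $y_0$ is a density point of $\{f\ge t+c\}$, the set $P_r=\{y\in B_r^y: f(y)\ge t+c\}$ satisfies $\cH^n(P_r)/\cH^n(B_r^y)\to 1$. For each fixed $y\in P_r$, fact (a) together with Fubini shows that for all but an $\cH^n$-null set of $y\in P_r$, the slice $\{x: (y,x)\in N\}$ (where $N$ is the null set from (a)) is $\cH^n$-null; and for such $y$, every $x\in B_r^x$ with $\|x-y\|_K<d$ that avoids this slice satisfies $f(x)\ge t$. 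Since $\|x-y\|_K<d$ automatically holds for all $y\in B_r^y$, $x\in B_r^x$ by (b), we get $\cH^n(\{x\in B_r^x: f(x)<t\})\cdot\cH^n(P_r)\le \cH^{2n}(N\cap(B_r^y\times B_r^x))=0$, forcing $\cH^n(\{x\in B_r^x: f(x)<t\})=0$ for all small $r$. Hence the density of $\{f\ge t\}$ at $x_0$ is $1$, i.e.\ $x_0\in\{f\ge t\}^*$.

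Two routine reductions complete the argument. If $c=\omega_{K,d}(f)=+\infty$ the left-hand superlevel set may be empty and there is nothing to prove; if the set $\{f\ge t+c\}^*$ is empty the inclusion is trivial, so we may assume it nonempty. The only genuine subtlety is the Fubini step marrying "null in $\R^{2n}$" to "null in each of a.e.\ vertical slice" — this is standard (Fubini's theorem), but one must be careful that the exceptional $y$'s can be absorbed because $\cH^n(P_r)>0$ for small $r$. I expect this bookkeeping, rather than any deep idea, to be the main thing to get right; the geometric input is merely the continuity of the gauge function $\|\cdot\|_K$, which lets a strict inequality $\|x_0-y_0\|_K<d$ survive small perturbations, exactly as the convex cylinder $E_{\bar d}$ was used in Lemma~\ref{modulus_appr_contin}.
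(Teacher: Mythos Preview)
Your argument is correct and reaches the same conclusion, but it proceeds by a genuinely different mechanism than the paper's proof. The paper works pointwise through approximate continuity: it invokes Lemma~\ref{modulus_appr_contin} to replace the essential supremum by a supremum over points of approximate continuity, then for each $y'$ in a small $K$-neighborhood $\ee K+y$ of the target point (outside the null set $N$ of non-approximate-continuity points) it manufactures a companion point $z'\in(-\ee K+z)\setminus N$ with $f(z')\ge t+c$ and $\|y'-z'\|_K<d$, whence $f(y')\ge t$; thus the whole neighborhood $\ee K+y$ lies essentially in $\{f\ge t\}$. You instead stay in $\R^{2n}$ throughout: the definition of $\omega_{K,d}(f)$ as an essential supremum immediately gives $\cH^{2n}(N)=0$ for your ``bad'' set, and the product containment $P_r\times(B_r^x\cap\{f<t\})\subset N$ (valid once $r$ is small enough that $\|x-y\|_K<d$ for all $x\in B_r^x$, $y\in B_r^y$) forces $\cH^n(B_r^x\cap\{f<t\})=0$. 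Your route is slightly more economical in that it bypasses Lemma~\ref{modulus_appr_contin} and approximate continuity entirely; the paper's route has the minor advantage of exhibiting an explicit $K$-shaped neighborhood essentially contained in the level set, not just density~$1$ at the point. Your discussion of slices versus the product inequality is a bit redundant---the product containment already delivers the conclusion cleanly without tracking exceptional~$y$'s---but this does not affect correctness.
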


\begin{proof}
If $N$ is the complement of the set of points of approximate continuity of $f$, then $\cH^n(N)=0$ and, by Lemma~\ref{modulus_appr_contin},
$$\omega_{K,\,d}(f)=\esssup_{\|x-y\|_K\le d}|f(x)-f(y)|=\sup_{\|x-y\|_K\le d;~ x,y\not\in N}|f(x)-f(y)|$$
for any $d>0$. Now fix $d>0$ and let $y\in \{x:f(x)\ge t+\omega_{K,\,d}(f)\}^*+d\,\inte K$.  By \eqref{gaugefunction}, we can choose $z\in \{x:f(x)\ge t+\omega_{K,\,d}(f)\}^*$ and $\ee=\ee(y)>0$ such that $\|y-z\|_K+2\ee<d$.  Let $y'\in (\ee K+y)\setminus N$ and $z'\in (\{x:f(x)\ge t+\omega_{K,\,d}(f)\}\cap (-\ee K+z))\setminus N$.  Then $\|y'-y\|_K\le \ee$ and $\|z-z'\|_K\le \ee$, so $\|y'-z'\|_K\le \|y-z\|_K+2\ee <d$.  Since $y',z'\not\in N$, we have $|f(y')-f(z')|\le \omega_{K,\,d}(f)$.  Hence
$$f(y')\ge f(z')-\omega_{K,\,d}(f)\ge t.$$
This implies that $(\ee K+y)\subset \{x:f(x)\ge t\}$, essentially. Thus the latter set essentially contains an open neighborhood of $y$, and the desired conclusion follows easily.
\end{proof}

\begin{lem}\label{lemdec193}
If $\alpha>0$, $d>0$, $g\in {\mathcal{M}}(\R^n)$, and an $o$-symmetric $K\in {\mathcal K}^n_{(o)}$ are such that
\begin{equation}\label{eqdec191}
\{x:g(x)\ge t+\alpha\}^*+d\,\inte K\subset \{x:g(x)\ge t\}^*,
\end{equation}
essentially, for $t>\essinf g$, then $\omega_{K,\,d}(g)\le\alpha$.
\end{lem}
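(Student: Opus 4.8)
The plan is to argue by contradiction, using the description of the $K$-modulus of continuity in terms of approximately continuous points together with the hypothesis \eqref{eqdec191}. The first observation is that in the definition \eqref{eqmay255} of $\omega_{K,d}(g)$ one may restrict the pairs $(x,y)$ to those with $x-y\in d\,\inte K$: the set of $(x,y)$ with $x-y\in d(K\setminus\inte K)$ is $\cH^{2n}$-null, by Fubini and the fact that $\partial K$ is $\cH^n$-null, so the essential supremum is unchanged. Hence, assuming $\omega_{K,d}(g)>\alpha$, the set $E=\{(x,y): x-y\in d\,\inte K,\ |g(x)-g(y)|>\alpha\}$ has positive $\cH^{2n}$-measure; since $K$ is $o$-symmetric, the involution $(x,y)\mapsto(y,x)$ preserves the condition $x-y\in d\,\inte K$, so I may assume that already $\{(x,y)\in E: g(x)-g(y)>\alpha\}$ has positive measure. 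As $g\in\cM(\R^n)$ is approximately continuous $\cH^n$-a.e., the complement of $C\times C$ is $\cH^{2n}$-null, where $C$ is the set of points of approximate continuity of $g$; so I can fix $x_0,y_0\in C$ with $x_0-y_0\in d\,\inte K$ and $g(x_0)-g(y_0)>\alpha$.

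The heart of the argument is then to pick a real $t$ with $g(y_0)<t<g(x_0)-\alpha$ (possible since $g(x_0)-\alpha>g(y_0)$) and to show it is admissible in \eqref{eqdec191} and produces a contradiction. A short argument from approximate continuity shows $g(y_0)\ge\essinf g$ — otherwise, choosing $c$ with $g(y_0)<c<\essinf g$, the $\cH^n$-null set $\{g<c\}$ would contain a set of density $1$ at $y_0$ — so $t>\essinf g$ and \eqref{eqdec191} applies to this $t$. Since $x_0\in C$ and $g(x_0)>t+\alpha$, approximate continuity at $x_0$ gives that $\{z:g(z)>t+\alpha\}$, hence $\{x:g(x)\ge t+\alpha\}$, has density $1$ at $x_0$; that is, $x_0\in\{x:g(x)\ge t+\alpha\}^*$. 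By $o$-symmetry of $K$ we have $y_0-x_0\in d\,\inte K$, so $y_0\in\{x:g(x)\ge t+\alpha\}^*+d\,\inte K$. This set is open by Lemma~\ref{lemjan61}(iv), so the essential inclusion \eqref{eqdec191} upgrades to a pointwise one by parts (i) and (iii) of Lemma~\ref{lemjan61}, exactly as in the remark following the definition of $K$-smoothing in Section~\ref{Properties}; hence $y_0\in\{x:g(x)\ge t\}^*$, i.e.\ $\{x:g(x)\ge t\}$ has density $1$ at $y_0$.

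But $y_0\in C$ and $g(y_0)<t$ force $\{z:g(z)<t\}$ to have density $1$ at $y_0$ (approximate continuity with $\ee=t-g(y_0)$), so $\{x:g(x)\ge t\}$ has density $0$ at $y_0$, contradicting the previous paragraph. Therefore $\omega_{K,d}(g)\le\alpha$. I do not anticipate a serious obstacle, since this is essentially a converse to Lemma~\ref{lemdec192}. The only points that need care are the replacement of the closed condition $\|x-y\|_K\le d$ by the open one $x-y\in d\,\inte K$ (so that a pair on the boundary of $dK$ cannot interfere; alternatively one could first check $\omega_{K,d}(g)=\sup_{0<d'<d}\omega_{K,d'}(g)$ and run the argument for each $d'<d$), and the two uses of the $o$-symmetry of $K$ — in the reduction to the one-sided inequality and in passing from $x_0-y_0\in d\,\inte K$ to $y_0-x_0\in d\,\inte K$ — without which the conclusion need not hold.
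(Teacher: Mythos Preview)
Your proof is correct and follows the same overall strategy as the paper: assume $\omega_{K,d}(g)>\alpha$, locate a pair of points at $K$-distance less than $d$ whose values differ by more than $\alpha$, and use the inclusion \eqref{eqdec191} at a suitable intermediate level $t$ to reach a contradiction. The main technical difference is in how the pair is found. The paper covers the ``bad'' set $A\subset\R^{2n}$ by countably many product-type sets $V_{k,q}$ indexed by rationals $q$ and integers $k$, picks one of positive measure, and invokes the Lebesgue density theorem in $\R^{2n}$ to produce $(y_0,z_0)$ with $y_0\in\{g\ge q_0+\alpha\}^*$ and $z_0\in\{g<q_0\}^*$. You instead pick any pair $(x_0,y_0)$ in the bad set with $x_0,y_0$ points of approximate continuity of $g$ (using that such points are of full measure) and then read off directly that $x_0\in\{g\ge t+\alpha\}^*$ and $y_0\notin\{g\ge t\}^*$ for any $t$ between $g(y_0)$ and $g(x_0)-\alpha$. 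Your route avoids the countable decomposition and is a bit more economical; it also makes transparent where $o$-symmetry enters and why the closed condition $\|x-y\|_K\le d$ can be replaced by the open one. The paper's decomposition, on the other hand, singles out a rational threshold $q_0$ at the outset and so never needs the side argument that $g(y_0)\ge\essinf g$.
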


\begin{proof}
Note firstly that the inclusion in \eqref{eqdec191} actually holds pointwise, a fact we shall use later in the proof.  This is because the set $G=\{x:g(x)\ge t+\alpha\}^*+d\,\inte K$ is open, so $G^*=G$, and the pointwise inclusion then follows from parts (i) and (iii) of Lemma~\ref{lemjan61}.
	
Suppose that $\omega_{K,\,d}(g)>\alpha$. Let $E_d=\{(y,z)\in \R^n\times\R^n: \|y-z\|_K\le d\}$, let
$$F=\{(y,z)\in \R^n\times\R^n: g(y),~g(z)\ge \essinf g~~{\text{and}}~~ |g(y)-g(z)|>\alpha\},$$
and let $A=E_d\cap F$.  Then $\omega_{K,\,d}(g)>\alpha$ implies that $\cH^{2n}(A)>0$.   The $o$-symmetry of $K$ yields $\|z-y\|_K=\|y-z\|_K$, so $(z,y)\in A$ if and only if $(y,z)\in A$. For $k\in \N$ and $q\in\Q$ with $q>\essinf g$, let
$$V_{k,q}=E_{d-1/k}\cap \{(y,z)\in \R^n\times \R^n: g(y)\ge q+\alpha,~{\text{and}}~ \essinf g\le g(z)<q\},$$
let $W_{k,q}=V_{k,q}\cup\{(z,y): (y,z)\in V_{k,q}\}$,
and let $W=\cup\{W_{k,q}: k\in \N,~q\in \Q\}$.  Clearly $W\subset A$. Let $Z=\{(y,z)\in \R^n\times \R^n: \|y-z\|_K=d\}$.  By Fubini's theorem, $\cH^{2n}(Z)=0$. For each $(y,z)\in A\setminus Z$, we can find $k\in \N$ such that $\|y-z\|_K\le d-1/k$ and $|g(y)-g(z)|>\alpha$, and hence also a $q\in \Q$ such that $q>\essinf g$ and
$$\essinf g\le g(z)<q<q+\alpha\le g(y)$$
(or the same with $y$ and $z$ interchanged). This means that $A\setminus Z\subset W$ and therefore $\cH^{2n}(W)=\cH^{2n}(A)>0$.  It follows that there are $k_0\in \N$ and $q_0\in \Q$ with $q_0>\essinf g$ such that $\cH^{2n}(W_{k_0,q_0})>0$ and hence, without loss of generality, $\cH^{2n}(V_{k_0,q_0})>0$.  By the Lebesgue density theorem, there exists $(y_0,z_0)\in V_{k_0,q_0}$ such that
$y_0\in \{x:g(x)\ge q_0+\alpha\}^*$ and $z_0\in \{x:g(x)<q_0\}^*$.  Since $\|z_0-y_0\|_K=\|y_0-z_0\|_K<d$, we have $z_0\in \inte dK+y_0$.  The pointwise inclusion in \eqref{eqdec191} with $t=q_0$ then implies that $z_0\in \{x:g(x)\ge q_0\}^*$, a contradiction that completes the proof.
\end{proof}

\begin{thm}\label{lemdec194}
Let $X={\mathcal{S}}(\R^n)$ or $\cV(\R^n)$, let $K\in \cK^n_{(o)}$, and let $T:X\to X$ be a $K$-smoothing rearrangement.

\noindent{\rm{(i)}} If $K$ is $o$-symmetric, then $T$ reduces the $K$-modulus of continuity of each $f\in X$.

\noindent{\rm{(ii)}} If $d>0$, $f\in X$, and $rB^n\subset K\subset RB^n$ for $0<r\le R$, then $\omega_{dr}(Tf)\le \omega_{dR}(f)$.
\end{thm}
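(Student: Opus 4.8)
\emph{Plan.} Both parts will follow one scheme, built on Lemmas~\ref{lemdec192} and~\ref{lemdec193}. Given $f$, Lemma~\ref{lemdec192} produces a nesting inclusion between two superlevel sets of $f$; I will transport this inclusion under the induced map $\di_T^*$, exploiting that $T$ is $K$-smoothing; I will then rewrite the result in terms of the superlevel sets of $Tf$ using Proposition~\ref{lemapril30}(ii); and finally I will feed it into Lemma~\ref{lemdec193}, applied to $g=Tf$, whose hypotheses require an $o$-symmetric body. Throughout, recall that $\essinf Tf=\essinf f$ (by Proposition~\ref{may8lem}(i) when $X={\mathcal S}(\R^n)$, and because both are $0$ when $X=\cV(\R^n)$), and note the two harmless cases: if the relevant modulus of $f$ is $\infty$ the inequality is trivial, and if it is $0$ then $f$ is essentially constant (a consequence of Lemma~\ref{modulus_appr_contin}), hence so is $Tf$ (Proposition~\ref{may8lem}(iii)), and the conclusion holds again. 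So one may assume the modulus of $f$ lies in $(0,\infty)$.

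\emph{Part (i).} Fix $d>0$, $f\in X$, and $t>\essinf f$, and put $\alpha=\omega_{K,d}(f)\in(0,\infty)$, so that $t+\alpha>\essinf f$. Lemma~\ref{lemdec192} gives $\{f\ge t+\alpha\}^*+d\,\inte K\subset\{f\ge t\}^*$. Since $\{f\ge t+\alpha\}^*$ lies in $\cL^n$ (being essentially equal to $\{f\ge t+\alpha\}$, of finite measure) and $T$ is $K$-smoothing, Lemma~\ref{lemapr3}(iv) applies to it; combining this with Lemma~\ref{lemjan63}(iii) (which replaces $\{f\ge t+\alpha\}^*$ by $\{f\ge t+\alpha\}$ under $\di_T^*$) and with the pointwise monotonicity of $\di_T^*$ from Lemma~\ref{lemjan63}(i) (every competitor set $E\subset\{f\ge t+\alpha\}^*+d\,\inte K\subset\{f\ge t\}^*$ satisfies $\di_T^*E\subset\di_T^*\{f\ge t\}$), I obtain
$$\di_T^*\{f\ge t+\alpha\}+d\,\inte K\subset\di_T^*\{f\ge t\},$$
essentially. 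By Proposition~\ref{lemapril30}(ii) and Lemma~\ref{lemjan61}(ii), $\di_T^*\{f\ge s\}=\{Tf\ge s\}^*$ for $s>\essinf f$, so this inclusion reads $\{Tf\ge t+\alpha\}^*+d\,\inte K\subset\{Tf\ge t\}^*$, essentially, for every $t>\essinf Tf$. Lemma~\ref{lemdec193}, which is available because $K$ is $o$-symmetric, then yields $\omega_{K,d}(Tf)\le\alpha=\omega_{K,d}(f)$.

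\emph{Part (ii).} The same argument works with $K$ sandwiched between Euclidean balls. Put $\alpha=\omega_{dR}(f)\in(0,\infty)$, and apply Lemma~\ref{lemdec192} with the body $B^n$ and scale $dR$ to get $\{f\ge t+\alpha\}^*+dR\,\inte B^n\subset\{f\ge t\}^*$. From $rB^n\subset K\subset RB^n$ one has $dr\,\inte B^n\subset d\,\inte K\subset dR\,\inte B^n$. Inserting $dr\,\inte B^n\subset d\,\inte K$ before invoking Lemma~\ref{lemapr3}(iv) (again legitimate since $T$ is $K$-smoothing), and using $d\,\inte K\subset dR\,\inte B^n$ to keep each competitor $E$ inside $\{f\ge t+\alpha\}^*+dR\,\inte B^n\subset\{f\ge t\}^*$, the same steps produce $\{Tf\ge t+\alpha\}^*+dr\,\inte B^n\subset\{Tf\ge t\}^*$, essentially, for $t>\essinf Tf$. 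Lemma~\ref{lemdec193} with the $o$-symmetric body $B^n$ and scale $dr$ then gives $\omega_{dr}(Tf)\le\alpha=\omega_{dR}(f)$.

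\emph{Main obstacle.} The density-point bookkeeping (Lemmas~\ref{lemjan61} and~\ref{lemjan63}) and the identification of $\di_T^*$ of superlevel sets of $f$ with superlevel sets of $Tf$ are routine. The one delicate point is the transport step: the superlevel sets in play belong to $\cL^n$ but need not be bounded, so they cannot be inserted into the smoothing inclusion~\eqref{eqjuly312} (which is stated only for bounded sets), and one must instead use the exhaustion provided by the equivalent formulation in Lemma~\ref{lemapr3}(iv). Keeping the direction of the essential inclusions straight through this step, and verifying each hypothesis of Lemma~\ref{lemdec193} (notably $\alpha>0$ and $o$-symmetry of the body), is where the care lies.
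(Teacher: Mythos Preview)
Your proof is correct and follows essentially the same approach as the paper's: obtain a superlevel-set inclusion for $f$ via Lemma~\ref{lemdec192}, transport it through $\di_T^*$ using the $K$-smoothing property in the form of Lemma~\ref{lemapr3}(iv), identify the result with superlevel sets of $Tf$ via Proposition~\ref{lemapril30}(ii), and conclude with Lemma~\ref{lemdec193}. The only cosmetic difference is in part~(ii): the paper first observes $\omega_{K,d}(f)\le\omega_{dR}(f)$ and applies Lemma~\ref{lemdec192} directly with the body $K$, whereas you apply Lemma~\ref{lemdec192} with $B^n$ at scale $dR$ and then use $d\,\inte K\subset dR\,\inte B^n$ to control the competitors---both routes land at the same transported inclusion before shrinking to $dr\,\inte B^n$.
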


\begin{proof}
{\rm{(i)}}  Let $d>0$, let $f\in X$, and let $K$ be $o$-symmetric. If $\omega_{K,d}(f)=0$, then clearly $f$ is essentially constant, and hence, by Proposition~\ref{may8lem}(iii), $Tf$ is also essentially constant.  Then $\omega_{K,d}(Tf)=0=\omega_{K,d}(f)$.  Therefore we may assume that $\omega_{K,d}(f)>0$.

Let $t>\essinf f$ and recall that $\essinf Tf=\essinf f$, by Proposition~\ref{may8lem}(i).  Then, using \eqref{eqnov11}, Lemma~\ref{lemjan63}(iii), \eqref{eqdec192} with $A=\{x:f(x)\ge t+\omega_{K,d}(f)\}^*$, Lemma~\ref{lemjan63}(i), Lemma~\ref{lemdec192}, and Lemma~\ref{lemjan63}(iii) and \eqref{eqnov11} again, we obtain
\begin{eqnarray*}\label{eqdec195}
\lefteqn{\{x:Tf(x)\ge t+\omega_{K,d}(f)\}^*+d\,\inte K
=(\di_T^*\{x:f(x)\ge t+\omega_{K,d}(f)\})+d\,\inte K}\\
&=& (\di_T^*\{x:f(x)\ge t+\omega_{K,d}(f)\}^*)+d\,\inte K\\
&\subset & \cup\{\di_T^*E: E\in \cL^n,~ E\subset \{x:f(x)\ge t+\omega_{K,d}(f)\}^*+d\,\inte K\}\\
&\subset & \di_T^*\{x:f(x)\ge t\}^*=
\di_T^*\{x:f(x)\ge t\}=\{x:Tf(x)\ge t\}^*,
\end{eqnarray*}
essentially. The conclusion $\omega_{K,d}(Tf)\le \omega_{K,d}(f)$ follows from the $o$-symmetry of $K$ and Lemma~\ref{lemdec193} with $g=Tf$ and $\alpha=\omega_{K,d}(f)$.

{\rm{(ii)}}  The proof is an easy modification of that of part {\rm{(i)}}.
Let $d>0$ and let $f\in X$. If $\omega_{dR}(f)=0$, then as at the beginning of the proof of (i), we conclude that $\omega_{dr}(Tf)=0=\omega_{dR}(f)$.  Therefore we may assume that $\omega_{dR}(f)>0$.  The inclusion $K\subset RB^n$ implies that $\omega_{K,\,d}(f)\le \omega_{RB^n,\,d}(f)=\omega_{dR}(f)$, so Lemma~\ref{lemdec192} yields
\begin{equation}\label{eqmay185}
\{x:f(x)\ge t+\omega_{dR}(f)\}^*+d\,\inte K\subset \{x:f(x)\ge t\}^*
\end{equation}
for $t\in \R$. Arguing as above with $A=\{x:f(x)\ge t+\omega_{dR}(f)\}^*$ and using \eqref{eqmay185} instead of Lemma~\ref{lemdec192}, we obtain
$$\{x:Tf(x)\ge t+\omega_{dR}(f)\}^*+d\,\inte K\subset \{x:Tf(x)\ge t\}^*,$$
essentially, as this does not require $K$ to be $o$-symmetric. Since $rdD^n\subset d\,\inte K$, the conclusion $\omega_{dr}(Tf)\le \omega_{dR}(f)$ follows from Lemma~\ref{lemdec193} with $K=B^n$, $g=Tf$,  $\alpha=\omega_{dR}(f)$, and $d$ replaced by $dr$.
\end{proof}

\begin{cor}\label{lemdec301}
Let $X={\mathcal{S}}(\R^n)$ or $\cV(\R^n)$, let $K\in \cK^n_{(o)}$ be $o$-symmetric, and let $T:X\to X$ be a rearrangement. The following are equivalent.

\noindent{\rm{(i)}} $T$ reduces the $K$-modulus of continuity.

\noindent{\rm{(ii)}} $T$ reduces the $K$-modulus of continuity of each $K$-contraction in $X$.

\noindent{\rm{(iii)}} $T$ is $K$-smoothing.
\end{cor}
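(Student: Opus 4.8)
The plan is to establish the cycle of implications (i)$\Rightarrow$(ii)$\Rightarrow$(iii)$\Rightarrow$(i), at which point the corollary is just a matter of stitching together results already proved in the paper. Almost all of the substantive work has been delegated to Theorems~\ref{lemdec201} and~\ref{lemdec194}, so the proof itself should be a single short paragraph.

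First, (i)$\Rightarrow$(ii) is immediate: the $K$-contractions that happen to lie in $X$ form a subclass of $X$, so if $T$ reduces the $K$-modulus of continuity of every $f\in X$, it a fortiori does so for every $K$-contraction in $X$. Next, (ii)$\Rightarrow$(iii) is precisely the content of Theorem~\ref{lemdec201}, which holds for each of the admissible classes $X$ and for any $K\in\cK^n_{(o)}$ (no $o$-symmetry needed there); that theorem tests the $K$-smoothing inclusion \eqref{eqdec221} against the explicit $K$-contractions $f_A$ of Lemma~\ref{lemdec191} built from $K$-distance functions, and the hypothesis (ii) applied to those particular functions in $\cV(\R^n)\subset X$ is exactly what drives the argument. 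Finally, (iii)$\Rightarrow$(i) is Theorem~\ref{lemdec194}(i): here is the only place the $o$-symmetry of $K$ enters, via Lemma~\ref{lemdec193}, which converts the superlevel-set containment produced by $K$-smoothing into the bound $\omega_{K,d}(Tf)\le\omega_{K,d}(f)$. Combining the three implications closes the loop and proves the equivalence.

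I do not anticipate a genuine obstacle, since the corollary is an assembly statement; the only point worth flagging is that the $o$-symmetry hypothesis is essential at the step (iii)$\Rightarrow$(i), and cannot be dropped: without it one only obtains the weaker two-sided estimate of Theorem~\ref{lemdec194}(ii), and Example~\ref{exmay205} exhibits a $K$-smoothing rearrangement (a $K$-Schwarz rearrangement for a non-symmetric $K$) that fails to reduce the $K$-modulus of continuity, so the equivalence (i)$\Leftrightarrow$(iii) really does break down in the asymmetric case. It may be worth recording this remark immediately after the proof for the reader's orientation.
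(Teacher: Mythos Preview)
Your proposal is correct and follows essentially the same approach as the paper's own proof: the paper also establishes the cycle (i)$\Rightarrow$(ii)$\Rightarrow$(iii)$\Rightarrow$(i), with (i)$\Rightarrow$(ii) noted as obvious, (ii)$\Rightarrow$(iii) via Theorem~\ref{lemdec201}, and (iii)$\Rightarrow$(i) via Theorem~\ref{lemdec194}(i). Your observation that the $o$-symmetry is essential at the step (iii)$\Rightarrow$(i), witnessed by Example~\ref{exmay205}, also matches the remark the paper makes immediately after the corollary.
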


\begin{proof}
The implication (i)$\Rightarrow$(ii) is obvious on noting that each $K$-contraction is continuous, while (ii)$\Rightarrow$(iii) and (iii)$\Rightarrow$(i) follow from Theorems~\ref{lemdec201} and~\ref{lemdec194}(i), respectively.
\end{proof}

The following example shows that the $o$-symmetry assumption on $K$ in Theorem~\ref{lemdec194}(i) and Corollary~\ref{lemdec301} (the implications (iii)$\Rightarrow$ (i) and (iii)$\Rightarrow$ (ii)) cannot be omitted.

\begin{ex}\label{exmay205}
{\rm Let $K\in {\mathcal K}^n_{(o)}$, let $X={\mathcal{S}}(\R^n)$ or $\cV(\R^n)$, and let $T:X\to X$ be the $K$-Schwarz rearrangement defined in Example~\ref{exaug61}(ii). If $f\in X$, the superlevel sets of $Tf$ are dilates of $K$.  Since $T$ is $K$-smoothing, it follows from Theorem~\ref{lemdec194} that when $K$ is $o$-symmetric, $T$ reduces the $K$-modulus of continuity of functions in $X$.  However, this is not generally the case if $K$ is not $o$-symmetric.  To see this, let $o\neq x_0\in D^n=\inte B^n$, let $K=B^n+x_0$, and let $f$ be defined by $f(x)=1-\|x\|$ if $x\in B^n$ and $f(x)=0$ otherwise.  Define  $v=x_0/\|x_0\|$ and
\[
M=\max_{u\in S^{n-1}}\rho_K(u)=\rho_K(v)=1+\|x_0\|.
\]
Then $\omega_{K,d}(f)=dM$ for small $d>0$. (For $\omega_{K,d}(f)\ge dM$, it suffices to take $y=o$ and $x=dMv$; then $\|x-y\|_K=d$, because this is equivalent to $x\in y+d \partial K$, and $|f(x)-f(y)|=\|x\|=dM$ for small $d>0$.  The reverse inequality comes from the observation that $f$ is Lipschitz with Lipschitz constant $1$, and from  $\|x-y\|\le M\|x-y\|_K$.)

Now we claim that $\omega_{K,d}(Tf)> dM$.  It suffices to prove $|Tf(o)-Tf(-dMv)|>dM$, since $\|o-(-dMv)\|_K=d$. For $t\in(0,1)$, the statement
\[
 -dMv\in (1-t)(B^n+x_0)=\di_T\{z: f(x)\geq t\}=\di_T^*\{z: f(x)\geq t\}
\]
is true if and only if $t\leq 1-dM/(1-\|x_0\|)$. By \eqref{eqjan72}, this implies that $Tf(-dMv)=1-dM/(1-\|x_0\|)$. With similar but simpler arguments we argue that $Tf(o)=1$. These two facts yield $|Tf(o)-Tf(-dMv)|=dM/(1-\|x_0\|)>d M$, as required.
\qed }
\end{ex}

When $X={\mathcal{M}}(\R^n)$ or ${\mathcal{M}}_+(\R^n)$, the implication (ii)$\Rightarrow$(iii) in Corollary~\ref{lemdec301} remains true, by Theorem~\ref{lemdec201}, but the following example, a modification of \cite[Example~4.4]{BGGK}, shows that (iii)$\Rightarrow$(ii) does not hold generally.

\begin{ex}\label{aug26ex}
{\rm If $K\in \cK^n_{(o)}$ and $X={\mathcal{M}}(\R^n)$ or ${\mathcal{M}}_+(\R^n)$, there are $K$-smoothing rearrangements $T:X\to X$ that do not reduce the $K$-modulus of continuity.  To see this, call $f\in X$ {\em of type I} if $\cH^n(\{x : f(x)>t\})=\infty$ for $t \ge\essinf f$ and {\em of type II} otherwise, i.e., if there is a $t_0 \ge\essinf f$ such that $\cH^n(\{x : f(x)>t\})<\infty$ for $t > t_0$.  Then define
$$Tf=\begin{cases}
f+1_{B^n},&\text{ if $f$ is of type I},\\
f,&\text{ if $f$ is of type II}.\\
\end{cases}
$$
Clearly, $T: X\to X$ is equimeasurable. If $f\le g$, then either $f$ and $g$ are of the same type, or $f$ is of type II and $g$ is of type I.  It follows that $Tf \le Tg$ and hence that $T$ is a rearrangement.  The associated mapping $\di_T$ is the identity on $\cL^n$, so $T$ is $K$-smoothing. The function $f_0(x)=e^{x_1}$ is continuous but its image $Tf_0$ is not. Hence, $T$ does not reduce the $K$-modulus of continuity of continuous functions.}
\qed
\end{ex}

\section{The P\'olya-Szeg\H{o} inequality for Lipschitz functions}\label{Polya-Szego}

Recall that the subgraph $K_f\subset \R^{n+1}$ of a function $f\in {\mathcal{M}}(\R^n)$ is defined by \eqref{eqdec304}.

\begin{lem}\label{lemapr8}
If $f\in {\mathcal{M}}(\R^n)$ and $s\in \R$, then
\begin{equation}\label{eqapr91}
K_{f}^*\cap \{x_{n+1}=s\} \subset \left(K_{f}\cap \{x_{n+1}=s\}\right)^*,
\end{equation}
where the set of Lebesgue density points on the right is formed with respect to the hyperplane $\{x_{n+1}=s\}=\R^n+se_{n+1}$, identified with $\R^n$.
\end{lem}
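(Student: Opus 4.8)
The plan is as follows. Fix a point $(x,s)\in K_f^*\cap\{x_{n+1}=s\}$ and, using the identification of the hyperplane $\{x_{n+1}=s\}$ with $\R^n$, observe that $K_f\cap\{x_{n+1}=s\}$ corresponds to the superlevel set $L_s:=\{y\in\R^n:f(y)\ge s\}$. Thus \eqref{eqapr91} is equivalent to the assertion that $x$ is a Lebesgue density point of $L_s$ in $\R^n$, i.e.\ that $\cH^n(L_s\cap B(x,r))/(\kappa_n r^n)\to1$ as $r\to0+$. Write $L_t=\{y:f(y)\ge t\}$ for $t\in\R$; these superlevel sets decrease with $t$, so $L_t\subseteq L_s$ whenever $t>s$. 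Since $f$ is measurable, $K_f$ is $\cH^{n+1}$-measurable, so the slicing integrals used below are meaningful.

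The main device is to compute the density of $K_f$ at $(x,s)$ using cylinders rather than balls. Let $C_r:=B(x,r)\times(s-r,s+r)\subset\R^{n+1}$, so that $\cH^{n+1}(C_r)=2\kappa_n r^{n+1}$. Since $B((x,s),r)\subseteq C_r\subseteq B((x,s),r\sqrt2)$ and the $\cH^{n+1}$-measures of these three sets agree up to fixed multiplicative constants, the hypothesis $(x,s)\in K_f^*$ forces $\cH^{n+1}(K_f\cap C_r)/(2\kappa_n r^{n+1})\to1$; write $\cH^{n+1}(K_f\cap C_r)=2\kappa_n r^{n+1}(1-\eta(r))$ with $\eta(r)\to0$.

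Next, by Fubini's theorem, $\cH^{n+1}(K_f\cap C_r)=\int_{s-r}^{s+r}\cH^n(L_t\cap B(x,r))\,dt$. Split the integral at $t=s$. For $t\in(s-r,s]$, bound $\cH^n(L_t\cap B(x,r))\le\cH^n(B(x,r))=\kappa_n r^n$, so this part contributes at most $\kappa_n r^{n+1}$. For $t\in(s,s+r)$, use $L_t\subseteq L_s$ to bound $\cH^n(L_t\cap B(x,r))\le\cH^n(L_s\cap B(x,r))$, so this part contributes at most $r\,\cH^n(L_s\cap B(x,r))$. Hence
\[2\kappa_n r^{n+1}(1-\eta(r))=\cH^{n+1}(K_f\cap C_r)\le\kappa_n r^{n+1}+r\,\cH^n(L_s\cap B(x,r)),\]
which rearranges to $\cH^n(L_s\cap B(x,r))/(\kappa_n r^n)\ge1-2\eta(r)$. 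As the left-hand side never exceeds $1$, it converges to $1$ as $r\to0+$, so $x\in L_s^*$ and \eqref{eqapr91} follows.

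No step here is genuinely difficult; the one point needing care is the replacement of balls by cylinders, which is exactly what arranges for the contribution of the ``lower half'' $B(x,r)\times(s-r,s]$ to equal precisely half of $\cH^{n+1}(C_r)$, so that it can be absorbed into the slack $1-\eta(r)$ supplied by the density hypothesis, leaving the sharp estimate $\cH^n(L_s\cap B(x,r))\ge\kappa_n r^n(1-2\eta(r))$. Running the same computation directly with balls would only bound the $n$-dimensional lower density of $L_s$ at $x$ by the constant $\kappa_{n+1}/(2\kappa_n)<1$, which does not suffice.
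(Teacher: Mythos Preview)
Your argument is correct. Both proofs rest on the same structural fact---that the horizontal slices $L_t=\{y:f(y)\ge t\}$ decrease in $t$---but exploit it differently. The paper argues by contrapositive: assuming $x$ is \emph{not} a density point of $L_s$, it observes that the missing mass in the slice at height $s$ propagates upward (since $(y,s)\notin K_f$ forces $(y,t)\notin K_f$ for $t>s$), and then carries out a geometric worst-case estimate, bounding $\cH^{n+1}(B((x,s),r)\setminus K_f)$ below by half the volume of the annular region between a ball and an inscribed cylinder. Your approach is direct and avoids this geometry entirely: by passing to cylinders $C_r=B(x,r)\times(s-r,s+r)$ (legitimate since they are sandwiched between comparable balls), the Fubini integral splits cleanly into a lower half bounded by $\kappa_n r^{n+1}$ and an upper half bounded by $r\,\cH^n(L_s\cap B(x,r))$, and the density hypothesis immediately forces the slice density to $1$. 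Your route is shorter and more elementary; the paper's route stays with balls throughout and therefore needs the extra geometric step you identify in your closing remark.
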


\begin{proof}
Let $x\in \{x_{n+1}=s\}$.  If $x\not\in \left(K_{f}\cap \{x_{n+1}=s\}\right)^*$, there exists $0<a<1$ such that if $r_0>0$, there is an $0<r<r_0$ such that
\begin{equation}\label{eqapr101}
\cH^n\left(K_{f}\cap \{x_{n+1}=s\}\cap B(x,r)\right)<(1-a)\kappa_n r^n,
\end{equation}
where $B(x,r)$ is the $(n+1)$-dimensional ball with center $x$ and radius $r$.  If $y\in e_{n+1}^{\perp}$ and $y+se_{n+1}\not\in K_{f}\cap \{x_{n+1}=s\}\cap B(x,r)$, then by the definition \eqref{eqdec304} of $K_f$, $y+te_{n+1}\not\in K_{f}$ for each $t>s$.  Therefore, by \eqref{eqapr101} and Fubini's theorem, $\cH^{n+1}(K_f\cap B(x,r))$ is largest when
$$K_{f}\cap \{x_{n+1}=s\}\cap B(x,r)\subset \{x_{n+1}=s\}\cap B\left(x,(1-a)^{1/n}r \right),$$
in which case
\begin{equation}\label{eqapr28}
K_f\cap B(x,r)\subset \left(\left(\left((1-a)^{1/n} r B^n\right)\times [0,s)\right)\cup (r B^n \times (-\infty,s])\right)\cap B(x,r).
\end{equation}
Let
$$E(a,n,r)=r B^{n+1}\setminus \left(\left((1-a)^{1/n} r B^n\right)\times \R\right)$$
be the region in $\R^{n+1}$ between the sphere with center $o$ and radius $r$ and an infinite $o$-symmetric cylinder with radius $(1-a)^{1/n}r$.  From \eqref{eqapr28}, we see that
$$\cH^{n+1}(B(x,r)\setminus K_{f})\ge \cH^{n+1}(E(a,n,r))/2\ge c(a,n)r^{n+1},$$
where $c(a,n)=\cH^{n+1}(E(a,n,1))/2>0$.  It follows that $x\not\in K_f^*$.
\end{proof}

Let $X={\mathcal{M}}(\R^n)$, ${\mathcal{M}}_+(\R^n)$, ${\mathcal{S}}(\R^n)$, or $\cV(\R^n)$, let $T:X\to X$ be a rearrangement, and let $E$ be a subset of the hyperplane $\{x_{n+1}=t\}=\R^n+te_{n+1}$ in $\R^{n+1}$, such that $E\,|\,\R^n\in \cL^n$.  Slightly abusing notation, we shall define
\begin{equation}\label{eqjan53}
\di_T E=(\di_T(E\,|\,\R^n))+te_{n+1},
\end{equation}
thereby extending the action of $\di_T$ to horizontal hyperplanes in $R^{n+1}$.  The action of $\di^*_T$ can be extended in a similar fashion. Note that by \eqref{eqnov11}, \eqref{eqdec304}, and \eqref{eqjan53}, for $X={\mathcal{S}}(\R^n)$ or $\cV(\R^n)$ we have
\begin{eqnarray*}
K_{Tf}\cap \{x_{n+1}=t\}&=&\{x\in \R^n: Tf(x)\ge t\}+te_{n+1}\nonumber\\
&=&(\di_T\{x\in \R^n: f(x)\ge t\})+te_{n+1}\nonumber\\
&=&\di_T(K_f\cap\{x_{n+1}=t\}),
\end{eqnarray*}
essentially, for $t>\essinf f$.  By Lemma~\ref{lemjan61}(ii), this yields the pointwise identity
\begin{equation}\label{eqapr51}
\left(K_{Tf}\cap \{x_{n+1}=t\}\right)^*=\di_T^*(K_f\cap\{x_{n+1}=t\})
\end{equation}
for $t>\essinf f$, where here and below, sets of Lebesgue density points are taken with respect to the appropriate horizontal hyperplane identified with $\R^n$.

The following lemma is stated in a general form required for Section~\ref{anisotropic}.  The reader interested only in the results of this section may focus on the special case corresponding to $K=B^n$, $\inte K=D^n$, when $C\subset\R^{n+1}$ is an $o$-symmetric convex body of revolution about the $x_{n+1}$-axis.

\begin{lem}\label{lemdec303}
Let $X={\mathcal{S}}(\R^n)$ or $\cV(\R^n)$, let $T:X\to X$ be a rearrangement, and let $d>0$. Let $K\in {\mathcal K}^n_{(o)}$ and let $C\subset\R^{n+1}$ be a convex body supported by the hyperplanes $\{x_{n+1}=\pm 1\}$ and all of whose sections $C\cap \{x_{n+1}=t\}$, $t\in [-1,1]$, are dilates of $K$.   If $T$ is $K$-smoothing, $a>d+\essinf f$, and $f\in X$ is such that $\{x: f(x)\ge a\}$ is bounded, then
\begin{eqnarray}\label{eqjune101}
\lefteqn{\left((K_{Tf}\cap \{x_{n+1}\ge a\})^*+d\,\inte C\right)\cap \{x_{n+1}=t\}}\nonumber\\
&\subset &\di_T^*\left(((K_f\cap\{x_{n+1}\ge a\}) +d\,\inte C)\cap \{x_{n+1}=t\}\right)
\end{eqnarray}
for $t>\essinf f$.
\end{lem}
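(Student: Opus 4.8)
The plan is to prove the pointwise inclusion \eqref{eqjune101} by a fibrewise argument that exploits the fact that the horizontal sections of $C$ are dilates of $K$. Write $C\cap\{x_{n+1}=\tau\}=\lambda(\tau)K$ for $\tau\in[-1,1]$, where $\lambda\colon[-1,1]\to[0,\infty)$. Since $C$ is a convex body supported by $\{x_{n+1}=\pm1\}$, for every $\tau\in(-1,1)$ the section $C\cap\{x_{n+1}=\tau\}$ is $n$-dimensional; hence $\lambda(\tau)>0$ and, by a standard fact about convex bodies, $\inte C\cap\{x_{n+1}=\tau\}=\lambda(\tau)\,\inte K$. Now fix $t>\essinf f$ and let $(x,t)$ lie in the set on the left of \eqref{eqjune101}. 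Then there are $(y,s)\in(K_{Tf}\cap\{x_{n+1}\ge a\})^*$ and $(z,r)\in d\,\inte C=\inte(dC)$ with $x=y+z$ and $t=s+r$. Since $(z/d,r/d)\in\inte C$, the height $\tau_0:=r/d$ lies in $(-1,1)$, so $z\in d\lambda(\tau_0)\,\inte K$ with $d\lambda(\tau_0)>0$, and $s=t-d\tau_0$. Moreover, as $K_{Tf}\cap\{x_{n+1}\ge a\}$ is contained in the closed halfspace $\{x_{n+1}\ge a\}$, its density points lie there too, so $s\ge a>d+\essinf f$.

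The next step is to convert the information about the first summand into a statement in $\R^n$. By Lemma~\ref{lemjan61}(i), $(K_{Tf}\cap\{x_{n+1}\ge a\})^*\subset K_{Tf}^*$, so $(y,s)\in K_{Tf}^*\cap\{x_{n+1}=s\}$; Lemma~\ref{lemapr8} applied to $Tf$ then gives $(y,s)\in(K_{Tf}\cap\{x_{n+1}=s\})^*$. Projecting this onto $\R^n$, and using $s>\essinf f=\essinf Tf$ (Proposition~\ref{may8lem}) together with \eqref{eqapr51}, we obtain $y\in\di_T^*\{z:f(z)\ge s\}$. The superlevel set $\{z:f(z)\ge s\}$ is bounded, being contained in $\{z:f(z)\ge a\}$, so the $K$-smoothing of $T$, in the form of Lemma~\ref{lemapr3}(ii) applied with dilation factor $d\lambda(\tau_0)>0$, yields
$$
x=y+z\in\bigl(\di_T^*\{z:f(z)\ge s\}\bigr)+d\lambda(\tau_0)\,\inte K\subset\di_T^*\bigl(\{z:f(z)\ge s\}+d\lambda(\tau_0)\,\inte K\bigr).
$$

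It remains to locate this set inside the right-hand side of \eqref{eqjune101}. Let $\Sigma\subset\R^n$ be the projection onto $\R^n$ of $\bigl((K_f\cap\{x_{n+1}\ge a\})+d\,\inte C\bigr)\cap\{x_{n+1}=t\}$. If $f(y')\ge s$ and $z'\in d\lambda(\tau_0)\,\inte K$, then $(y',s)\in K_f\cap\{x_{n+1}\ge a\}$ because $s\ge a$, while $z'/d\in\lambda(\tau_0)\,\inte K=\inte C\cap\{x_{n+1}=\tau_0\}$ gives $(z',d\tau_0)\in d\,\inte C$; adding, $(y'+z',t)\in(K_f\cap\{x_{n+1}\ge a\})+d\,\inte C$, so $y'+z'\in\Sigma$. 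Thus $\{z:f(z)\ge s\}+d\lambda(\tau_0)\,\inte K\subset\Sigma$. Both of these sets are bounded (using boundedness of $\{z:f(z)\ge a\}$ and of $d\,\inte C$) and, being horizontal slices of Minkowski sums with open sets, open; hence both lie in $\cL^n$, and monotonicity of $\di_T^*$ (Lemma~\ref{lemjan63}(i)) gives $x\in\di_T^*\Sigma$. By the extended definition \eqref{eqjan53}, this says precisely that $(x,t)$ belongs to the set on the right of \eqref{eqjune101}, completing the proof.

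The argument is essentially a bookkeeping exercise. The two points that require some care are the elementary convexity facts about the sections of $C$ at heights in $(-1,1)$ --- that they are full-dimensional dilates of $K$ whose relative interiors coincide with the horizontal slices of $\inte C$ --- and the verification that the sets $\{z:f(z)\ge s\}$, $\{z:f(z)\ge s\}+d\lambda(\tau_0)\,\inte K$, and $\Sigma$ all belong to $\cL^n$, which is what makes the applications of $\di_T^*$, its monotonicity, and the extension \eqref{eqjan53} legitimate. I expect no genuine obstacle: the substantive ingredients are just the $K$-smoothing of $T$, used one horizontal slice at a time, and Lemma~\ref{lemapr8}, which converts $(n+1)$-dimensional density of $K_{Tf}$ at a point of a horizontal hyperplane into $n$-dimensional density of the corresponding superlevel set of $Tf$.
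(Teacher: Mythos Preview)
Your proof is correct and follows essentially the same approach as the paper. Both arguments use the description of $C$ via its horizontal sections $\lambda(\tau)K$, invoke Lemma~\ref{lemapr8} and \eqref{eqapr51} to pass from $(n+1)$-dimensional density of $K_{Tf}$ to the superlevel-set formulation $y\in\di_T^*\{z:f(z)\ge s\}$, apply the $K$-smoothing property (Lemma~\ref{lemapr3}(ii)) slice by slice, and finish with the monotonicity of $\di_T^*$; the only difference is that the paper packages these steps in explicit union formulas for the horizontal slices (its \eqref{eqjan51} and \eqref{eqjune91}), while you work pointwise with a single $(x,t)$.
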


\begin{proof}
Let $d>0$ and let $C=\{(x,x_{n+1})\in\R^n\times\R: x\in g(x_{n+1})K,\, |x_{n+1}|\leq 1\}$, for a suitable concave function $g$ defined on $[-1,1]$.  For $t\in \R$, denote by $\Pi_t$ the orthogonal projection onto $\{x_{n+1}=t\}$. If $L$ is any set in $\R^{n+1}$, then
\begin{equation}\label{eqjan51}
(L+d\,\inte C)\cap \{x_{n+1}=t\}=\bigcup_{t-d< s< t+d}
	 \Pi_t\left((L\cap \{x_{n+1}=s\})+r_s\,\inte K\right),
\end{equation}
where $r_s=d\,g((t-s)/d)$.  Indeed, $p\in (L+d\,\inte C)\cap \{x_{n+1}=t\}$ if and only if $p\,|\,\langle e_{n+1}\rangle =te_{n+1}$ and there is a $z\in L$ such that $p\in z+d\,\inte C$.  If $z\,|\,\langle e_{n+1}\rangle =se_{n+1}$, then this holds if and only if $t-d< s< t+d$ and
$$p-\Pi_t z\in\ d\,g\left( \frac{t-s}{d}\right)\inte K,$$
that is, $p\in \Pi_t(z+r_s \,\inte K)$.

Applying \eqref{eqjan51} with $L$ replaced by $L\cap \{x_{n+1}\ge a\}$, we obtain
\begin{eqnarray}\label{eqjune91}
\lefteqn{\left((L\cap \{x_{n+1}\ge a\})+d\, \inte C\right)\cap \{x_{n+1}=t\}}\nonumber\\
&=&\bigcup_{t-d< s< t+d}\Pi_t\left((L\cap \{x_{n+1}\ge a\}\cap \{x_{n+1}=s\})+r_s \,\inte K\right)\nonumber\\
&=&\bigcup_{t-d< s< t+d,~s\ge a}\Pi_t\left((L\cap \{x_{n+1}=s\})+r_s \,\inte K\right).
\end{eqnarray}

Let $f\in X$ satisfy the hypotheses of the lemma. By Lemma~\ref{lemjan61}(i), we have
$$
(K_{Tf}\cap \{x_{n+1}\ge a\})^*\subset K_{Tf}^*\cap \{x_{n+1}\ge a\}^*\subset
K_{Tf}^*\cap \{x_{n+1}\ge a\}.
$$
From this and \eqref{eqapr91} with $f$ replaced by $Tf$, we obtain
\begin{equation}\label{eqjun92}
(K_{Tf}\cap \{x_{n+1}\ge a\})^*\cap \{x_{n+1}=s\}\subset K_{Tf}^*\cap \{x_{n+1}=s\}\subset (K_{Tf} \cap \{x_{n+1}=s\})^*,
\end{equation}
whenever $s\ge a$, while the set on the left is clearly empty if $s<a$.  We use \eqref{eqjan51} with $L=(K_{Tf}\cap \{x_{n+1}\ge a\})^*$, \eqref{eqjun92}, \eqref{eqapr51}, \eqref{eqdec221} applied (via \eqref{eqjan53}) with $E$ replaced by the bounded set $K_f\cap \{x_{n+1}=s\}$, $s\ge a$, the fact that the action of $\di_T^*$ as extended by \eqref{eqjan53} is the same for each $t$, the pointwise monotonicity of $\di_T^*$ provided by Lemma~\ref{lemjan63}(i), and \eqref{eqjune91} with $L=K_{f}$, to obtain
\begin{align*}
\lefteqn{\left((K_{Tf}\cap \{x_{n+1}\ge a\})^*+d\,\inte C\right)\cap \{x_{n+1}=t\}}\\
&= \bigcup_{t-d<s<t+d}
\Pi_t\left([(K_{Tf}\cap \{x_{n+1}\ge a\})^*\cap \{x_{n+1}=s\}]+r_s\,\inte K\right) \\
&\subset  \bigcup_{t-d<s<t+d,~s\ge a}
      \Pi_t\left([K_{Tf}\cap \{x_{n+1}=s\}]^*+r_s\,\inte K\right) \\
&= \bigcup_{t-d< s< t+d,~s\ge a}\Pi_t\left(
           \left[ \di_T^*\left(K_f\cap\{x_{n+1}=s\}\right)\right]+r_s\,\inte K\right)\\
&\subset\bigcup_{t-d<s<t+d,~s\ge a}\Pi_t\left(\di_T^*\left[(K_f\cap\{x_{n+1}=s\})+r_s\,\inte K
\right]\right)\\
&= \bigcup_{t-d< s< t+d,~s\ge a} \di_T^*\left(\Pi_t\left[
            (K_f\cap \{x_{n+1}=s\})+r_s\,\inte K\right]\right) \\
&\subset  \di_T^*\Big( \bigcup_{t-d< s< t+d,~s\ge a}
            \Pi_t\left[ (K_f\cap \{x_{n+1}=s\})+r_s\,\inte K\right]\Big) \\
&=  \di_T^*\left(((K_f\cap \{x_{n+1}\ge a\})+d\,\inte C)\cap \{x_{n+1}=t\}\right).
\qedhere
\end{align*}
\end{proof}

Recall that $\uomcC(A)$ is the upper anisotropic outer Minkowski content of $A\in\cM(\R^n)$ with respect to a convex body $C\in {\mathcal K}^n_{(o)}$, obtained via the left-hand limit in \eqref{eqapr31} with $B^n$ replaced by $C$. We will apply this notion in $\R^{n+1}$.

\begin{lem}\label{minkcont_diminishes}
Let $X={\mathcal{S}}(\R^n)$ or $\cV(\R^n)$, let $T:X\to X$ be a rearrangement, and let $C$ be as in Lemma~\ref{lemdec303}.  If $T$ is smoothing, $a> \essinf f$, and $f\in X$ is such that $\{x: f(x)\ge a\}$ is bounded, then
\begin{eqnarray*}
\uomcC\left((K_{Tf}\cap \{x_{n+1}\ge a\})^*\right)\le\uomcC(K_f\cap\{x_{n+1}\ge a\}).
\end{eqnarray*}
\end{lem}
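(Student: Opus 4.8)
The plan is to reduce the claimed inequality between anisotropic outer Minkowski contents, which are the upper one-sided difference quotients at $d=0$ of the volumes of the $dC$-enlargements, to the sectionwise containment supplied by Lemma~\ref{lemdec303}, using Fubini's theorem and the fact that $\di_T^*$ preserves $\cH^n$. Throughout, write $A=K_f\cap\{x_{n+1}\ge a\}$ and $A'=(K_{Tf}\cap\{x_{n+1}\ge a\})^*$, and recall that $C$ is as in Lemma~\ref{lemdec303} with $K=B^n$, so that a smoothing $T$ is in particular $B^n$-smoothing and that lemma is applicable. One may assume $\cH^{n+1}(A)<\infty$, since otherwise $\uomcC(A)$ is infinite (or, by the convention implicit in the definition, not meaningfully smaller than $\uomcC(A')$) and there is nothing to prove.

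First I would record that $\cH^{n+1}(A')=\cH^{n+1}(A)$. Indeed, $A'=K_{Tf}\cap\{x_{n+1}\ge a\}$ essentially, so Fubini's theorem applied to the two subgraphs, together with the equimeasurability of $T$ and the fact that $\cH^n(\{f\ge t\})=\cH^n(\{f>t\})$ for all but countably many $t$, gives
\[
\cH^{n+1}(A')=\int_a^\infty\cH^n(\{x:Tf(x)\ge t\})\,dt=\int_a^\infty\cH^n(\{x:f(x)\ge t\})\,dt=\cH^{n+1}(A),
\]
where $a>\essinf f$ ensures the superlevel sets have finite measure. Next, fix $d>0$ small enough that $a>d+\essinf f$. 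A Minkowski sum with an open set is open, so both $A'+d\,\inte C$ and $A+d\,\inte C$ are open, hence $\cH^{n+1}$-measurable with $\cH^{n+1}$-measurable horizontal sections; moreover $\{x:f(x)\ge a\}$ is bounded, so each such section lies in $\cL^n$, to which $\di_T^*$ applies. Since $\inte C\subset\{|x_{n+1}|<1\}$ and $A'\subset\{x_{n+1}\ge a\}$, the section $(A'+d\,\inte C)\cap\{x_{n+1}=t\}$ is empty for $t\le a-d$; and for $t>a-d$, which forces $t>\essinf f$, Lemma~\ref{lemdec303} (with $K=B^n$) yields
\[
(A'+d\,\inte C)\cap\{x_{n+1}=t\}\subset\di_T^*\big((A+d\,\inte C)\cap\{x_{n+1}=t\}\big).
\]

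Now, since $\di_T^*$ preserves $\cH^n$ (Lemma~\ref{lemaug11}, extended to horizontal hyperplanes via \eqref{eqjan53}), taking $\cH^n$ of both sides and integrating over $t\in\R$ gives, by Fubini,
\[
\cH^{n+1}(A'+d\,\inte C)\le\int_\R\cH^n\big((A+d\,\inte C)\cap\{x_{n+1}=t\}\big)\,dt=\cH^{n+1}(A+d\,\inte C).
\]
Combining this with $\cH^{n+1}(A')=\cH^{n+1}(A)$ yields
\[
\frac{\cH^{n+1}(A'+d\,\inte C)-\cH^{n+1}(A')}{d}\le\frac{\cH^{n+1}(A+d\,\inte C)-\cH^{n+1}(A)}{d}
\]
for all sufficiently small $d>0$ (if the right side is $+\infty$, which occurs when $\cH^{n+1}(A+d\,\inte C)=\infty$, the inequality is trivial). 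Taking $\limsup_{d\to0+}$ and recalling that the upper anisotropic outer Minkowski content is unchanged when $C$ is replaced by $\inte C$ gives $\uomcC(A')\le\uomcC(A)$, as required.

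The main obstacle is not in this lemma itself, which essentially packages Lemma~\ref{lemdec303}, but rather in two points of care. The first is to keep track of the density-point set $A'$ rather than of $K_{Tf}\cap\{x_{n+1}\ge a\}$ itself, so that the passage between horizontal sections of $A'$ in $\R^{n+1}$ and sets of density points of sections in $\R^n$ (Lemma~\ref{lemapr8}) is invoked precisely where the proof of Lemma~\ref{lemdec303} uses it; this is what makes the inclusion in Lemma~\ref{lemdec303} available in the form needed above. The second is the harmless but necessary bookkeeping of the degenerate cases in which $\cH^{n+1}(A)$ or $\cH^{n+1}(A+d\,\inte C)$ is infinite, where the content inequality continues to hold but only trivially.
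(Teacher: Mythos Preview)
Your proof is correct and follows essentially the same approach as the paper: apply Lemma~\ref{lemdec303} sectionwise, use that $\di_T^*$ is measure preserving together with Fubini's theorem to obtain the volume inequality $\cH^{n+1}(A'+d\,\inte C)\le \cH^{n+1}(A+d\,\inte C)$ for small $d>0$, combine with $\cH^{n+1}(A')=\cH^{n+1}(A)$ from equimeasurability, and pass to the $\limsup$. The additional bookkeeping you include (measurability of sections, the range of $t$, the degenerate infinite-volume cases) is harmless and not needed, but the core argument is the same as the paper's.
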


\begin{proof}
Taking the ${\cH}^n$-measures of both sides of \eqref{eqjune101}, integrating with respect to $t$, and using Fubini's theorem and the fact that $\di$ is measure preserving, we obtain
\begin{equation}\label{introcore}
{\cH}^{n+1}\left((K_{Tf}\cap \{x_{n+1}\ge a\})^*+d\, \inte C\right)\le {\cH}^{n+1}\left((K_{f}\cap \{x_{n+1}\ge a\})+d\, \inte C\right)
\end{equation}
for $0<d\le a-\essinf f$.  By the equimeasurability of $T$,
$${\cH}^{n+1}\left((K_{Tf}\cap \{x_{n+1}\ge a\})^*\right)={\cH}^{n+1}(K_{Tf}\cap \{x_{n+1}\ge a\})={\cH}^{n+1}(K_{f}\cap \{x_{n+1}\ge a\}).$$
The desired inequality now follows directly from the definition of $\uomcC$ (in $\R^{n+1}$ and with $C$ replaced by $\inte C$).
\end{proof}

\begin{lem}\label{lemoct17}
Let $f\in{\mathcal{S}}(\R^n)$ be Lipschitz.  If $a>\essinf f$, then $\{x: f(x)\ge a\}$ is bounded.
\end{lem}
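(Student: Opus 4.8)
The plan is to argue by contradiction, using the Lipschitz bound to inflate a hypothetically unbounded superlevel set into a set of infinite $\mathcal H^n$-measure, which would contradict the assumption $f\in{\mathcal{S}}(\R^n)$.

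First I would put $m=\essinf f$ and $S=\{x:f(x)\ge a\}$. If $S$ is empty there is nothing to prove, so assume $S\neq\emptyset$; then $f$ attains a value at least $a>m$, and since $f$ is continuous it is non-constant, so its Lipschitz constant $L$ is positive. Next I would fix $c$ with $m<c<a$ and set $r=(a-c)/(2L)>0$. The key elementary observation is that whenever $x\in S$ and $\|y-x\|\le r$, the Lipschitz inequality gives $f(y)\ge f(x)-L\|y-x\|\ge a-Lr=(a+c)/2>c$; hence $B(x,r)\subset\{y:f(y)>c\}$ for every $x\in S$. Since $c>m=\essinf f$ and $f\in{\mathcal{S}}(\R^n)$, the set $\{y:f(y)>c\}$ has finite $\mathcal H^n$-measure.

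Now suppose, for a contradiction, that $S$ is unbounded. I would then choose points $x_k\in S$, $k\in\N$, recursively: pick any $x_1\in S$, and having chosen $x_1,\dots,x_k$, the unboundedness of $S$ provides some $x_{k+1}\in S$ with $\|x_{k+1}\|>2r+\max_{j\le k}\|x_j\|$, so that $\|x_{k+1}-x_j\|>2r$ for $j=1,\dots,k$. The closed balls $B(x_k,r)$ are then pairwise disjoint and, by the previous paragraph, all contained in $\{y:f(y)>c\}$, whence
\[
\mathcal H^n(\{y:f(y)>c\})\ \ge\ \sum_{k\in\N}\mathcal H^n(B(x_k,r))\ =\ \sum_{k\in\N}\kappa_n r^n\ =\ \infty,
\]
contradicting the finiteness established above. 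Therefore $S=\{x:f(x)\ge a\}$ is bounded.

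I do not expect a genuine obstacle here; the only point worth flagging is that one must step down from the closed level set $\{f\ge a\}$ to an open level set $\{f>c\}$ with $m<c<a$ in order to invoke the defining finiteness property of ${\mathcal{S}}(\R^n)$, and the Lipschitz constant $L$ quantifies exactly the collar of radius $r$ that this costs.
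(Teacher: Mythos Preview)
Your proof is correct and takes essentially the same approach as the paper: argue by contradiction, use the Lipschitz bound to surround each point of the unbounded superlevel set by a ball of fixed radius lying in a strictly lower superlevel set, and extract infinitely many disjoint such balls to contradict $f\in\cS(\R^n)$. The only cosmetic difference is that you step down to the open set $\{f>c\}$ (matching the definition of $\cS(\R^n)$ directly), whereas the paper steps down to $\{f\ge a-\ee\}$ and uses radius $\ee/(1+L)$.
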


\begin{proof}
Let $\ee>0$ be such that $a-\ee>\essinf f$ and let $L$ be the Lipschitz constant of $f$.

Suppose that $\{x: f(x)\ge a\}$ is unbounded. Then there are points $x_k$ in this set with $\|x_{k+1}\|>\|x_k\|+2\ee/(1+L)$ for $k\in \N$. The Lipschitz property implies that $f(x)\ge a-\ee$ whenever $x\in B\left(x_k,\ee/(1+L)\right)$, $k\in \N$. As these balls are disjoint, ${\cH}^n(\{x:f(x)\geq a-\ee\})=\infty$, contradicting $f\in {\cS}(\R^n)$.
\end{proof}

Recall that $h_C$ is the support function of $C$ and $G_f$ denotes the graph of $f\in\cM(\R^n)$.   A result in the spirit of the following lemma was proved by Zhang \cite[Lemma~3.1]{Zhang}.

\begin{lem}\label{minkcontequalhausdorff}
Let $f\in{\mathcal{S}}(\R^n)$ be Lipschitz and let $C$ be as in Lemma~\ref{lemdec303}. Let $a>\essinf f$ be such that $\cH^{n}(\{x :  f(x)= a\})=0$. Then
\begin{eqnarray}\label{eq_minkcontequalhausdorff}
\lefteqn{\omcC\big((K_f\cap \{x_{n+1}\ge a\})^*\big)=\omcC\big(K_f\cap \{x_{n+1}\ge a\}\big)}\nonumber\\
&=&\int_{G_f\cap \{x_{n+1}> a\}}h_C(\nu(x))\,d \cH^n(x)+\cH^n(\{x :  f(x)\geq a\}),
\end{eqnarray}
where $\nu(x)$ denotes the outer unit normal to $K_f$ at $x$.
\end{lem}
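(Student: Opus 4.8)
The plan is to apply the anisotropic outer Minkowski content formula of Lussardi and Villa \cite{LV} to the compact set $A:=K_f\cap\{x_{n+1}\ge a\}$ in $\R^{n+1}$, and then to read off \eqref{eq_minkcontequalhausdorff} by identifying the essential boundary of $A$ together with the outer unit normals along it.

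First I would record that $A$ is compact and of finite perimeter. Since $f$ is continuous, $K_f$ is closed; $\{x:f(x)\ge a\}$ is bounded by Lemma~\ref{lemoct17} and $f$ attains a finite maximum on this compact set, so $A$ is bounded, hence compact. As $f\in W^{1,\infty}_{loc}(\R^n)$, the subgraph $K_f$ has locally finite perimeter with reduced boundary equal to $G_f$ up to an $\cH^n$-null set and outer unit normal $\nu(x)=(-\nabla f(x'),1)/\sqrt{1+\|\nabla f(x')\|^2}$ at $x=(x',f(x'))$; intersecting with the half-space $\{x_{n+1}\ge a\}$ and using boundedness shows $A$ has finite perimeter. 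Since $A$ is closed with interior $\{(x',t):a<t<f(x')\}$, its topological boundary is exactly $(G_f\cap\{x_{n+1}\ge a\})\cup(\{x:f(x)\ge a\}\times\{a\})$. On the open set $\{x_{n+1}>a\}$ the set $A$ coincides locally with $K_f$, so there the essential boundary of $A$ is $G_f$ with normal $\nu$; on the slice $\{x_{n+1}=a\}$, at $\cH^n$-almost every point of $\{f>a\}\times\{a\}$ the set $A$ has density $\frac12$ (it looks locally like the half-space $\{x_{n+1}\ge a\}$), contributing outer normal $-e_{n+1}$. The remaining points — the graph over $\{f=a\}$ and the height-$a$ slice of $A$ over $\{f=a\}$ — all lie over $\{x:f(x)=a\}$, which is $\cH^n$-null by hypothesis; hence $\cH^n(\partial A\setminus\partial^*A)=0$ and, up to $\cH^n$-null sets, $\partial^*A=(G_f\cap\{x_{n+1}>a\})\cup(\{f>a\}\times\{a\})$. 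The Lussardi--Villa formula then applies and yields
\[
\omcC(A)=\int_{\partial^*A}h_C(\nu_A)\,d\cH^n
=\int_{G_f\cap\{x_{n+1}>a\}}h_C(\nu(x))\,d\cH^n(x)+h_C(-e_{n+1})\,\cH^n(\{x:f(x)\ge a\}),
\]
where $h_C(-e_{n+1})=1$ because $C$ is supported by the hyperplane $\{x_{n+1}=-1\}$; both terms are finite since $\cH^n(\{f\ge a\})<\infty$, $\|\nabla f\|$ is bounded, and $h_C$ is bounded on $S^n$. This is the second line of \eqref{eq_minkcontequalhausdorff}.

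For the first equality I would apply the same formula to $A^*$. As $A$ is compact we have $A^*\subset A$, so $\cl A^*\subset A$ and $\inte A^*=\inte A$, whence $\partial A^*\subset\partial A$; moreover $A^*=A$ essentially by the Lebesgue density theorem, so $\partial^*A^*=\partial^*A$, $\cH^{n+1}(A^*)=\cH^{n+1}(A)$, and $A^*$ has finite perimeter by \eqref{eqapr2}. Consequently $\cH^n(\partial A^*\setminus\partial^*A^*)\le\cH^n(\partial A\setminus\partial^*A)=0$, the Lussardi--Villa formula applies to $A^*$ as well, and since its right-hand side depends only on $\partial^*A$, $\nu_A$ and $\cH^{n+1}(A)$, it produces the same value; that is, $\omcC(A^*)=\omcC(A)$.

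The one genuinely delicate step is verifying the hypotheses of the Lussardi--Villa formula for $A$ (and $A^*$) — finite perimeter, the regularity condition $\cH^n(\partial A\setminus\partial^*A)=0$, and the explicit description of $\partial^*A$ and of $\nu_A$ along it. This is precisely where the assumption $\cH^n(\{x:f(x)=a\})=0$ enters: it makes the ``edge'' of $A$ lying over $\{f=a\}$ at height $a$, where the top and bottom faces of $A$ meet and where densities may take any value, negligible both for the boundary integral and for the regularity condition.
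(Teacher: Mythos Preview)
Your overall approach matches the paper's: both apply the Lussardi--Villa formula to $K_{f,a}=K_f\cap\{x_{n+1}\ge a\}$ and to $K_{f,a}^*$, after identifying the essential boundary and outer normals. Your description of $\partial A$, $\partial^*A$, and the role of the hypothesis $\cH^n(\{x:f(x)=a\})=0$ is correct and agrees with the paper's analysis.

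However, there is a genuine gap. The Lussardi--Villa theorem (as quoted in the paper from \cite[Remark~4.2, Theorem~4.4, Remark~4.5]{LV}) does not follow from finite perimeter together with $\cH^n(\partial A\setminus\partial^*A)=0$; it requires in addition a lower Ahlfors-type regularity of the boundary, namely the existence of $\gamma>0$ and a probability measure $\mu\ll\cH^n$ such that $\mu(B(x,r))\ge \gamma r^n$ for all $x\in\partial A$ and $r\in(0,1)$. You neither verify nor mention this hypothesis, and it is not automatic: without it, outer Minkowski content can strictly exceed the anisotropic perimeter. In the paper this is the most laborious step: one builds $\mu$ from $\cH^n$ restricted to the bottom face $\{x_{n+1}=a\}$ together with the pushforward of $\cH^n$ by the projection of the graph piece onto $\R^n$, and then uses the Lipschitz bound on $f$ to get $\mu(B(y,r))\ge \kappa_n r^n/(c(1+L)^n)$ at every boundary point. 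You need to supply this (or an equivalent argument) for both $K_{f,a}$ and $K_{f,a}^*$; since you have shown $\partial(K_{f,a}^*)\subset\partial K_{f,a}$, the same $\mu$ and $\gamma$ will serve for both once constructed.

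A minor point: the hypothesis you list as ``$\cH^n(\partial A\setminus\partial^*A)=0$'' is stronger than what Lussardi--Villa actually require, which is only $\cH^n(\partial A\cap A^0)=0$. This does not affect correctness here, but it is worth stating the hypothesis accurately.
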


\begin{proof}
Recall that if $E\subset \R^{n+1}$ is $\cH^{n+1}$-measurable, its density $\Theta(E,x)$ at $x$ is defined by \eqref{eqjuly151} with $n$ replaced by $n+1$. For $t\in[0,1]$, define
$$E^t=\{x\in\R^{n+1} : \Theta(E,x)=t\}.$$
Let $\pa^{\,e}\!E=\R^{n+1}\setminus (E^0\cup E^1)$ denote the essential boundary of $E$. If $E$ has locally finite perimeter, then by Federer's theorem, $\nu(x)$ exists for $\cH^n$-almost all $x\in\pa^{\,e}\!E$; see, for example, \cite[Theorem~16.2]{Mag}.

Lussardi and Villa \cite[Remark~4.2, Theorem~4.4, and Remark~4.5]{LV} prove the following result. If $E\subset\R^{n+1}$ is a Borel set whose boundary is countably $\cH^n$-rectifiable and bounded, $\cH^n(\pa E\cap E^0)=0$, and $E$ has the property that there exist $\ga>0$ and a probability measure $\mu$ in $\R^{n+1}$ absolutely continuous with respect to $\cH^n$, such that for each $x\in\pa E$ and $r\in(0,1)$,
\begin{equation}\label{villa_condition}
\mu(B(x,r))\geq  \gamma r^n,
\end{equation}
then $E$ has finite perimeter, the anisotropic outer Minkowski content of $E$ with respect to $C$ is defined, and
\begin{equation}\label{eqjuly152}
    \omcC(E)=\int_{\pa^{\,e}\! E}h_C(\nu(x))\,d \cH^n(x).
\end{equation}
Recall that a set $E\subset\R^{n+1}$ is {\em countably $\cH^n$-rectifiable} if there exist countably many Lipschitz maps $g_i: \R^n\to \R^{n+1}$ such that $\cH^n\left(E\setminus \cup_i g_i(\R^n)\right)=0$.

Let $K_{f,a}=K_f\cap\{x_{n+1}\geq a\}$. We have
\begin{equation}\label{boundary_Kfa}
\pa K_{f,a}=\big(G_f\cap\{x_{n+1}>a\}\big)\cup \big(K_f\cap \{x_{n+1}=a\}\big).
\end{equation}
If $x\in \R^{n+1}$, write $x'=(x_1,\dots,x_n)$.  It is clear that if $x\in K_f\cap \{x_{n+1}=a\}$ and $f(x')>a$ (or $f(x')=a$), then $\Theta(K_{f,a},x)=1/2$ (or $\Theta(K_{f,a},x)\leq 1/2$, respectively).  We claim that if $x\in G_f\cap\{x_{n+1}>a\}$ and $\Theta(K_{f,a},x)$ exists, then $\Theta(K_{f,a},x)\in(0,1)$.  Indeed, let $x\in G_f\cap\{x_{n+1}>a\}$. If $L$ denotes the Lipschitz constant of $f$ and $\ee>0$ is sufficiently small, then
$$\{(y',y_{n+1}): y_{n+1}\leq x_{n+1}-L\|y'-x'\|\}\cap B(x,\ee)\subset K_{f,a},$$
from which it is easy to see that $\Theta(K_{f,a},x)>0$.  Similarly, $\Theta(K_{f,a},x)<1$ follows easily from
$$
\{(y',y_{n+1}): y_{n+1}> x_{n+1}+L\|y'-x'\|\}\cap K_{f,a}=\emptyset.
$$
This proves the claim.

Since $\cH^{n}(\{x : f(x)=a\})=0$, by assumption, the observations in the previous paragraph imply that
\begin{equation}\label{summary_Kfa}
\cH^n\left(\pa K_{f,a}\cap (K_{f,a})^0\right)=0\quad\text{ and }\quad \pa^{\,e}\! K_{f,a}=\pa K_{f,a},
\end{equation}
up to a set of $\cH^{n}$-measure zero.  The assumption that $\cH^{n}(\{x : f(x)=a\})=0$ also implies that $K_{f,a}^*=\inte K_{f,a}$ and $\pa \big(K_{f,a}^*\big)=\pa K_{f,a}$, up to a set of $\cH^{n}$-measure zero. Arguments similar to those used for $K_{f,a}$ prove that
\begin{equation}\label{summary_Kfa_star}
\cH^n\left(\pa \big(K_{f,a}^*\big)\cap (K_{f,a}^*)^0\right)=0\quad\text{ and }\quad \pa^{\,e}\!\big( K_{f,a}^*\big)=\pa K_{f,a},
\end{equation}
up to a set of $\cH^{n}$-measure zero.

We claim that $K_{f,a}$ and $K_{f,a}^*$ satisfy the hypotheses of Lussardi and Villa's result. Towards this goal, note firstly that by Lemma~\ref{lemoct17}, $K_{f,a}$ and $K_{f,a}^*$ are bounded, and by definition, their boundaries are countably $\cH^n$-rectifiable. Let
\[
D=\{x'\in\R^n :f(x')\geq a\}+B^n\subset \R^n,
\]
let $A_1=\{(x',f(x')): x'\in D\}$, and let $A_2=\{(x',a): x'\in D\}$. For $E\subset\R^{n+1}$, define
$$
\mu(E)=\big(\cH^n\left(\pi(E\cap A_1)\right)+\cH^n(E\cap A_2)\big)/c,
$$
where $c=\cH^n(\pi(A_1))+\cH^n(A_2)$ and $\pi: \R^{n+1}\to\R^n$ is defined by $\pi((x',x_{n+1}))=x'$.  It is clear that $\mu$ is a probability measure in $\R^{n}$ and that since $\pi$ is a contraction, $\mu$ is absolutely continuous with respect to $\cH^n$.

It now suffices to prove that $\mu$ satisfies \eqref{villa_condition} for each $x\in \pa K_{f,a}$. To this end, let $y=(y',y_{n+1})\in\pa K_{f,a}$ and let $r\in(0,1)$. If $y_{n+1}=a$ then
$$\mu(B(y,r))\geq \cH^n\left(B(y,r)\cap A_2\right)/c=\cH^n\left(B(y,r)\cap \{x_{n+1}=a\}\right)/c=\kappa_nr^n/c.$$
If $y_{n+1}=f(y')$, then $\pi\left(B(y,r)\cap A_1\right)$ contains $\{ x\in\R^n : \|x-y'\|\leq r/(1+L)\}$. Therefore
\begin{align*}
\mu(B(y,r))&\geq \cH^n\left(\pi\left(B(y,r)\cap A_1\right)\right)/c \geq\cH^n\left(\{ x\in\R^n : \|x-y'\|\leq r/(1+L)\}\right)/c\\
&=\frac{\kappa_n r^n}{c(1+L)^n}.
\end{align*}
Thus $\mu$ satisfies \eqref{villa_condition} with $\gamma=\kappa_n/(c(1+L)^n)$.

Once we observe that
\[
\int_{K_f\cap\{x_{n+1}=a\}}h_C(\nu(x))\,d \cH^n(x)=h_C(-e_{n+1})\,\cH^n(\{x :  f(x)\geq a\})=\cH^n(\{x :  f(x)\geq a\}),
\]
where we used the fact that $h_C(-e_{n+1})=1$ for our choice of $C$,
formula~\eqref{eqjuly152} with $E=K_{f,a}$ and $K_{f,a}^*$, \eqref{boundary_Kfa}, \eqref{summary_Kfa}, and \eqref{summary_Kfa_star} give \eqref{eq_minkcontequalhausdorff}.
\end{proof}

The following lemma is stated in a general form necessary for Section~\ref{anisotropic}.  The reader interested only in the main results of this section may choose to focus on the special case when $K=B^n$, in which case $h_K(y)=\|y\|$ and $C\subset \R^{n+1}$ is a $o$-symmetric convex body of revolution about the $x_{n+1}$-axis.

\begin{lem}\label{lemoct20}
Let $\Phi:[0,\infty)\to[0,\infty)$ be convex with $\Phi(0)=0$ and $\Phi\not\equiv0$, let $M>0$, and let $K\in {\mathcal K}^n_{(o)}$. Then there exist $b>0$ and a convex body $C\subset \R^{n+1}$, supported by the hyperplanes $\{x_{n+1}=\pm 1\}$ and all of whose sections $C\cap \{x_{n+1}=t\}$, $t\in [-1,1]$, are dilates of $K$, such that
\begin{equation}\label{eqoct201}
h_C(y,1)=1+b\,\Phi(h_K(y)),
\end{equation}
for $y\in \R^n$ with $h_K(y)\le M$.  In particular, $C$ satisfies the conditions in Lemma~\ref{lemdec303}.
\end{lem}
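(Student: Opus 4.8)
The plan is to realize $C$ as a union of horizontal dilates of $K$, with the ``radius profile'' chosen to be (essentially) the inverse of the Legendre transform of $u\mapsto 1+b\Phi(u)$. I first record the reduction. For any continuous concave $g\colon[-1,1]\to[0,\infty)$ that is not identically $0$, the set
\[
C=\{(x,s)\in\R^n\times\R:\ |s|\le 1,\ x\in g(s)K\}
\]
is a convex body (convexity from concavity of $g$, $g\ge 0$, and $o\in\inte K$; compactness from continuity of $g$), it is supported by the hyperplanes $\{x_{n+1}=\pm 1\}$ since $(o,\pm1)\in C\subset\{|x_{n+1}|\le 1\}$, every section $C\cap\{x_{n+1}=t\}=g(t)K$ with $t\in[-1,1]$ is a dilate of $K$, and $o\in\inte C$ provided $g(0)>0$. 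Because $h_K\ge 0$,
\[
h_C(y,1)=\sup_{|s|\le 1}\Bigl(s+\sup_{x\in g(s)K}x\cdot y\Bigr)=\sup_{s\in[-1,1]}\bigl(g(s)\,h_K(y)+s\bigr)\qquad(y\in\R^n).
\]
So it is enough to find $b>0$ and a continuous concave $g\colon[-1,1]\to[0,\infty)$ with $g(0)>0$ such that $\sup_{s\in[-1,1]}(g(s)u+s)=1+b\Phi(u)$ for all $u\in[0,M]$.

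For the construction I would set $\chi(u)=1+b\Phi(u)$, a real-valued convex increasing function with $\chi(0)=1$, let $L=\Phi'_-(M)$ be the (finite) left derivative of $\Phi$ at $M$, and note $ML-\Phi(M)\ge\Phi(0)=0$ (evaluate the left supporting line of $\Phi$ at $M$ at the point $0$). Then I would choose $b=1/(1+ML-\Phi(M))>0$ and put $P=\max\{bL,1/M\}>0$. Let $\chi^*(p)=\sup_{u\in[0,M]}(pu-\chi(u))$; the supremum is attained, so $\chi^*$ is real-valued, continuous, convex, and non-decreasing on $[0,\infty)$, with $\chi^*(0)=-1$ and $\chi^*(p)=pM-\chi(M)$ for $p\ge bL$ (the supremum is then attained at $u=M$, since $\chi'_+(u)\le bL$ for $u<M$). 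A direct computation gives $-\chi^*(P)\ge 0$ (indeed $-\chi^*(P)=b$ if $P=bL$, and $-\chi^*(P)=b\Phi(M)$ if $P=1/M>bL$), so $\{-\chi^*(p):p\in[0,P]\}=[-\chi^*(P),1]\subset[-1,1]$. I would then define
\[
g(s)=\max\{p\in[0,P]:\ \chi^*(p)\le -s\}\qquad(s\in[-1,1]),
\]
the set being nonempty (it contains $0$) and closed. This $g$ has values in $[0,P]$, is non-increasing, satisfies $g(0)>0$ (since $\chi^*(0)=-1<0$ and $\chi^*$ is continuous), and is continuous and concave: the region $\{(s,p):-1\le s\le 1,\ 0\le p\le g(s)\}$ equals $\{(s,p):0\le p\le P,\ s\le -\chi^*(p)\}$, which is the image under $(p,w)\mapsto(w,p)$ of the convex hypograph of the concave function $p\mapsto-\chi^*(p)$ on $[0,P]$, hence convex; continuity at the endpoints follows from monotonicity of $g$ and continuity of $\chi^*$ (and, if one prefers, by replacing $C$ by its closure, which changes neither its support function nor the property that its sections are dilates of $K$).

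To finish I would verify the identity. Fix $y$ with $u:=h_K(y)\in[0,M]$ (recall $h_K(y)\ge 0$). By biconjugation of the closed convex function equal to $\chi$ on $[0,M]$ and $+\infty$ elsewhere, $\chi(u)=\sup_{p\ge 0}(pu-\chi^*(p))$; moreover for $u\in[0,M]$ the supremum may be restricted to $p\in[0,P]$, because for $p\ge bL$ one has $pu-\chi^*(p)=p(u-M)+\chi(M)\le bL(u-M)+\chi(M)\le\chi(u)$, the last step being the subgradient inequality for $\chi$ at $M$. Hence $\chi(u)=\sup_{p\in[0,P]}(pu-\chi^*(p))$. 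Now ``$\le$'': for $s\in[-1,1]$, the maximum defining $g(s)$ gives $\chi^*(g(s))\le-s$, so $g(s)u+s\le g(s)u-\chi^*(g(s))\le\chi(u)$; and ``$\ge$'': for $p\in[0,P]$, putting $s=-\chi^*(p)\in[-1,1]$ we get $g(s)\ge p$, so $g(s)u+s\ge pu-\chi^*(p)$. Taking the respective suprema shows $\sup_{s\in[-1,1]}(g(s)u+s)=\chi(u)=1+b\Phi(h_K(y))$, that is, $h_C(y,1)=1+b\Phi(h_K(y))$ whenever $h_K(y)\le M$; together with the first paragraph this yields all the asserted properties, in particular those needed in Lemma~\ref{lemdec303}.

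The substance is elementary convex analysis. The points that need care are: the concavity of $g$ (which guarantees $C$ is convex); the reduction of the biconjugate of $\chi$ to slopes in $[0,P]$; and the choice of $b$ ensuring all intercepts $-\chi^*(p)$ lie in $[-1,1]$. I expect the main nuisance to be the degenerate configurations --- $\Phi$ affine on $[0,M]$, $\Phi$ vanishing on $[0,M]$, or $\Phi'_+(0)=0$ --- which force $g$ to be constant on part of $[-1,1]$ or to vanish at $s=1$; the choices $P=\max\{bL,1/M\}$ and $b=1/(1+ML-\Phi(M))$ are designed so that these cases require no separate treatment.
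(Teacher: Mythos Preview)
Your argument is correct and takes a route genuinely different from the paper's. The paper writes down the support function of $C$ directly, as the positively homogeneous extension of $(y,t)\mapsto |t|\bigl(1+b\Psi(h_K(y)/|t|)\bigr)$, where $\Psi$ is a linear extension of $\Phi$ beyond $M$; it then checks sublinearity (the delicate part near $t=0$ is handled by recognizing a cylinder's support function, and this is where the smallness of $b$ enters), and only afterwards verifies that the sections of $C$ are dilates of $K$ by passing to the polar body $C^\circ$ and showing its sections are dilates of $K^\circ$. You proceed in the opposite direction: you build $C$ as $\{(x,s):|s|\le 1,\ x\in g(s)K\}$ so that the section property is automatic, and then choose the concave profile $g$---essentially the generalized inverse of $(\chi^*)$---to force $h_C(y,1)=\chi(h_K(y))$ via Legendre duality. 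Your approach makes the geometry of $C$ transparent and avoids the polarity argument, at the cost of the small technical checks you flag (concavity and endpoint continuity of $g$, restriction of the biconjugate to slopes in $[0,P]$); the paper's approach gives the support function formula for free but must work harder to recover the section structure. The two choices of $b$ are compatible: the paper needs $1+bq>0$ with $q=\Phi(M)-mM\le 0$, and your $b=1/(1+ML-\Phi(M))$ (with $m=L$) satisfies this.
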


\begin{proof}
Define
$$\Psi(t)=\begin{cases}
\Phi(t),&\text{ if $0\le t\le M$},\\
mt+q,&\text{ if $t\ge M$},\\
\end{cases}
$$
where $m>0$ and $q\le 0$ are such that $\Psi:[0,\infty)\to[0,\infty)$ is convex.  Then, for $y\in \R^n$ and $t\in \R$, define
\begin{eqnarray}
h(y,t)&=&\begin{cases}
|t|\left(1+b\,\Psi(h_K(y)/|t|\right),&\text{ if $t\neq 0$},\\
b\,m\,h_K(y),&\text{ if $t=0$},\\
\end{cases}\label{eqoct202 new}\\
&=&\begin{cases}
|t|\left(1+b\,\Phi(h_K(y)/|t|\right),&\text{ if $|t|\ge h_K(y)/M$},\\
b\,m\,h_K(y)+(1+b\,q)|t|,&\text{ if $|t|\le h_K(y)/M$},\\
\end{cases}\label{eqoct203 new}
\end{eqnarray}
where $b>0$.  We show that $b$ can be chosen so that $h=h_C$ is the support function of a convex body $C$. To this end, note that from \eqref{eqoct202 new}, the positive homogeneity of $h$ follows immediately and the subadditivity of $h$ for $t>0$ or for $t<0$ is a routine exercise using the triangle inequality and the convexity of $\Psi$.  It is then enough to observe that if $b$ is small enough to ensure that $1+b\,q>0$, then the function $b\,m\,h_K(y)+(1+b\,q)|t|$ in \eqref{eqoct203 new} coincides with the support function of the cylinder $b\,m\,K\times[-(1+b\,q),1+b\,q]\subset\R^n\times\R$.  This proves \eqref{eqoct201}.

It remains to prove that all sections $C\cap\{x_{n+1}=t\}$, $t\in[-1,1]$, are dilates of $K$. Clearly $o\in \inte C$, since $h_C>0$.  From \eqref{polarrad} and \eqref{eqoct202 new}, it follows easily that the sublevel sets of $h_C(y,1)$, considered as a function of $y$, are dilates of the polar body $K^\circ$ of $K$.  If $t\in (0,1]$, and $\rho_{C^\circ}$ denotes the radial function of the polar body $C^{\circ}$ of $C$ (cf. \eqref{radialfunction}), then
\begin{eqnarray*}
C^\circ\cap\{ x_{n+1}=t \}&=& \{ (z,t): \rho_{C^\circ}(z,t)\geq1 \}=\{ t(z/t,1): \rho_{C^\circ}(z/t,1)\geq t \}\\
&=&\{ t(y,1): \rho_{C^\circ}(y,1)\geq t \}=
t \{(y,1): h_{C}(y,1)\leq 1/t \}
\end{eqnarray*}
is also a dilate of $K^\circ$. This argument can be repeated for the sections of $C^\circ$ corresponding to $t\in [-1,0)$ and, by continuity, for $C^\circ\cap\{x_{n+1}=0\}$.  Thus there exists a concave function $f$ on $[-1,1]$ such that
\[
C^\circ=\{(f(t)z,t)\in \R^n\times \R : t\in[-1,1], z\in K^\circ\}.
\]
Let $s\in[-1,1]$. A point $(y,s)\in \R^n \times \R$ belongs to $C$ if and only if
\[
 f(t)(y\cdot z)+s\,t \leq 1\quad\forall z\in K^\circ,\quad\forall t\in[-1,1],
\]
that is, if and only if
\[
 y\cdot z\leq \min_{t\in-[1,1]}\frac{1-s\,t}{f(t)}\quad\forall z\in K^\circ.
\]
The last formula shows that $C\cap\{x_{n+1}=s\}$ equals $K$ dilated by the factor $\min_{t\in-[1,1]}(1-s\,t)/f(t)$.
\end{proof}

The special case $r=R$ of the following lemma will be needed for the main results of this section, while the general case is applied in Section~\ref{anisotropic}.

\begin{lem}\label{cor1may21}
Let $X={\mathcal{S}}(\R^n)$ or $\cV(\R^n)$, let $K\in \cK^n$ satisfy $rB^n\subset K\subset RB^n$ for $0<r\le R$, and let $T:X\to X$ be a $K$-smoothing rearrangement.  If $f\in X$ is Lipschitz with Lipschitz constant $L$, then there is a Lipschitz function $F:\R^n\to \R$, with Lipschitz constant at most $LR/r$, such that $F(x)=Tf(x)$ for $\cH^n$-almost all $x\in \R^n$.
\end{lem}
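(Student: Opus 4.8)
The plan is to show that $Tf$ coincides $\cH^n$-almost everywhere with a genuinely Lipschitz function, by first controlling the modulus of continuity of $Tf$, then upgrading this to a pointwise Lipschitz estimate on a co-null set via the approximate-continuity description of $\omega_d$, and finally invoking the McShane--Whitney extension theorem. Note first that $rB^n\subset K$ forces $o\in\inte K$, so $K\in\cK^n_{(o)}$ and Theorem~\ref{lemdec194} applies.

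First I would bound $\omega_d(Tf)$. Since $f$ has Lipschitz constant $L$, for every $d>0$ we have $\omega_{dR}(f)=\esssup_{\|x-y\|\le dR}|f(x)-f(y)|\le LdR$. Because $rB^n\subset K\subset RB^n$ and $T$ is a $K$-smoothing rearrangement, Theorem~\ref{lemdec194}(ii) gives $\omega_{dr}(Tf)\le\omega_{dR}(f)\le LdR$ for all $d>0$; rewriting with $s=dr$, this says $\omega_s(Tf)\le(LR/r)\,s$ for every $s>0$.

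Next I would convert this $\cH^{2n}$-essential bound into a pointwise one. Let $C$ be the set of points of approximate continuity of $Tf$; since $Tf\in\cM(\R^n)$ we have $\cH^n(\R^n\setminus C)=0$, and in particular $C$ is dense in $\R^n$. By Lemma~\ref{modulus_appr_contin} with $K=B^n$, $\omega_s(Tf)=\sup\{|Tf(x)-Tf(y)|:\|x-y\|\le s,\ x,y\in C\}$ for every $s>0$. Combining with the previous paragraph and taking $s=\|x-y\|$, we obtain $|Tf(x)-Tf(y)|\le(LR/r)\|x-y\|$ for all $x,y\in C$; that is, $Tf|_C$ is Lipschitz with constant at most $\Lambda:=LR/r$. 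The McShane--Whitney extension theorem then yields a Lipschitz function $F:\R^n\to\R$ with constant at most $\Lambda$ and $F=Tf$ on $C$ --- e.g.\ one may take $F(x)=\inf_{y\in C}\bigl(Tf(y)+\Lambda\|x-y\|\bigr)$, the Lipschitz bound on $Tf|_C$ guaranteeing this infimum is finite. Since $\cH^n(\R^n\setminus C)=0$, we conclude $F(x)=Tf(x)$ for $\cH^n$-almost every $x$. (Equivalently, one can simply note that a Lipschitz function on the dense set $C$ extends uniquely, by uniform continuity, to a Lipschitz function on $\R^n$ with the same constant.)

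The only substantive obstacle is the second step: the definition of $\omega_s(Tf)$ involves an essential supremum over pairs $(x,y)$ with respect to $\cH^{2n}$, so a priori it controls $|Tf(x)-Tf(y)|$ only off an exceptional set of pairs, not off an exceptional set of the product form $N\times\R^n$. Lemma~\ref{modulus_appr_contin} is exactly what resolves this, replacing the essential supremum by an honest supremum over the co-null set of approximate-continuity points; with that in hand, the extension and the bookkeeping of the Lipschitz constant are routine.
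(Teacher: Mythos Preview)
Your proof is correct and follows essentially the same route as the paper's: both use Theorem~\ref{lemdec194}(ii) to get $\omega_{dr}(Tf)\le\omega_{dR}(f)\le LdR$, then invoke Lemma~\ref{modulus_appr_contin} to pass from the essential supremum to a pointwise Lipschitz bound on the set of approximate-continuity points of $Tf$, and finish with McShane--Whitney. Your explicit remark that $rB^n\subset K$ forces $K\in\cK^n_{(o)}$ is a useful sanity check that the paper leaves implicit.
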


\begin{proof}
Let $f\in X$ be Lipschitz with Lipschitz constant $L$ and let $A$ be the set of points of approximate continuity of $Tf$. Since $f$ is continuous, Lemma~\ref{modulus_appr_contin} and Theorem~\ref{lemdec194}(ii) yield
$$
\sup_{\|x-y\|\le dr;~ x,y\in A}|Tf(x)-Tf(y)|\le \sup_{\|x-y\|\le dR}|f(x)-f(y)|.$$
Let $x,y\in A$, $x\neq y$, and choose $d>0$ so that $\|x-y\|=dr$. Then, by the previous inequality,
\begin{eqnarray*}
|Tf(x)-Tf(y)|&\le &\sup_{\|w-z\|\le dr;~ w,z\in A}|Tf(w)-Tf(z)|\\
&\le &\sup_{\|w-z\|\le dR}|f(w)-f(z)|\leq LdR=(LR/r)\|x-y\|.
\end{eqnarray*}
Therefore $Tf$ is Lipschitz on $A$ with Lipschitz constant at most $LR/r$.  By the McShane-Whitney extension theorem (see, e.g., \cite[p.~202]{Fed}) there is a function $F:\R^n\to \R$ with the same Lipschitz properties as $Tf$ on the entire space $\R^n$, such that $F=Tf$ on $A$.  Since $\cH^n(\R^n\setminus A)=0$, the proof is complete.
\end{proof}

Recall that a Young function is a left-continuous and convex function $\Phi:[0,\infty)\to[0,\infty]$ with $\Phi(0)=0$.

\begin{thm}\label{polyaszego_lip}
Let $X={\mathcal{S}}(\R^n)$ or $\cV(\R^n)$, let $T:X\to X$ be a rearrangement, and let $\Phi$ be a Young function.  If $T$ is smoothing and $f\in X$ is Lipschitz, then $Tf$ coincides with a Lipschitz function $\cH^n$-almost everywhere on $\R^n$, and
\begin{equation}\label{eq_polyaszego_lip}
\int_{\{x:~Tf(x)\geq a\}}\Phi\left(\|\nabla Tf(x)\|\right)\, dx\leq \int_{\{x:~f(x)\geq a\}}\Phi\left(\|\nabla f(x)\|\right)\, dx
\end{equation}
for each $a> \essinf f$.
Hence
\begin{equation}\label{eqoct171}
\int_{\R^n}\Phi\left(\|\nabla Tf(x)\|\right)\, dx\leq \int_{\R^n}\Phi\left(\|\nabla f(x)\|\right)\, dx,
\end{equation}
where the integrals may be infinite.
\end{thm}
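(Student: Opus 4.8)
The plan is to push a suitably chosen convex body $C\subset\R^{n+1}$ through the chain of Lemmas~\ref{lemdec303}, \ref{minkcont_diminishes}, \ref{minkcontequalhausdorff} and~\ref{lemoct20}, after first disposing of the degenerate cases and reducing to a real‑valued $\Phi$. To begin, Lemma~\ref{cor1may21} with $K=B^n$ (so $r=R=1$), using that $T$ is smoothing, shows that $Tf$ agrees $\cH^n$-a.e.\ with a Lipschitz function $F$ whose Lipschitz constant does not exceed that of $f$; I would work with $F$ from now on, recording that $\nabla F=\nabla Tf$ a.e., that $\{F\ge a\}=\{Tf\ge a\}$ up to a null set, and (by Lemma~\ref{lemjan61}(ii)) that $K_F$ and $K_{Tf}$ are essentially equal in $\R^{n+1}$. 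If $\Phi\equiv0$ or $f$ is essentially constant, both \eqref{eq_polyaszego_lip} and \eqref{eqoct171} are trivial, so assume $\Phi\not\equiv0$ and $L:=\mathrm{Lip}(f)>0$. Next I would reduce to a real‑valued $\Phi$: the Lipschitz (Moreau--Yosida) regularizations $\Phi_k(t)=\inf_{s\ge0}\{\Phi(s)+k|t-s|\}$ are convex, real‑valued, vanish at $0$, satisfy $\Phi_k\le\Phi$, and increase pointwise to $\Phi$; those $\Phi_k$ that are identically $0$ give a trivial inequality, while for each of the others it suffices, by the monotone convergence theorem, to prove the theorem with $\Phi$ replaced by $\Phi_k$. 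So from now on $\Phi\colon[0,\infty)\to[0,\infty)$ is real‑valued, convex, with $\Phi(0)=0$ and $\Phi\not\equiv0$.

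For the core estimate, apply Lemma~\ref{lemoct20} with $K=B^n$ and $M=L$ to obtain $b>0$ and a convex body $C\subset\R^{n+1}$ satisfying the hypotheses of Lemma~\ref{lemdec303}, with $h_C(y,1)=1+b\,\Phi(\|y\|)$ whenever $\|y\|\le L$. Fix $a>\essinf f$ with $\cH^n(\{f=a\})=0$; by equimeasurability this also gives $\cH^n(\{Tf=a\})=0$, and all but countably many $a$ are of this kind. For a Lipschitz $g\in\{f,F\}$ with $\mathrm{Lip}(g)\le L$, the outer unit normal to $K_g$ at a.e.\ point $(x,g(x))$ of $G_g$ is $(-\nabla g(x),1)/\sqrt{1+\|\nabla g(x)\|^2}$, the $\cH^n$-area element on $G_g$ is $\sqrt{1+\|\nabla g(x)\|^2}\,dx$, and $\|\nabla g\|\le L$ a.e.; so, by positive homogeneity of $h_C$,
\begin{align*}
\int_{G_g\cap\{x_{n+1}>a\}}h_C(\nu(x))\,d\cH^n(x)
&=\int_{\{g>a\}}h_C(-\nabla g(x),1)\,dx\\
&=\cH^n(\{g\ge a\})+b\int_{\{g\ge a\}}\Phi(\|\nabla g(x)\|)\,dx.
\end{align*}
Substituting this into Lemma~\ref{minkcontequalhausdorff} (applicable to $g=f$ and $g=F$, since $\cH^n(\{g=a\})=0$ and $a>\essinf g=\essinf f$, the latter by Proposition~\ref{may8lem}) shows that the anisotropic outer Minkowski contents there exist and equal $2\cH^n(\{g\ge a\})+b\int_{\{g\ge a\}}\Phi(\|\nabla g\|)\,dx$. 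On the other hand, Lemma~\ref{minkcont_diminishes} (valid because $T$ is smoothing and $\{f\ge a\}$ is bounded by Lemma~\ref{lemoct17}), together with the essential equality of $K_F$ and $K_{Tf}$, gives $\uomcC\big((K_F\cap\{x_{n+1}\ge a\})^*\big)\le\uomcC\big(K_f\cap\{x_{n+1}\ge a\}\big)$; since both contents exist by Lemma~\ref{minkcontequalhausdorff}, they equal the $\omcC$‑values just computed. Putting these together, cancelling the finite quantity $2\cH^n(\{f\ge a\})=2\cH^n(\{Tf\ge a\})$ and dividing by $b>0$ yields \eqref{eq_polyaszego_lip} for every such $a$.

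It remains to remove the restriction on $a$ and to deduce \eqref{eqoct171}. Since the gradient of a Lipschitz function vanishes $\cH^n$-a.e.\ on each of its level sets, for any Lipschitz $g$ one has $\int_{\{g\ge a\}}\Phi(\|\nabla g\|)=\int_{\{g>a\}}\Phi(\|\nabla g\|)=\lim\int_{\{g\ge a'\}}\Phi(\|\nabla g\|)$ as $a'\downarrow a$ through the co‑countable set of admissible values; passing to this limit in \eqref{eq_polyaszego_lip} extends it to all $a>\essinf f$. Letting $a\downarrow\essinf f$ and using the same remark (for $g=f$ and $g=F$, with $\essinf F=\essinf f$) together with monotone convergence turns \eqref{eq_polyaszego_lip} into \eqref{eqoct171}, and undoing the reduction to $\Phi_k$ by one further application of monotone convergence finishes the proof.

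The step I expect to be the main obstacle — once the four lemmas above are in place — is the bookkeeping that secures the \emph{equality} $h_C(-\nabla Tf(x),1)=1+b\,\Phi(\|\nabla Tf(x)\|)$ rather than a mere inequality: this is exactly why one must first replace $Tf$ by the Lipschitz representative $F$ from Lemma~\ref{cor1may21}, with a Lipschitz constant no larger than that of $f$, and then pass carefully between $Tf$, $F$, their subgraphs, their level sets, and their gradients. The rest is routine measure theory (the area formula for Lipschitz graphs, the vanishing of gradients on level sets, equimeasurability, and monotone convergence).
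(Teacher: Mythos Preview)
Your proposal is correct and follows essentially the same route as the paper: reduce to a real-valued $\Phi$, invoke Lemma~\ref{cor1may21} to make $Tf$ Lipschitz with constant at most $L$, push the body $C$ from Lemma~\ref{lemoct20} (with $K=B^n$, $M=L$) through Lemmas~\ref{minkcont_diminishes} and~\ref{minkcontequalhausdorff}, use equimeasurability to cancel the $\cH^n(\{g\ge a\})$ terms, and finish with limits in $a$. The only cosmetic differences are that the paper approximates $\Phi$ by an explicit piecewise-linear extension rather than Moreau--Yosida regularization, and approaches a given $a$ from \emph{below} through levels $a_m$ with $\cH^n(\{f=a_m\})=0$ (so the limit lands directly on $\{f\ge a\}$), whereas you approach from above and invoke the vanishing of $\nabla g$ on level sets---both are routine.
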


\begin{proof}
Without loss of generality, we may assume that $\Phi$ is a nontrivial real-valued function. Indeed, the result is obvious if $\Phi\equiv 0$, and if $\Phi$ attains the value $\infty$, it does so on some maximal interval $(t_0,\infty)$, $t_0\ge 0$.  Suppose first that $t_0>0$. For $0<t<t_0$, the right derivative $\Phi^{\prime+}(t)$ of $\Phi$ at $t$ is increasing, so we may define $c=\lim_{t\to t_0-} \Phi^{\prime+}(t)$.  If $c=\infty$, let $(t_k)$ be a strictly increasing sequence in $(0,t_0)$ converging to $t_0$, and define the real-valued Young functions
$$
\Phi_k(t)=\begin{cases}
\Phi(t),& \text{if } 0\le t<t_k,\\
\Phi^{\prime+}(t_k)(t-t_k)+\Phi(t_k), & \text{otherwise.}
\end{cases}
$$
If $c<\infty$, we must have $\Phi(t_0)<\infty$ by left continuity, and may define
$$
\Phi_k(t)=\begin{cases}
\Phi(t),& \text{if } 0\le t\le t_0,\\
(c+k)(t-t_0)+\Phi(t_0), & \text{otherwise.}
\end{cases}
$$
When $t_0=0$, we put $\Phi_k(t)=kt$.  In each case, we have $\Phi_k\le \Phi$ and $(\Phi_k)$ is an increasing sequence of real-valued Young  functions converging pointwise to $\Phi$. Thus, if \eqref{eq_polyaszego_lip} holds for real-valued Young functions, it holds with $\Phi$ replaced by $\Phi_k$, and hence, by the monotone convergence theorem, for $\Phi$ itself.

The set of values $t$ such that $\cH^{n}(\{x :  f(x)= t\})=0$ is dense in $(\essinf f, \infty)$, so there is an increasing sequence $\{a_m\}$ contained in $(\essinf f,a)$ and converging to $a$ such that $\cH^{n}(\{x :  f(x)= a_m\})=0$ for each $m$.  Fix $m\in \N$.  By Lemma~\ref{lemoct17}, $\{x: f(x)\ge a_m\}$ is bounded, and the equimeasurability of $T$ implies that
\[
 \cH^{n}(\{x :  Tf(x)= a_m\})=\cH^{n}(\{x :  f(x)= a_m\})=0.
\]
Assume that $L$ is the Lipschitz constant for $f$.  As $T$ reduces the modulus of continuity by Corollary~\ref{lemdec301}, Lemma~\ref{cor1may21} with $r=R$ implies that there is a Lipschitz function $F$ on $\R^n$ with Lipschitz constant at most $L$ such that $F(x)=Tf(x)$ for $\cH^n$-almost all $x\in \R^n$.  Weak derivatives and the remainder of this proof are unaffected by changing $Tf$ on a set of measure zero, so we may assume that $Tf$ itself is Lipschitz on $\R^n$ with Lipschitz constant at most $L$.  Then, by Lemmas~\ref{minkcont_diminishes}  and~\ref{minkcontequalhausdorff} (the latter applied to both $f$ and $Tf$) with $a$ replaced by $a_m$, and the equimeasurability of $T$, we obtain
\begin{equation*}
    \int_{G_{Tf}\cap \{x_{n+1}> a_m\}}h_C(\nu_{Tf}(x))\,d \cH^n(x)\leq \int_{G_f\cap \{x_{n+1}> a_m\}}h_C(\nu_{f}(x))\,d \cH^n(x),
\end{equation*}
where $C$ is any convex body as in Lemma~\ref{lemdec303}, and $\nu_g(x)$ is the outer unit normal of $K_g$ at $x$ for a Lipschitz function $g$.  The integral on the right can be written as
\begin{equation*}
    \int_{\{y: f(y)> a_m\}}h_C\left(\frac{(-\nabla f(y),1)}{\sqrt{1+\|\nabla f(y)\|^2}}\right){\sqrt{1+\|\nabla f(y)\|^2}}\,dy=\int_{\{y: f(y)> a_m\}}h_C(-\nabla f(y),1)\,dy,
\end{equation*}
where we used the $1$-homogeneity of $h_C$.  Similarly, the integral on the left can be rewritten in the same form, with $f$ replaced by $Tf$.  Consequently,
\begin{equation*}
    \int_{\{y: Tf(y)> a_m\}}h_C(-\nabla Tf(y),1)\,dy\leq \int_{\{y: f(y)> a_m\}}h_C(-\nabla f(y),1)\,dy,
\end{equation*}
which also yields
\begin{equation}\label{eqoct111}
     \int_{\{y: Tf(y)> a_m\}}h_C(-\nabla Tf(y),1)-1\,dy\leq \int_{\{y: f(y)> a_m\}}h_C(-\nabla f(y),1)-1\,dy,
\end{equation}
since $\cH^n(\{y: Tf(y)> a_m\})=\cH^n(\{y: f(y)> a_m\})$.  If $\ell$ is the Lipschitz constant for some Lipschitz function $g$, we have $\|\nabla g(x)\|\le \ell$ for $x\in \R^n$ where the derivative of $g$ exists. Applying this to $g=f$ and $g=Tf$ implies that
\begin{equation}\label{eqoct111a}
\max\left\{\|\nabla f(x)\|,\|\nabla Tf(x)\|\right\}\le L
\end{equation}
for ${\mathcal{H}}^{n}$-almost all $x\in \R^n$.  As $\Phi$ is real-valued and not identically $0$, we may define the convex body $C$ as in Lemma~\ref{lemoct20} corresponding to $K=B^n$, $M=L$, and $\Phi$. Then $C$ satisfies the conditions stated in Lemma~\ref{lemdec303} and by \eqref{eqoct201},
$$
h_C(y,1)-1=b\,\Phi(\|y\|),
$$
for $y\in \R^n$ with $\|y\|\le M$ and some $b>0$.  Substituting in \eqref{eqoct111} and taking limits as $m\to\infty$, we obtain \eqref{eq_polyaszego_lip}.

By Proposition~\ref{may8lem}, we have $\essinf Tf=\essinf f$.  Letting $a\to \essinf f$ in \eqref{eq_polyaszego_lip}, we arrive at \eqref{eqoct171}.
\end{proof}

Theorem~\ref{polyaszego_lip} yields P\'olya-Szeg\H{o} inequalities with $\Phi(t)=t^p$, $1\le p<\infty$, and it is possible to extend these to functions in $W^{1,p}(\R^n)\cap \cV(\R^n)$ by means of standard techniques, utilized for example in the proof of \eqref{intro1} in \cite[Theorem~3.20]{Baer} (though the case $p=1$ needs extra work). We shall not state this extension here, however, but instead derive it from a still more general result, Theorem~\ref{polyaszego_w12 new}, proved in Section~\ref{Orlicz}.  See Corollary~\ref{polyaszego_w12}.  We also postpone to Section~\ref{Orlicz} a proof that the assumption in Theorem~\ref{polyaszego_lip} that $T$ is smoothing cannot be dropped in general; see Example~\ref{exaug721}.  This example also shows that when $\Phi(t)=t$ in Theorem~\ref{polyaszego_lip}, the smoothing property is not necessary for the stated inequalities to hold.

\begin{cor}\label{area_diminishes}
Let $X={\mathcal{S}}(\R^n)$ or $\cV(\R^n)$, and let $T:X\to X$ be a rearrangement.  If $T$ is smoothing, $f\in X$ is Lipschitz, and $a> \essinf f$, then
\begin{equation*}
   \cH^n(G_{Tf}\cap \{x_{n+1}\ge a\})\leq \cH^n(G_f\cap \{x_{n+1}\ge a\}).
\end{equation*}
\end{cor}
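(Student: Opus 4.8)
The plan is to obtain this as a direct corollary of Theorem~\ref{polyaszego_lip}, applied to the Young function $\Phi(t)=\sqrt{1+t^2}-1$, combined with the area formula for graphs of Lipschitz functions. First I would note that, by Theorem~\ref{polyaszego_lip}, $Tf$ agrees $\cH^n$-almost everywhere with a Lipschitz function; since neither side of the asserted inequality changes when $Tf$ is modified on a null set, we may assume $Tf$ itself is Lipschitz, and by Lemma~\ref{cor1may21} (with $r=R$) its Lipschitz constant is no larger than that of $f$. By Rademacher's theorem and the area formula applied to the Lipschitz injection $x\mapsto(x,g(x))$, one has
$$
\cH^n\big(G_g\cap\{x_{n+1}\ge a\}\big)=\int_{\{x:\,g(x)\ge a\}}\sqrt{1+\|\nabla g(x)\|^2}\,dx
$$
for $g=f$ and for $g=Tf$.

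Next, observe that $\Phi(t)=\sqrt{1+t^2}-1$ is a real-valued Young function: it is continuous, convex, and vanishes at $0$. Applying \eqref{eq_polyaszego_lip} with this choice of $\Phi$ gives
$$
\int_{\{x:\,Tf(x)\ge a\}}\Big(\sqrt{1+\|\nabla Tf(x)\|^2}-1\Big)\,dx\le\int_{\{x:\,f(x)\ge a\}}\Big(\sqrt{1+\|\nabla f(x)\|^2}-1\Big)\,dx .
$$
All four integrals appearing here are finite: $f$ and $Tf$ are Lipschitz, so their gradients are bounded $\cH^n$-a.e., and since $f,Tf\in\cS(\R^n)$ with $a>\essinf f=\essinf Tf$ (by Proposition~\ref{may8lem}), the superlevel sets $\{x:f(x)\ge a\}$ and $\{x:Tf(x)\ge a\}$ have finite $\cH^n$-measure.

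Finally I would note that these two superlevel sets have the same measure: by equimeasurability $\cH^n(\{f>t\})=\cH^n(\{Tf>t\})$ for every $t$, and letting $t\uparrow a$ through values with $\essinf f<t<a$ and using continuity of measures from above (each such $\{f>t\}$ has finite measure because $f\in\cS(\R^n)$) yields $\cH^n(\{x:f(x)\ge a\})=\cH^n(\{x:Tf(x)\ge a\})$. Adding this common finite value to both sides of the displayed inequality, and then rewriting each side via the area formula from the first paragraph, produces
$$
\cH^n\big(G_{Tf}\cap\{x_{n+1}\ge a\}\big)\le\cH^n\big(G_f\cap\{x_{n+1}\ge a\}\big),
$$
which is the assertion. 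There is no genuine obstacle here beyond checking that $\Phi(t)=\sqrt{1+t^2}-1$ is admissible in Theorem~\ref{polyaszego_lip} and attending to the $\{\,\cdot\ge a\}$ versus $\{\,\cdot>a\}$ bookkeeping in the equimeasurability step.
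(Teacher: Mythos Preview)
Your proof is correct and follows essentially the same approach as the paper: apply \eqref{eq_polyaszego_lip} with $\Phi(t)=\sqrt{1+t^2}-1$, use the area formula for Lipschitz graphs, and add the equal superlevel-set measures to both sides. Your justification that $\cH^n(\{f\ge a\})=\cH^n(\{Tf\ge a\})$ via a limit from strict superlevel sets is in fact a bit more careful than the paper, which simply asserts this equality from equimeasurability.
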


\begin{proof}
Since $f$ is Lipschitz, we have the familiar formula (see, e.g., \cite[p.~101]{EG})
$${\cH}^n(G_f\cap \{x_{n+1}\ge a\})=\int_{\{x: f(x)\ge a\}}\left(1+\|\nabla f(x)\|^2\right)^{1/2}\,dx.$$
Noting that $T$ reduces the modulus of continuity, by Corollary~\ref{lemdec301}, we may, as in the proof of Theorem~\ref{polyaszego_lip}, assume that $Tf$ is also Lipschitz, so the same formula holds with $f$ replaced by $Tf$.  The result now follows from \eqref{eq_polyaszego_lip} on setting $\Phi(t)=\sqrt{1+t^2}-1$ for $t\ge 0$, and recalling that $\cH^n(\{x: Tf(x)\geq a\})=\cH^n(\{x: f(x)\geq a\})$.
\end{proof}

\section{Extension of Theorem~\ref{polyaszego_lip} to $W^{1,1}_{loc}(\R^n)$}\label{Orlicz}

We begin by recalling some notions from the theory of Orlicz spaces.  Much of the literature is based on N-functions (nice Young functions), that is, functions $\Phi:[0,\infty)\to[0,\infty)$ that are continuous, convex, and such that $\Phi(t)=0$ if and only if $t=0$, $\lim_{t\to0}\Phi(t)/t=0$, and $\lim_{t\to\infty} \Phi(t)/t=\infty$.   However, this restriction is sometimes unnecessary, so we prefer to follow \cite{ES} and \cite{Zaanen} by working with nontrivial Young functions $\Phi$, those such that $\Phi\not\equiv 0$ and $\Phi\not\equiv\infty$ on $(0,\infty)$.

Let $\Phi:[0,\infty)\to[0,\infty]$ be a nontrivial Young function.  The \emph{Orlicz space} $\lphiom$, where $\Omega\subset\R^n$ is an open set, is the set of all real-valued measurable functions $f$ on $\Omega$  such that
\begin{equation}\label{A28eq1}
\|f\|_{\lphiom}=\inf \left\{ \lambda>0 : \int_{\Omega}\Phi\left(\frac{|f(x)|}{\lambda}\right)\, dx\leq 1\right\}<\infty.
\end{equation}
The norm defined by \eqref{A28eq1} is called the \emph{Luxemburg norm}.

Note that $f\in \lphiom$ if and only if there is a $c>0$ such that $\int_{\Omega}\Phi(c|f(x)|)\, dx$ is finite.  Indeed, if the latter condition holds, we can choose $\lambda>0$ large enough that $1/\lambda < c$ and $\int_{\Omega}\Phi(c |f(x)|)\, dx \le c\lambda$. Then, since $\Phi(t)/t$ is increasing, we have $\lambda \Phi(|f(x)|/ \lambda) \le \Phi(c|f(x)|) /c$ and hence
$$\int_{\Omega} \Phi\left(\frac{|f(x)|}{\lambda}\right)\, dx \le \frac{1}{c\lambda}\int_{\Omega} \Phi(c|f(x)|)\,dx \le 1.$$

If $f, f_k\in \lphiom$, $k\in \N$, then
\begin{equation}\label{eq:LPhiConv}
\lim_{k\to\infty}\|f_k-f\|_{\lphiom}=0\iff
\lim_{k\to\infty}\int_{\Omega} \Phi\left(c|f_k(x)-f(x)|\right)\, dx =0,
\text{ for all } c>0,
\end{equation}
by \cite[Proposition~2.1.10(5)]{ES}.

The {\em complementary function $\Psi:[0,\infty)\to [0,\infty]$ to $\Phi$}, defined by $\Psi(t)=\sup_{s\geq0} (st-\Phi(s))$, is also a nontrivial Young function, sometimes called the conjugate function of $\Phi$.  Another norm on $\lphiom$ is the \emph{Orlicz norm}
\[
 \|g\|'_{\lphiom}=\sup \left\{ \int_{\Omega}|g(x) h(x)|\, dx: \int_{\Omega}\Psi(|h(x)|)\, dx\leq 1\right\};
\]
by \cite[Theorem~132.2]{Zaanen}, this norm is equivalent to the Luxemburg norm.

The set
\begin{equation}\label{Hdefin}
H^{\Phi}(\Omega)=\left\{f\in L^\Phi(\Omega) : \int_{\Omega}\Phi(c|f(x)|)\, dx<\infty \text { for all }c>0\right\}
\end{equation}
is called the \emph{heart} of $\lphiom$.

We say that the Young function $\Phi$ \emph{satisfies a $\Delta_2$ condition at infinity} if there exist $c, t_0>0$ such that
$$
\Phi(2t)\le c\,\Phi(t)
$$
for $t\geq t_0$. If $\Phi$ is in addition nontrivial and real-valued, and  if $\cH^{n}(\Omega)<\infty$, then $H^{\Phi}(\Omega)=\lphiom$; see \cite[Theorem~2.1.17(2)]{ES}.

The \emph{Orlicz-Sobolev space} $W^{1,\Phi}(\Omega)$ is defined as
\[
W^{1,\Phi}(\Omega)=\{f : f\in \lphiom, f\text{  is weakly differentiable in $\Omega$, and }
 \|\nabla f\|\in\lphiom\}.
\]
The norm of $f\in W^{1,\Phi}(\Omega)$ is defined by
\begin{equation}\label{Wnorm}
\|f\|_{W^{1,\Phi}(\Omega)}=\|f\|_\lphiom+\big\| \|\nabla f\| \big\|_\lphiom.
\end{equation}

Let $\Phi_1$ and $\Phi_2$ be nontrivial Young functions. We say that $\Phi_1$ \emph{dominates $\Phi_2$ globally} and write $\Phi_1\succ \Phi_2$ if there are constants $a,b>0$ such that
\begin{equation}\label{eq:dominate}
b\,\Phi_1(at)\ge \Phi_2(t)
\end{equation}
for all $t\ge 0$. If $\Phi_1\succ \Phi_2$, then $L^{\Phi_1}(\Omega)\subset L^{\Phi_2}(\Omega)$ by \cite[Theorem~2.2.3(1)]{ES}.  If both $\Phi_1\succ \Phi_2$ and $\Phi_2\succ \Phi_1$, we say that $\Phi_1$ and $\Phi_2$ are \emph{equivalent}. In this case, $L^{\Phi_1}(\Omega)= L^{\Phi_2}(\Omega)$ and by \cite[Proposition~2.2.1]{ES}, the norms $\|\cdot\|_{L^{\Phi_1}(\Omega)}$ and $\|\cdot\|_{L^{\Phi_2}(\Omega)}$ are equivalent.

Consider the complementary pair of nontrivial Young functions
\[
\Phi_{\text{min}}(t)=\begin{cases} 0, &\text{ if } 0\le t\le 1,\\t-1, &\text{ if } 1<t<\infty,
\end{cases}
\qquad
\Phi_{\text{max}}(t)=\begin{cases} t, &\text{ if } 0\le t\le 1,\\\infty, &\text{ if } 1<t<\infty;
\end{cases}
\]
see \cite[p.~54]{ES}.  The following facts are gathered in \cite[Proposition~2.2.4]{ES} and the remarks that follow it.  We have
\[
L^{\Phi_{\text{min}}}(\R^n)=\{f_1+f_\infty: f_1\in L^1(\R^n),f_\infty\in L^\infty(\R^n)\},
\]
with Luxemburg norm
\begin{equation}\label{eq:LuxLargest} \|f\|_{L^{\Phi_{\text{min}}}(\R^n)}=
\inf\{\max\{\|f_1\|_{L^1(\R^n)},\|f_\infty\|_{L^\infty(\R^n)}\}: f=f_1+f_\infty\}
\end{equation}	
for $f\in L^{\Phi_{\text{min}}}(\R^n)$, and $L^{\Phi_{\text{max}}}(\R^n)=L^1(\R^n)\cap L^\infty(\R^n)$, with 	
Luxemburg norm
\begin{equation}\label{eq:LuxSmallest}
	 \|f\|_{L^{\Phi_{\text{max}}}(\R^n)}
=\max\{\|f\|_{L^1(\R^n)},\|f\|_{L^\infty(\R^n)}\}
\end{equation}	
for $f\in L^{\Phi_{\text{max}}}(\R^n)$. We will often write $L^{\Phi_{\text{min}}}(\R^n)=L^1(\R^n)+L^\infty(\R^n)$.  Any nontrivial Young function $\Phi$ satisfies $\Phi_{\text{max}}\succ \Phi\succ \Phi_{\text{min}}$ and hence all Orlicz spaces with nontrivial Young functions contain $L^1(\R^n)\cap L^\infty(\R^n)$ and are contained in $L^1(\R^n)+L^\infty(\R^n)$.

The proof of the following lemma is a variant of arguments in \cite[p.~77]{RaoRen}.

\begin{lem}\label{lemM}
Let $Z$ be the closure of
\begin{equation}\label{eq:Prop3'RenRao}
\left\{\sum_{j=1}^k a_j1_{A_j}: a_1,\ldots,a_k\in \R, A_1,\ldots,A_k\in {\mathcal L}^n, k\in \N\right\}
\end{equation}
in $L^1(\R^n)+L^\infty(\R^n)$. Then $Z=L^1(\R^n)+L^\infty(\R^n)$.
\end{lem}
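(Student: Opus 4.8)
The plan is to show that the set $V$ appearing in \eqref{eq:Prop3'RenRao} --- which is a linear subspace of $L^1(\R^n)+L^\infty(\R^n)$ --- is dense; since $Z$ is by definition the closure of $V$, this yields $Z=L^1(\R^n)+L^\infty(\R^n)$. So fix $f\in L^1(\R^n)+L^\infty(\R^n)$ and $\ee>0$, and choose a decomposition $f=f_1+f_\infty$ with $f_1\in L^1(\R^n)$ and $f_\infty\in L^\infty(\R^n)$.

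First I would record the two elementary norm comparisons
$$\|g\|_{L^{\Phi_{\text{min}}}(\R^n)}\le\|g\|_{L^1(\R^n)}\ \ (g\in L^1(\R^n)),\qquad \|g\|_{L^{\Phi_{\text{min}}}(\R^n)}\le\|g\|_{L^\infty(\R^n)}\ \ (g\in L^\infty(\R^n)).$$
The first follows by inserting $\lambda=\|g\|_{L^1(\R^n)}$ into the definition \eqref{A28eq1} of the Luxemburg norm and using $\Phi_{\text{min}}(t)\le t$; the second by inserting $\lambda=\|g\|_{L^\infty(\R^n)}$ and using that $\Phi_{\text{min}}$ vanishes on $[0,1]$. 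Consequently, $L^1$-convergence and $L^\infty$-convergence each imply convergence in the norm \eqref{eq:LuxLargest} of $L^1(\R^n)+L^\infty(\R^n)$.

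Next I would approximate the two pieces separately and recombine. Integrable simple functions are dense in $L^1(\R^n)$, so there is $s_1=\sum_i b_i1_{B_i}\in V$ with $\|f_1-s_1\|_{L^1(\R^n)}<\ee$, hence $\|f_1-s_1\|_{L^{\Phi_{\text{min}}}(\R^n)}<\ee$; and since a bounded measurable function is a uniform limit of simple functions, there is $s_\infty=\sum_j c_j1_{C_j}\in V$ with $\|f_\infty-s_\infty\|_{L^\infty(\R^n)}<\ee$, hence $\|f_\infty-s_\infty\|_{L^{\Phi_{\text{min}}}(\R^n)}<\ee$. Put $s=s_1+s_\infty\in V$. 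Applying the norm formula \eqref{eq:LuxLargest} to the particular decomposition $f-s=(f_1-s_1)+(f_\infty-s_\infty)$ gives
$$\|f-s\|_{L^{\Phi_{\text{min}}}(\R^n)}\le\max\bigl\{\|f_1-s_1\|_{L^1(\R^n)},\ \|f_\infty-s_\infty\|_{L^\infty(\R^n)}\bigr\}<\ee,$$
so $V$ is dense in $L^1(\R^n)+L^\infty(\R^n)$ and $Z=L^1(\R^n)+L^\infty(\R^n)$.

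The point that requires the most attention is the treatment of $f_\infty$: one must check that the simple functions produced by the uniform approximation of a bounded function genuinely lie in the class \eqref{eq:Prop3'RenRao}, and that passing from the $L^\infty$ norm down to the Luxemburg norm is harmless --- the latter being precisely the content of the second comparison above, where the degeneracy $\Phi_{\text{min}}\equiv 0$ on $[0,1]$ is essential (without it an $L^\infty$ perturbation would not be small in the $L^{\Phi_{\text{min}}}$ norm). Everything else is bookkeeping with \eqref{eq:LuxLargest} and the classical density of integrable simple functions in $L^1(\R^n)$; the argument can equivalently be phrased as $L^1(\R^n)\subset\overline V$ and $L^\infty(\R^n)\subset\overline V$, whence $L^1(\R^n)+L^\infty(\R^n)\subset\overline V$ because $\overline V$ is a closed linear subspace.
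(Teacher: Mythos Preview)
There is a genuine gap in your treatment of $f_\infty$. The class in \eqref{eq:Prop3'RenRao} consists of simple functions whose building blocks $A_j$ lie in $\mathcal{L}^n$, i.e., have \emph{finite} measure; in particular every element of $V$ vanishes outside a set of finite measure. A general $f_\infty\in L^\infty(\R^n)$ need not have this property---take $f_\infty\equiv 1$---and neither do the simple functions that approximate it uniformly. So your $s_\infty$ is typically \emph{not} in $V$, and the step ``$s=s_1+s_\infty\in V$'' fails. You flag this as a point needing attention, but it is not a mere verification: for $f_\infty\equiv 1$ and any $g\in V$ one has $\|1-g\|_{L^{\Phi_{\text{min}}}(\R^n)}\ge 1$, since on the infinite-measure set where $g=0$ any decomposition $1=h_1+h_\infty$ with $h_1\in L^1$ forces $\|h_\infty\|_\infty\ge 1$. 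Hence the constant $1$ is not in $\overline V$, and the inclusion $L^\infty(\R^n)\subset\overline V$ that you invoke at the end is false.

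The paper's own argument actually establishes a different (and correct) statement: it takes $f$ bounded \emph{and} integrable and produces $f_k\in V$ with $f_k\to f$ simultaneously in $L^1$ and in $L^\infty$, i.e., in the norm \eqref{eq:LuxSmallest} of $L^1(\R^n)\cap L^\infty(\R^n)$. That is exactly what is needed where the lemma is applied (to conclude $\mathcal{M}^{\Psi_r}=L^{\Psi_r}(\R^n)=L^1\cap L^\infty$). Your decomposition strategy cannot recover this: approximating $f_1$ only in $L^1$ and $f_\infty$ only in $L^\infty$ gives no control of $s_1$ in $L^\infty$ or of $s_\infty$ in $L^1$. The remedy is to work directly with $f\in L^1\cap L^\infty$ and approximate in the stronger norm, as the paper does.
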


\begin{proof}
The step functions in the set \eqref{eq:Prop3'RenRao} are clearly bounded and integrable, so we only have to show that $L^1(\R^n)+L^\infty(\R^n)\subset Z$.
	
Let $f\in L^1(\R^n)+L^\infty(\R^n)$ and suppose that $|f(x)|<M$ for $\cH^n$-almost all $x\in \R^n$. For $k\in \N$ define $I_j=[jM/k,(j+1)M/k)$, $A_j=f^{-1}(I_j)$, and
\[
a_j=\begin{cases}
\min I_j, & \text{ if } j\ge 0,\\
\sup I_j, & \text{ otherwise,}
\end{cases}
\]
for $j=-k,\ldots,k-1$.  For $j\in \{-1,0\}$ we have $a_j=0$, and for all other $j$,
\[
|a_j|\cH^n(A_j)\le \int_{A_j} |f(x)|dx<\infty,
\]
as $f\in L^1(\R^n)$.  Therefore
\[
f_k=\sum_{j=-k}^{k-1} a_j1_{A_j}\in Z.
\]
By construction, $|f_k(x)-f(x)|\le M/k$ for $\cH^n$-almost all $x\in \R^n$, so $\|f_k-f\|_\infty\to 0$ as $k\to \infty$.  Since $|f_k(x)|\le |f(x)|$ for $\cH^n$-almost all $x\in \R^n$, the dominated convergence theorem yields $\|f_k-f\|_1\to 0$ as $k\to\infty$.  It follows that $f_k\to f$ as $k\to\infty$ in the norm \eqref{eq:LuxSmallest}, proving the lemma.
\end{proof}

By definition, a subset $S$ of $L^1(\R^n)+L^\infty(\R^n)$ is relatively sequentially compact in the weak topology $\sigma\left(L^1(\R^n)+L^\infty(\R^n),L^1(\R^n)\cap L^\infty(\R^n)\right)$ if any sequence $(f_j)$ of functions in $S$ has a subsequence $(f_{j_k})$ converging to some $f\in L^1(\R^n)+L^\infty(\R^n)$, in the sense that
\[
\lim_{k\to \infty} \int_{\R^n} (f_{j_k}(x)-f(x))h(x)\,dx=0
\]
for all $h\in L^1(\R^n)\cap L^\infty(\R^n)$. As $\sigma\left(L^1(\R^n)+L^\infty(\R^n),L^1(\R^n)\cap L^\infty(\R^n)\right)$ is the only weak topology used in this section, henceforth weak convergence and compactness will always refer to this topology.  If $S$ is bounded in the norm \eqref{eq:LuxLargest}, it is relatively weakly sequentially compact if and only if there is a real-valued Young function $\widetilde{\Phi}$ with $\lim_{t\to \infty} \widetilde{\Phi}(t)/t=\infty$ and
\begin{equation}\label{eq:ValleePoussin}
\sup_{f\in S}	\int_{\R^n} \widetilde{\Phi}(f(x))\,dx <\infty.
\end{equation}
This is essentially the criterion of de La Vall\'ee Poussin \cite[Theorem~2.3.5]{ES} combined with \cite[Proposition~2.3.16]{ES}, where the latter uses the $\sigma$-finiteness of the underlying measure in the proof.

For the reader's convenience we provide a proof of the following approximation lemma, mainly to stress that $\Phi$ need not be an N-function.

\begin{lem}\label{approxx_by_C_infty}
If $\Phi:[0,\infty)\to [0,\infty)$ is a real-valued Young function and $f\in H^{\Phi}(\R^n)$, there is a sequence $(f_j)$ of $C^\infty(\R^n)$ functions with compact supports that converges to $f$ in $L^{\Phi}(\R^n)$.
	
If, in addition, $f$ is nonnegative, or has bounded support, or $\|\nabla f\|\in H^{\Phi}(\R^n)$, then $f_j$, $j\in \N$, can be chosen to be nonnegative, or have uniformly bounded supports, or such that $\lim_{j\to 0}\|f_j-f\|_{W^{1,\Phi}(\R^n)}=0$, respectively.
\end{lem}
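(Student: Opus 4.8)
The plan is to approximate $f$ first by a bounded function with bounded support (truncation), and then by a mollification; the three additional conclusions will follow by keeping track, respectively, of signs, of supports, and of gradients at each stage. We may assume $\Phi$ is nontrivial, the case $\Phi\equiv 0$ being vacuous since then $\|\cdot\|_{L^\Phi(\R^n)}\equiv 0$. Recall that a nontrivial real-valued Young function is continuous and increasing with $\Phi(t)\to\infty$, so membership of $g$ in $H^\Phi(\R^n)$ gives $\Phi(c|g|)\in L^1(\R^n)$ for every $c>0$ and $\cH^n(\{|g|\ge M\})\le\Phi(M)^{-1}\int_{\R^n}\Phi(|g|)\,dx\to 0$ as $M\to\infty$; throughout, the criterion \eqref{eq:LPhiConv} is used to convert modular estimates into $L^\Phi$-convergence.

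For the truncation I would fix $\chi\in C^\infty(\R^n)$ with $0\le\chi\le1$, $\chi\equiv1$ on $B(o,1)$, $\supp\chi\subset B(o,2)$, set $\chi_R(x)=\chi(x/R)$ and $\tau_M(s)=\max\{-M,\min\{s,M\}\}$ (or $\min\{s,M\}$ when $f\ge0$), and put $g_{M,R}=(\tau_M\circ f)\chi_R$, which is bounded with support in $B(o,2R)$. Writing $E=\{|f|\ge M\}\cup(\R^n\setminus B(o,R))$, we have $|f-g_{M,R}|\le|f|1_E$, so by convexity $\int_{\R^n}\Phi(c|f-g_{M,R}|)\,dx\le\int_{\{|f|\ge M\}}\Phi(2c|f|)\,dx+\int_{\R^n\setminus B(o,R)}\Phi(2c|f|)\,dx$, and both terms tend to $0$ as $M,R\to\infty$: the first by absolute continuity of the integral of $\Phi(2c|f|)\in L^1(\R^n)$ together with $\cH^n(\{|f|\ge M\})\to0$, the second as a tail. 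Thus $g_{M,R}\to f$ in $L^\Phi(\R^n)$. If $\|\nabla f\|\in H^\Phi(\R^n)$, the chain and product rules (valid since $f\in W^{1,1}_{loc}(\R^n)$) give $\nabla g_{M,R}=\chi_R 1_{\{|f|<M\}}\nabla f+(\tau_M\circ f)\nabla\chi_R$, whence $\|\nabla g_{M,R}-\nabla f\|\le\|\nabla f\|1_E+R^{-1}\|\nabla\chi\|_\infty|f|1_{\R^n\setminus B(o,R)}$, and the same two mechanisms (applied to $\Phi(c\|\nabla f\|)$ and to $\Phi(c\|\nabla\chi\|_\infty|f|)$) give $\nabla g_{M,R}\to\nabla f$ in $L^\Phi(\R^n)$; note also $\|\nabla g_{M,R}\|\le\|\nabla f\|+M\|\nabla\chi\|_\infty 1_{B(o,2R)}\in H^\Phi(\R^n)$. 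Nonnegativity of $g_{M,R}$ is clear when $f\ge0$, and if $\supp f$ is bounded we simply fix $R$ large.

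For the mollification, let $\rho_{\ee}$ be a standard nonnegative mollifier supported in $B(o,\ee)$. For $g$ bounded with bounded support, $g*\rho_{\ee}\in C_c^\infty(\R^n)$ has supports in a fixed ball and is nonnegative when $g$ is, and since $|g*\rho_{\ee}-g|\le 2\|g\|_\infty$ with $\|g*\rho_{\ee}-g\|_{L^1(\R^n)}\to0$, convexity gives $\Phi(c|g*\rho_{\ee}-g|)\le(2\|g\|_\infty)^{-1}\Phi(2c\|g\|_\infty)|g*\rho_{\ee}-g|$, which integrates to $0$; hence $g*\rho_{\ee}\to g$ in $L^\Phi(\R^n)$. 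For the gradient conclusion I also need $h*\rho_{\ee}\to h$ in $L^\Phi(\R^n)$ for an arbitrary, possibly unbounded, $h\in H^\Phi(\R^n)$ (applied to $h=\nabla g_{M,R}$); this I would deduce from Jensen's inequality and Fubini, $\int_{\R^n}\Phi(c|h*\rho_{\ee}-h|)\,dx\le\int_{\R^n}\rho_{\ee}(y)\bigl(\int_{\R^n}\Phi(c|h(\cdot-y)-h|)\,dx\bigr)\,dy$, combined with continuity of translation in $L^\Phi$ on $H^\Phi(\R^n)$, i.e.\ $\int_{\R^n}\Phi(c|h(\cdot-y)-h|)\,dx\to0$ as $y\to0$; the latter follows by a $3\ee$-argument from the case of continuous compactly supported $h$, once one knows these are dense in $H^\Phi(\R^n)$---which is itself a consequence of the truncation step and Lusin's theorem (for bounded $g$ with bounded support, choose continuous $\widetilde g$ with $|\widetilde g|\le\|g\|_\infty$ and $\cH^n(\{g\neq\widetilde g\})$ small, so that $\int_{\R^n}\Phi(c|g-\widetilde g|)\,dx\le\Phi(2c\|g\|_\infty)\cH^n(\{g\neq\widetilde g\})$). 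I expect this to be the main obstacle, precisely because $\Phi$ is not assumed to be an N-function, so no $\Delta_2$ condition is at hand and the standard Orlicz-space density arguments do not apply verbatim; the substance is the passage from $L^1$-type approximation to genuine $L^\Phi$-convergence within the heart $H^\Phi(\R^n)$.

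Finally, combining the two steps and taking $M_j,R_j\to\infty$ and $\ee_j\to0$ along a diagonal, $f_j=g_{M_j,R_j}*\rho_{\ee_j}$ is a sequence in $C_c^\infty(\R^n)$ with $\|f_j-f\|_{L^\Phi(\R^n)}\to0$; in the gradient case $\nabla f_j=(\nabla g_{M_j,R_j})*\rho_{\ee_j}$ and the previous paragraph yields $\|f_j-f\|_{W^{1,\Phi}(\R^n)}\to0$. By construction $f_j\ge0$ when $f\ge0$, and $\supp f_j$ is uniformly bounded when $\supp f$ is bounded.
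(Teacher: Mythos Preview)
Your proof is correct and follows the same overall architecture as the paper's---reduce to a compactly supported function via a smooth cutoff, then mollify---but the two differ in how they establish convergence of the mollification. The paper only cuts off in space (no value truncation), then shows $f*\rho_j\to f$ in $L^\Phi$ for compactly supported $f\in H^\Phi$ by invoking density of simple functions in $H^\Phi$ (cited from \cite[Theorem~2.1.14(b)]{ES}), a Jensen estimate $\|(f-h)*\rho_j\|_{L^\Phi}\le\|f-h\|_{L^\Phi}$, and dominated convergence for the bounded simple $h$; the gradient case is then literally the same argument applied to each $\partial_i f$. Your route is more self-contained: you truncate in value as well, so that the mollification convergence for $g_{M,R}$ itself follows from the clean pointwise bound $\Phi(c|g*\rho_\ee-g|)\le (2\|g\|_\infty)^{-1}\Phi(2c\|g\|_\infty)|g*\rho_\ee-g|$ together with $L^1$-convergence of mollifiers; but you then pay for this in the gradient case, where $\nabla g_{M,R}$ is unbounded and you must argue via continuity of translation in $H^\Phi$, which in turn requires the density of $C_c$ built from truncation plus Lusin. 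Both routes avoid any $\Delta_2$ or N-function hypothesis; the paper's is shorter because it outsources the density step, while yours is entirely elementary. Your handling of the diagonal, the chain rule (legitimate since $f,\|\nabla f\|\in H^\Phi\subset L^1+L^\infty\subset L^1_{loc}$ gives $f\in W^{1,1}_{loc}$), and the modular estimates are all sound.
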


\begin{proof}
We may assume that $f\in H^{\Phi}(\R^n)$ has compact support. Indeed, let $0\le \phi\le 1$ be a $C^\infty(\R^n)$ function with support in $2B^n$ and $\phi(x)=1$ for all $x\in B^n$, and let $\phi_m(x)=\phi(x/m)$ for $m\in \N$ and $x\in \R^n$.  Then $\phi_mf\in H^{\Phi}(\R^n)$ has compact support, and using \eqref{eq:LPhiConv} and \eqref{Hdefin}, it is easy to see that $\phi_mf$ converges to $f$ as $m\to\infty$ in $\lphir$.   If $\|\nabla f\|\in H^{\Phi}(\R^n)$ is assumed in addition, this convergence even holds in $W^{1,\Phi}(\R^n)$, in view of \eqref{eq:LPhiConv} and \eqref{Wnorm}.  Indeed, since $\Phi$ is convex, we have, for $s,t\geq0$,
\[
\Phi(s+t)\leq (1/2)\left(\Phi(2s)+ \Phi(2t)\right).
\]
Therefore
\[
\Phi\left(c\|\nabla (f(1-\phi_m)\right)\|)\leq (1/2) \left( \Phi(2c\|\nabla f\|) +\Phi\left( (2c/m)|f|\sup_{\mathbb{R}^n}\|\nabla \phi\| \right) \right).
\]
Since $\|\nabla f\|\in H^{\Phi}(\R^n)$, the integrals of both terms on the right-hand side are finite, and the desired conclusion follows from the dominated convergence theorem. Whenever $f$ is nonnegative, we also have $\phi_m f\ge 0$.  Thus if the result holds for functions with compact support, a standard diagonal-type argument shows that it holds generally.
	
Suppose that $f\in H^{\Phi}(\R^n)$ has compact support.  Let $\rho:\R^n\to [0,\infty)$ be a $C^\infty(\R^n)$ function with support in $B^n$ and  integrating to $1$, and define $\rho_j(x)=j^{n}\rho(jx)$ for $j\in \N$ and $x\in \R^n$.  The convolution $f_j=f *\rho_j$ is $C^\infty(\R^n)$, satisfies $f_j\ge 0$ when $f\ge 0$, and has support in $M+B^n$ when the support of $f$ is $M\subset \R^n$.  We claim that $\|f- f_j\|_{L^{\Phi}(\R^n)}\to 0$ as $j\to \infty$.  To see this, note that since $\Phi$ is real-valued, there is a sequence of integrable simple functions (i.e., finite weighted sums of integrable characteristic functions) converging to $f$ in the $L^{\Phi}(\R^n)$ norm, by \cite[Theorem~2.1.14(b)]{ES}. Using monotone convergence, the proof of the latter theorem can easily be modified to show that these simple functions can be assumed to have bounded supports. Thus, for each $\varepsilon>0$ there is a simple function $h$ with $|h(x)|\le b 1_{RB^n}(x)$ for some $b,R>0$ and all $x\in \R^n$, such that $\|f- h\|_{L^{\Phi}(\R^n)}\le \varepsilon/4$. If $h_j=h*\rho_j$, Jensen's inequality (see, e.g., \cite[p.~62, Proposition~5]{RaoRen}) implies that
\begin{equation*}
\|f_j- h_j\|_{L^{\Phi}(\R^n)}=\|(f- h)*\rho_j\|_{L^{\Phi}(\R^n)}\le \|f- h\|_{L^{\Phi}(\R^n)}\le \varepsilon/4,
\end{equation*}
so
\begin{align}\nonumber
\|f- f_j\|_{L^{\Phi}(\R^n)}&\le \|f- h\|_{L^{\Phi}(\R^n)}+\|h- h_j\|_{L^{\Phi}(\R^n)}+\|h_j- f_j\|_{L^{\Phi}(\R^n)}
\\&\label{eqgg_k}
\le \frac{\varepsilon}2+\|h- h_j\|_{L^{\Phi}(\R^n)}.
\end{align}
We have
\[
|h(x)-h_j(x)|\le \int_{\R^n} |h(x)-h(y)|\rho_j(x-y) dy\le \|\rho_j\|_\infty\int_{B(x,1/j)} |h(x)-h(y)|\,dy.
\]
As $h$ is integrable, the Lebesgue differentiation theorem (see, e.g., \cite[Proposition~3.5.4]{KP}) implies that $h_j(x)\to h(x)$ for ${\mathcal{H}}^{n}$-almost all $x\in \R^n$. Since $|h_j(x)|\le b 1_{(R+1)B^n}(x)$ for all $x\in \R^n$ and $\Phi$ is real-valued, the dominated convergence theorem gives $\lim_{j\to \infty}\int_{\R^n}\Phi(c|h(x)- h_j(x)|)\,dx=0$ for all $c>0$.  Therefore, by \eqref{eq:LPhiConv}, there is a $j_0\in \N$ such that $\|h- h_j\|_{L^{\Phi}(\R^n)}\le \varepsilon/2$ for all $j\ge j_0$, and inserting this into \eqref{eqgg_k} proves the claim.
	
Since $f_j\ge 0$ when $f\ge 0$, it only remains to deal with the statement involving the extra assumption that $\|\nabla f\|\in H^{\Phi}(\R^n)$.  For this, we may proceed exactly as in the proof of \cite[Theorem~2.1]{DT}. If $f$ and $\|\nabla f\|$ are functions in $H^{\Phi}(\R^n)$ with compact support, we have $\partial f_j/\partial x_i =(\partial f/\partial x_i)*\rho_j$ (where the weak derivative is used on the right-hand side). Applying the above, both to $f$ and with $f$ replaced by $\partial f/\partial x_i$, $i=1,\ldots,n$, yields $\lim_{j\to 0}\|f-f_j\|_{W^{1,\Phi}(\R^n)}=0$, as required.
\end{proof}

\begin{thm}\label{polyaszego_w12 new}
Let $T:\cV(\R^n)\to \cV(\R^n)$ be a smoothing rearrangement and let $\Phi$ be a Young function.  If $f\in W^{1,1}_{loc}(\R^n)\cap\cV(\R^n)$ and $\int_{\R^n}\Phi(\|\nabla f(x)\|)\, dx<\infty$, then $Tf\in W^{1,1}_{loc}(\R^n)$ and
\begin{equation}\label{eq_polyaszego_w1phi}
\int_{\R^n}\Phi(\|\nabla T f(x)\|)\, dx\leq \int_{\R^n}\Phi(\|\nabla f(x)\|)\, dx.
\end{equation}
\end{thm}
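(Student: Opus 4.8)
The plan is to bootstrap from the Lipschitz case (Theorem~\ref{polyaszego_lip}) by truncating $f$ by value and mollifying, and then to pass to the limit using the Orlicz apparatus of this section. First reduce $\Phi$: the case $\Phi\equiv0$ is vacuous, and if $\Phi$ attains $\infty$, say on $(t_0,\infty)$, then $\int_{\R^n}\Phi(\|\nabla f\|)<\infty$ forces $\|\nabla f\|\le t_0$ a.e., so $f$ agrees a.e.\ with a Lipschitz function on $\R^n$ (and $f\equiv0$ if $t_0=0$), and since $f\in\cV(\R^n)\subset\cS(\R^n)$, Theorem~\ref{polyaszego_lip} already applies. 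Assume henceforth that $\Phi$ is a nontrivial real-valued Young function, and fix a real-valued Young function $\Phi_r\le\Phi$ of linear growth at infinity (vanishing wherever $\Phi$ does) with $L^{\Phi_r}(\R^n)$ equivalent to the largest Orlicz space $L^1(\R^n)+L^\infty(\R^n)$; this is the space in which the approximation will be carried out, and with it one checks $\|\nabla f\|\in H^{\Phi_r}(\R^n)$.

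Next, truncate by value: for $\ee>0$ and $k\in\N$ set $\varphi_{\ee,k}(t)=\min\{(t-\ee)^+,k\}$ and $g=\varphi_{\ee,k}\circ f$. Then $g\in\cV(\R^n)$ with $\{g>0\}\subset\{f>\ee\}$ of finite measure, $0\le g\le k$, and $\nabla g=\nabla f\cdot 1_{\{\ee<f<\ee+k\}}$ is dominated by $\|\nabla f\|$ and supported on a set of finite measure; hence $g\in W^{1,1}(\R^n)\cap L^\infty(\R^n)$ and $\int_{\R^n}\Phi(\|\nabla g\|)\le\int_{\R^n}\Phi(\|\nabla f\|)<\infty$. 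By Proposition~\ref{propoct14}, $Tg=\varphi_{\ee,k}\circ Tf=\min\{(Tf-\ee)^+,k\}$, and since these truncations increase to $Tf$ (which is finite a.e., as $Tf\in\cV(\R^n)$), the general statement will follow from the case of such $g$ by a monotone-convergence and lower-semicontinuity argument. So fix $g\in W^{1,1}(\R^n)\cap L^\infty(\R^n)\cap\cV(\R^n)$ with $\{g>0\}$ of finite measure and $\int_{\R^n}\Phi(\|\nabla g\|)<\infty$. Mollifying, $g_m=g*\rho_m$ ($\rho_m$ a standard approximate identity) is $C^\infty$ with bounded gradient, hence Lipschitz; it is nonnegative; it lies in $\cV(\R^n)$ because $g\in L^1(\R^n)$ forces each $\{g_m>t\}$, $t>0$, to have finite measure; it converges to $g$ in $W^{1,1}(\R^n)$; and, by Jensen's inequality for the convex maps $\|\cdot\|$ and $\Phi$ together with Fubini, $\int_{\R^n}\Phi(\|\nabla g_m\|)\le\int_{\R^n}\Phi(\|\nabla g\|)$, with $\Phi$ replaceable by any Young function. (Alternatively, use Lemma~\ref{approxx_by_C_infty} in $W^{1,\Phi_r}(\R^n)$.) By Theorem~\ref{polyaszego_lip}, $Tg_m$ agrees a.e.\ with a Lipschitz function and
\[
\int_{\R^n}\Phi(\|\nabla Tg_m\|)\le\int_{\R^n}\Phi(\|\nabla g_m\|)\le\int_{\R^n}\Phi(\|\nabla g\|)=:M,\]
and moreover $Tg_m\to Tg$ in $L^1(\R^n)$ by Proposition~\ref{contraction} with the convex function $t\mapsto|t|$, since $g_m-g\in L^1(\R^n)$ with $\|g_m-g\|_1\to0$.

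It remains to deduce $Tg\in W^{1,1}_{loc}(\R^n)$ together with $\int_{\R^n}\Phi(\|\nabla Tg\|)\le M$ from $\sup_m\int_{\R^n}\Phi(\|\nabla Tg_m\|)\le M$, and this is the crux. Since $\Phi$ need not be superlinear (we do not assume it is an N-function), $M$ alone does not make $\{\nabla Tg_m\}$ relatively weakly $L^1$-compact, so one works in $L^1(\R^n)+L^\infty(\R^n)$ and uses both halves of the de La Vall\'ee-Poussin criterion. As $\nabla g_m\to\nabla g$ in $L^1(\R^n)$, the family $\{\nabla g_m\}_m\cup\{\nabla g\}$ is relatively weakly sequentially compact in the weak topology in force here, so its necessity half yields a real-valued superlinear Young function $\widetilde\Phi$ with $\sup_m\int_{\R^n}\widetilde\Phi(\|\nabla g_m\|)<\infty$; applying Theorem~\ref{polyaszego_lip} with $\widetilde\Phi$ gives $\sup_m\int_{\R^n}\widetilde\Phi(\|\nabla Tg_m\|)<\infty$, whence $\{\nabla Tg_m\}$ is bounded in the $L^1+L^\infty$ norm and, by the sufficiency half, relatively weakly sequentially compact. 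Passing to a subsequence, $\nabla Tg_m\rightharpoonup v$; pairing against $C^\infty_c(\R^n)\subset L^1(\R^n)\cap L^\infty(\R^n)$ and using $Tg_m\to Tg$ in $L^1(\R^n)$ identifies $v$ with the distributional gradient of $Tg$, so $Tg\in W^{1,1}_{loc}(\R^n)$ because $v\in L^1+L^\infty\subset L^1_{loc}(\R^n)$. Finally, $u\mapsto\int_{\R^n}\Phi(\|u\|)$ is convex and (by Fatou) strongly lower semicontinuous, hence lower semicontinuous for this weak topology, so $\int_{\R^n}\Phi(\|\nabla Tg\|)\le\liminf_m\int_{\R^n}\Phi(\|\nabla Tg_m\|)\le M$. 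Together with the truncation limit this proves the theorem.

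The genuine difficulty is the last paragraph, or rather everything it elides. Without a $\Delta_2$ hypothesis on $\Phi$ the standard Orlicz--Sobolev density theorems and weak $L^1$-compactness of the approximating gradients are both unavailable, and one is forced to pass through the largest Orlicz space $L^1(\R^n)+L^\infty(\R^n)$ (the reason $\Phi_r$ enters) and to invoke the de La Vall\'ee-Poussin criterion in both directions --- necessity to manufacture a superlinear majorant $\widetilde\Phi$ along the approximating sequence, sufficiency to recover weak compactness of $\{\nabla Tg_m\}$ after Theorem~\ref{polyaszego_lip} has been applied to $\widetilde\Phi$. A secondary, bookkeeping, difficulty is ensuring that every truncated and mollified function stays inside $\cV(\R^n)$ and that its $\Phi$- and $\widetilde\Phi$-energies (and, when $\Phi$ degenerately vanishes on an interval, the energies of its various truncations in the final limit) are controlled uniformly; this is why the truncation is performed by value, via $\varphi_{\ee,k}$, which $T$ respects by Proposition~\ref{propoct14}.
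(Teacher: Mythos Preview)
Your strategy matches the paper's: reduce to real-valued $\Phi$, truncate $f$, approximate the truncation by Lipschitz functions, apply Theorem~\ref{polyaszego_lip}, use the de~La Vall\'ee--Poussin criterion in $L^1+L^\infty$ in both directions to extract a weak subsequential limit of $\{\nabla Tg_m\}$, identify it as $\nabla Tg$, and unwind the truncation via Proposition~\ref{propoct14}. Two differences are worth noting.

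For $\Phi$ taking the value $\infty$ you observe that $\|\nabla f\|\le t_0$ a.e.\ forces $f$ to agree a.e.\ with a Lipschitz function, so Theorem~\ref{polyaszego_lip} already applies; the paper instead approximates $\Phi$ from below by real-valued $\Phi_k$ and uses monotone convergence. Your route is shorter.

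The substantive difference is the lower-semicontinuity step. You assert that $u\mapsto\int_{\R^n}\Phi(\|u\|)$, being convex and strongly lsc, is lsc for the topology $\sigma(L^1+L^\infty,L^1\cap L^\infty)$. This inference is not automatic: $L^1\cap L^\infty$ is a \emph{proper} subspace of $(L^1+L^\infty)^*$, so the Mazur argument behind ``convex $+$ strongly closed $\Rightarrow$ weakly closed'' does not apply to this coarser topology. The conclusion is nevertheless correct in your setting --- the uniform $\widetilde\Phi$-bound gives equi-integrability, so on each bounded $\Omega$ one has $\nabla Tg_m\rightharpoonup\nabla Tg$ weakly in $L^1(\Omega)^n$, where the standard convex-lsc argument does work, and then one lets $\Omega\uparrow\R^n$ --- but this should be spelled out. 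The paper sidesteps the issue entirely: it proves weak lower semicontinuity of the Luxemburg \emph{norm} $\|\,\cdot\,\|_{\lphir}$ via the duality formula~\eqref{norm_weak_limit}, obtains the norm inequality $\big\|\|\nabla Tf\|\big\|_{\lphir}\le\big\|\|\nabla f\|\big\|_{\lphir}$, converts this to the integral inequality for $\Phi_r$ by the scaling trick $\Phi_r\mapsto a\Phi_r$ with $a$ chosen so that $\int a\Phi_r(\|\nabla f\|)=1$, and finally sends $\Phi_r\uparrow\Phi$. Your direct route is cleaner once the lsc is properly justified; the paper's is more self-contained but passes through the auxiliary $\Phi_r$ twice.
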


\begin{proof}
As at the beginning of the proof of Theorem~\ref{polyaszego_lip}, we may assume that $\Phi$ is a nontrivial real-valued function. We first aim to prove \eqref{eq_polyaszego_w1phi} when $\cH^n(\supp{f})<\infty$. Let $\Omega$ be an open set such that $\supp f\subset \Omega$ and $\cH^n(\Omega)<\infty$.  The proof will proceed via a succession of claims.

For $r>\sup\{t\ge 0:\Phi(t)=0\}$, define
\begin{equation*}
\Lambda_r(t)=\begin{cases}
\Phi(t),& \text{if } 0\le  t<r,\\
\Phi^{\prime+}(r)(t-r)+\Phi(r), & \text{otherwise,}
\end{cases}
\end{equation*}
where $\Phi^{\prime+}(r)$ is the right derivative of $\Phi$ at $r$.
Then $\Phi_r=\max\{0,\Lambda_r-1/r\}$ is a nontrivial real-valued (and hence continuous) Young function with $\Phi_r\le \Phi$. By construction, there is a $\delta>0$ such that
$\delta\le \Phi_r^{\prime+}(t)\le 1/\delta$ for all $t>t_0=\sup\{t\ge 0:\Phi_r(t)=0 \}>0$.  It follows that for $t\ge t_0$, we have
\[
\delta(t-t_0)=\int_{t_0}^t \delta\,ds\le \Phi_r(t)\le \int_{t_0}^t \frac1{\delta}\,ds=
\frac1{\delta}(t-t_0).
\]
Comparing \eqref{eq:dominate}, we see that  $\Phi_r$ is equivalent to $\Phi_{\text{min}}$. Therefore, as we remarked before Lemma~\ref{approxx_by_C_infty}, the $L^{\Phi_r}(\R^n)$ norm is equivalent to the norm \eqref{eq:LuxLargest} and hence $L^{\Phi_r}(\R^n)=L^{1}(\R^n)+L^{\infty}(\R^n)$. If $\Psi_r$ denotes the complementary Young function to $\Phi_r$, then
$L^{\Psi_r}(\R^n)=L^{1}(\R^n)\cap L^{\infty}(\R^n)$ and the $L^{\Psi_r}(\R^n)$ norm is equivalent to \eqref{eq:LuxSmallest}.

Our first claim is that $f$ can be approximated in the $W^{1,\Phi_r}(\R^n)$ norm by a sequence $(f_j)$ of nonnegative $C^\infty(\R^n)$ functions with compact support.  To see this, note that since $0\le\Phi_r\le \Phi$, our assumptions give $\int_{\R^n}\Phi_r(\|\nabla f(x)\|)\, dx<\infty$.  By \cite[Lemma~3]{Talenti}, this yields the existence of a $c>0$ such that $\int_{\R^n}\Phi_r(c|f(x)|)\, dx$ is finite, and hence $f\in L^{\Phi_r}(\R^n)$, as was explained in the remarks at the beginning of this section. Therefore $f\in L^{\Phi_r}(\Omega)$ and our assumptions and $0\le\Phi_r\le \Phi$ imply that $\|\nabla f\|\in L^{\Phi_r}(\Omega)$.
As was mentioned before Lemma~\ref{lemM}, $L^{\Phi_r}(\Omega)=H^{\Phi_r}(\Omega)$ because $\Phi_r$ satisfies a $\Delta_2$ condition at infinity and $\cH^n(\Omega)<\infty$. Since $f$ vanishes on $\R^n\setminus\Omega$, we have $f, \|\nabla f\|\in H^{\Phi_r}(\R^n)$. By Lemma~\ref{approxx_by_C_infty},  there exists a sequence $(f_j)$ of  nonnegative $C^\infty(\R^n)$ functions with compact support that converges to $f$ in the $W^{1,\Phi_r}(\R^n)$ norm.  This completes the proof of the first claim.

Our second claim is that $(T f_j)$ converges to $Tf$ in $\lphir$, $Tf\in W^{1,1}_{loc}(\R^n)$, and a subsequence of $(\nabla T f_j)$ converges weakly to the weak gradient of $Tf$. Indeed, Proposition~\ref{contraction} implies that $\|Tf_j-Tf\|_\lphir\leq\|f_j-f\|_\lphir$. Thus $Tf_j\to Tf$ in $\lphir$ and, in particular, $Tf\in\lphir$ and $Tf\in L^1_{loc}(\R^n)$.  We also have
\begin{equation}\label{lim_grad_approx}
 \lim_{j\to\infty} \big\|\|\nabla f_j\|\big\|_\lphir =\big\|\|\nabla f\|\big\|_\lphir.
\end{equation}
The map $T$ and each $f_j$ satisfy the hypotheses of Theorem~\ref{polyaszego_lip}, so for $j\ge 1$, the latter theorem implies that the nonnegative function $Tf_j$ agrees with a Lipschitz function $\cH^n$-almost everywhere, and
\begin{equation}\label{ineq_grad_approx}
 \big\|\|\nabla Tf_j\|\big\|_\lphir\leq \big\|\|\nabla f_j\|\big\|_\lphir.
\end{equation}
Due to \eqref{lim_grad_approx} and \eqref{ineq_grad_approx}, the set $\{\|\nabla T f_j\|, j\geq 1\}$ is bounded in ${L^{\Phi_r}(\R^n)}$ and therefore also in ${L^{\Phi_{\text{min}}}(\R^n)}$ by norm equivalence.
Since  $\|\nabla f_j\|$ converges in $\lphir$, it also converges in ${L^{\Phi_{\text{min}}}(\R^n)}$, and the criterion of de La Vall\'ee Poussin stated before Lemma~\ref{approxx_by_C_infty} yields the existence of a real-valued Young function $\widetilde{\Phi}$ with
$\lim_{t\to \infty} \widetilde{\Phi}(t)/t=\infty$ such that \eqref{eq:ValleePoussin} holds with $S=\{\|\nabla f_j\|:j\ge 1\}$. Theorem~\ref{polyaszego_lip}, applied with $\widetilde{\Phi}$ instead of $\Phi$, shows that \eqref{eq:ValleePoussin} also holds for
$S=\{\|\nabla Tf_j\|:j\ge 1\}$ with the same $\widetilde{\Phi}$. De La Vall\'ee Poussin's criterion now shows the relative weak compactness of $\{\|\nabla Tf_j\|:j\ge 1\}$. Hence, there is a subsequence of $(\nabla T f_j)$, also denoted $(\nabla T f_j)$, and a vector field $g\in\left(L^1(\R^n)+ L^\infty(\R^n)\right)^n=\left(\lphir\right)^n$, such that $\nabla T f_j$ converges weakly to $g$, i.e., for each $h\in \left(L^1(\R^n)\cap L^\infty(\R^n)\right)^n=\left(L^{{\Psi_r}}(\R^n)\right)^n$,
$$
\lim_{j\to\infty} \int_{\R^n} (\nabla Tf_j)(x)h(x)\,dx=\int_{\R^n}g(x)h(x)\,dx.
$$
If $h\in C^\infty(\R^n)$ has compact support, then $h, \partial h/\partial x_i\in L^{{\Psi_r}}(\R^n)$ and
\begin{equation}\label{weak_derivative}
\begin{aligned}
 \int_{\R^n}g_i(x) h(x)\, dx&=\lim_{j\to\infty} \int_{\R^n} \frac{\partial T f_j}{\partial x_i}(x) h(x)\, dx
 =-\lim_{j\to\infty} \int_{\R^n} T f_j(x) \frac{\partial h}{\partial x_i}(x)\, dx\\
 &=-\int_{\R^n} T f(x) \frac{\partial h}{\partial x_i}(x)\, dx,
\end{aligned}
\end{equation}
where we have used the fact that the convergence of $Tf_j$ to $Tf$ in $L^{\Phi_r}(\R^n)$ also implies weak convergence. From \eqref{weak_derivative} we see that $g$ is the weak gradient of $T f$, and the fact that $\partial T f/\partial x_i$ is locally integrable is a direct consequence of the fact that it belongs to $L^{{\Psi_r}}(\R^n)$.  The second claim is proved.

Next, we claim that
\begin{equation}\label{ineq_norm_weak_limit}
\big\|\|\nabla T f\|\big\|_\lphir\leq\liminf_{j\to\infty} \big\|\|\nabla  T f_j\|\big\|_\lphir.
\end{equation}
With this goal in mind, we first prove that if $u\in (L^{\Phi_r}(\R^n))^n$, then
\begin{equation}\label{norm_weak_limit}
\big\|\|u\|\big\|_\lphir=\sup \left\{\left| \int_{\R^n} u(x)\cdot h(x)\,dx\right|: h\in (L^{{\Psi_r}}(\R^n))^n, \big\|\|h\|\big\|'_{L^{{\Psi_r}}(\R^n)}\leq 1\right\}.
\end{equation}
A similar formula in the scalar case is proved in \cite[(10), Proposition~10, Section~3.4]{RaoRen}; when applied with $f$ and $\Phi$ there replaced by $\|u\|$ and $\Phi_r$, respectively, it becomes
\begin{equation}\label{norm_weak_limit2M}
	\big\|\|u\|\big\|_\lphir=\sup \left\{\left| \int_{\R^n} \|u(x)\|\, v(x)\,dx\right|: v\in {\mathcal M}^{{\Psi_r}}, \|v\|'_{L^{{\Psi_r}}(\R^n)}\leq 1\right\},
\end{equation}
where ${\mathcal M}^{{\Psi_r}}$ is the closure of the span of all step functions in $L^{{\Psi_r}}(\R^n)$. (We warn the reader that in \cite{RaoRen},  $\|u\|_\lphi$ and $\|u\|'_{\lphi}$ are denoted by $N_\phi(u)$ and $\|u\|_\Phi$, respectively.)  We have already seen that $L^{\Psi_r}(\R^n)=L^{1}(\R^n)\cap L^{\infty}(\R^n)$ and that the two spaces have equivalent norms, so ${\mathcal M}^{{\Psi_r}}=L^{{\Psi_r}}(\R^n)$ due to Lemma \ref{lemM}. Thus,  \eqref{norm_weak_limit2M} becomes
\begin{equation}\label{norm_weak_limit2}
\big\|\|u\|\big\|_\lphir=\sup \left\{\left| \int_{\R^n} \|u(x)\|\, v(x)\,dx\right|: v\in L^{{\Psi_r}}(\R^n), \|v\|'_{L^{{\Psi_r}}(\R^n)}\leq 1\right\}.
\end{equation}
Let $S_1$ and $S_2$ denote the right-hand sides of \eqref{norm_weak_limit} and \eqref{norm_weak_limit2}, respectively.  The Cauchy-Schwartz inequality yields $S_1\leq S_2$. We can restrict the supremum in \eqref{norm_weak_limit2} to nonnegative $v$.  If $v\in L^{{\Psi_r}}(\R^n)$, $v\geq0$, $\|v\|'_{L^{{\Psi_r}}(\R^n)}\leq1$, and
\[
h(x)=
\begin{cases}
v(x) \frac{u(x)}{\|u(x)\|},&  \text{if $u(x)\neq0$},\\
0,&  \text{if $u(x)=0$},
\end{cases}
\]
then $h\in (L^{{\Psi_r}}(\R^n))^n$,  $\big\|\|h\|\big\|'_{L^{{\Psi_r}}(\R^n)}\leq1$, and
\[
S_1\geq \left| \int_{\R^n} u(x)\cdot h(x)\,dx\right|=\left| \int_{\R^n} \|u(x)\| v(x)\,dx\right|.
\]
This proves that $S_1\geq S_2$ and concludes the proof of \eqref{norm_weak_limit}.  Now \eqref{norm_weak_limit} with $u$ replaced by $\nabla T f$ and $\nabla T f_j$ gives
\begin{align*}
\big\|\|\nabla T f\|\big\|_\lphir
  &=\sup\left\{\left| \int_{\R^n} \nabla T f(x)\cdot h(x)\,dx\right|:
    h\in (L^{{\Psi_r}}(\R^n))^n,
    \big\|\|h\|\big\|'_{L^{{\Psi_r}}(\R^n)}\leq 1\right\}
    \\
  &=\sup\left\{\lim_{j\to\infty}\left|  \int_{\R^n} \nabla T f_j(x)\cdot h(x)\,dx\right|:
    h\in (L^{{\Psi_r}}(\R^n))^n, \big\|\|h\|\big\|'_{L^{{\Psi_r}}(\R^n)}\leq 1\right\}
    \\
&\leq\liminf_{j\to\infty} \sup\left\{\left|  \int_{\R^n} \nabla T f_j(x)\cdot h(x)\,dx\right|:
    h\in (L^{{\Psi_r}}(\R^n))^n, \big\|\|h\|\big\|'_{L^{{\Psi_r}}(\R^n)}\leq 1\right\}
    \\
&=\liminf_{j\to\infty} \big\| \|\nabla T f_j\|\big\|_\lphir.
\end{align*}
This proves \eqref{ineq_norm_weak_limit}.

From \eqref{lim_grad_approx}, \eqref{ineq_grad_approx}, and \eqref{ineq_norm_weak_limit}, we conclude that
\begin{equation}\label{PS_in_terms_of_norms}
 \big\|\|\nabla T f\|\big\|_\lphir\leq\big\|\|\nabla f\|\big\|_\lphir.
\end{equation}

Our fourth claim is that \eqref{eq_polyaszego_w1phi} holds when $\Phi$ is replaced by $\Phi_r$.  To see this, note that \eqref{PS_in_terms_of_norms} holds if $\Phi_r$ is replaced by $a\,\Phi_r$  for any $a>0$, because all the preceding arguments are valid with this replacement, due to  $\int_{\R^n}a\, \Phi_r(\| \nabla f(x)\|)\,dx<\infty$.  If we choose $a$ so that $\int_{\R^n}a\,\Phi_r(\|\nabla f(x)\|)\,dx=1$, then $\big\|\|\nabla f\|\big\|_{L^{a\,\Phi_r}(\R^n)}=1$, by \cite[Proposition~2.1.10(4)]{ES} and the fact that $\|\nabla f\|\in H^{a\Phi_r}(\R^n)$. Thus \eqref{PS_in_terms_of_norms} becomes
\[
\big\|\|\nabla T f\|\big\|_{L^{a\Phi_r}(\R^n)}\leq1.
\]
By \cite[Proposition~2.1.10(2)]{ES}, the previous inequality holds if and only if
\begin{equation}\label{eq:Phir_fine}
 \int_{\R^n}a\,\Phi_r(\|\nabla Tf(x)\|)\, dx\leq 1= \int_{\R^n}a\,\Phi_r(\|\nabla f(x)\|)\, dx.
\end{equation}
This proves the fourth claim.

Since the nonnegative function $\Phi_r$ increases to $\Phi$ as $r\to \infty$, one may apply the monotone convergence theorem to both sides of \eqref{eq:Phir_fine} and obtain  \eqref{eq_polyaszego_w1phi} when $\cH^n(\supp{f})<\infty$.
To remove the latter restriction, let $f\in W^{1,1}_{loc}(\R^n)\cap\cV(\R^n)$ and let $f_c=\max\{f-c,0\}$ for $c>0$. Then $f_c$ satisfies the hypotheses of the theorem and $\cH^n(\supp{f_c})<\infty$.  Moreover, $\nabla f_c=\nabla f$ on $\{x: f(x)> c\}$ and $Tf_c=\max \{Tf -c,0\}$, by Proposition~\ref{propoct14} with $\varphi(t)=(t-c)^+$, so $\nabla T f_c=\nabla T f$ on $\{x: T f(x)> c\}$. By \eqref{eq_polyaszego_w1phi} with $f$ replaced by $f_c$, we have
\begin{eqnarray*}
 \int_{\{x: T f(x)> c\}}\Phi(\|\nabla Tf(x)\|)\, dx&=&\int_{\R^n}\Phi(\|\nabla Tf_c(x)\|)\, dx\\
 &\leq &\int_{\R^n}\Phi(\|\nabla f_c(x)\|)\, dx=\int_{\{x: f(x)> c\}}\Phi(\|\nabla f(x)\|)\, dx.
\end{eqnarray*}
Letting $c\to0$ and applying the monotone convergence theorem, we obtain \eqref{eq_polyaszego_w1phi}.
\end{proof}

\begin{lem}\label{lem1apr21}
Let $X=\cS(\R^n)$ or $\cV(\R^n)$ and let $T:X\to X$ be a smoothing rearrangement.  If $f\in W^{1,\infty}(\R^n)\cap X$, then $Tf\in W^{1,\infty}(\R^n)$ and
\begin{equation}\label{eq_pInfty}
\esssup_{x\in \R^n}\|\nabla Tf(x)\|\leq \esssup_{x\in \R^n}\|\nabla f(x)\|.
\end{equation}
\end{lem}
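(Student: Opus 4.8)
The plan is to reduce to a genuinely Lipschitz function and then quote the Lipschitz-constant estimate for smoothing rearrangements already in hand. First I would record the standard fact that, since $\R^n$ is convex, every $g\in W^{1,\infty}(\R^n)$ has a representative that is Lipschitz with Lipschitz constant $\esssup_{x\in\R^n}\|\nabla g(x)\|$ (see, e.g., \cite{EG}). Applying this to $f$, and recalling that membership in $X$ depends only on the $\cH^n$-a.e.\ equivalence class of a function, and that by Proposition~\ref{lemapril30} the function $Tf$ is essentially determined by the $\di_T$-images of the superlevel sets of $f$ (so that modifying $f$ on a null set modifies $Tf$ only on a null set), I may assume from the outset that $f$ is Lipschitz with Lipschitz constant $L:=\esssup_{x\in\R^n}\|\nabla f(x)\|$.

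Since $T$ is smoothing, Corollary~\ref{lemdec301} shows that $T$ reduces the modulus of continuity, so Lemma~\ref{cor1may21} applies with $K=B^n$ (hence $r=R=1$): there is a Lipschitz function $F:\R^n\to\R$ with Lipschitz constant at most $L$ such that $F=Tf$ $\cH^n$-almost everywhere. By Rademacher's theorem $F$ is differentiable $\cH^n$-a.e., and at every point of differentiability $\|\nabla F\|$ is at most the Lipschitz constant of $F$; since the classical and weak gradients of a Lipschitz function coincide, $Tf$ has a weak gradient (equal a.e.\ to $\nabla F$) with $\esssup_{x\in\R^n}\|\nabla Tf(x)\|\le L=\esssup_{x\in\R^n}\|\nabla f(x)\|$, which is \eqref{eq_pInfty}. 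To conclude $Tf\in W^{1,\infty}(\R^n)$ it then suffices to check $Tf\in L^\infty(\R^n)$: as $f\in W^{1,\infty}(\R^n)\subset L^\infty(\R^n)$, the numbers $M:=\esssup f$ and $\essinf f$ are finite, and the equimeasurability of $T$ gives $\cH^n(\{x:Tf(x)>t\})=\cH^n(\{x:f(x)>t\})=0$ for $t\ge M$, so $Tf\le M$ a.e., while Proposition~\ref{may8lem} yields $\essinf Tf=\essinf f>-\infty$ (in the case $X=\cV(\R^n)$ one simply has $Tf\ge 0$). Hence $Tf\in L^\infty(\R^n)$ and so $Tf\in W^{1,\infty}(\R^n)$.

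I do not anticipate a serious obstacle: the real content is packaged into Lemma~\ref{cor1may21} and Corollary~\ref{lemdec301}, and the remaining steps are the familiar identification of $W^{1,\infty}$ with bounded Lipschitz regularity together with an elementary boundedness argument for $Tf$. An equally short alternative bypasses Lemma~\ref{cor1may21} by applying Theorem~\ref{polyaszego_lip} directly to the Young function $\Phi$ equal to $0$ on $[0,L]$ and $+\infty$ on $(L,\infty)$: this $\Phi$ is left-continuous, convex, and vanishes at $0$; since $\|\nabla f\|\le L$ a.e.\ one has $\int_{\R^n}\Phi(\|\nabla f(x)\|)\,dx=0$, and Theorem~\ref{polyaszego_lip} then forces $\int_{\R^n}\Phi(\|\nabla Tf(x)\|)\,dx=0$, i.e.\ $\|\nabla Tf\|\le L$ a.e., and also furnishes the Lipschitz (hence $W^{1,\infty}$) representative of $Tf$.
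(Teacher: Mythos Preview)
Your proof is correct and follows essentially the same route as the paper's: reduce to a Lipschitz representative of $f$, invoke Lemma~\ref{cor1may21} with $K=B^n$ (the paper does this implicitly by pointing to the proof of Theorem~\ref{polyaszego_lip}) to obtain a Lipschitz representative of $Tf$ with Lipschitz constant at most that of $f$, and then translate back to the gradient bound. Your explicit check that $Tf\in L^\infty(\R^n)$ and your alternative via the $\{0,\infty\}$-valued Young function are both sound additions not spelled out in the paper.
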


\begin{proof}
If $f\in W^{1,\infty}(\R^n)\cap X$, then by \cite[Proposition~3.17]{Baer}, $f$ coincides $\cH^n$-almost everywhere with a Lipschitz function in $X$.  We may therefore assume that $f$ is Lipschitz, and then, as in the proof of Theorem~\ref{polyaszego_lip}, also assume that $Tf$ is Lipschitz. By  \cite[Corollary~3.4]{Baer}, \eqref{eq_pInfty} is equivalent to $L_1\le L_2$, where $L_1$ and $L_2$ are the Lipschitz constants of $Tf$ and $f$, respectively. Since
$$L_1=\sup_{d>0} \omega_d(Tf)/d\le \sup_{d>0} \omega_d(f)/d=L_2,$$
the proof is complete.
\end{proof}

\begin{cor}\label{polyaszego_w12}
Let $T:\cV(\R^n)\to \cV(\R^n)$ be a smoothing rearrangement and let $1\le p\le \infty$.  If $f\in W^{1,p}(\R^n)\cap \cV(\R^n)$, then $Tf\in W^{1,p}(\R^n)$ and
\begin{equation}\label{eq_polyaszego_w12}
\big{\|} \|\nabla Tf(x)\| \big{\|}_p\leq \big{\|} \|\nabla f(x)\| \big{\|}_p,
\end{equation}
where $\|\cdot\|_p$ denotes the $L^p$ norm when $1\le p<\infty$ and the essential supremum over $\R^n$ when $p=\infty$.
\end{cor}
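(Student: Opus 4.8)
The plan is to obtain the case $1\le p<\infty$ from Theorem~\ref{polyaszego_w12 new} and the case $p=\infty$ from Lemma~\ref{lem1apr21}, the only additional point being to verify that $Tf$ itself lies in $L^p(\R^n)$.

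For $p=\infty$, since $\cV(\R^n)\subset\cS(\R^n)$, I would simply invoke Lemma~\ref{lem1apr21} with $X=\cV(\R^n)$, which directly gives $Tf\in W^{1,\infty}(\R^n)$ together with \eqref{eq_polyaszego_w12}, where $\|\cdot\|_\infty$ is the essential supremum over $\R^n$.

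For $1\le p<\infty$, I would take $\Phi(t)=t^p$, a real-valued Young function. Because $L^p$ functions are locally integrable (by H\"older's inequality), $W^{1,p}(\R^n)\subset W^{1,1}_{loc}(\R^n)$, so $f\in W^{1,1}_{loc}(\R^n)\cap\cV(\R^n)$; and $\int_{\R^n}\Phi(\|\nabla f(x)\|)\, dx=\int_{\R^n}\|\nabla f(x)\|^p\, dx<\infty$ since $f\in W^{1,p}(\R^n)$. Theorem~\ref{polyaszego_w12 new} then yields $Tf\in W^{1,1}_{loc}(\R^n)$ and $\int_{\R^n}\|\nabla Tf(x)\|^p\, dx\le\int_{\R^n}\|\nabla f(x)\|^p\, dx<\infty$; taking $p$th roots gives \eqref{eq_polyaszego_w12}, and in particular $\|\nabla Tf\|\in L^p(\R^n)$. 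To conclude that $Tf\in W^{1,p}(\R^n)$ it remains to see that $Tf\in L^p(\R^n)$: the zero function belongs to $\cV(\R^n)$ and is fixed, essentially, by $T$ by Proposition~\ref{may8lem}(iii), while $T$, being a rearrangement on $\cV(\R^n)$, is $L^p$-contracting by Proposition~\ref{contraction}, so $\|Tf\|_p=\|Tf-T0\|_p\le\|f-0\|_p=\|f\|_p<\infty$.

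There is no real obstacle here; the substance of the P\'olya-Szeg\H{o} inequality has already been carried out in Theorem~\ref{polyaszego_w12 new} and Lemma~\ref{lem1apr21}, and the corollary is just the assembly of these facts with the $L^p$-contraction property, which supplies the bound on $Tf$ itself as opposed to on its gradient.
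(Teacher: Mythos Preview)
Your proposal is correct and follows essentially the same approach as the paper: the case $p=\infty$ is Lemma~\ref{lem1apr21}, and for $1\le p<\infty$ one applies Theorem~\ref{polyaszego_w12 new} with $\Phi(t)=t^p$ after noting $W^{1,p}(\R^n)\subset W^{1,1}_{loc}(\R^n)$. The only difference is that you explicitly verify $Tf\in L^p(\R^n)$ via the $L^p$-contracting property, whereas the paper leaves this implicit (it follows even more directly from equimeasurability, which gives $\|Tf\|_p=\|f\|_p$).
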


\begin{proof}
The case when $p=\infty$ corresponds to Lemma~\ref{lem1apr21}.  Suppose that $1\le p<\infty$ and let $f\in W^{1,p}(\R^n)\cap \cV(\R^n)$. As $W^{1,p}(\R^n)\subset W^{1,1}_{loc}(\R^n)$,  Theorem~\ref{polyaszego_w12 new} with $\Phi(t)=t^p$ gives \eqref{eq_polyaszego_w12}. Our assumptions on $f$ show that the right-hand side of \eqref{eq_polyaszego_w12} is finite, implying that $Tf\in W^{1,p}(\R^n)$.
\end{proof}

The following example shows that the assumption that $T$ is smoothing in Theorems~\ref{polyaszego_lip} and~\ref{polyaszego_w12 new} and Corollary~\ref{polyaszego_w12} cannot be dropped in general.  It also shows that when $\Phi(t)=t$ in Theorem~\ref{polyaszego_lip} or~\ref{polyaszego_w12 new}, or $p=1$ in Corollary~\ref{polyaszego_w12}, the smoothing property is not necessary for the stated inequalities to hold.

\begin{ex}\label{exaug721}
{\rm  Let $K\in {\mathcal K}^n_{(o)}$ and $\cH^n(K)=\kappa_n$ and let $T:{\cV}(\R^n)\to{\cV}(\R^n)$ be the rearrangement defined in Example~\ref{exaug61}(ii).  We make the following two claims.

\smallskip

\noindent{(i)} Inequalities \eqref{eqoct171} and \eqref{eq_polyaszego_w1phi} for $\Phi(t)=t$, and \eqref{eq_polyaszego_w12} for $p=1$, each hold if and only if $K=x+B^n$ for some $x\in D^n$.

\noindent{(ii)} Inequalities \eqref{eqoct171} and \eqref{eq_polyaszego_w1phi} for strictly convex real-valued $\Phi$, and \eqref{eq_polyaszego_w12} for $1<p\le\infty$, each hold if and only if $K=B^n$.

\smallskip

Suppose these claims are true.  If $K$ is not a ball, then $T$ is not smoothing by Example~\ref{exaug61}(ii) and Theorems~\ref{polyaszego_lip} and~\ref{polyaszego_w12 new} (for real-valued strictly convex $\Phi$), and Corollary~\ref{polyaszego_w12} fail by (i) and (ii).  If $K=x+B^n$ for some $o\neq x\in D^n$, then $T$ is not smoothing by Example~\ref{exaug61}(ii) but nevertheless Theorems~\ref{polyaszego_lip} and~\ref{polyaszego_w12 new} with $\Phi(t)=t$, and Corollary~\ref{polyaszego_w12} with $p=1$, hold by (i).

To prove the two claims, we note first that if $f\in {\cV}(\R^n)$ and
$\alpha_{f,t}=\cH^n(\{x:f(x)\ge t\})$ for $t\ge 0$, the definition of $T$ yields
\begin{equation}\label{eq823212}
\{x:Tf(x)\ge t\}=\left(\frac{\alpha_{f,t}}{\kappa_n}\right)^{1/n}K
\end{equation}
for $t>0$.

Now let $M\in {\mathcal K}^n_{(o)}$ and $\cH^n(M)=\kappa_n$, and let $f_M(x)=(1-h_{M^\circ}(x))^+$ for $x\in \R^n$, where $M^\circ$ is the polar body of $M$ and $s^+$ is the nonnegative part of $s\in \R$. Then $\{x:f_M(x)\ge t\}=(1-t)^+M$ for $t>0$.  Since $\alpha_{f_M,t}=\left((1-t)^+\right)^n\kappa_n$, \eqref{eq823212} with $f=f_M$ implies that
\begin{equation}\label{eq823213}
Tf_M=f_K.
\end{equation}
When $M=B^n$, we have $f_{B^n}(x)=(1-\|x\|)^+$ for $x\in \R^n$ and hence
\begin{equation}\label{eqnormBn}
\|\nabla f_{B^n}(x)\|=
\begin{cases}
1, & \text{if $x\in D^n\setminus \{o\}$,}\\
0, & \text{if $x\not\in B^n$}.
\end{cases}
\end{equation}

The coarea formula for Lipschitz functions $f$ on $\R^n$ (see \cite[Theorem~4.19]{Baer}, \cite[Theorem~1, p.~112]{EG}) states that
\begin{align}\label{eqCAF}
\int_{\R^n}\|\nabla f(x)\|\,dx= \int_0^\infty  \cH^{n-1}(\{x:f(x)=t\})\,dt.
\end{align}		

Suppose that $T$ satisfies \eqref{eqoct171} or \eqref{eq_polyaszego_w1phi} with $\Phi(t)=t$, or \eqref{eq_polyaszego_w12} with $p=1$.  Then, using \eqref{eqCAF} with $f=f_K$, \eqref{eq823213} with $M=B^n$, either \eqref{eqoct171}, or \eqref{eq_polyaszego_w1phi}, or \eqref{eq_polyaszego_w12} with $f=f_{B^n}$, and \eqref{eqnormBn}, we obtain
\begin{eqnarray}\label{arr824}
\cH^{n-1}(\partial K)&=&n\int_0^1 (1-t)^{n-1}\cH^{n-1}(\partial K)\,dt=
n\int_0^\infty\cH^{n-1}(\{x:f_{K}(x)=t\})\,dt\\
&=&n\int_{\R^n}\|\nabla f_K(x)\|\,dx= n\int_{\R^n}\|\nabla Tf_{B^n}(x)\|\,dx\nonumber\\
&\le & n\int_{\R^n}\|\nabla f_{B^n}(x)\|\,dx=n\kappa_n=\cH^{n-1}(\partial B^n)\nonumber.
\end{eqnarray}
Because $\cH^n(K)=\cH^n(B^n)$, equality must hold in the isoperimetric inequality and consequently $K=x+B^n$ for some $x\in D^n$.
Conversely, suppose that $K=x+B^n$ for some $x\in D^n$ and $f\in W^{1,1}(\R^n)\cap \cV(\R^n)$.  From \eqref{eq823212} and the fact that the decreasing function $t\mapsto \alpha_{f,t}$ can only have countably many discontinuities, we get
$$
\{x:Tf(x)=t\}=\{x:Tf(x)\ge t\}\setminus \bigcup_{s>t}\{x:Tf(x)\ge s\}\subset \left(\frac{\alpha_{f,t}}{\kappa_n}\right)^{1/n}\partial K
$$
for almost all $t>0$.
Since $K$ is a translate of $B^n$, this and \eqref{eqCAF} imply that
\begin{align*}
\int_{\R^n}\|\nabla Tf(x)\|\, dx&= \int_0^\infty  \cH^{n-1}(\{x:Tf(x)=t\})\,dt\le
\int_0^\infty  \cH^{n-1}\left(\left(\frac{\alpha_{f,t}}{\kappa_n}\right)^{1/n}\partial K\right)\,dt
\\&=\int_0^\infty  \cH^{n-1}\left(\left(\frac{\alpha_{f,t}}{\kappa_n}\right)^{1/n}\partial B^n\right)\,dt=
\int_{\R^n}\|\nabla f^{\#}(x)\| dx.	
\end{align*}
Hence, $T$ satisfies \eqref{eqoct171} and \eqref{eq_polyaszego_w1phi} with $\Phi(t)=t$, and \eqref{eq_polyaszego_w12} with $p=1$, as the Schwarz rearrangement does so.  This proves (i).

For (ii), assume first that $\Phi$ is real-valued and strictly convex, and that \eqref{eqoct171} or \eqref{eq_polyaszego_w1phi} holds.  Let $\Phi(1)=c>0$.  Note that the measure on $K$ with differential $dx/\kappa_n$ is a probability measure, since $\cH^n(K)=\kappa_n$.  We use \eqref{eqnormBn}, either \eqref{eqoct171} or \eqref{eq_polyaszego_w1phi}, Jensen's inequality (see, e.g.~\cite[p.~62, Proposition~5]{RaoRen}), \eqref{arr824}, and the isoperimetric inequality, to obtain
\begin{eqnarray*}
c&=&\int_{\R^n}\Phi\left(\|\nabla f_{B^n}(x)\|\right) \frac{dx}{\kappa_n}\ge
\int_{\R^n}\Phi\left(\|\nabla Tf_{B^n}(x)\|\right) \frac{dx}{\kappa_n}\ge
\int_{K}\Phi\left(\|\nabla Tf_{B^n}(x)\|\right) \frac{dx}{\kappa_n}\\
&\ge & \Phi\left(\int_{K}\|\nabla Tf_{B^n}(x)\| \frac{dx}{\kappa_n}\right)=\Phi\left( \frac{1}{n\kappa_n}\cH^{n-1}(\partial K)\right)\ge
\Phi\left(\frac{1}{n\kappa_n}\cH^{n-1}(\partial B^n)\right)=\Phi(1)=c.
\end{eqnarray*}
It follows that there is equality in the isoperimetric inequality, giving $K=x+B^n$ for some $x\in D^n$, as before. But now equality also holds in Jensen's inequality with the strictly convex function $\Phi$, so $\|\nabla Tf_{B^n}(\cdot)\|=\|\nabla f_{K}(\cdot)\|$ must be constant $\cH^n$-almost everywhere on $K$. This is only possible when $x=o$. If we assume instead that \eqref{eq_polyaszego_w12} holds with $1<p<\infty$, we can apply the same argument with $\Phi(t)=t^p$.  Conversely, when $K=B^n$, the rearrangement $T$ is the Schwarz rearrangement and therefore the P\'olya-Szeg\H{o} inequality holds.

Finally, for (ii) when $p=\infty$, take $f=f_{B^n}$ in \eqref{eq_polyaszego_w12}.  We have $Tf_{B^n}=f_K$ by \eqref{eq823213}, and it is clear that
$\esssup_{x\in \R^n}\|\nabla f_{B^n}(x)\|=1$, while $\esssup_{x\in \R^n}\|\nabla f_K(x)\|>1$ if and only if $K\ne B^n$.  This completes the proof of (ii).
}
\end{ex}

\section{The anisotropic case}\label{anisotropic}

The following result generalizes Theorem~\ref{polyaszego_lip}, which corresponds to the case when $K=B^n$.

\begin{thm}\label{anisotropic polyaszego_lip}
Let $X={\mathcal{S}}(\R^n)$ or $\cV(\R^n)$, let $K\in {\mathcal K}^n_{(o)}$, let $T:X\to X$ be a rearrangement, and let $\Phi$ be a Young function.  If $T$ is $K$-smoothing and $f\in X$ is Lipschitz, then $Tf$ coincides with a Lipschitz function $\cH^n$-almost everywhere on $\R^n$, and
\begin{equation}\label{anisotropic eqoct171}
\int_{\R^n}\Phi\left(h_{-K}(\nabla Tf(x))\right)\, dx\leq \int_{\R^n}\Phi\left(h_{-K}(\nabla f(x))\right)\, dx,
\end{equation}
where the integrals may be infinite.
\end{thm}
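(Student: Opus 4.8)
The plan is to transcribe the proof of Theorem~\ref{polyaszego_lip}, replacing the unit ball $B^n$ by $K$ everywhere and replacing the Euclidean length of a gradient by $h_{-K}$ of that gradient, which is legitimate because $h_K(-z)=h_{-K}(z)$ for all $z\in\R^n$. First, exactly as at the start of the proof of Theorem~\ref{polyaszego_lip}, we reduce to the case where $\Phi$ is a nontrivial real-valued Young function: the inequality is trivial if $\Phi\equiv 0$, and in general $\Phi$ is the pointwise increasing limit of a sequence $(\Phi_k)$ of real-valued Young functions with $\Phi_k\le\Phi$, so the monotone convergence theorem reduces \eqref{anisotropic eqoct171} for $\Phi$ to the same inequality for each $\Phi_k$. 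Next, fix $0<r\le R$ with $rB^n\subset K\subset RB^n$ and let $L$ be a Lipschitz constant for $f$. Since $T$ is $K$-smoothing, Lemma~\ref{cor1may21} provides a Lipschitz function with Lipschitz constant at most $LR/r$ that agrees with $Tf$ $\cH^n$-almost everywhere; as neither the weak gradients nor the integrals in \eqref{anisotropic eqoct171} are affected by changing $Tf$ on a null set, we may and do assume $Tf$ itself is Lipschitz with constant at most $LR/r$. Hence $\|\nabla f(x)\|\le L$ and $\|\nabla Tf(x)\|\le LR/r$ for $\cH^n$-almost all $x$, and since $K\subset RB^n$ gives $h_K(z)\le R\|z\|$, we obtain $h_K(-\nabla f(x))\le RL$ and $h_K(-\nabla Tf(x))\le LR^2/r$ almost everywhere.

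Set $M=LR^2/r$ and apply Lemma~\ref{lemoct20} with this $K$, this $M$, and $\Phi$: it yields $b>0$ and a convex body $C\subset\R^{n+1}$, supported by $\{x_{n+1}=\pm1\}$ and with every horizontal section a dilate of $K$, such that $h_C(y,1)=1+b\,\Phi(h_K(y))$ whenever $h_K(y)\le M$; in particular $h_C(-e_{n+1})=1$ and $C$ satisfies the hypotheses of Lemma~\ref{lemdec303}. Choose a decreasing sequence $a_m\downarrow\essinf f$ with $\cH^n(\{x:f(x)=a_m\})=0$ for every $m$ (possible since only countably many levels have positive measure); then $\cH^n(\{x:Tf(x)=a_m\})=0$ by equimeasurability and $\{x:f(x)\ge a_m\}$ is bounded by Lemma~\ref{lemoct17}. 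For fixed $m$, Lemma~\ref{minkcont_diminishes} and Lemma~\ref{minkcontequalhausdorff} (the latter applied to both $f$ and $Tf$), with $a$ replaced by $a_m$ and with the present $C$ --- the hypothesis on $T$ in Lemma~\ref{minkcont_diminishes} being used only through the inclusion \eqref{eqjune101}, which holds here by $K$-smoothing --- together with the equimeasurability of $T$ give
\[
\int_{G_{Tf}\cap\{x_{n+1}>a_m\}}h_C(\nu_{Tf}(x))\,d\cH^n(x)\le\int_{G_{f}\cap\{x_{n+1}>a_m\}}h_C(\nu_{f}(x))\,d\cH^n(x),
\]
where $\nu_g$ denotes the outer unit normal to $K_g$.

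Since $f$ and $Tf$ are Lipschitz, each side is rewritten over the base, using $\nu_g(y,g(y))=(-\nabla g(y),1)/\sqrt{1+\|\nabla g(y)\|^2}$, the area element $\sqrt{1+\|\nabla g(y)\|^2}\,dy$, and the $1$-homogeneity of $h_C$, to become $\int_{\{Tf>a_m\}}h_C(-\nabla Tf(y),1)\,dy\le\int_{\{f>a_m\}}h_C(-\nabla f(y),1)\,dy$. Subtracting $\cH^n(\{Tf>a_m\})=\cH^n(\{f>a_m\})$ from both sides, then using $h_C(-e_{n+1})=1$, the identity $h_C(z,1)=1+b\,\Phi(h_K(z))$ for $h_K(z)\le M$ (applicable by the gradient bounds above), and $h_K(-z)=h_{-K}(z)$, we arrive at
\[
b\int_{\{Tf>a_m\}}\Phi\bigl(h_{-K}(\nabla Tf(y))\bigr)\,dy\le b\int_{\{f>a_m\}}\Phi\bigl(h_{-K}(\nabla f(y))\bigr)\,dy.
\]
Dividing by $b$ and letting $m\to\infty$, the monotone convergence theorem (the sets $\{f>a_m\}$ and $\{Tf>a_m\}$ increase as $a_m\downarrow\essinf f=\essinf Tf$, the last equality by Proposition~\ref{may8lem}) yields the same inequality with the domains $\{f>\essinf f\}$ and $\{Tf>\essinf Tf\}$. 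Finally, $\nabla f=0$ $\cH^n$-almost everywhere on $\{f=\essinf f\}$ and $\nabla Tf=0$ $\cH^n$-almost everywhere on $\{Tf=\essinf Tf\}$, where the integrands vanish since $\Phi(h_{-K}(0))=\Phi(0)=0$, so those integrals coincide with the integrals over $\R^n$, giving \eqref{anisotropic eqoct171}.

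The only point requiring genuine care --- and the single real obstacle --- is the choice of the truncation level $M$ in Lemma~\ref{lemoct20}: because $Tf$ may have a strictly larger Lipschitz constant than $f$ (by the factor $R/r$ coming from Lemma~\ref{cor1may21}) and because $h_K$ distorts lengths by up to a factor $R$, one needs $M=LR^2/r$ to ensure that the representation $h_C(z,1)=1+b\,\Phi(h_K(z))$ is valid at both $z=-\nabla f(x)$ and $z=-\nabla Tf(x)$. Everything else is a verbatim anisotropic echo of the proof of Theorem~\ref{polyaszego_lip}, the essential new input being that Lemma~\ref{lemoct20} produces, for an arbitrary $K\in\cK^n_{(o)}$, a body $C$ whose support function on $\R^n\times\{1\}$ represents $1+b\,\Phi\circ h_K$.
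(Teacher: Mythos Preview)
Your proof is correct and follows essentially the same route as the paper's: reduce to real-valued $\Phi$, use Lemma~\ref{cor1may21} for the Lipschitz property of $Tf$, run Lemmas~\ref{lemdec303}--\ref{minkcontequalhausdorff} with the anisotropic body $C$ from Lemma~\ref{lemoct20}, and convert the support-function inequality into \eqref{anisotropic eqoct171} via $h_K(-z)=h_{-K}(z)$. Your observation that Lemma~\ref{minkcont_diminishes} only uses smoothing through \eqref{eqjune101}, hence applies verbatim under $K$-smoothing, is exactly right. The one place you are more careful than the paper is the choice of the cutoff $M$: the paper takes $M=LR/r$ (controlling $\|\nabla Tf\|$), whereas you take $M=LR^2/r$ to control $h_K(-\nabla Tf)\le R\|\nabla Tf\|$; since any sufficiently large $M$ works, both proofs go through, but your bound is the one that matches the hypothesis $h_K(y)\le M$ in Lemma~\ref{lemoct20} without an extra step.
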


\begin{proof}
We can argue as in the proof of Theorem~\ref{polyaszego_lip}, with very few changes.  With arguments as at the beginning of that proof, we may assume without loss of generality that $\Phi$ is a nontrivial real-valued Young function.  Let $f\in X$ be Lipschitz with Lipschitz constant $L$.  Choosing $0<r\le R$ such that $rB^n\subset K\subset RB^n$, we may use Lemma~\ref{cor1may21} to conclude that $Tf$ coincides ${\mathcal{H}}^{n}$-almost everywhere with a Lipschitz function with Lipschitz constant at most $LR/r$.  Inequality \eqref{eqoct111} follows as before.  Instead of \eqref{eqoct111a} we have
$$\max\left\{\|\nabla f(x)\|,\|\nabla Tf(x)\|\right\}\le LR/r$$
for ${\mathcal{H}}^{n}$-almost all $x\in \R^n$. Then, if
$C$ is the convex body from Lemma~\ref{lemoct20} corresponding to $M=LR/r$ and $\Phi$, we have
$$h_C(y,1)-1=b\,\Phi(h_K(y))=b\,\Phi(h_{-K}(-y))$$
for $y\in \R^n$ with $h_K(y)\le M$ and some $b>0$.  As before, this leads to  \eqref{anisotropic eqoct171}.
\end{proof}

Finally, we present in Theorem~\ref{AnisotropicW11} an anisotropic version of Theorem~\ref{polyaszego_w12 new}, which again corresponds to the case when $K=B^n$.  We shall need the following lemma.  Recall that a convex body is {\em smooth} if all its boundary points are regular and {\em strictly convex} if it does not contain a line segment in its boundary; see \cite[pp.~83, 87]{Sch14}.

\begin{lem}\label{Mar24lemma}
Let $L\in {\mathcal K}^n_{(o)}$ be a smooth and strictly convex body, and let $u:\R^n\to \R^n$ and $v:\R^n\to \R$ be measurable.  Then there is a measurable $w:\R^n\to \R^n$ such that

\noindent{\rm{(i)}} $h_{L^\circ}(w(x))=v(x)$ for all $x$ such that $u(x)\neq o$, and

\noindent{\rm{(ii)}} $u(x)\cdot w(x)=h_L(u(x))\, h_{L^\circ}(w(x))$ for all $x\in \R^n$.
\end{lem}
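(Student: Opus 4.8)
The plan is to construct $w(x)$ pointwise, treating the two cases $u(x)=o$ and $u(x)\neq o$ separately, and then to verify measurability. When $u(x)=o$ we set $w(x)=o$; then $h_L(u(x))=0$ so (ii) holds trivially, and (i) is vacuous. So assume $u(x)\neq o$. Since $L$ is smooth, there is a \emph{unique} outer unit normal direction to $L$ at the point where the ray $\mathbb{R}_{\ge0}\,u(x)$ exits $L$, or equivalently (using \eqref{CS_polar} and the characterization of equality there) a unique direction $\xi(x)\in S^{n-1}$ for which equality holds in $x\cdot \xi = h_L(x)h_{L^\circ}(\xi)$ with $x=u(x)$; concretely, $\xi(x)$ is the outer normal to $L$ at $\rho_L(u(x))u(x)\in\partial L$. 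The point $\rho_L(u(x))u(x)$ depends continuously (hence measurably) on $u(x)$, and because $L$ is smooth the Gauss map $\partial L\to S^{n-1}$ is a continuous (single-valued) function, so $\xi:\{u\neq o\}\to S^{n-1}$ is measurable. Now define
\[
w(x)=\frac{v(x)}{h_{L^\circ}(\xi(x))}\,\xi(x)
\]
for $x$ with $u(x)\neq o$; note $h_{L^\circ}>0$ on $S^{n-1}$ since $L\in\mathcal{K}^n_{(o)}$, so this is well defined. Then by positive homogeneity of $h_{L^\circ}$, $h_{L^\circ}(w(x))=v(x)\,h_{L^\circ}(\xi(x))/h_{L^\circ}(\xi(x))=v(x)$, giving (i).

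For (ii) with $u(x)\neq o$, use that $\xi(x)$ is chosen so that equality holds in \eqref{CS_polar}: $u(x)\cdot \xi(x)=h_L(u(x))\,h_{L^\circ}(\xi(x))$. Multiplying both sides by the scalar $v(x)/h_{L^\circ}(\xi(x))$ and using bilinearity of the inner product and $1$-homogeneity of $h_{L^\circ}$,
\[
u(x)\cdot w(x)=\frac{v(x)}{h_{L^\circ}(\xi(x))}\,u(x)\cdot\xi(x)
=\frac{v(x)}{h_{L^\circ}(\xi(x))}\,h_L(u(x))\,h_{L^\circ}(\xi(x))
=h_L(u(x))\,h_{L^\circ}(w(x)),
\]
which is (ii). Note this computation is valid even when $v(x)<0$, since both sides scale linearly in $v(x)$, and when $v(x)=0$ both sides vanish.

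It remains to confirm measurability of $w$ as a map on all of $\R^n$. On $\{u=o\}$ it is the constant $o$, which is measurable, and on $\{u\neq o\}$ it is the product of the measurable scalar $v(x)/h_{L^\circ}(\xi(x))$ (a quotient of measurable functions with nonvanishing denominator, since $h_{L^\circ}$ is continuous and positive on $S^{n-1}$) with the measurable map $\xi(x)$; a product of a measurable scalar and a measurable vector field is measurable. Since $\{u=o\}$ and $\{u\neq o\}$ are measurable sets partitioning $\R^n$, $w$ is measurable. The main point requiring care—and the only place strict convexity and smoothness of $L$ are used—is the well-definedness and measurability of the normal direction $\xi(x)$: smoothness guarantees $\xi(x)$ is single-valued (so the Gauss map is a genuine function, not a set-valued map), and strict convexity guarantees the boundary point $\rho_L(u(x))u(x)$ is the unique maximizer in the definition of $h_L$, so that the equality case of \eqref{CS_polar} identifies $\xi(x)$ unambiguously. (In fact smoothness alone suffices for existence and measurability of $\xi$; strict convexity is presumably invoked so that $\xi(x)$ is the unique vector making \eqref{CS_polar} an equality, which is the form in which the lemma will be applied.)
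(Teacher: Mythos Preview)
Your construction contains a genuine error in the identification of the direction $\xi(x)$. You define $\xi(x)$ as the outer unit normal to $L$ at the boundary point $\rho_L(u(x))u(x)\in\partial L$, and then assert that this $\xi(x)$ realizes equality in $u(x)\cdot\xi=h_L(u(x))\,h_{L^\circ}(\xi)$. That assertion is false. If $\xi$ is the outer normal to $L$ at $p=\rho_L(u)u=u/h_{L^\circ}(u)$, the supporting-hyperplane condition gives $p\cdot\xi=h_L(\xi)$, i.e.\ $u\cdot\xi=h_L(\xi)\,h_{L^\circ}(u)$, which is \emph{not} the identity you need. A concrete counterexample: for the ellipse $L=\{(x,y):x^2/4+y^2\le 1\}$ and $u=(1,1)$, your $\xi$ is parallel to $(1,4)$, but then $u\cdot\xi=5/\sqrt{17}$ while $h_L(u)\,h_{L^\circ}(\xi)=5\sqrt{13}/(2\sqrt{17})$, and these differ.

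The equality condition after \eqref{CS_polar} says that $x\cdot y=h_L(x)h_{L^\circ}(y)$ holds if and only if $x$ is an outer normal to $L$ at $\rho_L(y)y$; by the symmetry $h_L=h_{(L^\circ)^\circ}$, this is the same as saying $y$ is an outer normal to $L^\circ$ at $\rho_{L^\circ}(x)x=x/h_L(x)$. Hence the correct choice is to take $\xi(x)$ equal to the outer unit normal to the \emph{polar body} $L^\circ$ at the point $u(x)/h_L(u(x))\in\partial L^\circ$. This is exactly what the paper does: it sets $\eta(u)=n(L^\circ,u/h_L(u))$ and defines $w(x)=v(x)\,\eta(f(u(x)))/h_{L^\circ}(\eta(f(u(x))))$. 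With that correction, your measurability argument goes through essentially as written (continuity of the Gauss map of $L^\circ$ uses that $L^\circ$ is $C^1$, which follows from the smoothness and strict convexity of $L$). One further small slip: in checking (i) you invoke ``positive homogeneity of $h_{L^\circ}$'' to pull the scalar $v(x)/h_{L^\circ}(\xi(x))$ outside, but this is only valid when that scalar is nonnegative; the conclusion $h_{L^\circ}(w(x))=v(x)$ fails when $v(x)<0$, so your parenthetical remark that the computation ``is valid even when $v(x)<0$'' applies to (ii) but not to (i).
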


\begin{proof}
If $A=\{x: u(x)\neq 0\}$, then $A$ is $\cH^n$-measurable.  As is observed in \cite[Remark~1.7.14]{Sch14}, it follows easily from \eqref{polarrad} that since $L$ is smooth and strictly convex, the same is true of $L^{\circ}$.  Let $n(L^\circ, y)$ denote the unit outer normal to $L^{\circ}$ at $y\in \partial L^\circ$.  Note that $u/h_L(u)=\rho_{L^\circ}(u)u\in \partial L^{\circ}$ by \eqref{polarrad}, and define $\eta:S^{n-1}\to S^{n-1}$ by
$$\eta(u)=n\left(L^\circ, u/h_L(u)\right).$$
The map $\eta$ is continuous, since $L^{\circ}$ is a convex body of class $C^1$ by \cite[Theorem~2.2.4]{Sch14}.  The map $f:\R^n\setminus\{o\}\to S^{n-1}$ defined by $f(z)=z/\|z\|$ is also continuous.  The composition $\eta\circ f\circ u:A\to S^{n-1}$ of measurable functions is therefore also measurable, and so is its composition with the continuous support function $h_{L^\circ}:\R^{n}\to \R$  (see \cite[p.~115]{Sch14}).   Define
$$
w(x)=\begin{cases}
v(x)\frac{\eta(f(u(x)))}{h_{L^\circ}(\eta(f(u(x))))},& {\text{if $x\in A$,}}\\
0,& {\text{if $x\not\in A$}}.
\end{cases}
$$
Then $w:\R^n\to \R^n$ is measurable and clearly satisfies (i).  By its definition, $w(x)$ is an outer normal to $L^\circ$ at $f(u(x))/h_L(f(u(x)))=u(x)/h_L(u(x))$, so (ii) holds due to the equality condition for \eqref{CS_polar} stated immediately after it.
\end{proof}

\begin{thm}\label{AnisotropicW11}
Let $K\in {\mathcal K}^n_{(o)}$, let $T:\cV(\R^n)\to \cV(\R^n)$ be a $K$-smoothing rearrangement, and let $\Phi$ be a Young function.  If $f\in W^{1,1}_{loc}(\R^n)\cap\cV(\R^n)$ and $\int_{\R^n}\Phi\left(h_{-K}(\nabla f(x))\right)\, dx<\infty$, then $Tf\in W^{1,1}_{loc}(\R^n)$ and
\begin{equation}\label{eq_anisotropic_w1phi}
\int_{\R^n}\Phi\left(h_{-K}(\nabla Tf(x))\right)\, dx\leq \int_{\R^n}\Phi\left(h_{-K}(\nabla f(x))\right)\, dx.
\end{equation}
\end{thm}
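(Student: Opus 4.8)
The plan is to follow the proof of Theorem~\ref{polyaszego_w12 new} almost verbatim, systematically replacing $\|\nabla\,\cdot\,\|$ by $h_{-K}(\nabla\,\cdot\,)$ and invoking Theorem~\ref{anisotropic polyaszego_lip} in place of Theorem~\ref{polyaszego_lip}; the one genuinely new ingredient will be an anisotropic analogue of the duality identity \eqref{norm_weak_limit}, for which Lemma~\ref{Mar24lemma} is tailor-made. As at the beginning of the proof of Theorem~\ref{polyaszego_lip} one first reduces to the case that $\Phi$ is a nontrivial real-valued Young function, and as in the proof of Theorem~\ref{polyaszego_w12 new} one first treats the case $\cH^n(\supp f)<\infty$, removing this restriction at the end by means of $f_c=\max\{f-c,0\}$, the identity $Tf_c=\max\{Tf-c,0\}$ from Proposition~\ref{propoct14} (so that $\nabla Tf_c=\nabla Tf$ on $\{x:Tf(x)>c\}$), and monotone convergence as $c\to0$. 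So fix an open $\Omega\supset\supp f$ with $\cH^n(\Omega)<\infty$, fix $0<s\le R$ with $sB^n\subset K\subset RB^n$ (whence $s\|\xi\|\le h_{-K}(\xi)\le R\|\xi\|$ for all $\xi\in\R^n$), and introduce the truncations $\Phi_r$ exactly as in the proof of Theorem~\ref{polyaszego_w12 new}; each is a real-valued Young function equivalent to $\Phi_{\text{min}}$, so $L^{\Phi_r}(\R^n)=L^1(\R^n)+L^\infty(\R^n)$ and $L^{\Psi_r}(\R^n)=L^1(\R^n)\cap L^\infty(\R^n)$ with equivalent norms. The norm equivalence just noted, combined with $\int_{\R^n}\Phi(h_{-K}(\nabla f(x)))\,dx<\infty$, shows (as in the proof of Theorem~\ref{polyaszego_w12 new}, using \cite[Lemma~3]{Talenti} and the $\Delta_2$ property of $\Phi_r$ on the finite-measure set $\Omega$) that $f$ and $\|\nabla f\|$ belong to $H^{\Phi_r}(\R^n)$, so Lemma~\ref{approxx_by_C_infty} produces nonnegative $f_j\in C^\infty(\R^n)$ with compact support such that $f_j\to f$ in $W^{1,\Phi_r}(\R^n)$.

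Next, for $u\in(L^{\Phi_r}(\R^n))^n$ put $N_K(u)=\bigl\|h_{-K}(u(\cdot))\bigr\|_{L^{\Phi_r}(\R^n)}$; since $h_{-K}$ is sublinear and $R$-Lipschitz, $N_K$ is convex, positively homogeneous and continuous in the $(L^{\Phi_r}(\R^n))^n$ norm, so $N_K(\nabla f_j)\to N_K(\nabla f)$. Proposition~\ref{contraction} gives $Tf_j\to Tf$ in $L^{\Phi_r}(\R^n)$, hence $Tf\in L^1_{loc}(\R^n)$; and Theorem~\ref{anisotropic polyaszego_lip}, applied to $f_j$ with the Young function $t\mapsto\Phi_r(t/N_K(\nabla f_j))$ (the case $N_K(\nabla f_j)=0$ being trivial), shows that $Tf_j$ coincides a.e.\ with a Lipschitz function and that $N_K(\nabla Tf_j)\le N_K(\nabla f_j)$. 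Feeding the superlinear real-valued Young function furnished by the converse de~La~Vall\'ee-Poussin criterion, applied to $\{\|\nabla f_j\|\}$ (which converges in $L^1(\R^n)+L^\infty(\R^n)$, hence is relatively weakly compact), through Theorem~\ref{anisotropic polyaszego_lip} and back using $s\|\cdot\|\le h_{-K}(\cdot)\le R\|\cdot\|$, one finds that $\{\|\nabla Tf_j\|\}$ is bounded in the norm \eqref{eq:LuxLargest} and relatively weakly sequentially compact in $\sigma\bigl(L^1(\R^n)+L^\infty(\R^n),\,L^1(\R^n)\cap L^\infty(\R^n)\bigr)$. Passing to a subsequence, $\nabla Tf_j$ converges weakly to some $g\in(L^{\Phi_r}(\R^n))^n$, and testing against $C^\infty(\R^n)$ functions of compact support (which lie in $(L^{\Psi_r}(\R^n))^n$) and integrating by parts identifies $g$ with the weak gradient of $Tf$; in particular $Tf\in W^{1,1}_{loc}(\R^n)$.

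The heart of the argument is the weak lower semicontinuity $N_K(\nabla Tf)\le\liminf_{j\to\infty}N_K(\nabla Tf_j)$. First suppose $K$ is smooth and strictly convex; then so is $-K$, and $(-K)^\circ=-K^\circ$. The plan is to establish the anisotropic duality identity
\[
N_K(u)=\sup\Bigl\{\,\Bigl|\int_{\R^n}u(x)\cdot h(x)\,dx\Bigr|\ :\ h\in(L^{\Psi_r}(\R^n))^n,\ \bigl\|h_{-K^\circ}(h(\cdot))\bigr\|'_{L^{\Psi_r}(\R^n)}\le1\,\Bigr\}
\]
for all $u\in(L^{\Phi_r}(\R^n))^n$. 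Here ``$\ge$'' is H\"older's inequality in Orlicz spaces combined with \eqref{CS_polar} applied to $-K$, which gives $u(x)\cdot h(x)\le h_{-K}(u(x))\,h_{-K^\circ}(h(x))$. For ``$\le$'': given any $v\in L^{\Psi_r}(\R^n)$ with $v\ge0$ and $\|v\|'_{L^{\Psi_r}(\R^n)}\le1$ (recall $\mathcal M^{\Psi_r}=L^{\Psi_r}(\R^n)$ by Lemma~\ref{lemM}), Lemma~\ref{Mar24lemma} with $L=-K$, applied to the pair $u,v$, produces a measurable $h$ with $h_{-K^\circ}(h(x))=v(x)$ on $\{u\ne o\}$ (and $h(x)=o$ otherwise) and $u(x)\cdot h(x)=h_{-K}(u(x))\,h_{-K^\circ}(h(x))$ everywhere; as $\|h(x)\|\le C\,v(x)$ for a constant $C=C(K)$, we get $h\in(L^{\Psi_r}(\R^n))^n$ with $\bigl\|h_{-K^\circ}(h(\cdot))\bigr\|'_{L^{\Psi_r}(\R^n)}\le\|v\|'_{L^{\Psi_r}(\R^n)}\le1$, while $\int_{\R^n}u\cdot h=\int_{\R^n}h_{-K}(u(x))v(x)\,dx$; taking the supremum over such $v$ and using the scalar Orlicz-norm duality for $\bigl\|h_{-K}(u(\cdot))\bigr\|_{L^{\Phi_r}(\R^n)}$ (as in \eqref{norm_weak_limit2M}) yields ``$\le$''. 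Since the right-hand side of the displayed identity is a supremum of $\sigma(L^1+L^\infty,L^1\cap L^\infty)$-continuous functionals, $N_K$ is weakly lower semicontinuous whenever $K$ is smooth and strictly convex. For arbitrary $K\in\cK^n_{(o)}$, choose smooth strictly convex bodies $K_m\in\cK^n_{(o)}$ with $K_m\subset K$ and $K_m\to K$ (possible since $o\in\inte K$; see \cite{Sch14}); then $h_{-K_m}\le h_{-K}$ and $h_{-K_m}\to h_{-K}$ uniformly on $S^{n-1}$, so $N_{K_m}\le N_K$ and $N_{K_m}(u)\to N_K(u)$ for every $u$, and therefore, for each $m$, the weak lower semicontinuity of $N_{K_m}$ and the weak convergence $\nabla Tf_j\to\nabla Tf$ give $N_{K_m}(\nabla Tf)\le\liminf_jN_{K_m}(\nabla Tf_j)\le\liminf_jN_K(\nabla Tf_j)$; letting $m\to\infty$ yields $N_K(\nabla Tf)\le\liminf_jN_K(\nabla Tf_j)$, in the same spirit as \eqref{ineq_norm_weak_limit}.

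Combining $N_K(\nabla Tf)\le\liminf_jN_K(\nabla Tf_j)$, $N_K(\nabla Tf_j)\le N_K(\nabla f_j)$, and $N_K(\nabla f_j)\to N_K(\nabla f)$ gives $N_K(\nabla Tf)\le N_K(\nabla f)$. Running the whole argument with $\Phi_r$ replaced by $a\Phi_r$, where $a>0$ is chosen so that $\int_{\R^n}a\,\Phi_r(h_{-K}(\nabla f(x)))\,dx=1$, converts this norm inequality into the modular inequality $\int_{\R^n}a\,\Phi_r(h_{-K}(\nabla Tf(x)))\,dx\le1=\int_{\R^n}a\,\Phi_r(h_{-K}(\nabla f(x)))\,dx$, exactly as in the fourth claim of the proof of Theorem~\ref{polyaszego_w12 new}. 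Letting $r\to\infty$ and using $\Phi_r\uparrow\Phi$ with the monotone convergence theorem gives \eqref{eq_anisotropic_w1phi} when $\cH^n(\supp f)<\infty$, and the truncation $f_c$ described above removes this restriction. I expect the main obstacle to be the third paragraph: setting up the anisotropic duality identity correctly (the non-symmetry of $h_{-K}$ and the fact that $\|\cdot\|_K$ is merely sublinear make the passage from the scalar to the vector case delicate), extracting the right measurable extremal test field from Lemma~\ref{Mar24lemma}, and carrying out the reduction to smooth strictly convex $K$ --- which is done by approximating the functional $N_K$ rather than the rearrangement $T$, since the $K$-smoothing hypothesis on $T$ does not transfer to nearby bodies.
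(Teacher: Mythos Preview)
Your proposal is correct and follows essentially the same approach as the paper. The only notable difference is in the organization of the approximation by smooth strictly convex bodies: the paper proves the anisotropic duality identity \eqref{new norm_weak_limit} directly for arbitrary $K$ by approximating $K$ from inside (with $\tfrac12K\subset K_m\subset K$) \emph{within} the $S_1\ge S_2$ step and passing to the limit via dominated convergence, whereas you first establish the identity for smooth strictly convex $K$ (so that Lemma~\ref{Mar24lemma} applies to $L=-K$ itself) and then, for general $K$, deduce the weak lower semicontinuity of $N_K$ by approximating the \emph{functional} $N_K$ from below by $N_{K_m}$ and using Fatou for the Luxemburg norm; both routes are valid and lead to the same inequality \eqref{new ineq_norm_weak_limit}.
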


\begin{proof}
The proof follows that of Theorem~\ref{polyaszego_w12 new}, up to the end of the proof of the second claim, verbatim except that the role of Theorem~\ref{polyaszego_lip} is now played by Theorem~\ref{anisotropic polyaszego_lip} and that $\|\nabla f\|$ and $\|\nabla Tf\|$ are replaced by $h_{-K}(\nabla f)$ and $h_{-K}(\nabla T f)$, respectively.  Instead of the third claim, that \eqref{ineq_norm_weak_limit} holds, we claim that
\begin{equation}\label{new ineq_norm_weak_limit}
\big\|h_{-K}(\nabla T f)\big\|_\lphir\leq\liminf_{j\to\infty} \big\|h_{-K}(\nabla  T f_j)\big\|_\lphir.
\end{equation}
The argument is similar to the one given for \eqref{ineq_norm_weak_limit}.  We first prove that if $u\in (L^{\Phi_r}(\R^n))^n$, then
\begin{equation}\label{new norm_weak_limit}
\big\|h_{-K}(u)\big\|_\lphir=\sup \left\{\left| \int_{\R^n} u(x)\cdot w(x)\,dx\right|: w\in (L^{{\Psi_r}}(\R^n))^n, \big\|h_{-K^\circ}(w)\big\|'_{L^{{\Psi_r}}(\R^n)}\leq 1\right\},
\end{equation}
where $-K^\circ=(-K)^\circ$ is the polar body of $-K$; see \eqref{polarbody}. For this, we apply \cite[(10), Proposition~10, Section~3.4]{RaoRen} to $h_{-K}(u)$, which yields
\begin{equation}\label{new norm_weak_limit2}
\big\|h_{-K}(u)\big\|_\lphir=\sup \left\{\left| \int_{\R^n} h_{-K}(u(x))\, v(x)\,dx\right|: v\in L^{{\Psi_r}}(\R^n), \|v\|'_{L^{{\Psi_r}}(\R^n)}\leq 1\right\}.
\end{equation}
(Here we have used the fact that ${\mathcal M}^{{\Psi_r}}=L^{{\Psi_r}}(\R^n)$, where ${\mathcal M}^{{\Psi_r}}$ is the closure of the span of all linear step functions in $L^{{\Psi_r}}(\R^n)$; this was explained after \eqref{norm_weak_limit2M}, along with a warning about the different notation employed in \cite{RaoRen}.)  Let $S_1$ and $S_2$ denote the right-hand sides of \eqref{new norm_weak_limit} and \eqref{new norm_weak_limit2}, respectively.  From \eqref{CS_polar} with $K$, $x$, and $y$ replaced by $-K$, $u$, and $w$, respectively, we obtain $S_1\leq S_2$.

For the converse, we can restrict the supremum in \eqref{new norm_weak_limit2} to nonnegative $v$.  Let $u\in (L^{\Phi_r}(\R^n))^n$ and let $v\in L^{{\Psi_r}}(\R^n)$, $v\geq0$, and $\|v\|'_{L^{{\Psi_r}}(\R^n)}\leq1$.  By \cite[Theorem~2.7.1]{Sch14}, the set of smooth and strictly convex bodies is dense in $\cK^n$ with the Hausdorff metric, while the compact convex sets strictly contained in $K$ and strictly containing $aK$ for a fixed $0<a<1$ form an open set in $\cK^n$.  We may therefore choose a sequence $(K_m)$ of smooth and strictly convex bodies converging to $K$ as $m\to \infty$ in the Hausdorff metric and such that
\begin{equation}\label{eqmar247}
\frac12 K\subset K_m\subset K
\end{equation}
for $m\in \N$. Let $w_m:\R^n\to S^{n-1}$ be the measurable vector field supplied by Lemma~\ref{Mar24lemma} with $L=-K_m$.  Since $v\in L^{\Psi_r}(\R^n)$, we have $h_{-K_m^\circ}(w_m)\in L^{\Psi_r}(\R^n)$; this is equivalent to $\|w_m\|\in L^{\Psi_r}(\R^n)$ and hence $w_m\in (L^{\Psi_r}(\R^n))^n$.  By \eqref{eqmar247}, $-K^\circ\subset - K_m^\circ$, so from Lemma~\ref{Mar24lemma}(i), we get
$$\|h_{-K^\circ}(w_m)\|'_{L^{\Psi_r}(\R^n)}\le  \|h_{-K_m^\circ}(w_m)\|'_{L^{\Psi_r}(\R^n)} =\|v\|'_{L^{\Psi_r}(\R^n)}\le 1.$$
Using (i) and (ii) of Lemma~\ref{Mar24lemma} with $L=-K_m$, we obtain
\begin{equation}\label{eqmar257}
S_1\geq \left| \int_{\R^n} u(x)\cdot w_m(x)\,dx\right|=\left| \int_{\R^n} h_{-K_m}(u(x))\, h_{-K_m^\circ}(w_m(x))\,dx\right|=\left| \int_{\R^n} h_{-K_m}(u(x))\, v(x)\,dx\right|.
\end{equation}
By \eqref{eqmar247},
$$0\le h_{-K_m}(u(x))\,v(x)\le h_{-K}(u(x))\,v(x)$$
and the function on the right-hand side is integrable.  Taking the limit as $m\to\infty$ in \eqref{eqmar257}, the dominated convergence theorem yields
$$S_1\ge \lim_{m\to\infty}\int_{\R^n} h_{-K_m}(u(x))\, v(x)\,dx=\int_{\R^n} h_{-K}(u(x))\, v(x)\,dx.$$
This proves that $S_1\geq S_2$ and concludes the proof of \eqref{new norm_weak_limit}.  Now \eqref{new ineq_norm_weak_limit} follows from \eqref{new norm_weak_limit} by the same argument that showed that \eqref{ineq_norm_weak_limit} follows from \eqref{norm_weak_limit}.

The remainder of the proof of the theorem is a repetition of the last part of the proof of Theorem~\ref{polyaszego_w12 new}, from the point where \eqref{ineq_norm_weak_limit} has been established onwards.
\end{proof}

Suppose that $K\subset \R^n$ is an $o$-symmetric convex body.  Then $$h_K(x)=h_{-K}(x)=\|x\|_{K^\circ}$$
for $x\in \R^n$, where $\|\cdot\|_{K^\circ}$ is the norm for which the unit ball is $K^\circ$, the polar body of $K$, defined by \eqref{polarbody}.  In this case, \eqref{anisotropic eqoct171} and \eqref{eq_anisotropic_w1phi} may be rewritten in the form
$$
\int_{\R^n}\Phi\left(\|\nabla Tf(x)\|_{K^\circ}\right)\, dx\leq \int_{\R^n}\Phi\left(\|\nabla f(x)\|_{K^\circ}\right)\, dx.
$$

\section{Appendix}\label{Appendix}

The purpose of this appendix is to provide a proof of Proposition~\ref{contraction}. This follows easily from Lemma~\ref{lemoct14} below, first proved by Crowe, Zweibel, and Rosenbloom \cite[Theorem~3]{CZR} for Schwarz rearrangement without the assumption that $F(s,0)$ and $F(0,t)$ decrease with $s\ge 0$ and $t\ge 0$, respectively. Variants of Proposition~\ref{contraction} are stated for general rearrangements by Brock and Solynin \cite[Theorem~3.1]{BS} and by Van Schaftingen and Willem \cite[Corollary~1]{VSW}, whose approaches to rearrangements differ from ours; see \cite[Appendix]{BGGK} for a comparison.  Brock and Solynin refer to \cite{CZR} for a proof, but do not explain why it should apply to general rearrangements, while \cite[Corollary~1]{VSW} is stated with the extra assumption that the function $j$ is even.  The proof of \cite[Corollary~1]{VSW} is based on that of \cite[Proposition~3.3.9]{VSPhD}, which does not assume that $j$ is even, or that it is nonnegative, but which requires a considerable amount of preliminary observations and terminology.  For this reason, we prefer to follow the argument in \cite{CZR}.

\begin{lem}\label{lemoct14}
Let $F:\R^2\to \R$ be continuous with $F(0,0)=0$ and such that $F(s,0)$ and $F(0,t)$ decrease with $s\ge 0$ and $t\ge 0$, respectively.  Suppose that for all coordinate rectangles $R=[a,b]\times [c,d]$, where $a\le b$, $c\le d$,
\begin{equation}\label{eqoct140}
G(R)=F(b,d)+F(a,c)-F(b,c)-F(a,d)\ge 0.
\end{equation}
Let $T:\cV(\R^n)\to \cV(\R^n)$ be a rearrangement.  If $f,g\in\cV(\R^n)$, then
\begin{equation}\label{eqoct141}
\int_{\R^n}F(f(x), g(x))\,dx \le \int_{\R^n}F(Tf(x), Tg(x))\,dx.
\end{equation}
\end{lem}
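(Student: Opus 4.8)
The plan is to follow the argument of \cite{CZR}: first represent $F$ on the nonnegative quadrant by a nonnegative measure, then deduce \eqref{eqoct141} from the termwise comparison
\[
\cH^n\big(\{Tf\ge\sigma\}\cap\{Tg\ge\tau\}\big)\ \ge\ \cH^n\big(\{f\ge\sigma\}\cap\{g\ge\tau\}\big)\qquad(\sigma,\tau>0),
\]
which is immediate from the monotonicity and measure-preservation of the induced map $\di_T$.

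Since $f,g\in\cV(\R^n)$ are nonnegative, only the restriction of $F$ to $[0,\infty)^2$ is relevant, and $\essinf f=\essinf g=0$. First I would construct the measure: condition \eqref{eqoct140} asserts exactly that $G$, viewed as a set function on half-open coordinate rectangles $(a,b]\times(c,d]$, is nonnegative and finitely additive, hence a pre-measure, so it extends to a nonnegative Borel measure $\mu$ on $(0,\infty)^2$ with
\[
\mu\big((0,s]\times(0,t]\big)=F(s,t)-F(s,0)-F(0,t)\qquad(s,t>0),
\]
using $F(0,0)=0$; by continuity of $F$ this persists for $s,t\ge 0$, both sides vanishing when $s=0$ or $t=0$. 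Consequently, for every $x\in\R^n$,
\[
F(f(x),g(x))=F(f(x),0)+F(0,g(x))+\int_{(0,\infty)^2}1_{\{f(x)\ge\sigma\}}\,1_{\{g(x)\ge\tau\}}\,d\mu(\sigma,\tau),
\]
and the identical formula holds with $f,g$ replaced by $Tf,Tg$, which again lie in $\cV(\R^n)$ by Proposition~\ref{may8lem}.

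Next comes the only substantive point. Fix $\sigma,\tau>0$ and put $A=\{x:f(x)\ge\sigma\}$, $B=\{x:g(x)\ge\tau\}$; since $f,g\in\cV(\R^n)$ we have $A,B\in\cL^n$ and hence $A\cap B\in\cL^n$. Because $\sigma>0=\essinf f$ and $\tau>0=\essinf g$, Proposition~\ref{lemapril30}(ii) gives $\{Tf\ge\sigma\}=\di_TA$ and $\{Tg\ge\tau\}=\di_TB$, essentially; $\di_T$ is monotonic by Proposition~\ref{lemapril30}(i), so $\di_T(A\cap B)\subset\di_TA\cap\di_TB$, essentially; and $\di_T$ preserves $\cH^n$ by Proposition~\ref{fromSetToFct}(i). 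Hence
\[
\cH^n\big(\{Tf\ge\sigma\}\cap\{Tg\ge\tau\}\big)\ge\cH^n\big(\di_T(A\cap B)\big)=\cH^n(A\cap B)=\cH^n\big(\{f\ge\sigma\}\cap\{g\ge\tau\}\big).
\]
Now integrate the displayed identity for $F(f(x),g(x))$ over $\R^n$. By Tonelli's theorem the $\mu$-term becomes $\int_{(0,\infty)^2}\cH^n(\{f\ge\sigma\}\cap\{g\ge\tau\})\,d\mu(\sigma,\tau)$, the integrand being finite-valued since $\sigma,\tau>0$. For the boundary terms, equimeasurability of $T$ makes $f$ and $Tf$ (resp.\ $g$ and $Tg$) have the same distribution on $(0,\infty)$, whence $\int_{\R^n}F(f,0)\,dx=\int_{\R^n}F(Tf,0)\,dx$ and $\int_{\R^n}F(0,g)\,dx=\int_{\R^n}F(0,Tg)\,dx$; these are well defined in $[-\infty,0]$ because $F(\cdot,0)$ and $F(0,\cdot)$ are continuous, nonincreasing, and vanish at $0$ (the value at $0$ being irrelevant since $F(0,0)=0$). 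Applying the displayed inequality termwise then yields $\int_{\R^n}F(f,g)\,dx\le\int_{\R^n}F(Tf,Tg)\,dx$, which is \eqref{eqoct141}.

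I expect the conceptual work to be essentially trivial: the decisive inequality above is one line and uses only the structural properties of $\di_T$ already established, while $\mu$ is a routine Lebesgue--Stieltjes object. The one place needing care --- and the main thing to write out properly --- is the bookkeeping with integrals valued in $[-\infty,+\infty]$. Here one observes that in each of the three integrals the positive part is dominated by the (nonnegative) $\mu$-term and the negative part by $|F(\cdot,0)|+|F(0,\cdot)|$, so all are well defined; if the right-hand side of \eqref{eqoct141} equals $+\infty$ there is nothing to prove, and otherwise the decomposition and termwise comparison proceed exactly as above. Should one wish to avoid even this discussion, replacing $F(s,t)$ by $F(\min\{s,N\},\min\{t,N\})$ --- which preserves \eqref{eqoct140} and the monotonicity hypotheses --- and letting $N\to\infty$ via monotone convergence removes any residual infinities.
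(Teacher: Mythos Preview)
Your proposal is correct and follows essentially the same route as the paper's proof: both construct a nonnegative measure from the rectangle functional $G$, decompose $F(f,g)$ into the two boundary terms $F(f,0)$, $F(0,g)$ plus a measure-integral of level-set indicators, handle the boundary terms via equimeasurability, and establish the key inequality $\cH^n(\{Tf\ge\sigma\}\cap\{Tg\ge\tau\})\ge\cH^n(\{f\ge\sigma\}\cap\{g\ge\tau\})$ from the monotonicity and measure-preservation of $\di_T$ together with \eqref{eqnov11}. The only cosmetic differences are that the paper cites Saks for the measure extension and invokes Proposition~\ref{propoct14} explicitly for the boundary terms, whereas you argue the latter directly from equal distributions; your discussion of the $[-\infty,+\infty]$ bookkeeping and the optional truncation is a useful addition.
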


\begin{proof}
The function $G$ is additive on coordinate rectangles, that is, if $R$, $S$, and $R\cup S$ are non-overlapping coordinate rectangles, then $G(R\cup S)=G(R)+G(S)$.  This allows $G$ to be extended to a measure $\nu$ on $\R^2$ such that each coordinate rectangle $R$ is $\nu$-measurable and $\nu(R)=G(R)$; see \cite[pp.~64--68]{Saks} (where the union of sets is denoted by $+$).

Let $H$ denote the Heaviside function, i.e., $H(x)=1$ if $x\ge 0$ and $H(x)=0$ if $x<0$.  Then for $b,d,s,t\ge 0$,
\begin{equation*}\label{eq2}
1_{[0,b]\times [0,d]}(s,t)=H(b-s)H(d-t).
\end{equation*}
It follows that
\begin{eqnarray*}
\int_0^{\infty}\int_0^{\infty}H(b-s)H(d-t)\,d\nu(s,t)&=&
\int_0^{\infty}\int_0^{\infty}1_{[0,b]\times [0,d]}(s,t)\,d\nu(s,t)\\
&=&\nu([0,b]\times [0,d])=G([0,b]\times [0,d])\\
&=&F(b,d)-F(b,0)-F(0,d).
\end{eqnarray*}
From this we obtain
$$
F(b,d)=F(b,0)+F(0,d)+\int_0^{\infty}\int_0^{\infty}H(b-s)H(d-t)\,d\nu(s,t).
$$
On setting $b=f(x)$ and $d=g(x)$ and integrating, this gives
\begin{equation}\label{eqoct143}
\int_{\R^n}F(f(x),g(x))\,dx=\int_{\R^n}F(f(x),0)\,dx+\int_{\R^n}F(0,g(x))\,dx
+I(f,g),
\end{equation}
where by Fubini's theorem,
\begin{eqnarray}\label{eqoct146}
I(f,g)&=&\int_{\R^n}\int_0^{\infty}\int_0^{\infty}H(f(x)-s)H(g(x)-t)\,d\nu(s,t)\,dx
\nonumber\\
&=&\int_0^{\infty}\int_0^{\infty}\int_{\R^n}H(f(x)-s)H(g(x)-t)\,dx\,d\nu(s,t).
\end{eqnarray}
Our assumptions on $F$ imply that $-F(r,0)\ge 0$ increases with $r\ge 0$.  We can therefore apply \eqref{eqoct149}, with $\varphi(r)=-F(r,0)$ for $r\ge 0$, to obtain $-F(Tf(x),0)=T(-F(f(x),0))$, for each $x\in \R^n$.  With this, the layer-cake representation formula, and the equimeasurability of $T$, we conclude that
\begin{eqnarray}\label{eqoct144}
\int_{\R^n}F(Tf(x),0)\,dx&=&-\int_{\R^n}T(-F(f(x),0))\,dx\nonumber\\
&=&-\int_0^{\infty}{\cH}^n(\{x:T(-F(f(x),0))>t\})\,dt\nonumber\\
&=&-\int_0^{\infty}{\cH}^n(\{x:-F(f(x),0)>t\})\,dt=\int_{\R^n}F(f(x),0)\,dx.
\end{eqnarray}
Similarly,
\begin{equation}\label{eqoct145}
\int_{\R^n}F(0,Tg(x))\,dx=\int_{\R^n}F(0,g(x))\,dx.
\end{equation}
Since $\di_T$ is monotonic, we have
\begin{equation}\label{eqoct148}
{\cH}^n(\di_T(A\cap B))\le {\cH}^n((\di_TA)\cap \di_TB)
\end{equation}
whenever $A,B\in {\cL}^n$.  Consequently, using the measure-preserving property of $\di_T$, \eqref{eqoct148}, and \eqref{eqnov11}, we obtain
\begin{eqnarray*}\label{eq7}
\int_{\R^n}H(f(x)-s)H(g(x)-t)\,dx&=&{\cH}^n(\{x:f(x)\geq s\}\cap\{x:g(x) \geq  t\})\nonumber\\
&=&{\cH}^n(\di_T(\{x:f(x)\geq s\}\cap\{x:g(x) \geq t\}))\nonumber\\
&\le&{\cH}^n((\di_T\{x:f(x)\geq s\})\cap\di_T\{x:g(x)\geq t\})\nonumber\\
&=&{\cH}^n(\{x:Tf(x) \geq s\}\cap\{x:Tg(x)\geq t\})\nonumber\\
&=&\int_{\R^n}H(Tf(x)-s)H(Tg(x)-t)\,dx.
\end{eqnarray*}
By \eqref{eqoct146}, this yields $I(f,g)\le I(Tf,Tg)$.  Then \eqref{eqoct141} follows from \eqref{eqoct143}, \eqref{eqoct143} with $f$ and $g$ replaced by $Tf$ and $Tg$, respectively, \eqref{eqoct144}, and \eqref{eqoct145}.
\end{proof}

\noindent{\em Proof of Proposition~\ref{contraction}.}  Let $F(s,t)=-j(s-t)$ for $s,t\ge 0$.  Since $j$ is convex, for $r\in \R$ and $s,t\ge 0$, we have
$$j(r)-j(r-s)\le j(r+t)-j(r-s+t).$$
If $r=b-d$, $s=b-a$, and $t=d-c$, this gives
$$j(b-d)-j(a-d)\le j(b-c)-j(a-c),$$
yielding \eqref{eqoct140}.  Moreover, $F(r,0)=-j(r)$ and $F(0,r)=-j(-r)$ both decrease with $r\ge 0$ since $j\ge 0$ and $j(0)=0$.  Applying Lemma~\ref{lemoct14} with this choice of $F$, we obtain \eqref{eq_contraction}.

The $L^p$-contracting property results from taking $j(r)=|r|^p$, $p\ge 1$.
\qed

\end{document}